\newtheorem{theorem}{Theorem}[section]
\newtheorem{lemma}[theorem]{Lemma}
\newtheorem{cor}[theorem]{Corollary}
\theoremstyle{definition}
\theoremstyle{pro}
\newtheorem{definition}[theorem]{Definition}
\newtheorem{pro}[theorem]{Proposition}
\theoremstyle{remark}
\newtheorem{remark}[theorem]{Remark}
\numberwithin{equation}{section}
\begin{document}

\title[The moduli space of $S^1$-type zero loci]{ The moduli space of $S^1$-type zero loci for $\mathbb{Z}/2$-harmonic spinors in dimension 3 }

\author{Ryosuke Takahashi}
\address{Department of mathematics, National Cheng Kung University, No.1, University Road, East District, Tainan, Taiwan}
\email{tryo@mail.ncku.edu.tw}



\date{}

\keywords{}

\begin{abstract}
Let $M$ be a compact oriented 3-dimensional smooth manifold. In this paper, we construct a moduli space consisting of pairs $(\Sigma, \psi)$ where $\Sigma$ is a $C^1$-embedding simple closed curve in $M$, $\psi$ is a $\mathbb{Z}/2$-harmonic spinor vanishing only on $\Sigma$, and $\|\psi\|_{L^2_1}\neq 0$. We prove that when $\Sigma$ is $C^2$, a neighborhood of  $(\Sigma, \psi)$ in the moduli space can be parametrized by the space of Riemannian metrics on $M$ locally as the kernel of a Fredholm operator.
\end{abstract}

\maketitle
\setcounter{tocdepth}{1}
\tableofcontents

\section{Introduction}
\subsection{Main theorem and its background and motivations}
Let $M$ be a compact, oriented, three-dimensional smooth manifold without boundary.
\begin{definition} 
\begin{align*}
&\mathcal{X}=\{g\mbox{ }| \mbox{ }g\mbox{ is a Riemannian metric on } M\};\\
&\mathcal{A}=\{\Sigma \subset M\mbox{ }| \mbox{ }\Sigma\mbox{ is the image of a }C^1\mbox{ embedding }S^1\rightarrow M\}.
\end{align*}
\end{definition}
For each $\Sigma\in\mathcal{A}$, let $H_{\Sigma}\subset H^1(M\setminus\Sigma;\mathbb{Z}/2)$ consist of the elements with nonzero monodromy\footnote{Real line bundles on $M\setminus\Sigma$ can be totally determined by the transition functions $\{U_{\alpha\beta}, g_{\alpha\beta}\}$ with structure group $\mathbb{Z}/2$, which are one-one corresponding to the elements in the sheaf cohomology $H^1(M\setminus\Sigma;\mathbb{Z}/2)$. In other words, an element $e\in H^1(M\setminus\Sigma;\mathbb{Z}/2)$ has non-zero monodromy if and only if its corresponding real line bundle cannot be extended to the whole manifold $M$.} around $\Sigma$. As $\Sigma$ varies, the set $H_{\Sigma}$ varies continuously to define a finite-sheeted covering space of $\mathcal{A}$ with the fibres isomorphic to $H_{\Sigma}$.
\begin{definition}
\begin{align*}
&\mathcal{A}_H=\{(\Sigma,e)\mbox{ }|\mbox{ }\Sigma\in \mathcal{A}, e\in H_{\Sigma}\};\\
&\mathcal{Y}=\mathcal{X}\times \mathcal{A}_H.
\end{align*}
We also denote by $p_1:\mathcal{Y}\rightarrow\mathcal{X}$ the projection from $\mathcal{Y}$ to $\mathcal{X}$.
\end{definition}
Notice that once a topology on $\mathcal{A}$ is given, it will induce a topology on $\mathcal{A}_H$ ($H_{\Sigma}$ being equipped with the discrete topology). By the same reason, a topology defined on $\mathcal{X}\times\mathcal{A}$ will also induce a topology on $\mathcal{Y}$.

 Here we define the topology on $\mathcal{Y}$ for later use. Fix $(g,\Sigma,e)\in \mathcal{Y}$ and $r, C,C'\in \mathbb{R}^+$. Let $N_r$ be the tubular neighborhood of $\Sigma$ with radius $r$, 
\begin{align}
h:S^1\times \{z\in\mathbb{C}||z|<r\}\rightarrow N_r \label{AA_1}
\end{align} 
be a homeomorphism given by the exponential map with respect to the metric $g$. We define
\begin{align}
\mathcal{V}_{\Sigma,r,C}=\big\{\mbox{Im}(h\circ(id_{S^1},\eta))\mbox{ }|\mbox{ }\eta :S^1\rightarrow\mathbb{C}&,\|\eta\|_{C^1(S^1)}\leq C,\mbox{ }|\eta(t)|<r\big\};\label{AA_2}\\
\mathscr{V}_{g, r,C'}=\big\{\hat{g}\in \mathcal{X}\mbox{ }|\mbox{ }\|\hat{g}-g\|_{C^2(M)}< &C';\mbox{ }dist(\Sigma,supp(\hat{g}-g))> r\big\}.\label{AA_3}
\end{align}
It is direct to see that this defines a unique topology on $\mathcal{X}\times\mathcal{A}$ with a basis by the family of open sets 
\begin{align}
\{\mathscr{V}_{g,r,C'}\times\mathcal{V}_{\Sigma,r,C}\} \mbox{ }\mbox{ }\mbox{ }( r, C,C'\in \mathbb{R}^+\mbox{ and }\mbox{ }\Sigma\in\mathcal{A})\label{AA_4}
\end{align}
which induced the expected topology on $\mathcal{Y}$.

Each $(\Sigma,e)\in \mathcal{A}_H$ corresponds to a real line bundle $\mathcal{I}_{\Sigma,e}$ on $M\setminus\Sigma$, equipped with a Euclidean metric and a compatible flat connection. This real line bundle is unique up to bundle isomorphisms. Since $M$ is a compact, oriented, three-dimensional smooth manifold, it is a spin manifold. So each metric $g\in\mathcal{X}$ has at least one \footnote{The number of spinor bundles equals the number of elements in $H^1(M;\mathbb{Z}_2)$ (see \cite{E}), which is finite. So we may simply fix one element in $H^1(M;\mathbb{Z}_2)$ and consider the corresponding spinor bundle in this paper.} corresponding spinor bundle $\mathcal{S}_g \rightarrow M$. Denote by $\mathcal{S}_{g,\Sigma,e}$ the bundle $\mathcal{S}_g\otimes \mathcal{I}_{\Sigma,e}$. This is a spinor bundle over $M\setminus\Sigma$, which is called the $\mathbb{Z}/2$-{\bf spinor bundle}. For each $\mathcal{S}_{g,\Sigma,e}$, there exists a standard Dirac operator with respect to $g$ and the metric defined on $\mathcal{I}_{\Sigma,e}$, denoted by $D^{(g,\Sigma,e)}$. The precise definition of this Dirac operator will appear in Section 2.1.

We define the vector bundle $\mathcal{E} \rightarrow \mathcal{Y}$ of infinite rank as follows:
For $y = (g, \Sigma, e) \in \mathcal{Y}$, then the fiber $\mathcal{E}_y$ over $y$ is the infinite dimensional vector space of all $L^2_1$-sections over $M\setminus\Sigma$ of the $\mathbb{Z}/2$-spinor bundle $\mathcal{S}_{g,\Sigma,e}$. According to the topology we define above, one can obtain the local trivialization of $\mathcal{E} \rightarrow \mathcal{Y}$ as a vector bundle on
each connected component \footnote{Notice that we don't have the identification between fibers corresponding to non-isotopic knots.}.  Let $D^{(y)}=D^{(g,\Sigma,e)}$ denote the Dirac operator defined on $\mathcal{E}_y$, which induces a bounded, linear map from $\mathcal{E}_y$ to the space of square integrable sections of $\mathcal{S}_{g,\Sigma,e}$.

With the local trivialization property on $\mathcal{E}$ and the topology defined on $\mathcal{Y}$, one can obtain the topology defined on $\mathcal{E}$. 
\begin{definition}
With $\mathcal{E}$ understood, the space $\mathfrak{M}$ is defined as the set whose elements are pairs $(y , \psi)\in \mathcal{E}$ which obeys
\begin{align*}
&\bullet D^{(y)}\psi=0\mbox{ on }M\setminus\Sigma.\\
&\bullet |\psi| \mbox{ extends across } \Sigma \mbox{ as a }C^{0,\frac{1}{2}}\mbox{ function on } M \\
 &\mbox{ }\mbox{ }\mbox{ with its zero locus
containing } \Sigma.\\
&\bullet \liminf_{p\rightarrow x}\frac{|\psi|(p)}{\mbox{dist}(p,\Sigma)^{\frac{1}{2}}}>0\mbox{ for all }x\in\Sigma.
\end{align*}
\end{definition}

The set $\mathfrak{M}$ inherits a topology from $\mathcal{E}$. Our goal is to find the Kuranishi structure on $\mathfrak{M}$. To start with, we need a stronger regularity condition for $\Sigma$:
\begin{definition}
Let $(y = (g, \Sigma, e), \psi) \in \mathfrak{M}$ and $k\in\mathbb{N}$. We call it $C^k$-regular if and only if $\Sigma$ is a $C^k$-curve.
\end{definition}
When $\Sigma$ is a $C^1$-curve determined by a $\mathbb{Z}/2$-harmonic spinor, the higher regularity for $\Sigma$ is still an open question so far. Assuming all $\Sigma$ determined by $\mathbb{Z}/2-$harmonic spinors are $C^2$-regular. Then the following theorem proves the existence of the Kuranishi structure for $\mathfrak{M}$:
\begin{theorem}
Given a $C^2$-regular element $(y = (g, \Sigma, e), \psi)$ in $\mathfrak{M}$, there are finite
dimensional vector spaces $\mathbb{K}_0$ and $\mathbb{K}_1$, a ball $\mathbb{B}\subset \mathbb{K}_0$ centered at the origin, a neighborhood $\mathcal{N}\subset\mathcal{E}$ of $y$, and a map
\begin{align*}
f: p_1(\mathcal{N}) \times\mathbb{B}\rightarrow\mathbb{K}_1,
\end{align*}
such that $\mathcal{N}\cap\mathfrak{M}$ is homeomorphic to $f^{-1}(0)$. Here $p_1:\mathcal{E}\rightarrow\mathcal{X}$ is the projection defined in Definition 1.2. Moreover, $f$ is $C^1$ in the sense of Fr\'echet differentiation and the homeomorphism given by this theorem, $\Upsilon:f^{-1}(0)\rightarrow \mathcal{N}\cap\mathfrak{M}$, satisfies \begin{align*}
\Upsilon(f^{-1}(0)\cap\big(\mathbb{B}\times\{g\}\big))=\mathcal{N}\cap\mathfrak{M}\cap p_1^{-1}(g).
\end{align*}
\end{theorem}
We will denote by $\mathcal{B}$ the set $p_1(\mathcal{N})$ in this paper. In fact, $\mathcal{B}=\mathscr{V}_{g, r,C'}$ for some $r,C'$ as the one we defined in (\ref{AA_3}).\\

 The vector spaces $\mathbb{K}_0$ and $\mathbb{K}_1$ in Theorem 1.5 are the kernel and cokernel of a Fredholm operator respectively. This theorem shows us several facts. First, in order to stay in $\mathfrak{M}$, the component $\Sigma$ in $\mathfrak{M}$ can only be perturbed in finite dimensional directions. Second, when $dim(\mathbb{K}_1)=0$, $\mathfrak{M}$ is homeomorphic to $\mathbb{B} \times\mathcal{B}$ near $(y, \psi)$.\\
 
The operator that leads to $\mathbb{K}_1$ and $\mathbb{K}_0$ comes from a formal linearization of the
equations that are obtained by deforming the metric and the curve and the spinor so as to
stay in $\mathfrak{M}$. This operator is novel and the fact that it is Fredholm does not follow from the usual considerations. By the same token, the proof of Theorem
1.4 is not a standard application of the implicit function theorem as it requires a delicate
iteration to ``integrate'' the formal tangent space given by the kernel of $df$ at $(g, 0) \in \mathbb{B} \times \mathcal{B}$ to obtain the given parametrization of $\mathfrak{M}$.

The study of $\mathbb{Z}/2$-harmonic spinors started from the work of $PSL(2,\mathbb{C})$ compactness theorem proved by Clifford Taubes. In \cite{A}, he proves a generalized version of Uhlenbeck's compactness theorem \cite{B}. When $dim(M)=3$, Uhlenbeck's compactness theorem \cite{C} can be stated in the following way:
\begin{theorem} Let $P$ be a principal $G$ bundle over $M$ for some compact Lie group $G$ and $\{A_i\}$ be a sequence of connections on $P$ satisfying
\begin{align}
\|F(A_i)\|_{L^2}\leq C \label{AA_5}
\end{align}
for some constant $C$ which is independent of $i$. Then there exists a subsequence of $\{A_i\}$ converging (up to gauge transformations) weakly in $L^2_1$ to an $L^2_1$ connection.
\end{theorem}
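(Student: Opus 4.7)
The plan is to follow Uhlenbeck's Coulomb gauge method: first put the connections $A_i$ into a good local gauge on a finite collection of small balls where the curvature is controlled, extract weakly convergent subsequences ball by ball, and then patch the local gauges into a single global one. The central local ingredient is Uhlenbeck's small-curvature gauge fixing lemma: there is a universal $\epsilon_0>0$ and $C>0$ such that any connection $A$ on a Euclidean ball $B\subset \mathbb{R}^3$ with $\|F(A)\|_{L^{3/2}(B)}<\epsilon_0$ is gauge-equivalent, via some $L^2_2$ gauge transformation $g$, to a connection $\tilde A = g\cdot A$ satisfying the Coulomb condition $d^*\tilde A=0$ together with the estimate $\|\tilde A\|_{L^2_1(B)}\leq C\,\|F(A)\|_{L^2(B)}$. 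The passage from the critical $L^{3/2}$ smallness to the stronger $L^2_1$ estimate uses elliptic regularity for the Hodge system and the Sobolev embedding $L^2_1\hookrightarrow L^6$ valid in dimension three, which makes the quadratic nonlinearity $A\wedge A$ subcritical.

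To globalize, cover $M$ by a finite family $\{B_\alpha\}_{\alpha=1}^N$ of geodesic balls of radius $r>0$ chosen so small that H\"older's inequality together with the uniform bound $\|F(A_i)\|_{L^2(M)}\leq C$ yields $\|F(A_i)\|_{L^{3/2}(B_\alpha)}<\epsilon_0$ for every $i$ and every $\alpha$; such an $r$ exists because in three dimensions $|B_\alpha|^{1/6}$ scales like $r^{1/2}$. Trivializing $P$ over each $B_\alpha$, the lemma supplies gauges $g_\alpha^{(i)}$ for which $\|g_\alpha^{(i)}\cdot A_i\|_{L^2_1(B_\alpha)}$ is uniformly bounded in $i$. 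Applying Banach--Alaoglu and Rellich compactness to each ball and performing a diagonal extraction, one obtains a subsequence, not relabeled, so that $g_\alpha^{(i)}\cdot A_i$ converges weakly in $L^2_1(B_\alpha)$ and strongly in every $L^p(B_\alpha)$ with $p<6$, for each $\alpha$ simultaneously.

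The main obstacle, as expected, is the patching step. On each overlap $B_\alpha\cap B_\beta$ the transition cocycle $u_{\alpha\beta}^{(i)}=g_\alpha^{(i)}\,\tau_{\alpha\beta}\,(g_\beta^{(i)})^{-1}$, where $\tau_{\alpha\beta}$ are the fixed transitions of $P$, satisfies the structural identity
\begin{equation*}
du_{\alpha\beta}^{(i)} = \bigl(g_\alpha^{(i)}\cdot A_i\bigr)\,u_{\alpha\beta}^{(i)} - u_{\alpha\beta}^{(i)}\,\bigl(g_\beta^{(i)}\cdot A_i\bigr).
\end{equation*}
Since $G$ is compact and both local gauged connections are uniformly $L^2_1$-bounded, one extracts a uniform $L^2_1$ bound on $u_{\alpha\beta}^{(i)}$, hence, by a further diagonal extraction and Rellich, strong $C^0$ convergence $u_{\alpha\beta}^{(i)}\to u_{\alpha\beta}^\infty$ on each overlap. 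Because the $u_{\alpha\beta}^{(i)}$ are $C^0$-close to their limits for large $i$, a small gauge supported near $\partial B_\alpha$ corrects each $g_\alpha^{(i)}$ so that the adjusted local gauges fit together into a genuine global $h_i\in\mathrm{Aut}(P)$. The resulting $h_i\cdot A_i$ are uniformly $L^2_1(M)$-bounded and so, along a final subsequence, converge weakly in $L^2_1(M)$ to an $L^2_1$ limit $A_\infty$, completing the proof. The delicate bookkeeping lies entirely in the correction-by-small-gauges step, where one must ensure the adjusted $h_i$ remain automorphisms of the given bundle $P$ rather than of a nearby but distinct bundle.
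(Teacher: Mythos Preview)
Your outline is the standard Uhlenbeck argument and is essentially correct as a sketch: local Coulomb gauge on small balls where the $L^{3/2}$ curvature is below threshold, diagonal extraction, and patching via the transition-cocycle identity. The paper, however, does not prove this statement at all; it is quoted in the introduction purely as background and attributed to Uhlenbeck's original article [3]. So there is no ``paper's own proof'' to compare against---you have supplied what the paper simply cites.

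One small caution on your write-up: the correction-by-small-gauges step, which you flag as delicate, really does require a full inductive argument (adding one ball at a time and adjusting so the new local gauge agrees with the already-assembled global one on the overlap), and the $L^2_2$ regularity of the resulting global gauge transformation needs to be checked, not just its continuity. Your sketch acknowledges this but does not carry it out; if you were writing this as a self-contained proof rather than an outline, that is where the real work lies.
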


 To state the theorem proved in \cite{A}, we need to introduce some notation. First, Taubes uses the fact that $\mathfrak{sl}(2,\mathbb{C})=\mathfrak{su}(2)\oplus i\mathfrak{su}(2)$ and $P$ can be regarded as one of its $SO(3)$-reductions associated with $PSL(2,\mathbb{C})$. Fix one such reduction $Q$ and set $P=Q\times_{SO(3)}PSL(2,\mathbb{C})$. Therefore, he can always decompose a connection $\mathbb{A}=A+ia$ where $A$ is the connection one form on the $SO(3)-$reduction of $P$ and $a$ is a $\mathfrak{su}(2)$-valued one form. Second, if we denote the group of gauge transformations (the automorphism group of $P$) by $\mathcal{G}$, then the Lie algebra $\mathfrak{sl}(2,\mathbb{C})$ does not have norms which are invariant under the action of $\mathcal{G}$. So we should refine the $L^2$ boundedness condition (\ref{AA_5}) as follows:
\begin{definition}
Let
\begin{align*}
\mathcal{F}(\mathbb{A})=\inf_{A+ia\in\mathcal{G}_{\mathbb{A}}}\int |F(A)-a\wedge a|^2+|d_Aa|^2+|d_A*a|^2
\end{align*}
where $\mathcal{G}_{\mathbb{A}}$ is the $\mathcal{G}$-orbit of $\mathbb{A}$. 
\end{definition}

 Now, the generalized Uhlenbeck's compactness theorem proved in \cite{A} can be stated as follows:
\begin{theorem}
  Let $\{\mathbb{A}_i=A_i+ia_i\}$ be a sequence of connections defined on $Q\times_{SO(3)}PSL(2,\mathbb{C})$ such that $\{\mathcal{F}(\mathbb{A}_i)\}$ is bounded. If $\|a_i\|_{L^2}\rightarrow \infty$, then there exists a closed subset $\Sigma$ of Hausdorff dimension at most one and a subsequence of $\{\mathbb{A}_i=A_i+ia_i\}$ such that
both $\{\frac{1}{\|a_i\|_{2}}a_i\}$ and $\{A_i\}$ converge weakly in the $L^2_{1,loc}$-sense on $M\setminus\Sigma$ up to automorphisms of $Q$.
\end{theorem}

Moreover, $\Sigma$ can be formulated as the zero locus of a $\mathbb{Z}/2$-harmonic spinor. In \cite{A}, Taubes shows the set $\Sigma$ will always have a corresponding $\mathbb{Z}/2$-
spinor $\psi$ satisfying the Dirac equation $D\psi=0$ such that $|\psi|$ can be extended H\"older
continuously to be zero on $\Sigma$.

The $PSL(2,\mathbb{C})$ compactness theorem suggests that data sets consisting of pairs
$(\Sigma,\psi)$ with $\Sigma$ a closed set of Hausdorff dimension 1 set and $\psi$ a $\mathbb{Z}/2$-harmonic spinor with norm zero on $\Sigma$ should play a role to play in three-dimensional differential topology. So a natural question we can ask is the following: Can we find a way to parametrize the data $(\Sigma, \psi)$?

 Meanwhile, in \cite{B}, Taubes shows that $\Sigma$ is a $C^1$-curve on an open dense subset of $\Sigma$. After this work, Zhang \cite{P} shows that $\Sigma$ is always a rectifiable curve. All these results indicate the conjecture that $\Sigma$ is a $C^1$ curve for the metric $g$ which is suitably generic. This conjecture is also mentioned in \cite{J}. So it is natural to take $\Sigma$ a 1-dimensional submanifold in this paper. 

\subsection{The outline of the proof and the structure of this paper}

In the first part of this paper, we shall study model solutions of Dirac equation with $\Sigma$ fixed. Let
\begin{align}
N_R=\{p\in M\mbox{ }|\mbox{ }dist(p,\Sigma)\leq R\}\label{AA_6}
\end{align}
be a tubular neighborhood of $\Sigma$. We parametrize $N_R$ by
\begin{align}
(t,z)\in [0,2\pi]\times\{z\in\mathbb{C}||z|<R\}.\label{AA_7}
\end{align}
Then, we show that any $\mathbb{Z}/2$-harmonic spinor $\psi$ which vanishes along $\Sigma$ is in\\ $\ker(D|_{L^2_1(\mathcal{S}_{g,\Sigma,e})})$ and vice versa. For any $\psi_0\in \ker(D|_{L^2_1(\mathcal{S}_{g,\Sigma,e})})$, it can be written locally as
\begin{align}
\psi_0=\left(\begin{array}{c}
a^+(t)\sqrt{z}\\
a^-(t)\sqrt{\bar{z}}
\end{array}\right)+\mbox{higher order term}\label{AA_8}
\end{align}
on $N_R$. Here the “higher order term" is a smooth section defined on $M\setminus\Sigma$ with order $O(|z|^{p})$ for some $p>\frac{1}{2}$ as $|z|\rightarrow 0$. In addition, by the Lichnerowicz-Weitzenb\"ock formula, we will see $dim(\ker(D|_{L^2_1(\mathcal{S}_{g,\Sigma,e})}))<\infty$. All these basic analysis results for $L^2_1-$harmonic spinors will be shown in Section 2 and 3. Also, the analysis of $L^2$-harmonic spinors will be derived in Section 2 and 3 for later use.\\

According to these observations, one can consider the linear perturbation for any given $p=(g_0,\Sigma_0,\psi_0)\in\mathfrak{M}$. (Note that the element $e\in H$ will be omitted in the rest of the paper because this discrete data will not change in any local perturbation.) This perturbation can be written as $(g_0+s\delta, \Sigma_s,\psi_s)$ for small $s\in \mathbb{R}$, where \\
\ \\[-2mm]
$\bullet$ $\delta\in\mathscr{V}:=\{\delta\mbox{ is a smooth symmetric}(2,0)\mbox{-tensor with }supp(\delta)\cap \Sigma_0=\emptyset\}$;\\
$\bullet$  $\Sigma_s=\{h(t,-s\eta(t))\}$ for some $\eta\in C^1(S^1;\mathbb{C})$(Recall the topology defined on $\mathcal{Y}$\\$\mbox{ }\mbox{ }\mbox{ }$in Section 1.1 and the definition of $h$ in (\ref{AA_1}));\\
$\bullet$ $\psi_s(t,z)=\psi_0(t,z-s\eta(t))+s\phi(t,z-s\eta(t))$ for some $\phi\in L^2_1(\mathcal{S}_{g_0,\Sigma_0})$.\\
\ \\
Let us denote by $D^{(s)}$ the Dirac operator with respect to $g_0+s\delta$ and define the operator
\begin{align}
\mathfrak{L}_p:\mathscr{V}\times C^1(S^1;\mathbb{C})\times L^2_1(\mathcal{S}_{g_0,\Sigma_0})&\rightarrow L^2(\mathcal{S}_{g_0,\Sigma_0});\nonumber\\
(\delta, \eta, 
\phi)&\mapsto\frac{d}{ds}(D^{(s)}\psi_s)|_{s=0}.\label{AA_9}
\end{align}
By (\ref{AA_9}) and some basic analysis results derived in Section 4, we prove that there exists a map
\begin{align}
\Phi:\mathscr{V}\rightarrow L^2(\mathcal{S}_{g_0,\Sigma_0})\label{AA_10}
\end{align}
such that 
\begin{align}
\mathfrak{L}_p(\delta, \eta, \phi)-\Phi(\delta)\in \text{range}(D|_{L^2(\mathcal{S}_{g_0,\Sigma_0})})\label{AA_11}
\end{align}
for any $\eta\in C^1(S^1;\mathbb{C})$ and $\phi\in L^2(\mathcal{S}_{g_0,\Sigma_0})$ (see Remark 6.4 for the definition of $\Phi$). In particular, we have $\Phi(0)=0$. Therefore, we define $\mathbb{K}_0$ and $\mathbb{K}_1$ to be the kernel and cokernel of $\mathfrak{L}_p|_{\delta=0}$. By (\ref{AA_11}) and the fact $\Phi(0)=0$, any element in $\ker(\mathfrak{L}_p|_{\delta=0})$ corresponds to an element in $\ker(D|_{L^2(\mathcal{S}_{g_0,\Sigma_0})})$. By (\ref{AA_8}), (\ref{AA_9}) and some straight-forward computation (which will be showed in the first two pages in Section 6.2), the corresponding element in   $\ker(D|_{L^2(\mathcal{S}_{g_0,\Sigma_0})})$ has the form
\begin{align}
\left(\begin{array}{c}
\frac{a^+(t)\eta(t)}{2\sqrt{z}}\\
\frac{a^-(t)\bar{\eta}(t)}{2\sqrt{\bar{z}}}
\end{array}
\right)+\mbox{higher order term}.\label{AA_12}
\end{align}
Here the ``higher order term'' is a smooth section defined on $M\setminus\Sigma$ with order $O(|z|^p)$ for some $p>-\frac{1}{2}$. Notice that the inequality $\liminf_{p\rightarrow x}\frac{|\psi_0|(p)}{\mbox{dist}(p,\Sigma_0)^{\frac{1}{2}}}> 0$ for all $x\in \Sigma$ in Definition 1.3 implies $|a^+|^2(t)+|a^-|^2(t) > 0$ for all $t\in [0,2\pi]$. The pair $(a^+,a^-)$ is called the leading term for $\psi_0$, which plays an important role in this paper.

Now, since $dim\big(\ker(D|_{L^2(\mathcal{S}_{g_0,\Sigma_0})})\big)=\infty$ in general, one cannot show directly that $\mathbb{K}_0$ is finite dimensional. To deal with this problem, we prove in Section 6.1 that there exists a dense subspace $\ker(D|_{L^2(\mathcal{S}_{g_0,\Sigma_0})})^0\subset \ker(D|_{L^2(\mathcal{S}_{g_0,\Sigma_0})})$ such that any $\mathfrak{u}\in \ker(D|_{L^2(\mathcal{S}_{g_0,\Sigma_0})})^0$ can be written as
\begin{align}
\mathfrak{u}=\left(\begin{array}{c}
\frac{u^+}{2\sqrt{z}}\\
\frac{u^-}{2\sqrt{\bar{z}}}
\end{array}
\right)+\mbox{higher order term}\label{AA_13}
\end{align}
with $\mathfrak{u}\rightarrow u^+$ being a Fredholm operator from $\ker(D|_{L^2(\mathcal{S}_{g_0,\Sigma_0})})^0$ to $L^2(S^1;\mathbb{C})$. We call $(u^+,u^-)\in L^2(S^1;\mathbb{C}^2)$ the leading term of $\mathfrak{u}$. So the vector space of leading terms determined by elements in $\ker(D|_{L^2(\mathcal{S}_{g_0,\Sigma_0})})^0$ will be isomorphic to a copy of $L^2(S^1;\mathbb{C})$ sitting in $L^2(S^1;\mathbb{C}^2)$, up to quotients of finite dimensional subspaces determined by the Fredholm operator. Meanwhile, by (\ref{AA_12}) and the fact $|a^+|^2+|a^-|^2>0$ everywhere, we expect that the vector space of leading coefficients of (\ref{AA_12}) is also isomorphic to another copy of $L^2(S^1;\mathbb{C})$ in $L^2(S^1;\mathbb{C}^2)$, up to finite dimensional quotients. We have to prove that these two images of isomorphisms intersect only on a finite dimensional subspace in $L^2(S^1;\mathbb{C}^2)$, which is $\mathbb{K}_0$. That will be the main result in Section 6. The computation of $\mathfrak{L}_p$ will be shown in Section 6.2. It is the most crucial part of this paper.\\

Finally, in Section 7 and Section 8, we will derive a particular kind of implicit function theorem to prove Theorem 1.5. Unfortunately, this part is very tedious because there is no standard notation for Kuranishi problems perturbing both the domain $M\setminus\Sigma$ and the section $\psi$ simultaneously.

\section{Basic setting and results}
\subsection{Functional spaces}
Let $(M,g)$ be a compact, 3-dimensional Riemannian manifold and $\Sigma\in\mathcal{A}$ be a $C^1$-embedded circle in $M$. Moreover, we suppose that $g$ is Euclidean near $\Sigma$ in first four sections of this paper. In Section 5, we will show that all these theorems and propositions established in the first four sections hold even the metric $g$ is not Euclidean near $\Sigma$.\\

 Under this setting, there exists $N_R$, a small tubular neighborhood of $\Sigma$ which is parametrized by coordinates $(t, r,\theta)\in [0,2\pi]\times [0,R] \times [0,2\pi]$, such that $g|_{N_R}=dt^2+dr^2+r^2d\theta^2$. We parametrize $\Sigma$ by $t\in [0,2\pi]$. Also, we use the following notation for cut-off functions: For any $a,b$ with $a<b\leq R$, we define a nonnegative smooth function
\begin{align}
\chi_{a,b}=
\left\{ \begin{array}{cc}
0 &\mbox{ on }N_{a}\\
1 &\mbox{ on } M\setminus N_{b}
\end{array} \right.\label{ctf}
\end{align}
with $|\nabla^k(\chi_{a,b})|\leq \frac{C}{(b-a)^k}$ for $k\leq 4$ and $C$ a universal constant. This notation will appear frequently in this paper.\\

Recall that (see Chapter 2 in \cite{D}) the spin structures on $M$ are one-to-one corresponding to the homology group $H^1(M,\mathbb{Z}_2)$, which is discrete. We fix a spin structure on $M$ in this paper. Let $\mathbf{\mathcal{S}}$ be the corresponding spinor bundle over $M$ with respect to $g$ (see Chapter 3 in \cite{D}). There is a corresponding Dirac operator (see Chapter 3 in \cite{D} and \cite{Q}) which can be written as
\begin{align}
D^g=e_1\cdot\nabla^\mathbf{\mathcal{S}}_{e_1}+e_2\cdot\nabla^\mathbf{\mathcal{S}}_{e_2}+e_3\cdot\nabla^\mathbf{\mathcal{S}}_{e_3}
\end{align}
locally, where $\{e_1,e_2,e_3\}$ is an orthonormal frame in $TM$, $\cdot$ is the Clifford multiplication equipped on $\mathbf{\mathcal{S}}$ and $\nabla^\mathbf{\mathcal{S}}$ is the Levi-Civita connection on $\mathbf{\mathcal{S}}$, see Section 3 in \cite{D}.\\

 Let $\mathcal{I}$ be a real line bundle defined on $M\setminus\Sigma$ . We suppose that $\mathcal{I}$ cannot be extended to the entire manifold $M$, which means $\mathcal{I}$ is a M\"obius band when restricted to a small circle linking $\Sigma$. It can be written formally as
\begin{align}
\mathcal{I}|_{ t=a, r=b}\simeq [0,2\pi]\times \mathbb{R}/\{ (0,x)\sim (2\pi,-x)\mbox{ for all } x\in \mathbb{R} \}\label{r2_1}
\end{align}
for all $a\in [0,2\pi]$ and $0<b<R$. We fix an inner product on $\mathcal{I}$ and define $|v\otimes w|=|v||w|$ for any $(v,w)\in \mathcal{S}\otimes \mathcal{I}$.\\

$\mathcal{S}$ itself is equipped with the connection $\nabla^{\mathcal{S}}$. Since we fix the inner product defined on $\mathcal{I}$, there exists a unique connection $\nabla^{\mathcal{I}}$ defined on $\mathcal{I}$ which is compatible with this inner product; i.e., $X\langle s_1,s_2\rangle=\langle \nabla_X^{\mathcal{I}}s_1,s_2\rangle+\langle s_1,\nabla_X^{\mathcal{I}}s_2\rangle$ for any vector field $X$ on $M$ and any smooth sections $s_1,s_2$ on $\mathcal{I}$. We define the connection 
\begin{align}
\nabla^{\mathcal{S}\otimes \mathcal{I}}=\nabla^{\mathcal{S}}\otimes id_{I}+id_{\mathcal{S}}\otimes \nabla^{\mathcal{I}}\label{BB_1}
\end{align}
on the bundle $\mathcal{S}\otimes \mathcal{\mathcal{I}}$. This connection induces a Dirac operator defined on $\mathcal{S}\otimes \mathcal{I}$, which can be written as
\begin{align}
D^{(g,\Sigma,e)}:=e_1\cdot\nabla^{\mathcal{S}\otimes \mathcal{\mathcal{I}}}_{e_1}+e_2\cdot\nabla^{\mathcal{S}\otimes \mathcal{\mathcal{I}}}_{e_2}+e_3\cdot\nabla^{\mathcal{S}\otimes \mathcal{\mathcal{I}}}_{e_3}\label{BB_2}
\end{align}
locally (Recall that we fix $e\in H_{\Sigma}$ in the rest of this paper).\\

With the norm and the connection defined, one can define the following functional spaces.

\begin{definition}
Let $\mathfrak{u}\in C^{\infty}(M\setminus\Sigma, \mathbf{\mathcal{S}} \otimes \mathcal{I} )$ be a smooth section of $\mathbf{\mathcal{S}} \otimes \mathcal{I}$. We define the following norms and corresponding spaces:\\[1mm]
$\bullet$ $\|\mathfrak{u}\|_{L^2}=(\int_{M\setminus\Sigma}|\mathfrak{u}|^2)^{\frac{1}{2}}$;\\[1mm]
$\bullet$ $\|\mathfrak{u}\|_{L^2_1}=(\int_{M\setminus\Sigma}|\mathfrak{u}|^2+|\nabla \mathfrak{u}|^2)^{\frac{1}{2}}$;\\[1mm]
$\bullet$ $\|\mathfrak{u}\|_{L^2_{-1}}=\sup\{\int_{M\setminus\Sigma}\langle  \mathfrak{v} ,\mathfrak{u}\rangle\mbox{ }|\mbox{ } \mathfrak{v}\in C^{\infty}(M\setminus\Sigma, \mathbf{\mathcal{S}} \otimes \mathcal{I} )\mbox{ and }\|\mathfrak{v}\|_{L^2_1}\leq 1 \}$.\\[1mm]
Moreover, the spaces of sections bounded with respect to these norms will be denoted by
\begin{align*}
L^2_i(M\setminus\Sigma; \mathbf{\mathcal{S}} \otimes \mathcal{I} )=\mbox{closure of }\{\mathfrak{u}\in C^{\infty}(M\setminus\Sigma, \mathbf{\mathcal{S}} \otimes \mathcal{I} )\mbox{ }|\mbox{ } \|\mathfrak{u}\|_{L^2_i}< \infty\}
\end{align*}
for $i=1,0,-1$. In the rest of this paper, we simply use the notation $L^2_i$ to denote $L^2_i(M\setminus\Sigma; \mathbf{\mathcal{S}} \otimes \mathcal{I} )$ and usually omit the subscript $i$ when it is zero.
\end{definition}
Similarly, we can define the space of compactly supported sections, $L^2_{i,cpt}$, by taking the closure of the set of smooth, compactly supported sections with respect to the norm $\|\cdot\|_{L^2_i}$.

\begin{remark}
We should always remember that the space $L^2_{-1}$ is the dual space of $L^2_{1}$ in our case. For a general open domain $\Omega$ on $\mathbb{R}^n$, the notation $L^2_{-1}(\Omega)$ usually denotes the dual space of $L^2_{1,cpt}(\Omega)$, the closure of smooth functions(sections) compactly supported in $\Omega$ in $L^2_1(\Omega)$. $L^2_1(\Omega)\neq L^2_{1,cpt}(\Omega)$ in general. However, in our case, we will see that $L^2_1(M\setminus\Sigma;\mathbf{\mathcal{S}}\otimes\mathcal{I}) = L^2_{1,cpt}(M\setminus\Sigma;\mathbf{\mathcal{S}}\otimes\mathcal{I})$ by Lemma 2.6 below (see Section 9.4 in Appendix for the proof). Therefore, our definition is consistent with the usual one.
\end{remark}

The space $L^2_{-1}$ has the following property. This is an analog version of Theorem 1 in Section 5.9 of \cite{G}.
\begin{pro}
Let $\mathfrak{f}\in L^2_{-1}$. Then there exists a pair 
\begin{align*}
(\mathfrak{f}_0, \mathfrak{f}_1)\in L^2(M\setminus\Sigma; \mathbf{\mathcal{S}}\otimes \mathcal{I}) \times L^2(M\setminus\Sigma; \mathbf{\mathcal{S}}\otimes \mathcal{I}\otimes T^*M)
\end{align*}
such that
\begin{align}
\int_{M\setminus\Sigma}\langle \mathfrak{v} ,\mathfrak{f} \rangle=\int_{M\setminus\Sigma}\langle\mathfrak{v}, \mathfrak{f}_0\rangle+\langle \nabla\mathfrak{v},\mathfrak{f}_1 \rangle\label{note1_1}
\end{align}
for all $\mathfrak{v}\in L^2_{1}$. Furthermore, we have
\begin{align*}
\|\mathfrak{f}\|_{L^2_{-1}}=\bigg(\int_{M\setminus\Sigma}|\mathfrak{f}_0|^2+|\mathfrak{f}_1|^2\bigg)^{\frac{1}{2}}.
\end{align*}
\end{pro}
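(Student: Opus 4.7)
The plan is to reduce the statement to the Riesz representation theorem on the Hilbert space $L^2_1$. First I would observe that the $L^2_1$ norm comes from the inner product
$$\langle \mathfrak{v}, \mathfrak{u}\rangle_{L^2_1} := \int_{M-\Sigma} \langle \mathfrak{v}, \mathfrak{u}\rangle + \langle \nabla \mathfrak{v}, \nabla \mathfrak{u}\rangle,$$
where $\nabla$ extends continuously from smooth sections to all of $L^2_1$ with values in $L^2(M-\Sigma;T^*M\otimes\mathcal{S}\otimes\mathcal{I})$ by definition of the completion. Thus $L^2_1$ is a Hilbert space, and by the remark preceding the proposition $L^2_{-1}=(L^2_1)^*$, so any $\mathfrak{f}\in L^2_{-1}$ is a bounded linear functional on $L^2_1$.

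Applying Riesz, I would produce a unique $\mathfrak{u}\in L^2_1$ satisfying
$$\int_{M-\Sigma}\langle \mathfrak{v},\mathfrak{f}\rangle \;=\; \langle \mathfrak{v}, \mathfrak{u}\rangle_{L^2_1} \quad \text{for all } \mathfrak{v}\in L^2_1, \qquad \|\mathfrak{u}\|_{L^2_1}=\|\mathfrak{f}\|_{L^2_{-1}}.$$
Setting $\mathfrak{f}_0:=\mathfrak{u}\in L^2$ and $\mathfrak{f}_1:=\nabla\mathfrak{u}\in L^2(T^*M\otimes\mathcal{S}\otimes\mathcal{I})$ and unpacking the inner product above immediately yields the representation formula (\ref{note1_1}). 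The norm identity
$$\bigl(\|\mathfrak{f}_0\|_{L^2}^2+\|\mathfrak{f}_1\|_{L^2}^2\bigr)^{1/2}=\|\mathfrak{u}\|_{L^2_1}=\|\mathfrak{f}\|_{L^2_{-1}}$$
then falls out of the Riesz isometry together with the definition of the $L^2_1$ norm.

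I do not expect any real obstacle here. One direction of the norm identity, namely $\|\mathfrak{f}\|_{L^2_{-1}}\leq (\|\mathfrak{f}_0\|^2+\|\mathfrak{f}_1\|^2)^{1/2}$, is just Cauchy--Schwarz applied to the right-hand side of (\ref{note1_1}); the reverse inequality is automatic from the norm-preserving property of the Riesz map. The only point worth flagging is a convention: unlike the Evans analogue, where $L^2_{-1}(\Omega)$ is the dual of $L^2_{1,cpt}(\Omega)$ and integration-by-parts boundary terms must be addressed to pass from test functions to $L^2_1$-inputs, the remark just above the proposition fixes $L^2_{-1}$ here to be the full dual of $L^2_1$, so no distributional subtlety or boundary analysis near $\Sigma$ arises and the Riesz argument applies directly.
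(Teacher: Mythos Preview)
Your proposal is correct and matches the paper's own proof essentially line for line: both apply the Riesz representation theorem on $L^2_1$ to produce $\mathfrak{u}$, set $\mathfrak{f}_0=\mathfrak{u}$, $\mathfrak{f}_1=\nabla\mathfrak{u}$, and read off the norm identity. The only cosmetic difference is that you invoke the Riesz isometry directly for $\|\mathfrak{u}\|_{L^2_1}=\|\mathfrak{f}\|_{L^2_{-1}}$, whereas the paper verifies the two inequalities separately via Cauchy--Schwarz and by testing $\mathfrak{v}=\mathfrak{u}$.
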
 

\begin{proof}
Let $T_{\mathfrak{f}}:L^2_{1}\rightarrow \mathbb{C}$ be a bounded functional sending each $\mathfrak{v}$ to $\int_{M\setminus\Sigma}\langle\mathfrak{v} , \mathfrak{f}\rangle$. By Riesz Representation Theorem, there exists $\mathfrak{u}\in L^2_{1}$ such that
\begin{align}
T_{\mathfrak{f}}(\mathfrak{v})=\int_{M\setminus\Sigma}\langle \mathfrak{v},\mathfrak{u}\rangle +\langle \nabla\mathfrak{v},\nabla \mathfrak{u}\rangle.\label{note1_2}
\end{align}
So we can simply take $\mathfrak{f}_0=\mathfrak{u}$ and $\mathfrak{f}_1=\nabla\mathfrak{u}$.\\

To prove the second part, by taking $\mathfrak{v}=\mathfrak{u}$ in ($\ref{note1_2}$), we have
\begin{align*}
\|\mathfrak{u}\|_{L^2_1}^2=T_{\mathfrak{f}}(\mathfrak{u})\leq \|\mathfrak{u}\|_{L^2_1}\|\mathfrak{f}\|_{L^2_{-1}}.
\end{align*}
This inequality implies that $(\int_{M\setminus\Sigma}|\mathfrak{f}_0|^2+|\mathfrak{f}_1|^2)^{\frac{1}{2}} =\|\mathfrak{u}\|_{L^2_1}\leq \|\mathfrak{f}\|_{L^2_{-1}}$.\\

Meanwhile, from ($\ref{note1_2}$) we have
\begin{align*}
|T_{\mathfrak{f}}(\mathfrak{v})|\leq \bigg(\int_{M\setminus\Sigma}|\mathfrak{f}_0|^2+|\mathfrak{f}_1|^2\bigg)^{\frac{1}{2}} 
\end{align*}
if $\|\mathfrak{v}\|_{L^2_1}\leq 1$. So by Definition 2.1, we have $\|\mathfrak{f}\|_{L^2_{-1}}\leq (\int_{M\setminus\Sigma}|\mathfrak{f}_0|^2+|\mathfrak{f}_1|^2)^{\frac{1}{2}} $.
\end{proof}

\subsection{Some analytical properties of Dirac operators on $M\setminus\Sigma$} We prove the following basic properties in this section. These are very similar to some well-known results (see Chapter 4 in \cite{D}). We simply denote $D^{(g,\Sigma,e)}$ by $D$ in the rest of this section because our $g,\Sigma$ and $e$ are fixed here.
\begin{pro}
Let $D|_{L_1^2}:L^2_1(M\setminus\Sigma;\mathbf{\mathcal{S}} \otimes \mathcal{I})\rightarrow L^2(M\setminus\Sigma;\mathbf{\mathcal{S}} \otimes \mathcal{I})$ be the Dirac operator. Then we have the following properties:\\[1mm]
{\bf a.} $\ker(D|_{L^2_1})$ is finite dimensional.\\[1mm]
{\bf b.} ${\rm range}(D|_{L^2_1})$ is closed.\\[1mm]
{\bf c.} $D|_{L^2_1}$ is essentially self-adjoint, i.e., the closure $D|_{L^2}:=\overline{D|_{L^2_1}}=D|_{L^2_1}^*$.
\begin{align*}
L^2={\rm range}(D|_{L^2_1})\oplus \ker(D|_{L^2}).
\end{align*}
\end{pro}
\begin{remark}
$\ker(D|_{L^2})$ is not finite dimensional in general.
\end{remark}
To prove this proposition, we need the following lemma, which is also very useful in the rest of this article. Here $N_r$ is the tubular neighborhood of radius $r\leq R$ as we defined in (\ref{AA_6}) and $R$ is a fixed number with $N_R$ being a tubular neighborhood of $\Sigma$, too.
\begin{lemma}
For any $\mathfrak{u}\in L^2_1(M\setminus\Sigma;\mathbf{\mathcal{S}}\otimes \mathcal{I})$, we have
\begin{align*}
\int_{N_r}|\mathfrak{u}|^2\leq 64\pi^2 r^2\int_{N_r}|\nabla\mathfrak{u}|^2
\end{align*}
for all $r\leq R$.
\end{lemma}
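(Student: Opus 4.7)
The plan is to reduce the inequality to a one-dimensional Poincaré--Wirtinger estimate along the angular circles $\{\rho=\text{const},\, t=\text{const}\}$, crucially exploiting the nontrivial $\mathbb{Z}/2$ monodromy of $\mathcal{I}$ around $\Sigma$ that forces sections of $\mathcal{S}\otimes\mathcal{I}$ to be anti-periodic in $\theta$. Once the angular bound is in hand, the full inequality comes by multiplying by the radial volume factor and integrating.

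First I would work with a smooth section $\mathfrak{u}$ and fix a trivialization of $\mathcal{S}\otimes\mathcal{I}$ over $N_R-\Sigma$ built from an orthonormal frame for $\mathcal{S}$ coming from a smooth frame on $M$ (for instance, the Cartesian coordinates $(x,y,t)=(\rho\cos\theta,\rho\sin\theta,t)$) together with the defining anti-invariant trivialization of $\mathcal{I}$. In this trivialization, $\mathfrak{u}(\rho,\theta+2\pi,t)=-\mathfrak{u}(\rho,\theta,t)$. Expanding $\theta\mapsto\mathfrak{u}(\rho,\theta,t)$ in the Fourier basis $\{e^{i(n+1/2)\theta}\}_{n\in\mathbb{Z}}$ on $[0,2\pi]$ and using that the smallest value of $(n+\tfrac12)^2$ is $\tfrac14$ yields, for each fixed $(\rho,t)$ with $\rho>0$, the sharp angular Wirtinger inequality
\[
\int_0^{2\pi}|\mathfrak{u}(\rho,\theta,t)|^2\,d\theta \;\le\; 4\int_0^{2\pi}|\partial_\theta\mathfrak{u}(\rho,\theta,t)|^2\,d\theta.
\]

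Next, I would multiply through by the volume-form factor $\rho$ and integrate over $\rho\in[0,r]$ and $t\in[0,2\pi]$. Since $g=d\rho^2+\rho^2\,d\theta^2+dt^2$, the unit angular direction is $\rho^{-1}\partial_\theta$, so $\rho^{-2}|\partial_\theta\mathfrak{u}|^2\leq|\nabla\mathfrak{u}|^2$ pointwise (in the chosen frame the spin connection is trivial, so covariant and partial derivatives agree). Using $\rho^2\leq r^2$ throughout $N_r$,
\[
\int_{N_r}|\mathfrak{u}|^2\,dV \;\le\; 4\int_{N_r}\rho^2\cdot\rho^{-2}|\partial_\theta\mathfrak{u}|^2\,dV \;\le\; 4r^2\int_{N_r}|\nabla\mathfrak{u}|^2\,dV,
\]
which strengthens the claimed inequality since $4\le 4\pi^2$. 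A standard density argument then extends this from smooth sections to all $\mathfrak{u}\in L^2_1$.

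The only real obstacle is the bookkeeping needed to set up a trivialization in which the anti-periodicity is honest and the angular partial derivative faithfully represents the covariant derivative; once that is handled, the heart of the proof is simply the sharp Wirtinger inequality for anti-periodic functions on a circle, and the key qualitative input is that the $\mathbb{Z}/2$ monodromy rules out the constant mode (which would otherwise obstruct any Poincaré-type estimate).
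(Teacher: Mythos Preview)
Your argument is correct and in fact yields the sharper constant $4$ in place of $4\pi^2$. The paper's proof rests on the same topological input (the nontrivial monodromy of $\mathcal{I}$ around the angular circles) but exploits it differently: rather than expanding in the anti-periodic Fourier basis, the paper observes that a continuous section of a M\"obius bundle over $S^1$ must vanish at some angle $\theta_0$, so $|\mathfrak{u}_n(r,s,t)|\le \int_0^{2\pi}\bigl|\partial_\theta|\mathfrak{u}_n|\bigr|\,d\theta$ pointwise by the fundamental theorem of calculus, and then applies Cauchy--Schwarz and Kato's inequality $|\partial_\theta|\mathfrak{u}_n||\le r|\nabla_{e_2}\mathfrak{u}_n|$ before integrating over $N_r$. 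Your Wirtinger route is cleaner and optimal in the constant; the paper's route is slightly more elementary in that it avoids Fourier series altogether and needs only the vanishing of the section and the fundamental theorem of calculus.
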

\begin{proof}
Let $\mathfrak{u}\in L^2_1$ and $\{\mathfrak{u}_n\}$ be a sequence of smooth sections such that
\begin{align*}
\mathfrak{u}_n\rightarrow \mathfrak{u}
\end{align*}
in the $L^2_1$ norm. We can write $\mathfrak{u}_n=(u_{n,1}+iu_{n,2}, u_{n,3}+iu_{n,4})$ locally for some real-valued functions $u_{n,i}$. Because $\mathcal{I}$ is nontrivial along the $\theta$ direction, for any $r,t$, there exists $\theta_i\in [0,2\pi]$ such that $u_{n,i}(r,\theta_i,t)=0$ for $i=1,2,3,4$. By the fundamental theorem of calculus, we have
\begin{align*}
|\mathfrak{u}_n(r,s,t)|&\leq 4\int_0^{2\pi}|\partial_{\theta}\mathfrak{u}_n(r,\theta,t)|d\theta\\
&\leq 4\int_0^{2\pi}|\nabla_{e_2}\mathfrak{u}_n(r,\theta,t)|rd\theta\\
&\leq 4\sqrt{2\pi} r^{\frac{1}{2}}\bigg(\int_0^{2\pi}|\nabla_{e_2}\mathfrak{u}_n(r,\theta,t)|^2rd\theta\bigg)^{\frac{1}{2}}
\end{align*}
for any $s,t\in [0,2\pi]$, $0<r\leq R$, where $e_2=\frac{1}{r}\partial_\theta$. So we have
\begin{align*}
\int_{N_r}|\mathfrak{u}_n|^2&= 16\int_0^r\int_0^{2\pi}\int_0^{2\pi}|\mathfrak{u}_n(r,s,t)|^2rdsdtdr\\
&\leq 64\pi^2 r^2\int_{N_r}|\nabla_{e_2}\mathfrak{u}_n|^2.
\end{align*}
By taking $n\rightarrow \infty$, we prove this lemma.
\end{proof}

\begin{proof}[Proof of Proposition 2.4] \ \\
First, the proof of essential self-adjoint for $D|_{L^2_1}$ in the third property of Proposition 2.4 follows from the standard argument which can be found in Section 4.1 in \cite{D}. So we omit the proof for this part.\\

Second, for any $\mathfrak{u}\in L^2_1$, one can write the Lichnerowicz-Weitzenb\"ock formula
\begin{align*}
D^2\mathfrak{u} =\nabla^*\nabla \mathfrak{u} +\frac{\mathscr{R}}{4}\mathfrak{u}
\end{align*}
in the following sense:
\begin{align}
\int\langle D\zeta, D\mathfrak{u}\rangle =\int\langle\nabla \zeta, \nabla \mathfrak{u}\rangle +\int\frac{\mathscr{R}}{4}\langle\zeta, \mathfrak{u}\rangle\label{1_1}
\end{align}
for all $\zeta \in L^2_{1,cpt}$. Here $\mathscr{R}$ is the scalar curvature of $M$. We will now prove that (\ref{1_1}) is true for all $\zeta\in L^2_1$.\\

By Lemma 2.6, we have
\begin{align}
\int_{N_r}|\zeta|^2\leq 64\pi^2 r^2\int_{N_r}|\nabla \zeta|^2\label{1_2_1}
\end{align}
for all $\zeta\in L^2_1$. Let us denote $(\int_{N_r}|\nabla \zeta|^2)^{\frac{1}{2}}=f_{\zeta}(r)$. We have $f_{\zeta}(r)\rightarrow 0$ as $r\rightarrow 0$.\\

We now take the family of cut-off functions $\chi_{\delta}:=\chi_{\frac{2}{3}\delta,\delta}$ with $|\nabla(\chi_{\delta})|\leq \frac{C}{\delta}$ for $\delta>0$ (Recall the definition (\ref{ctf})). So by (\ref{1_1}), we have
\begin{align}
\int\langle D(\chi_\delta\zeta), D\mathfrak{u}\rangle =\int\langle\nabla (\chi_\delta\zeta), \nabla \mathfrak{u}\rangle +\int\frac{\mathscr{R}}{4}\langle\chi_\delta\zeta, \mathfrak{u}\rangle\label{1_3}
\end{align}
for all $\zeta\in L^2_1$. Clearly the second terms on the right-hand side of (\ref{1_3}) converges to
$\int\frac{\mathscr{R}}{4}\langle\zeta, \mathfrak{u}\rangle\label{1_2}$ as $\delta\rightarrow 0$ by Cauchy's inequality.\\

For the left-hand side of (\ref{1_3}), we have
\begin{align*}
\int\langle D(\chi_\delta\zeta), D\mathfrak{u}\rangle=\int\chi_\delta\langle D\zeta, D\mathfrak{u}\rangle+\mathfrak{e}_{\delta}
\end{align*}
where
\begin{align*}
\mathfrak{e}_{\delta}=\int\langle(\nabla \chi_\delta)\zeta, \nabla \mathfrak{u}\rangle +\int\frac{\mathscr{R}}{4}\langle\chi_\delta\zeta, \mathfrak{u}\rangle.
\end{align*}
Because of the inequality (\ref{1_2_1}), $\mathfrak{e}$ can be bounded as follows.
\begin{align*}
|\mathfrak{e}_\delta|&\leq \frac{C}{\delta}\int_{N_\delta}| \zeta||D \mathfrak{u}|
\leq \frac{C}{\delta}\bigg(\int_{N_\delta}|\mathfrak{\zeta}|^2\bigg)^{\frac{1}{2}}\|D\mathfrak{u}\|_{L^2}\leq C f_{\zeta}(\delta)\|D\mathfrak{u}\|_{L^2}.
\end{align*}
So we have
\begin{align*}
\int\langle D(\chi_\delta\zeta), D\mathfrak{u}\rangle\rightarrow \int\langle D \zeta, D\mathfrak{u}\rangle
\end{align*}
as $\delta\rightarrow 0$.\\

Similarly, we have
\begin{align*}
\int\langle\nabla (\chi_\delta\zeta), \nabla \mathfrak{u}\rangle\rightarrow \int\langle\nabla \zeta, \nabla \mathfrak{u}\rangle
\end{align*}
as $\delta\rightarrow 0$. So
\begin{align}
\int\langle D\zeta, D\mathfrak{u}\rangle =\int\langle\nabla \zeta, \nabla \mathfrak{u}\rangle +\int\frac{\mathscr{R}}{4}\langle\zeta, \mathfrak{u}\rangle\label{1_4}
\end{align}
for all $\zeta \in L^2_{1}$.\\

Once we have (\ref{1_4}) for all $\zeta \in L^2_{1}$, the proof of Proposition 2.4 will be obtained from the standard argument. Readers can see p. 107 in \cite{D} for details.
\end{proof}

We have proved that $D|_{L^2_1}$ has closed range and finite dimensional kernel. However, the cokernel of $D|_{L^2_1}$, which is also the kernel of $D|_{L^2}:L^2\rightarrow L^2_{-1}$, is infinite dimensional in general. In Section 3, we will describe the elements of $\ker(D|_{L^2})$ explicitly in terms of Bessel functions on a tubular neighborhood of $\Sigma$.

\section{Harmonic sections defined on the tubular neighborhood with the Euclidean metric}
\subsection{$L^2$ and $L^2_1$ harmonic sections expressed by modified Bessel functions} In this section, we will write down the Fourier expansion for $L^2$- and $L^2_1$-harmonic spinors. This expansion gives us the growth rate of these spinors and explains (\ref{AA_8}) and (\ref{AA_13}) in Section 1.2. Then, we will define the notation for leading terms and leading coefficients for these spinors. As we will see in Section 6.3, the leading coefficient of $\psi_0$ is the key to define the Fredholm operator corresponding to the linearization of moduli space $\mathfrak{M}$.\\

Let us consider the space $N=\mathbf{S}^1 \times \mathbb{R}^2$, which can be regarded as a local model for the tubular neighborhood of $\Sigma$. The Dirac operator on $N$ can be written as
\begin{align}
D=e_1\cdot\frac{\partial}{\partial t}+e_2\cdot\frac{\partial}{\partial z}+e_3\cdot\frac{\partial}{\partial {\bar{z}}}\label{CC_1}
\end{align}
where
\begin{align*}
e_1=\left( \begin{array}{cc}
-i & 0\\
0 & i 
\end{array} \right), e_2=\left( \begin{array}{cc}
0 & 0\\
-1 & 0 
\end{array} \right), e_3=\left( \begin{array}{cc}
0 & 1\\
0 & 0 
\end{array} \right)
\end{align*}
and $z=x+iy$.\\

Using the cylindrical coordinates, $r:=\sqrt{x^2+y^2}$ and $\theta=\arctan (\frac{y}{x})$, we can write down the Fourier expansion of $\mathfrak{u}$ as follows:
\begin{align*}
\mathfrak{u}(t,r,\theta)=\sum_{l,k}e^{ilt}\left( \begin{array}{c}
e^{i(k-\frac{1}{2})\theta}U^+_{k,l}(r)\\
e^{i(k+\frac{1}{2})\theta}U^-_{k,l}(r)
\end{array} \right)
\end{align*}
for any $\mathfrak{u}\in C^{\infty}(M\setminus\Sigma;\mathcal{S}\otimes \mathcal{I})$. (The structure (\ref{r2_1}) gives us the $e^{i(k-\frac{1}{2})\theta}$ and $e^{i(k+\frac{1}{2})\theta}$ exponents along $\theta$-direction.) Here $k$ runs over all integers and $l$ can be either in $\mathbb{Z}$ or $\mathbb{Z}+\frac{1}{2}$, according to the spin structure we chose (see Chapter 2 in \cite{F}). The Dirac operator can be written in terms of $\theta$, $r$ by changing of coordinates:
\begin{align*}
\frac{\partial}{\partial z}=\frac{1}{2}\Big(\frac{\partial}{\partial x}-i\frac{\partial}{\partial y}\Big)=\frac{1}{2} e^{-i\theta}\Big(\frac{\partial}{\partial r}+\frac{i\partial}{r\partial \theta}\Big);\\
\frac{\partial}{\partial \bar{z}}=\frac{1}{2}\Big(\frac{\partial}{\partial x}+i\frac{\partial}{\partial y}\Big)=\frac{1}{2}
e^{i\theta}\Big(\frac{\partial}{\partial r}-\frac{i\partial}{r\partial \theta}\Big).
\end{align*}

Suppose $\mathfrak{u}$ is a harmonic section. Then we have
\begin{align}
D\mathfrak{u}=\sum_{l,k}e^{ilt}\left( \begin{array}{c}
e^{i(k-\frac{1}{2})\theta}(2lU^+_{k,l} +\frac{d}{dr}U^-_{k,l} +\frac{(k+\frac{1}{2})}{r}U^-_{k,l})\\
e^{i(k+\frac{1}{2})\theta}(-2lU^-_{k,l} -\frac{d}{dr}U^{+}_{k,l}+\frac{(k-\frac{1}{2})}{r}U^{+}_{k,l})
\end{array} \right)=0\label{CC_2}
\end{align}
which gives us the following system of equations:
\begin{align}
\frac{d}{dr}\left( \begin{array}{c}
U^{+}_{k,l}\\
U^{-}_{k,l}
\end{array} \right)
=
\left( \begin{array}{cc}
\frac{(k-\frac{1}{2})}{r}& -2l\\
-2l & -\frac{(k+\frac{1}{2})}{r}
\end{array} \right)
\left( \begin{array}{c}
U^{+}_{k,l}\\
U^-_{k,l}
\end{array} \right).\label{CC_3}
\end{align}
By the standard ODE theory, we can reduce this system of first order ODEs to the following modified Bessel equations:
\begin{align}
\frac{d^2}{dr^2}
U^+_{k,l}+\frac{1}{r}\frac{d}{dr}U^+_{k,l} -\Bigg(4l^2+\frac{(k-\frac{1}{2})^2}{r^2}\Bigg)U^+_{k,l}=0,\\
\frac{d^2}{dr^2}U^-_{k,l}+\frac{1}{r}\frac{d}{dr}U^-_{k,l} -\Bigg(4l^2+\frac{(k+\frac{1}{2})^2}{r^2}\Bigg)U_{k,l}^-=0.\label{CC_4}
\end{align}
There are standard solutions for this type of equations called {\bf modified Bessel functions} (see \cite{N}).
\begin{align}
I_p(r)=\sum_{m=0}^{\infty}\frac{1}{m!\Gamma(m+p+1)}\big(\frac{r}{2}\big)^{2m+p}\label{CC_5}
\end{align}
According to the study of modified Bessel functions, $U^+_{k,l}(r)$ will be a linear combination of $I_{k-\frac{1}{2}}(2lr)$ and $I_{-k+\frac{1}{2}}(2lr)$ and $U^-_{k,l}(r)$ will be a linear combination of $I_{k+\frac{1}{2}}(2lr)$ and $I_{-k-\frac{1}{2}}(2lr)$, when $l\neq 0$.\\

Therefore, by equation (\ref{CC_3}), we have
\begin{align*}
\left( \begin{array}{c}
U^{+}_{k,l}\\
U^{-}_{k,l}
\end{array} \right)
=
\left( \begin{array}{c}
u^{+}_{k,l}(2l)^{-k+\frac{1}{2}}I_{k-\frac{1}{2}}(2lr)-u^{-}_{k,l}(2l)^{k+\frac{1}{2}}I_{-k+\frac{1}{2}}(2lr)\\
-u_{k,l}^{+}(2l)^{-k+\frac{1}{2}}I_{k+\frac{1}{2}}(2lr)+u_{k,l}^{-}(2l)^{k+\frac{1}{2}}I_{-k-\frac{1}{2}}(2lr)
\end{array} \right)
\end{align*}
for some $u^+_{k,l},u^-_{k,l}\in\mathbb{C}$ when $l\neq 0$. From (\ref{CC_5}), we have $I_p(r)=O(r^p)$. To normalize the leading coefficient of $I_p(2lr)$, we define
\begin{align*}
\mathfrak{I}_{p,l}(r)=(2l)^{-p}I_p(2lr).
\end{align*}

 When $l=0$, we have
\begin{align*}
\left( \begin{array}{c}
U^{+}_{k,0}\\
U^{-}_{k,0}
\end{array} \right)
=
\left( \begin{array}{c}
u^{+}_{k,0}r^{k-\frac{1}{2}}\\
u_{k,0}^{-}r^{-k-\frac{1}{2}}
\end{array} \right).
\end{align*}

Now, we apply these results to sections of $\mathcal{S}\otimes \mathcal{I}$ over $N$. Fix $R>0$, we simply have $N_R=N\cap \{r<R\}$. Suppose  $\mathfrak{u}\in L^2(N;\mathbf{\mathcal{S}}\otimes\mathcal{I} )$ and $D\mathfrak{u}|_{N_R}=0$, then
\begin{align*}
\mathfrak{u}=&\sum_{k\geq 0;l\neq 0}u^+_{k,l}e^{ilt}\left( \begin{array}{c}
e^{i(k-\frac{1}{2})\theta}\mathfrak{I}_{k-\frac{1}{2},l}(r)\\
-e^{i(k+\frac{1}{2})\theta}l\mathfrak{I}_{k+\frac{1}{2},l}(r)
\end{array} \right)+\sum_{k\geq 0}\left( \begin{array}{c}
u^{+}_{k,0}e^{i(k-\frac{1}{2})\theta}r^{k-\frac{1}{2}}\\
0
\end{array} \right)\\
&+
\sum_{k\leq 0;l\neq 0}u^-_{k,l}e^{ilt}\left( \begin{array}{c}
-e^{i(k-\frac{1}{2})\theta}l\mathfrak{I}_{-k+\frac{1}{2},l}(r)\\
e^{i(k+\frac{1}{2})\theta}\mathfrak{I}_{-k-\frac{1}{2},l}(r)
\end{array} \right)+
\sum_{k\leq 0}\left( \begin{array}{c}
0\\
u^{-}_{k,0}e^{i(k+\frac{1}{2})\theta}r^{-k-\frac{1}{2}}
\end{array} \right)
\end{align*}
which has the leading term with order $r^{-\frac{1}{2}}$, i.e.
\begin{align*}
\mathfrak{u}=\sum_{l\neq 0}e^{ilt}\Bigg[u^{+}_{0,l}\left( \begin{array}{c}
e^{-i\frac{1}{2}\theta}\mathfrak{I}_{-\frac{1}{2},l}(r)\\
-le^{i\frac{1}{2}\theta}\mathfrak{I}_{\frac{1}{2},l}(r)
\end{array} \right)&+
u^{-}_{0,l}\left( \begin{array}{c}
-le^{-i\frac{1}{2}\theta}\mathfrak{I}_{\frac{1}{2},l}(r)\\
e^{i\frac{1}{2}\theta}\mathfrak{I}_{-\frac{1}{2},l}(r)
\end{array} \right)\Bigg]\\
&+\left( \begin{array}{c}
u^{+}_{0,0}e^{-i\frac{1}{2}\theta}r^{-\frac{1}{2}}\\
u^{-}_{0,0}e^{i\frac{1}{2}\theta}r^{-\frac{1}{2}}
\end{array} \right)
+\mbox{ higher order term}
\end{align*}
where the higher order term is $O(r^{p})$ for some $p > -\frac{1}{2}$.\\ 

The Bessel functions $I_{\frac{1}{2}}(x)$ and $I_{-\frac{1}{2}}(x)$ can be written explicitly as $\sqrt{\frac{2}{\pi x}} \sinh (x)$ and $\sqrt{\frac{2}{\pi x}}\cosh (x)$. So the leading term can be expressed in terms of  $\{\frac{e^{2lr}}{\sqrt{r}},{\frac{e^{-2lr}}{\sqrt{r}}}\}$. Let us use this expression, then
\begin{align}
\mathfrak{u}=
\sum_{l}e^{ilt}\Bigg[\hat{u}^{+}_{0,l}\left( \begin{array}{c}
\frac{e^{2|l|r}}{\sqrt{z}}\\
-\mbox{sign}(l)\frac{e^{2|l|r}}{\sqrt{\bar{z}}}
\end{array} \right)+
\hat{u}^{-}_{0,l}\left( \begin{array}{c}
\frac{e^{-2|l|r}}{\sqrt{z}}\\
\mbox{sign}(l)\frac{e^{-2|l|r}}{\sqrt{\bar{z}}}
\end{array} \right)\Bigg]\label{CC_6}\\[3mm]
+\mbox{ higher order term}\nonumber\\[-4mm]
\nonumber
\end{align}
where $\hat{u}^+_{0,l}=(u^+_{0,l}-\mbox{sign}(l)u^-_{0,l})$, $\hat{u}^-_{0,l}=(u^+_{0,l}+\mbox{sign}(l)u^-_{0,l})$ and the higher order term is $O(r^{p})$ for some $p>-\frac{1}{2}$. Notice that by (\ref{CC_6}), we the following inequality:
\begin{align}
\sum_{l\neq 0}|l|^{-2}\big(|\hat{u}_{0,l}^+|^2+|\hat{u}_{0,l}^-|^2\big)\leq \|\mathfrak{u}\|^2_{L^2(N;\mathcal{S}\otimes\mathcal{I})}.\label{CC_7}
\end{align}

We have seen from the equation (\ref{AA_12}) and (\ref{AA_13}), the leading terms of $\mathbb{Z}/2-$\\harmonic spinors in $L^2_1(M\setminus \Sigma;\mathbf{\mathcal{S}}\otimes\mathcal{I})$ and  $L^2(M\setminus \Sigma;\mathbf{\mathcal{S}}\otimes\mathcal{I})$ as $r\rightarrow 0$ involve in the argument of the lineariation of $\mathfrak{M}$. These leading terms play an important role in the proof of Fredholmness. So here we define the notation of the leading terms and lading coefficients of $\mathbb{Z}/2-$harmonic $L^2(M\setminus \Sigma;\mathbf{\mathcal{S}}\otimes\mathcal{I})$ and $L^2_1(M\setminus \Sigma;\mathbf{\mathcal{S}}\otimes\mathcal{I})$ sections in Definition 3.3 and Definition 3.5 for later use. Moreover, we also introduce a space called $\mathcal{K}_R$ in Definition 3.2. This space help us to obtain the regularity for the leading terms defined of $L^2(M\setminus \Sigma;\mathbf{\mathcal{S}}\otimes\mathcal{I})$ in Proposition 3.6.\\

To begin with , let us recall the notation for sequence spaces $\ell^2$ and $\ell^2_k$:
\begin{definition}
Let $k\in\mathbb{Z}$. We define
\begin{align}
\ell^2&:=\Big\{\{a_l\}_{l\in\mathbb{Z}}\in \mathbb{C}^{\mathbb{Z}}\Big|\sum_{l\in\mathbb{Z}}|a_l|^2< \infty\Big\};\label{CC_8}\\
\ell^2_k&:=\Big\{\{a_l\}_{l\in\mathbb{Z}}\in \mathbb{C}^{\mathbb{Z}}\Big|\sum_{l\in\mathbb{Z}}(1+|l|)^{2k}|a_l|^2< \infty\Big\}.\label{CC_9}
\end{align}
\end{definition}

\begin{definition} Given $R>0$,
let $\mathcal{K}_R$ be a subspace of $L^2(N_{R};\mathcal{S}\otimes \mathcal{I})$ defined by
\begin{align*}
\mathcal{K}_R=\Big\{\mathfrak{u}\in L^2(N_{R};\mathcal{S}\otimes \mathcal{I})\Big| D\mathfrak{u}=0 \mbox{ and }
 \hat{u}^-_{0,l}=0 \mbox{ for all } |l|>\frac{1}{2R}\Big\}.
\end{align*}
\end{definition}
By (\ref{CC_7}), we obtain the Fourier coefficients of $L^2$-harmonic spinors is an $\ell^2_{-1}$ sequence. We define the following notations.
\begin{definition}
Let $\mathfrak{u}$ be a harmonic section in $L^2(N_R;\mathcal{S}\otimes \mathcal{I})$ with the corresponding Fourier coefficients $\{u^{\pm}_{k,l}\}$. Then\\[1mm]
$\bullet$ We define
 $\big\{(\hat{u}^+_{0,l}+\hat{u}^-_{0,l},-\mbox{sign}(l)\hat{u}^+_{0,l}+\mbox{sign}(l)\hat{u}^-_{0,l})\big\}\in \ell^2_{-1}\times \ell^2_{-1}$ to be the sequence \\ $\mbox{ }\mbox{ }\mbox{ }$of leading coefficients of $\mathfrak{u}$.\\[1mm]
 $\bullet$ Define the space
\begin{align}
\ker(D|_{L^2(N_R;\mathcal{S}\otimes\mathcal{I})})^0:=\{\mathfrak{u}\in&\ker(D|_{L^2(N_R;\mathcal{S}\otimes\mathcal{I})})\label{CC_10}\\
&\mbox{ with its leading coefficients in }\ell^2\times\ell^2\}.\nonumber
\end{align} 
$\bullet$ We define $\Big(\sum_l(\hat{u}^+_{0,l}+\hat{u}^-_{0,l}) e^{ilt},\sum_l(-\mbox{sign}(l)\hat{u}^+_{0,l}+\mbox{sign}(l)\hat{u}^-_{0,l})e^{ilt}\Big)$ to be the leading \\
 $\mbox{ }\mbox{ }\mbox{ }$term of $\mathfrak{u}$ when $\mathfrak{u}\in\ker(D|_{L^2(N_R;\mathcal{S}\otimes\mathcal{I})})^0$.
\end{definition}

\begin{definition}
Let $\mathfrak{u}\in\ker(D|_{L^2(N_R;\mathcal{S}\otimes\mathcal{I})})^0$ and $\{u^{\pm}_{k,l}\}$ be its Fourier coefficients. We define the following terminologies:\\[1mm]
$\bullet$ Define $\{(u^+_l,u^-_l)\}\in \ell^2\times \ell^2$ to be
\begin{align*}
(u^{+}_l,u^-_{l})&=(\hat{u}^+_{0,l},-\mbox{sign}(l)\hat{u}^+_{0,l}) \mbox{ for } |l|>\frac{1}{2R}\\
(u^{+}_l,u^-_{l})&=(\hat{u}^+_{0,l},-\mbox{sign}(l)\hat{u}^+_{0,l})+(\hat{u}^-_{0,l},\mbox{sign}(l)\hat{u}^-_{0,l})\mbox{ for } |l|\leq \frac{1}{2R}.
\end{align*}
$\mbox{ }\mbox{ }\mbox{ }$We call $\big\{(u^+_l,u^-_l)\big\}$ the sequence of $\mathcal{K}_R$-leading coefficients of $\mathfrak{u}$.
\\[1mm]
$\bullet$ Define $u^+(t)=\sum_lu^+_le^{ilt}$ and $u^-(t)=\sum_lu^-_le^{ilt}$ where $\{u^{\pm}_l\}$ is the sequence of \\[1mm]
$\mbox{ }\mbox{ }\mbox{ }\mathcal{K}_R$-leading coefficients of $\mathfrak{u}$. We call $u^{\pm}(t)$ to be the $\mathcal{K}_R$-leading term of $\mathfrak{u}$.\\[1mm]
$\bullet$ Let $u^{\pm}(t)$ be the $\mathcal{K}_R$-leading term of $\mathfrak{u}$. We call $(u^+(t)\frac{1}{\sqrt{z}},u^-(t)\frac{1}{\sqrt{\bar{z}}})$ the $\mathcal{K}_R$-\\
$\mbox{ }\mbox{ }\mbox{ }$dominant term of $\mathfrak{u}$.\\
Moreover, we can see that if $\mathfrak{u}\in \mathcal{K}_R$, then the $\mathcal{K}_R$-leading term (coefficients) will be the leading term (coefficients) of $\mathfrak{u}$. The sequence of leading coefficients of $\mathfrak{u}$ can also be regarded as the sequence of $\mathcal{K}_0$-leading coefficients of $\mathfrak{u}.$
\end{definition}

 When we perturb $(g,\Sigma,\psi)\in\mathfrak{M}$, the leading term of $\psi$ plays a crucial rule in the linearization of $\mathfrak{M}$. So readers should be familiar with these definitions.\\

Now if we consider $\mathfrak{v}\in L^2_1(N_R;\mathbf{\mathcal{S}}\otimes \mathcal{I})$ satisfying $D\mathfrak{v}=0$, we will have
\begin{align*}
\mathfrak{v}=&
\sum_{k\geq 1;l\neq 0}v^+_{k,l}e^{ilt}\left( \begin{array}{c}
e^{i(k-\frac{1}{2})\theta}\mathfrak{I}_{k-\frac{1}{2},l}(r)\\
-e^{i(k+\frac{1}{2})\theta}l\mathfrak{I}_{k+\frac{1}{2},l}(r)
\end{array} \right)+\sum_{k\geq 1}\left( \begin{array}{c}
v^{+}_{k,0}e^{i(k-\frac{1}{2})\theta}r^{k-\frac{1}{2}}\\
0
\end{array} \right)
\\
&+\sum_{k\leq -1;l\neq 0}v^-_{k,l}e^{ilt}\left( \begin{array}{c}
-e^{i(k-\frac{1}{2})\theta}l\mathfrak{I}_{-k+\frac{1}{2},l}(r)\\
e^{i(k+\frac{1}{2})\theta}\mathfrak{I}_{-k-\frac{1}{2},l}(r)
\end{array} \right)
+
\sum_{k\leq -1}\left( \begin{array}{c}
0\\
v^{-}_{k,0}e^{i(k+\frac{1}{2})\theta}r^{-k-\frac{1}{2}}
\end{array} \right).
\end{align*}
So we can write
\begin{align}
\mathfrak{v}=
\left( \begin{array}{c}
v^{+}_{1,0}e^{i\frac{1}{2}\theta}r^{\frac{1}{2}}\\
v^{-}_{-1,0}e^{-i\frac{1}{2}\theta}r^{\frac{1}{2}}
\end{array} \right)
&+
\sum_{l\neq 0}e^{ilt}\left( \begin{array}{c}
v^{+}_{1,l}e^{i\frac{1}{2}\theta}\mathfrak{I}_{\frac{1}{2},l}(r)\\
v^{-}_{-1,l}e^{-i\frac{1}{2}\theta}\mathfrak{I}_{\frac{1}{2},l}(r)
\end{array} \right)\label{CC_11}\\[3mm]
&+\mbox{ higher order term}\nonumber\\[-4mm]
\nonumber
\end{align}
where the higher order term is $O(r^{p})$ for some $p>\frac{1}{2}$. By the definition of modified Bessel function (\ref{CC_5}), we can check that
\begin{align}
\{v^{+}_{1,l}\},\{v^-_{-1,l}\}\in \ell^2.\label{CC_12}
\end{align}

 Again, we define leading coefficients and the leading term for $\mathfrak{v}$.
 
\begin{definition}
Let $\mathfrak{v}$ be a harmonic section in $L^2_1(N_R;\mathcal{S}\otimes \mathcal{I})$.\\[1mm]
$\bullet$ We call the Fourier coefficients, $\{(v^+_{1,l}, v^-_{-1,l})\}$, denoted by $\{v^{\pm}_l\}\in \ell^2\times\ell^2$, to be\\ $\mbox{ }\mbox{ }\mbox{ }$the sequence of leading coefficients of $\mathfrak{v}$.\\[1mm]
$\bullet$ We define $v^{\pm}(t)$, where $v^{+}(t)=\sum_l v^{+}_le^{ilt}$ and $v^{-}(t)=\sum_l v^{-}_le^{ilt}$, to be the\\ $\mbox{ }\mbox{ }\mbox{ }$leading term of $\mathfrak{v}$.\\[1mm]
$\bullet$ We call $(v^+(t)\sqrt{z},v^-(t)\sqrt{\bar{z}})$ the dominant term of $\mathfrak{v}$.
\end{definition} 
 
 In the rest of this paper, we always use letters of Fraktur script, $\mathfrak{u},\mathfrak{v},\mathfrak{h},\mathfrak{c}$, etc., to denote the sections defined on $L^2(M\setminus\Sigma;\mathcal{S}\otimes\mathcal{I})$ or $L^2_1(M\setminus\Sigma;\mathcal{S}\otimes\mathcal{I})$. If they satisfy the Dirac equation on $N_R$ for some $R>0$, their corresponding sequences of ($\mathcal{K}_R$-)leading coefficients will be denoted by letters of normal script $\{u^{\pm}_l\},\{v^{\pm}_l\},\{h^{\pm}_l\},\{c^{\pm}_l\}$, etc. which are in $\ell^2\times \ell^2$. Meanwhile, the corresponding ($\mathcal{K}_R$-)leading terms will be denoted by $u^{\pm}=\sum u^{\pm}_le^{ilt},v^{\pm},h^{\pm},c^{\pm}$ which are in $L^2(S^1;\mathbb{C})$. We have the $L^2$-norm for $u^{\pm}$ will be the same as $(\|\{u^+_l\}\|_{\ell^2}^2+\|\{u^-_l\}\|_{\ell^2}^2)^{\frac{1}{2}}$.\\

By Definition 3.4, any $L^2$-harmonic spinor $\mathfrak{u}$ with $\mathcal{K}_R$-leading coefficients can be decomposed as a sum of a dominant term and a remainder term. It is also true that for any $L^2_1$-harmonic spinor, it can be decomposed as a sum of a dominant term and a reminder term. In the following proposition, we take care of the regularity estimate for these remainder terms.

\begin{pro} We have the following two properties.\\[1mm]
{\bf a.} Let $\mathfrak{u} \in \mathcal{K}_R$, then we can decompose
\begin{align*}
\mathfrak{u}=\left( \begin{array}{c}
u^+(t)\frac{1}{\sqrt{z}}\\
u^-(t)\frac{1}{\sqrt{\bar{z}}}
\end{array} \right)+\mathfrak{u}_{\mathfrak{R}}
\end{align*}
$\mbox{ }\mbox{ }\mbox{ }\mbox{ }$for some $\mathfrak{u}_{\mathfrak{R}}\in L^2_1(N_{\frac{2R}{3}};\mathbf{\mathcal{S}}\otimes \mathcal{I})$ where $u^{\pm}(t)=\sum u^{\pm}_l e^{ilt}$ and
\begin{align}
\|\mathfrak{u}_{\mathfrak{R}}\|_{L^2_1(N_{\frac{2R}{3}})}\leq CR^{-1}\|\mathfrak{u}\|_{L^2(N_{R})}\label{CC_13}
\end{align}
$\mbox{ }\mbox{ }\mbox{ }\mbox{ }$for some constant $C$. In the following paragraphs, we call $(\mathfrak{u}-\mathfrak{u}_{\mathfrak{R}})$ the $\mathcal{K}_R$-$\mbox{ }\mbox{ }\mbox{ }\mbox{ }$dominant term of $\mathfrak{u}$ and call $\mathfrak{u}_{\mathfrak{R}}$ the remainder term of $\mathfrak{u}$.\\[1mm]
{\bf b.}  Let $\mathfrak{v} \in L^2_1(N_R;\mathbf{\mathcal{S}}\otimes \mathcal{I})$ and $D\mathfrak{v}=0$, then we can decompose
\begin{align*}
\mathfrak{v}=\left( \begin{array}{c}
v^+(t)\sqrt{z}\\
v^-(t)\sqrt{\bar{z}}
\end{array} \right)+\mathfrak{v}_{\mathfrak{R}}
\end{align*}
$\mbox{ }\mbox{ }\mbox{ }\mbox{ }$for some $\mathfrak{v}_{\mathfrak{R}}\in L^2_2(N_{\frac{2R}{3}};\mathbf{\mathcal{S}}\otimes \mathcal{I})$ where $v^{\pm}(t)=\sum v^{\pm}_l e^{ilt}$ and
\begin{align}
\|\mathfrak{v}_{\mathfrak{R}}\|_{L^2_2(N_{\frac{2R}{3}})}\leq CR^{-2}\|\mathfrak{v}\|_{L^2(N_{R})}\label{CC_14}
\end{align}
$\mbox{ }\mbox{ }\mbox{ }\mbox{ }$for some constant $C$. Similarly, in the following paragraphs, we call $(\mathfrak{v}-\mathfrak{v}_{\mathfrak{R}})$ $\mbox{ }\mbox{ }\mbox{ }\mbox{ }$the dominant term of $\mathfrak{v}$ and call $\mathfrak{v}_{\mathfrak{R}}$ the remainder term of $\mathfrak{v}$.
\end{pro}
\begin{proof} {\bf(proof of part a)}.
 We claim the following two inequalities:\\
First, we have $D\left( \begin{array}{c}
u^+(t)\frac{1}{\sqrt{z}}\\
u^-(t)\frac{1}{\sqrt{\bar{z}}}
\end{array} \right)\in L^2(N_R)$ and
\begin{align}
\bigg\|D\left( \begin{array}{c}
u^+(t)\frac{1}{\sqrt{z}}\\
u^-(t)\frac{1}{\sqrt{\bar{z}}}
\end{array} \right)\bigg\|_{L^2(N_R)}^2\leq C R^{-2}\|\mathfrak{u}\|_{L^2(N_R)}^2\label{4_claim1}
\end{align}
for some $C>0$. Second,
\begin{align}
\bigg\|\left( \begin{array}{c}
u^+(t)\frac{1}{\sqrt{z}}\\
u^-(t)\frac{1}{\sqrt{\bar{z}}}
\end{array} \right)\bigg\|_{L^2(N_R)}^2\leq C\|\mathfrak{u}\|_{L^2(N_R)}^2.\label{4_claim2}
\end{align}
We will prove these inequalities in Corollary 3.8.\\

We now fix $K>0$ and define
\begin{align*}
\mathfrak{u}_{\mathfrak{R},K}=\sum_{k\neq 0}\sum_{|l|\leq K}e^{ilt}
\left( \begin{array}{c}
e^{i(k-\frac{1}{2})\theta}U^+_{k,l}\\
e^{i(k+\frac{1}{2})\theta}U^-_{k,l}
\end{array} \right)-\sum_{l\neq 0; |l|\leq K}e^{ilt}\left( \begin{array}{c}
u^+_{l}\frac{1}{\sqrt{z}}\\
u^-_l\frac{1}{\sqrt{\bar{z}}}
\end{array}
\right).
\end{align*}
We can easily see that $|\mathfrak{u}_{\mathfrak{R},K}|\leq C_{K}\sqrt{r}$ and $|\nabla \mathfrak{u}_{\mathfrak{R},K}|\leq C_K \frac{1}{\sqrt{r}}$, which means there will be no boundary term when we do the integration by part for the Lichnerowicz-Weitzenb\"ock formula. Let $\chi=1-\chi_{\frac{2}{3}R,R}$ be a cut-off function. By applying Lichnerowicz-Weitzenb\"ock formula on $\chi\mathfrak{u}_{\mathfrak{R},K}$ and using (\ref{4_claim1}), (\ref{4_claim2}) above, we have
\begin{align}
\|\mathfrak{u}_{\mathfrak{R},K}\|_{L^2_1(N_{\frac{2R}{3}})}^2&\leq \|D\mathfrak{u}_{\mathfrak{R},K}\|_{L^2(N_R)}^2 +C\frac{1}{R^2}\|\mathfrak{u}_{\mathfrak{R},K}\|_{L^2(N_R)}^2\label{4_1lema}\\
&\leq\bigg\|D\left( \begin{array}{c}
u^+(t)\frac{1}{\sqrt{z}}\nonumber\\
u^-(t)\frac{1}{\sqrt{\bar{z}}}
\end{array} \right)\bigg\|_{L^2(N_R)}^2+C\frac{1}{R^2}\|\mathfrak{u}_{\mathfrak{R},K}\|_{L^2(N_R)}^2\nonumber\\
&\leq CR^{-2}\|\mathfrak{u}\|_{L^2(N_R)}^2\nonumber
\end{align} 
for some $C>0$.\\

By taking $K\rightarrow \infty$ in (\ref{4_1lema}), we have
\begin{align*}
\|\mathfrak{u}_{\mathfrak{R}}\|_{L^2_1(N_{\frac{2R}{3}})}^2\leq CR^{-2}\|\mathfrak{u}\|_{L^2(N_R)}^2.\\
\end{align*}

{\bf (proof of part b)}. Similar to the proof of part a, we claim the following two inequalities which will be proved in Corollary 3.10. 
\begin{align}
\bigg\|D\left(\begin{array}{c}
v^+(t)\sqrt{z}\\
v^-(t)\sqrt{\bar{z}}
\end{array}\right)\bigg\|^2_{L^2(N_R)}\leq C R^{-2}\|\mathfrak{v}\|_{L^2(N_R)}^2.\label{4_clm5}
\end{align}
\begin{align}
\bigg\|\left(\begin{array}{c}
v^+(t)\sqrt{z}\\
v^-(t)\sqrt{\bar{z}}
\end{array}\right)\bigg\|^2_{L^2(N_R)}\leq C\|\mathfrak{v}\|^2_{L^2(N_R)}\label{4_clm6}.
\end{align}

Fix $K>0$, define
\begin{align*}
\mathfrak{v}_{\mathfrak{R},K}=\sum_{k\neq 0}\sum_{|l|\leq K}e^{ilt}
\left( \begin{array}{c}
e^{i(k-\frac{1}{2})\theta}V^+_{k,l}\\
e^{i(k+\frac{1}{2})\theta}V^-_{k,l}
\end{array} \right)-\sum_{l\neq 0; l\leq K}e^{ilt}\left( \begin{array}{c}
v^+_{l}\sqrt{z}\\
v^-_l\sqrt{\bar{z}}
\end{array}
\right).
\end{align*}
We have $|\mathfrak{v}_{\mathfrak{R},K}|\leq C_{K}\sqrt{r^3}$ and $|\nabla \mathfrak{v}_{\mathfrak{R},K}|\leq C_K {\sqrt{r}}$ and $|\nabla\nabla \mathfrak{v}_{\mathfrak{R},K}|\leq C_K \frac{1}{{\sqrt{r}}}$. So by applying Lichnerowicz-Weitzenb\"ock formula on $\chi\mathfrak{v}_{\mathfrak{R},K}$, we have
\begin{align}
\|\mathfrak{v}_{\mathfrak{R},K}\|_{L^2_1(N_{\frac{2R}{3}})}^2&\leq \|D\mathfrak{v}_{\mathfrak{R},K}\|_{L^2}^2 +C\frac{1}{R^2}\|\mathfrak{v}_{\mathfrak{R},K}\|_{L^2(N_R)}^2\label{CC_15}\\
&\leq\bigg\|D\left( \begin{array}{c}
v^+(t)\sqrt{z}\nonumber\\
v^-(t)\sqrt{\bar{z}}
\end{array} \right)\bigg\|_{L^2(N_R)}^2+C\frac{1}{R^2}\|\mathfrak{v}_{\mathfrak{R},K}\|_{L^2(N_R)}^2\nonumber\\
&\leq CR^{-2}\|\mathfrak{v}\|_{L^2(N_R)}^2\nonumber
\end{align} 
for some $C>0$. By taking the limit $K\rightarrow \infty$, we have
\begin{align*}
\|\mathfrak{v}_{\mathfrak{R}}\|^2_{L^2_1(N_{\frac{2R}{3}})}\leq CR^{-2}\|\mathfrak{v}\|_{L^2(N_R)}^2.
\end{align*}

Notice that $[\nabla_i, D]=0$, so we can use the same argument on $\nabla_i \mathfrak{v}$. Here we need the following inequalities which are also proved in Corollary 3.10.
\begin{align}
\bigg\|D\big(\nabla\left(\begin{array}{c}
v^+(t)\sqrt{z}\\
v^-(t)\sqrt{\bar{z}}
\end{array}\right)\big)\bigg\|^2_{L^2(N_R)}\leq C R^{-4}\|\mathfrak{v}\|_{L^2(N_R)}^2\label{4_clm7}
\end{align}
and
\begin{align}
\bigg\|\left(\begin{array}{c}
v^+(t)\sqrt{z}\\
v^-(t)\sqrt{\bar{z}}
\end{array}\right)\bigg\|^2_{L^2_1(N_R)}\leq CR^{-2}\|\mathfrak{v}\|^2_{L^2(N_R)}\label{4_clm8}.
\end{align}
So we have
\begin{align}
\|\mathfrak{v}_{\mathfrak{R},K}\|_{L^2_2(N_{\frac{2R}{3}})}^2&\leq \|D\big(\nabla\mathfrak{v}_{\mathfrak{R},K}\big)\|_{L^2(N_R)}^2 +C\frac{1}{R^2}\|\mathfrak{v}_{\mathfrak{R},K}\|_{L^2_1(N_R)}^2\label{CC_16}\\
&\leq\bigg\|D\big(\nabla\left( \begin{array}{c}
v^+(t)\sqrt{z}\nonumber\\
v^-(t)\sqrt{\bar{z}}
\end{array} \right)\big)\bigg\|_{L^2(N_R)}^2+C\frac{1}{R^2}\|\mathfrak{v}_{\mathfrak{R},K}\|_{L^2_1(N_R)}^2\nonumber\\
&\leq CR^{-4}\|\mathfrak{v}\|_{L^2(N_R)}^2\nonumber
\end{align} 
for some $C>0$. By taking the limit $K\rightarrow \infty$, we prove this proposition.
\end{proof}

\subsection{Regularity properties and the asymptotic behavior of $L^2$-harmonic sections on the tubular neighborhood} In this subsection, we will derive some regularity theorems for harmonic spinors $\mathfrak{u}\in L^2(N_R;\mathbf{\mathcal{S}}\otimes \mathcal{I})$. These estimates are similar to the doubling estimate appearing in \cite{L}. Recall that, by standard interior regularity theorem, $\mathfrak{u}$ is a smooth section on any compact subset of $N_R$. We write
\begin{align*}
\mathfrak{u}=\sum_{l,k}e^{ilt}\left( \begin{array}{c}
e^{(k-\frac{1}{2})\theta}U^+_{k,l}\\
e^{(k+\frac{1}{2})\theta}U^-_{k,l}
\end{array} \right)
\end{align*} 
where
\begin{align*}
\left( \begin{array}{c}
U^{+}_{k,l}\\
U^{-}_{k,l}
\end{array} \right)
=
\left( \begin{array}{c}
u_{k,l}^{+}\mathfrak{I}_{k-\frac{1}{2},l}(r)-u_{k,l}^{-}l\mathfrak{I}_{-k+\frac{1}{2},l}(r)\\
-u^{+}_{k,l}l\mathfrak{I}_{k+\frac{1}{2},l}(r)+u^{-}_{k,l}\mathfrak{I}_{-k-\frac{1}{2},l}(r)
\end{array} \right)
\end{align*}
for $l\neq 0$ and
\begin{align*}
\left( \begin{array}{c}
U^{+}_{k,0}\\
U^{-}_{k,0}
\end{array} \right)
=
\left( \begin{array}{c}
u_{k,0}^{+}r^{k-\frac{1}{2}}\\
u^{-}_{k,0}r^{-k-\frac{1}{2}}
\end{array} \right).
\end{align*}\\

Since $\mathfrak{u}\in L^2$, so we have
\begin{align*}
u^+_{k,l}=0 \mbox{ for }k\leq -1;\\
u^-_{k,l}=0 \mbox{ for }k\geq 1.
\end{align*}
Moreover, let us define
\begin{align*}
E_{k,l}=\Bigg\{e^{ilt}\left( \begin{array}{c}
ae^{i\frac{1}{2}\theta}\mathfrak{I}_{k-\frac{1}{2},l}(r)-be^{i\frac{1}{2}\theta}l\mathfrak{I}_{-k+\frac{1}{2},l}(r)\\
-ae^{-i\frac{1}{2}\theta}l\mathfrak{I}_{k+\frac{1}{2},l}(r)+be^{-i\frac{1}{2}\theta}\mathfrak{I}_{-k-\frac{1}{2},l}(r)
\end{array} \right)\mbox{ }\Bigg| \mbox{ }a,b\in\mathbb{R}\Bigg\},
\end{align*} then $E_{k,l}$ and $E_{k',l'}$ are $L^2$-orthogonal for any two pairs $(k,l)\neq (k',l')$ (readers can obtain this result by using the orthogonality of modified Bessel functions, see \cite{N}).\\

By using these observations, we can prove the following proposition.
\begin{pro}
Let $\mathfrak{u}\in L^2(N_R;\mathcal{S}\otimes \mathcal{I})\cap \ker(D)$ with the corresponding Fourier coefficients $\{u^{\pm}_{k,l}\}$. Then the sequence of $\mathcal{K}_R$-leading coefficients $\{u^{\pm}_l\}$
 is in $\ell^2_k$ (see $(\ref{CC_9})$ for the definition) for all $k\in \mathbb{N}$. Moreover, we have
\begin{align}
\|(l^{k} u_l^{\pm})_{l\in\mathbb{Z}}\|^2_{\ell^2}\leq 3\frac{(2k+1)!}{R^{2k+1}}\|\mathfrak{u}\|^2_{L^2}\label{4_1b}.
\end{align}
\end{pro}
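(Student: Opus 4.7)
The plan is to exploit the $L^2$-orthogonality of the Fourier modes $\{E_{k,l}\}$, which reduces the problem to a mode-by-mode analysis of the $k=0$ piece, and then to extract the factor $l^{2k}$ from a Taylor-series lower bound on the relevant Bessel integrals.

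Since the $E_{k,l}$ are mutually orthogonal in $L^2(N_R)$,
\begin{align*}
\|\mathfrak{u}\|_{L^2(N_R)}^{2}\;=\;\sum_{k,l}\|E_{k,l}\|_{L^2(N_R)}^{2}\;\geq\;\sum_{l}\|E_{0,l}\|_{L^2(N_R)}^{2},
\end{align*}
so it suffices to bound $\sum_{l} l^{2k}|u^{\pm}_l|^2$ by a suitable multiple of $\sum_l \|E_{0,l}\|^2$. Using the explicit identities $I_{1/2}(x)=\sqrt{2/(\pi x)}\sinh x$ and $I_{-1/2}(x)=\sqrt{2/(\pi x)}\cosh x$, I will rewrite the two $k=0$ basis sections in the frame $\{e^{|l|r}(1,-\mathrm{sign}\,l),\;e^{-|l|r}(1,\mathrm{sign}\,l)\}$; the two vectors in $\mathbb{C}^{2}$ are pointwise orthogonal, so all cross-terms cancel and direct integration yields, for $l\neq 0$,
\begin{align*}
\|E_{0,l}\|_{L^2(N_R)}^{2}\;=\;\frac{2\pi}{|l|}\Bigl(|\hat u^{+}_{0,l}|^{2}(e^{2|l|R}-1)\;+\;|\hat u^{-}_{0,l}|^{2}(1-e^{-2|l|R})\Bigr).
\end{align*}

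The elementary Taylor-series lower bound $e^{x}-1\geq x^{2k+1}/(2k+1)!$ applied with $x=2|l|R$ then gives the pointwise estimate
\begin{align*}
|l|^{2k}\,|\hat u^{+}_{0,l}|^{2}\;\leq\;\frac{(2k+1)!}{2\pi(2R)^{2k+1}}\,\|E_{0,l}\|_{L^2(N_R)}^{2}.
\end{align*}
By Definition~4.2, whenever $|l|>1/(2R)$ one has $|u^{\pm}_{l}|=|\hat u^{+}_{0,l}|$, so this estimate passes directly to the $\mathcal{K}_{R}$-leading coefficients. For the finitely many remaining frequencies $|l|\leq 1/(2R)$ the weight $|l|^{2k}$ is controlled by $(2R)^{-2k}$; combining this with the bound $1-e^{-2|l|R}\geq |l|R$ on $[0,1/(2R)]$, together with the direct computation $\|E_{0,0}\|^{2}=4\pi^{2}R(|u^{+}_{0,0}|^{2}+|u^{-}_{0,0}|^{2})$ for the polynomial $r^{\pm 1/2}$ modes, absorbs the low-frequency contribution into the constant. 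Summing over $l$ and tracking the accumulated factors produces the inequality with constant $3(2k+1)!/R^{2k+1}$ as stated.

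The main obstacle is the near-degeneracy of the Gram matrix of the radial basis $\{\mathfrak{I}_{-1/2,l},l\mathfrak{I}_{1/2,l}\}$: for $|l|R\gg 1$ both functions are dominated by the same factor $e^{|l|r}/\sqrt{r}$, so a naive diagonal estimate loses the decisive exponential. What rescues the argument is the pointwise $\mathbb{C}^{2}$-orthogonality of $(1,\mp\mathrm{sign}\,l)$, which is precisely the reason the change of basis defining $\hat u^{\pm}_{0,l}$ was introduced in Definition~4.2; it separates the growing and decaying exponential contributions, kills the cross terms, and makes the extraction of the sharp exponential lower bound on $\|E_{0,l}\|^{2}$ transparent.
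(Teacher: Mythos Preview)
Your proposal is correct and follows essentially the same route as the paper's proof: project onto the $k=0$ Fourier block, pass to the $(\hat u^{+}_{0,l},\hat u^{-}_{0,l})$ basis of growing/decaying exponentials, use a single Taylor term of $e^{2|l|R}$ to extract the factor $|l|^{2k}$ from the $\hat u^{+}$ contribution, and treat the low frequencies $|l|\le 1/(2R)$ separately via the trivial bound $|l|^{2k}\le (2R)^{-2k}$ together with a lower bound on $\int_0^R e^{-2|l|r}\,dr$. Your explicit remark that the pointwise $\mathbb{C}^2$-orthogonality of $(1,\mp\mathrm{sign}\,l)$ kills the cross terms is exactly what the paper uses implicitly when it writes $\|P_{0,l}\mathfrak u\|^2\ge |\hat u^{+}_{0,l}|^2\int_0^R e^{2|l|r}dr+|\hat u^{-}_{0,l}|^2\int_0^R e^{-2|l|r}dr$; both arguments are the same up to the order in which one integrates and Taylor-expands, and both are equally loose about the precise constant $3$.
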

\begin{proof}
First of all, let $P_{k,l}:\ker(D|_{L^2})\rightarrow E_{k,l}$ be the orthonormal projection. We have
\begin{align*}
P_{0,l}(\mathfrak{u})=
e^{ilt}\left( \begin{array}{c}
\hat{u}_{0,l}^{+}\frac{e^{2|l|r}}{\sqrt{z}}+\hat{u}_{0,l}^{-}\frac{e^{-2|l|r}}{\sqrt{z}}\\
-\mbox{sign}(l)\hat{u}^{+}_{0,l}\frac{e^{2|l|r}}{\sqrt{\bar{z}}}+\mbox{sign}(l)\hat{u}^{-}_{0,l}\frac{e^{-2|l|r}}{\sqrt{\bar{z}}}
\end{array} \right)
\end{align*}
for any $l$.\\

Recall that $(u^{+}_l,u^-_{l})=(\hat{u}^+_{0,l},-\mbox{sign}(l)\hat{u}^+_{0,l})$ for $|l|>\frac{1}{2R}$ and $(u^{+}_l,u^-_{l})=(\hat{u}^+_{0,l},-\mbox{sign}(l)\hat{u}^+_{0,l})+(\hat{u}^-_{0,l},\mbox{sign}(l)\hat{u}^-_{0,l})$ for $|l|\leq \frac{1}{2R}$. We can compute directly to get
\begin{align*}
\|\mathfrak{u}\|_{L^2(N_R)}^2&\geq \sum_l \|P_{0,l}(\mathfrak{u})\|_{L^2(N_R)}^2\\ 
&\geq \sum_l |\hat{u}^{+}_{0,l}|^2\int_0^Re^{4|l|r}dr+\sum |\hat{u}^-_{0,l}|^2\int_0^Re^{-4|l|r}dr\\
&\geq\sum_l |\hat{u}^{+}_{0,l}|^2\int_0^Re^{4|l|r}dr\\
&\geq \sum_k \sum_l |\hat{u}^{+}_{0,l}|^2\frac{(2l)^{2k}R^{2k+1}}{(2k+1)!}.
\end{align*}
Meanwhile, the second line of this inequality also tells us that 
\begin{align*}
\|\mathfrak{u}\|_{L^2(N_R)}^2 &\geq \sum |\hat{u}^-_{0,l}|^2\int_0^Re^{-4|l|r}dr\\
&\geq \sum_{|l|\leq \frac{1}{2R}}e^{-1}|\hat{u}^-_{0,l}|^2R\\
&\geq \sum_{|l|\leq \frac{1}{2R}}e^{-1}|\hat{u}^-_{0,l}|^2|l|^{2k}R^{2k+1}.
\end{align*}
 So we prove (\ref{4_1b}).

\end{proof}
 By using this proposition, we can prove (\ref{4_claim1}) and (\ref{4_claim2}) in the following way.\\

\begin{cor}
Suppose that $\left( \begin{array}{c}
u^+(t)\frac{1}{\sqrt{z}}\\
u^-(t)\frac{1}{\sqrt{\bar{z}}}
\end{array} \right)$ is the $\mathcal{K}_R$-dominant term of an $L^2$-harmonic section $\mathfrak{u}$ as we showed in Proposition 3.6, then
\begin{align*}
\bigg\|D\left( \begin{array}{c}
u^+(t)\frac{1}{\sqrt{z}}\\
u^-(t)\frac{1}{\sqrt{\bar{z}}}
\end{array} \right)\bigg\|_{L^2(N_R)}^2\leq CR^{-2}\|\mathfrak{u}\|_{L^2(N_R)}^2
\end{align*}
and
\begin{align*}
\bigg\|\left(\begin{array}{c}
u^+(t)\frac{1}{\sqrt{z}}\\
u^-(t)\frac{1}{\sqrt{\bar{z}}}
\end{array} \right)\bigg\|_{L^2(N_R)}^2\leq C\|\mathfrak{u}\|_{L^2(N_R)}^2
\end{align*}
for some constant $C>0$.
\end{cor}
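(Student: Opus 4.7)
The plan is to reduce both bounds to two elementary ingredients: (i) an explicit formula for $D$ acting on sections of the form $(f(t)/\sqrt{z}, g(t)/\sqrt{\bar z})^T$, and (ii) the $\ell^2$–bounds on the $\mathcal{K}_R$-leading coefficients supplied by Proposition 4.5.

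First I would compute $D$ on the ansatz $\mathfrak{w}(t,z)=(f(t)/\sqrt z,\,g(t)/\sqrt{\bar z})^T$. Since $\partial_{\bar z}(1/\sqrt z)=0$ and $\partial_z(1/\sqrt{\bar z})=0$, the off-diagonal matrices $e_2,e_3$ annihilate the corresponding entries, so only the $e_1\partial_t$ piece survives:
\begin{equation*}
D\mathfrak{w}=\begin{pmatrix}-if'(t)/\sqrt z\\ ig'(t)/\sqrt{\bar z}\end{pmatrix}.
\end{equation*}
In particular, the radial and angular derivatives contribute nothing, which is the key cancellation of the argument.

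Next I would compute the $L^2(N_R)$ norms of the model expressions. Using Fubini together with $|1/\sqrt z|^2=|1/\sqrt{\bar z}|^2=1/r$ and the volume form $r\,dr\,d\theta\,dt$, the radial integral over $[0,R]$ pulls out a factor of $R$, the angular integration gives $2\pi$, and Parseval in $t$ gives
\begin{equation*}
\bigl\|f(t)/\sqrt z\bigr\|_{L^2(N_R)}^{2}=4\pi^{2}R\,\sum_{l}|f_l|^{2},
\end{equation*}
with the analogous identity for $g(t)/\sqrt{\bar z}$. Combined with the formula from the first step, the same computation yields
\begin{equation*}
\|D\mathfrak{w}\|_{L^{2}(N_R)}^{2}=4\pi^{2}R\,\sum_{l}l^{2}\bigl(|u_l^{+}|^{2}+|u_l^{-}|^{2}\bigr).
\end{equation*}

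Finally I would invoke Proposition 4.5 with $k=0$ and $k=1$ respectively. The case $k=0$ gives $\sum_l|u_l^{\pm}|^2\le 3R^{-1}\|\mathfrak{u}\|_{L^2(N_R)}^2$, so the $R$ from spatial integration and the $R^{-1}$ from the coefficient bound cancel, yielding $\|(u^+/\sqrt z,u^-/\sqrt{\bar z})\|_{L^2(N_R)}^2\le C\|\mathfrak{u}\|_{L^2(N_R)}^2$. The case $k=1$ gives $\sum_l l^{2}|u_l^{\pm}|^2\le 18\,R^{-3}\|\mathfrak{u}\|_{L^2(N_R)}^2$, and combined with the spatial factor of $R$ this produces the required $R^{-2}$ bound for $\|D\mathfrak{w}\|_{L^2(N_R)}^2$. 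There is no real obstacle: the only thing that could be mishandled is the bookkeeping for $|l|\le 1/(2R)$, where the $\mathcal{K}_R$-leading coefficient $u_l^\pm$ absorbs both $\hat u_{0,l}^\pm$; Proposition 4.5 is stated precisely in terms of these $u_l^\pm$, so no separate argument is needed.
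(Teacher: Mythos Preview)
Your proposal is correct and follows exactly the paper's approach: compute $D$ on the ansatz to see that only the $e_1\partial_t$ term survives, then feed the resulting Fourier-coefficient expressions into Proposition~4.5 with $k=0$ and $k=1$. The paper's proof is a one-line ``compute directly and apply Proposition~4.5,'' and you have simply written out the details of that line.
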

\begin{proof}
We can compute directly that
\begin{align*}
D\left( \begin{array}{c}
u^+(t)\frac{1}{\sqrt{z}}\\
u^-(t)\frac{1}{\sqrt{\bar{z}}}
\end{array} \right)=\left( \begin{array}{c}
\partial_t u^+(t)\frac{1}{\sqrt{z}}\\
\partial_t u^-(t)\frac{1}{\sqrt{\bar{z}}}
\end{array} \right).
\end{align*}
Then by Proposition 3.7, we can prove this corollary immediately.
\end{proof}

\subsection{Regularity properties and the asymptotic behavior of $L^2_1$-harmonic sections on the tubular neighborhood} The main result of this section is Theorem 3.11. Our goal is to estimate the $L^2$-norms of harmonic spinor $\mathfrak{v}$ and its derivative $\partial_t\mathfrak{v}$ on a small tubular neighborhood of $\Sigma$.\\

 Suppose that $\mathfrak{v}$ is an $L^2_1$-harmonic section, then we can write
\begin{align*}
\mathfrak{v}=\sum_{l,k}e^{ilt}\left( \begin{array}{c}
e^{(k-\frac{1}{2})\theta}V^+_{k,l}\\
e^{(k+\frac{1}{2})\theta}V^-_{k,l}
\end{array} \right)
\end{align*} 
where
\begin{align*}
\left( \begin{array}{c}
V^{+}\\
V^{-}
\end{array} \right)_{k,l}
=
\left( \begin{array}{c}
v_{k,l}^{+}\mathfrak{I}_{k-\frac{1}{2},l}(r)-v_{k,l}^{-}l\mathfrak{I}_{-k+\frac{1}{2},l}(r)\\
-v^{+}_{k,l}l\mathfrak{I}_{k+\frac{1}{2},l}(r)+v^{-}_{k,l}\mathfrak{I}_{-k-\frac{1}{2},l}(r)
\end{array} \right)
\end{align*}
for $l\neq 0$ and
\begin{align*}
\left( \begin{array}{c}
V^{+}\\
V^{-}
\end{array} \right)_{k,0}
=
\left( \begin{array}{c}
v_{k,0}^{+}r^{k-\frac{1}{2}}\\
v^{-}_{k,0}r^{-k-\frac{1}{2}}
\end{array} \right).
\end{align*}\\

Since $\mathfrak{v}\in L^2_1$, so we have
\begin{align*}
v^+_{k,l}=0 \mbox{ for }k\leq 0;\\
v^-_{k,l}=0 \mbox{ for }k\geq 0.
\end{align*}

\begin{pro}
Let $\mathfrak{v}\in L_1^2(N_R;\mathbf{\mathcal{S}}\otimes \mathcal{I})\cap \ker(D)$ with the corresponding coefficients $\{v^{\pm}_{k,l}\}$. Then the sequence of leading coefficients $\{(v^{\pm}_{l})\}$ defined in Definition 3.5 is in $\ell^2_k$ for all $k\in \mathbb{N}\cup \{0\}$. Moreover, we have
\begin{align}
\|(l^kv_l^{\pm})_{l\in\mathbb{Z}}\|^2_{\ell^2}\leq \frac{(2k+3)!}{R^{2k+3}}\|\mathfrak{v}\|^2_{L^2}\label{4_2b}
\end{align}
\end{pro}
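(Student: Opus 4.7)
The plan is to mimic the proof of Proposition~4.5, now using the $L^2_1$ Fourier expansion of $\mathfrak{v}$ in place of the $L^2$ one. Since $\mathfrak{v}\in L^2_1$ rules out every Fourier mode with $|k|<1$, the slowest-decaying surviving modes are those with $k=\pm 1$; these are precisely the modes that carry the leading coefficients $v^{\pm}_l$ of Definition~4.3, with radial profile $\mathfrak{I}_{1/2,l}(r)$ (top entry for $k=1$, bottom entry for $k=-1$).

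First I would use the pairwise $L^2(N_R)$-orthogonality of the modes $e^{ilt}e^{i(k\pm 1/2)\theta}$, together with positivity of the non-leading contributions (the higher-$|k|$ modes and the sub-leading $l^2|\mathfrak{I}_{3/2,l}|^2$ piece within each $|k|=1$ mode), to drop everything except the leading terms and obtain
\begin{equation*}
\|\mathfrak{v}\|_{L^2(N_R)}^2 \;\geq\; (2\pi)^2\sum_{l\neq 0}(|v^+_l|^2+|v^-_l|^2)\int_0^R|\mathfrak{I}_{1/2,l}(r)|^2\,r\,dr+\tfrac{(2\pi)^2 R^3}{3}\bigl(|v^+_0|^2+|v^-_0|^2\bigr),
\end{equation*}
where the last term accounts for the $l=0$ polynomial modes $v^{\pm}_0 r^{1/2}$ that involve no Bessel function.

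Next I would use the closed form $I_{1/2}(x)=\sqrt{2/(\pi x)}\sinh(x)$, hence $|\mathfrak{I}_{1/2,l}(r)|^2\,r = 2\sinh^2(lr)/(\pi l^2)$, together with the power series $\sinh^2(x)=\sum_{n\geq 1}(2x)^{2n}/(2(2n)!)$, to extract the single term $n=k+1$ and conclude, for every $k\geq 0$,
\begin{equation*}
\int_0^R|\mathfrak{I}_{1/2,l}(r)|^2\,r\,dr \;\geq\; \frac{2^{2k+2}\,l^{2k}\,R^{2k+3}}{\pi(2k+3)!}.
\end{equation*}
Combining with the previous display yields
\begin{equation*}
\sum_{l\neq 0}l^{2k}(|v^+_l|^2+|v^-_l|^2) \;\leq\; \frac{(2k+3)!}{4\pi\cdot 2^{2k+2}\,R^{2k+3}}\,\|\mathfrak{v}\|_{L^2}^2,
\end{equation*}
which is far stronger than the claimed bound. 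For $k=0$ the $l=0$ contribution is at most $3/(4\pi^2 R^3)\,\|\mathfrak{v}\|^2\leq 6/R^3\,\|\mathfrak{v}\|^2$, still well within $(2k+3)!/R^{2k+3}=6/R^3$; for $k\geq 1$ the factor $l^{2k}$ kills the $l=0$ term automatically.

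The main bookkeeping subtlety is identifying $v^{\pm}_l$ with the labels $v^+_{-1,l},\,v^-_{1,l}$ of Definition~4.3 via the explicit Fourier expansion and verifying that distinct $(k,l)$ modes are pairwise orthogonal in $L^2(N_R)$; the latter relies on the $\mathbb{Z}/2$-monodromy of $\mathcal{I}$ lifting the $\theta$-modes to genuine half-integers on the double cover of the tubular neighborhood. Beyond that, the estimate is a direct analogue of Proposition~4.5, with $\mathfrak{I}_{1/2,l}$ replacing the singular $r^{-1/2}$ leading behavior and $R^{2k+3}$ replacing $R^{2k+1}$; the proof is in fact easier because the Bessel factor is $L^2_{\mathrm{loc}}$ rather than merely $L^2\setminus L^2_1$, so no separate handling of the slowly-decaying mode is needed.
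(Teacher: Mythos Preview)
Your proof is correct and follows essentially the same route as the paper: project onto the $|k|=1$ Fourier modes using orthogonality, drop the non-leading $\mathfrak{I}_{3/2,l}$ and higher-$|k|$ contributions by positivity, plug in the closed form $\mathfrak{I}_{1/2,l}(r)=\sinh(lr)/(l\sqrt{r})$ (the paper suppresses the $\sqrt{2/\pi}$ normalization), and extract a single term from the power series of $\sinh^2$ to get the $l^{2k}R^{2k+3}/(2k+3)!$ factor. The only difference is that you keep track of the $(2\pi)^2$ and Bessel normalization constants, which gives the sharper constant $(2k+3)!/(4\pi\cdot 2^{2k+2})$; the paper simply drops these and arrives directly at $(2k+3)!$.
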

\begin{proof}
We use the same notation defined in Proposition 3.7.
\begin{align*}
P_{-1,l}(\mathfrak{v})=e^{ilt}\left( \begin{array}{c}
-v_{-1,l}^{-}l\mathfrak{I}_{\frac{3}{2},l}(r)\\
v^{-}_{-1,l}\mathfrak{I}_{\frac{1}{2},l}(r)
\end{array} \right),\mbox{ }
P_{1,l}(\mathfrak{v})=e^{ilt}\left( \begin{array}{c}
v_{1,l}^{+}\mathfrak{I}_{\frac{1}{2},l}(r)\\
-v^{+}_{1,l}l\mathfrak{I}_{\frac{3}{2},l}(r)
\end{array} \right).
\end{align*}
for $l\neq 0$ and
\begin{align*}
P_{-1,0}(\mathfrak{v})=
\left( \begin{array}{c}
0\\
v^{-}_{-1,0}r^{\frac{1}{2}}
\end{array} \right),\mbox{ }
P_{1,0}(\mathfrak{v})=
\left( \begin{array}{c}
v^{+}_{1,0}r^{\frac{1}{2}}\\
0
\end{array} \right).\\
\end{align*}

Since $\mathfrak{I}_{\frac{1}{2},l}=\frac{\sinh (2lr)}{2l\sqrt{r}}$, we have
\begin{align*}
\|\mathfrak{v}\|_{L^2}^2&\geq \sum_{l}\|P_{-1,l}(\mathfrak{v})\|_{L^2}^2+\|P_{1,l}(\mathfrak{v})\|_{L^2}^2\\
&\geq \sum_{l\neq 0}( |v^+_{1,l}|^2+|v^-_{-1,l}|^2)\int_0^R\frac{\sinh^2 (2lr)}{4l^2}dr+(|v^+_{1,0}|^2+|v^-_{-1,0}|^2)\int_0^Rr^2dr\\
&\geq \sum_{l}( |v^+_{1,l}|^2+|v^-_{-1,l}|^2)\int_0^R\sum_{k=1}^{\infty}\frac{l^{2k}r^{2k+2}}{(2k+2)!}\\
&=\sum_{l}|v_{l}^{\pm}|^2\sum_{k=1}^{\infty}\frac{l^{2k}R^{2k+3}}{(2k+3)!}.
\end{align*}
Therefore, we prove this proposition.
\end{proof}

The proof of the following corollary is similar to the proof of Corollary 3.8. So we omit the proof of this corollary. 
\begin{cor}
Suppose $\left(\begin{array}{c}
v^+(t)\sqrt{z}\\
v^-(t)\sqrt{\bar{z}}
\end{array}\right)$ is the dominant term of an $L^2_1$-harmonic section $\mathfrak{v}$ as we showed in Proposition 3.6, then we have\\[1mm]
{\bf a}.
\begin{align*}
\bigg\|D\left(\begin{array}{c}
v^+(t)\sqrt{z}\\
v^-(t)\sqrt{\bar{z}}
\end{array}\right)\bigg\|^2_{L^2(N_R)}\leq C R^{-2}\|\mathfrak{v}\|_{L^2(N_R)}^2
\end{align*}
$\mbox{ }\mbox{ }\mbox{ }\mbox{ }$and
\begin{align*}
\bigg\|\left(\begin{array}{c}
v^+(t)\sqrt{z}\\
v^-(t)\sqrt{\bar{z}}
\end{array}\right)\bigg\|^2_{L^2(N_R)}\leq C\|\mathfrak{v}\|^2_{L^2(N_R)}
\end{align*}
$\mbox{ }\mbox{ }\mbox{ }\mbox{ }$for some constant $C>0$.\\[1mm]
{\bf b}.
\begin{align*}
\bigg\|D\Big(\nabla\left(\begin{array}{c}
v^+(t)\sqrt{z}\\
v^-(t)\sqrt{\bar{z}}
\end{array}\right)\Big)\bigg\|^2_{L^2(N_R)}\leq C R^{-4}\|\mathfrak{v}\|_{L^2(N_R)}^2
\end{align*}
$\mbox{ }\mbox{ }\mbox{ }\mbox{ }$and
\begin{align*}
\bigg\|\left(\begin{array}{c}
v^+(t)\sqrt{z}\\
v^-(t)\sqrt{\bar{z}}
\end{array}\right)\bigg\|^2_{L^2_1(N_R)}\leq CR^{-1}\|\mathfrak{v}\|^2_{L^2(N_R)}
\end{align*}
$\mbox{ }\mbox{ }\mbox{ }\mbox{ }$for some constant $C>0$.
\end{cor}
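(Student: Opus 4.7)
The proposal is to mirror the proof of Corollary 4.6, substituting Proposition 4.7 for Proposition 4.5 so as to control the weighted $\ell^2$-norms of the leading coefficients $\{v^\pm_l\}$ of the dominant term. The starting observation is that $\sqrt{z}$ is annihilated by $\partial_{\bar z}$ and $\sqrt{\bar z}$ by $\partial_z$, so in the product-type neighborhood $N_R$ a direct computation gives
\[
D\left(\begin{array}{c} v^+(t)\sqrt{z}\\ v^-(t)\sqrt{\bar z}\end{array}\right) = \left(\begin{array}{c} -i\dot v^+(t)\sqrt{z}\\ i\dot v^-(t)\sqrt{\bar z}\end{array}\right).
\]
Since $|\sqrt{z}|^2=|\sqrt{\bar z}|^2=r$ and $dV=r\,dr\,d\theta\,dt$, Parseval in $t$ converts the integrals over $N_R$ into products of a radial integral $\int_0^R r^2\,dr$ with a sum of the form $\sum_l |v^\pm_l|^2$ or $\sum_l l^2|v^\pm_l|^2$.

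For part a, the first (unweighted) sum is controlled by Proposition 4.7 with $k=0$, giving an $R^{-3}$ bound; combined with the $R^3$ from the radial integral, this yields the claimed inequality $\|\text{dom}\|^2_{L^2(N_R)}\leq C\|\mathfrak{v}\|^2_{L^2(N_R)}$. The second sum is controlled by Proposition 4.7 with $k=1$, producing an $R^{-5}$ bound; combined with $R^3$ from the radial integral this gives the desired $R^{-2}$ factor for $\|D(\text{dom})\|^2_{L^2(N_R)}$.

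For part b, since the metric is flat in $N_R$ the commutator $[\nabla,D]$ vanishes, so $\|D\nabla(\text{dom})\|^2_{L^2}=\|\nabla D(\text{dom})\|^2_{L^2}$. A short calculation gives $|\nabla\sqrt{z}|^2=\tfrac{1}{2r}$, so the integrand of $\|\nabla D(\text{dom})\|^2$ is pointwise of the form $|\ddot v^\pm(t)|^2\, r + |\dot v^\pm(t)|^2\, r^{-1}$. Multiplying by $r$ and integrating radially leaves $R^3\sum l^4|v^\pm_l|^2$ and $R\sum l^2|v^\pm_l|^2$; Proposition 4.7 with $k=2$ and $k=1$ bounds these by $R^{-7}\|\mathfrak{v}\|^2$ and $R^{-5}\|\mathfrak{v}\|^2$ respectively, and both combinations produce the requested $R^{-4}$. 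The bound on $\|\text{dom}\|^2_{L^2_1(N_R)}$ is obtained by the analogous direct computation: the $L^2$ part is handled by part a, and $\|\nabla(\text{dom})\|^2_{L^2(N_R)}$ reduces to $R^3\sum l^2|v^\pm_l|^2 + R\sum|v^\pm_l|^2$, both of which are controlled via Proposition 4.7 by $\|\mathfrak{v}\|^2_{L^2(N_R)}$ times the appropriate negative power of $R$.

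The only real obstacle is bookkeeping: several Fourier sums with different weights in $l$ appear, and one must pair each with the correct radial integral so that the $R$-powers match. No genuinely new analytic input is needed beyond Proposition 4.7 and the algebraic identity $D(v^+\sqrt{z},v^-\sqrt{\bar z})^T=(-i\dot v^+\sqrt{z},i\dot v^-\sqrt{\bar z})^T$, which is what makes the leading-order spinor $(v^+\sqrt{z},v^-\sqrt{\bar z})$ behave like a one-dimensional Dirac field along $\Sigma$.
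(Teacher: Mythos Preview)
Your proposal is correct and is precisely the argument the paper has in mind: the paper omits the proof of this corollary, saying only that it is ``similar to the corollary 4.6,'' and your computation---applying $D$ to the dominant term via the identity $D(v^+\sqrt{z},v^-\sqrt{\bar z})^T=(-i\dot v^+\sqrt{z},i\dot v^-\sqrt{\bar z})^T$, then invoking Proposition 4.7 with the appropriate values of $k$ to control the resulting weighted $\ell^2$-sums---is exactly that analogue. Your handling of part \textbf{b} via $[\nabla,D]=0$ in the flat neighborhood is also the intended route.
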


Finally, we can prove the following theorem by using Proposition 3.9 now. 

\begin{theorem}
For any $\mathfrak{v}\in L^2_1(N_{R})\cap \ker(D)$, we have
\begin{align*}
\|\mathfrak{v}\|_{L^2(N_r)}^2\leq r^3\frac{C}{R^3}\|\mathfrak{v}\|_{L^2(N_R)}^2.
\end{align*}
In addition, if we let $\mathfrak{v}_t=\partial_t \mathfrak{v}$, then we can prove that
\begin{align*}
\|\mathfrak{v}_t\|_{L^2(N_r)}^2\leq r^3\frac{C}{R^5}\|\mathfrak{v}\|_{L^2(N_R)}^2.
\end{align*}
for some constant $C>0$ and all $r\leq \frac{R}{2}$.
\end{theorem}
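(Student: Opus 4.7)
My plan is to combine the Fourier expansion recorded in Section~4.3 with a monotonicity property of modified Bessel functions. The key elementary lemma is that for each $p \geq 0$, the map $R \mapsto R^{-(2p+2)}\int_0^R I_p(ls)^2\, s\, ds$ is non-decreasing. This follows because $I_p(x)/x^p$ is non-decreasing (immediate from the Taylor series of $I_p$), so $I_p(ls)^2 \leq (s/R)^{2p}I_p(lR)^2$; inserting this pointwise bound into the derivative of the displayed function shows the derivative is non-negative. Consequently, for every $r \leq R$ and $p \geq 0$,
\begin{equation*}
\int_0^r \mathfrak{I}_{p,l}(s)^2\, s\, ds \;\leq\; (r/R)^{2p+2}\, \int_0^R \mathfrak{I}_{p,l}(s)^2\, s\, ds,
\end{equation*}
with the identical ratio $(r/R)^{2p+2}$ holding for the $l=0$ polynomial modes $s^{k\pm 1/2}$.

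For the first inequality, I would expand $\mathfrak{v}$ in the Fourier basis recorded before Proposition~4.7. Since $\mathfrak{v} \in L^2_1$, only modes with $|k| \geq 1$ appear, so the smallest Bessel order entering the expansion is $p = 1/2$; this means the per-mode ratio is uniformly bounded by $(r/R)^3$. Summing mode-by-mode using $L^2$-orthogonality of the Fourier modes directly gives $\|\mathfrak{v}\|^2_{L^2(N_r)} \leq C(r/R)^3\,\|\mathfrak{v}\|^2_{L^2(N_R)}$.

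For the second inequality, I would decompose $\mathfrak{v} = \mathfrak{v}_D + \mathfrak{v}_R$ via Proposition~4.4(b), with dominant term $\mathfrak{v}_D = (v^+(t)\sqrt z, v^-(t)\sqrt{\bar z})$ and remainder satisfying $\|\mathfrak{v}_R\|_{L^2_2(N_{2R/3})} \leq CR^{-2}\|\mathfrak{v}\|_{L^2(N_R)}$. Direct integration together with Plancherel on $S^1$ gives
\begin{equation*}
\|(\mathfrak{v}_D)_t\|^2_{L^2(N_r)} \;=\; \tfrac{4\pi^2 r^3}{3}\sum_l l^2\,(|v^+_l|^2 + |v^-_l|^2),
\end{equation*}
which is bounded by $Cr^3/R^5\,\|\mathfrak{v}\|^2_{L^2(N_R)}$ upon applying Proposition~4.7 with $k=1$. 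For the remainder $(\mathfrak{v}_R)_t$, the Fourier modes of $\mathfrak{v}_R$ all have effective Bessel order $p \geq 3/2$ after the $\sqrt{r}$ dominant part is subtracted off, so the monotonicity lemma supplies a per-mode ratio $(r/R)^{2p+2} \leq (r/R)^5$. Combining this with the higher-moment bounds of Proposition~4.7 applied at $k = 2$ then yields a remainder contribution of order $r^5/R^7$, which is dominated by $r^3/R^5$ whenever $r \leq R/2$. Summing the dominant and remainder contributions proves the second inequality.

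The main obstacle is to make the remainder bound for $(\mathfrak{v}_R)_t$ quantitative. A naive iteration of the Poincar\'e-type Lemma~2.6 using only the $L^2_2$ estimate on $\mathfrak{v}_R$ yields only $\|(\mathfrak{v}_R)_t\|^2_{L^2(N_r)} \leq Cr^2/R^4\,\|\mathfrak{v}\|^2$, which is strictly weaker than $r^3/R^5$ in the regime $r \leq R/2$. Hence the Fourier decomposition combined with the higher-moment bounds of Proposition~4.7, which simultaneously exploit the additional vanishing order of $\mathfrak{v}_R$ along $\Sigma$ and the decay of the leading coefficients in $l$, is essential to recover the sharp $r^3/R^5$ scaling.
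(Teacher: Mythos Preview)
Your argument for the first inequality is correct and is genuinely different from the paper's. The paper does not use the Bessel monotonicity at all: it first applies Lemma~2.6 to gain a factor $r^2$, then splits $\nabla\mathfrak v$ via the dominant/remainder decomposition of Proposition~4.4(b), bounds $\int_{N_r}|\nabla\mathfrak v_D|^2$ by $Cr/R^3\|\mathfrak v\|^2$ using Proposition~4.7, and absorbs the remainder via the $L^2_2$ bound and a second application of Lemma~2.6. Your route is shorter: once one knows the $L^2_1$ condition kills all $k=0$ modes, the per--mode ratio $(r/R)^{2p+2}$ with $p\ge 1/2$ gives the $r^3/R^3$ scaling immediately, with no Poincar\'e inequality and no decomposition.

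For the second inequality your treatment of the dominant piece $(\mathfrak v_D)_t$ coincides with the paper's. The remainder step, however, has a gap. Proposition~4.7 (for any value of the index you call $k$) bounds only the moments $\sum_l l^{2k}|v^\pm_l|^2$ of the \emph{leading} coefficients $v^\pm_l=v^\pm_{\mp1,l}$; it says nothing about the coefficients $v^\pm_{k,l}$ for $|k|\ge 2$, which carry essentially all of $\mathfrak v_R$. Consequently your sentence ``combining this with the higher-moment bounds of Proposition~4.7 applied at $k=2$'' does not bound $\|(\mathfrak v_R)_t\|^2_{L^2(N_R)}$, and the monotonicity ratio $(r/R)^5$ alone (which you correctly identify) cannot absorb the extra factor $l^2$ coming from $\partial_t$ on those modes. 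The paper closes this gap by a bootstrap rather than by Proposition~4.7: it re-applies Proposition~4.4(b) at the small scale to control $\|\mathfrak v_{\mathfrak R}\|_{L^2_2(N_r)}$ in terms of $\|\mathfrak v\|_{L^2(N_{2r})}$, and then feeds in the already-proved first inequality at radius $2r$ to convert $\|\mathfrak v\|^2_{L^2(N_{2r})}$ into $(r/R)^3\|\mathfrak v\|^2_{L^2(N_R)}$; multiplied by the outer $r^2$ from Lemma~2.6 this yields the required $r^3/R^5$ scaling. If you want to stay within your Bessel framework, the fix is not Proposition~4.7 but a strengthened per--mode inequality of the form $l^2\int_0^r\mathfrak I_{p,l}^2\,s\,ds\le C\,r^3R^{-5}\int_0^R\mathfrak I_{p,l}^2\,s\,ds$ for $p\ge 1/2$ and $r\le R/2$, which requires the large-argument exponential growth of $I_p$ and does not follow from your monotonicity lemma.
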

\begin{proof}
To prove the first statement, we use Lemma 2.6 to get
\begin{align}
\|\mathfrak{v}\|_{L^2(N_r)}^2\leq Cr^2\|\nabla\mathfrak{v}\|_{L^2(N_r)}^2\label{CC_17}
\end{align}
for all $\mathfrak{v}\in L^2_1(N_R)$ and $r<R$. By Proposition 3.9 and Proposition 3.6 {\bf b}, (\ref{CC_17}) implies
\begin{align*}
\int_{N_r}|\mathfrak{v}|^2&\leq Cr^2\int_{N_r}|\nabla\mathfrak{v}|^2\leq 2Cr^2\int_{N_r}\Big|\nabla\left(\begin{array}{c}
v^+(t)\sqrt{z}\\
v^-(t)\sqrt{\bar{z}}
\end{array}\right)\Big|^2+|\nabla\mathfrak{v}_{\mathfrak{R}}|^2\\
&\leq 2C\frac{r^3}{R^3}\|\mathfrak{v}\|^2_{L^2(N_R)}+2Cr^4\|\mathfrak{v}_{\mathfrak{R}}\|_{L^2_2(N_R)}^2\\
&\leq 4C\frac{r^3}{R^3}\|\mathfrak{v}\|^2_{L^2(N_R)}
\end{align*}
for some $C>0$.\\

To prove the second statement, we notice that by applying Lemma 2.6 on $\mathfrak{v}_t$,
\begin{align*}
\int_{N_r}|\mathfrak{v}_t|^2&\leq r^2\int_{N_r}|\nabla\mathfrak{v}_t|^2\leq 2r^2\int_{N_r}\Big|\nabla\left(\begin{array}{c}
v^+_t(t)\sqrt{z}\\
v^-_t(t)\sqrt{\bar{z}}
\end{array}\right)\Big|^2+|\nabla(\mathfrak{v}_{\mathfrak{R}})_t|^2.
\end{align*}
By using Proposition 3.9, we have
\begin{align*}
r^2\int_{N_r}\Big|\nabla\left(\begin{array}{c}
v^+_t(t)\sqrt{z}\\
v^-_t(t)\sqrt{\bar{z}}
\end{array}\right)\Big|^2 \leq 2\frac{r^3}{R^5}\|\mathfrak{v}\|^2_{L^2(N_R)}.
\end{align*}
So we have
\begin{align*}
\int_{N_r}|\mathfrak{v}_t|^2&\leq 2\frac{r^3}{R^5}\|\mathfrak{v}\|^2_{L^2(N_R)}+2r^2\|\mathfrak{v}_{\mathfrak{R}}\|_{L^2_2(N_{r})}^2.
\end{align*}

Then by the first statement proved above and Proposition 3.6 {\bf b},
\begin{align*}
\|\mathfrak{v}_{\mathfrak{R}}\|_{L^2_2(N_{r})}^2 \leq \frac{C}{R^4}\|\mathfrak{v}\|_{L^2(N_{2r})}^2\leq C \frac{r^3}{R^7}\|\mathfrak{v}\|_{L^2(N_R)}^2.
\end{align*}
So we prove the second statement.
\end{proof}

\begin{remark}\ \\[1mm]
{\bf a.} By using this theorem, Proposition 2.3 and Lemma 2.6, we can prove that for $\mbox{ }\mbox{ }\mbox{ }\mbox{ }\mbox{ }$any $\mathfrak{v}\in L^2_1(N_{R})\cap \ker(D)$, we have
\begin{align*}
\|\mathfrak{v}\|_{L^2_{-1}(N_r)}^2\leq r^5\frac{C}{R^3}\|\mathfrak{v}\|_{L^2(N_R)}^2.
\end{align*}
$\mbox{ }\mbox{ }\mbox{ }\mbox{ }\mbox{ }$Moreover, we have
\begin{align*}
\|\mathfrak{v}_t\|_{L^2_{-1}(N_r)}^2\leq r^5\frac{C}{R^5}\|\mathfrak{v}\|_{L^2(N_R)}^2
\end{align*}
$\mbox{ }\mbox{ }\mbox{ }\mbox{ }\mbox{ }$for some constant $C>0$.\\
\ \\
{\bf b.} By Proposition 3.9 and the definition of modified Bessel functions, one can $\mbox{ }\mbox{ }\mbox{ }\mbox{ }\mbox{ }$prove directly that the remainder term $\mathfrak{v}_{\mathfrak{R}}$ is bounded by the order $r^{p}$ for $\mbox{ }\mbox{ }\mbox{ }\mbox{ }\mbox{ }$any $p\leq\frac{3}{2}$. Similarly, for an $L^2$-harmonic spinor $\mathfrak{u}$ with $\mathcal{K}_R$-leading coefficients, $\mbox{ }\mbox{ }\mbox{ }\mbox{ }\mbox{ }$the remainder term $\mathfrak{u}_{\mathfrak{R}}$ is bounded by the order $r^p$ for any $p\leq \frac{1}{2}$. This can be $\mbox{ }\mbox{ }\mbox{ }\mbox{ }\mbox{ }$obtained by Proposition 3.7. This result is also true for $L^2$-harmonic spinors $\mbox{ }\mbox{ }\mbox{ }\mbox{ }\mbox{ }$with its sequence of leading coefficients has $L^2_1$ bound.
\end{remark}

\section{Variational formula and perturbation of curves}
The previous Section gives us some analytic tools to handle the perturbation of $\psi$ later. Section 4 follows below will give us some important analytic tools to deal with the perturbation of the metric $g$ and $\Sigma$.

\subsection{Variational formula}
We should review the following fact about the Sobolev inequality and introduce a modified Poincare inequality first.\\

Let $\mathfrak{v}\in L^2(M\setminus\Sigma;\mathbf{\mathcal{S}}\otimes \mathcal{I} )$. We have $|\mathfrak{v}|\in L^2(M\setminus\Sigma;\mathbb{R})$. Since $\Sigma$ is a measure zero subset of $M$, $|\mathfrak{v}|$ can be extended as an $L^2$ section over $M$. Moreover, suppose $\mathfrak{v}$ is in $L^2_1(M\setminus\Sigma;\mathbf{\mathcal{S}}\otimes \mathcal{I} )$, then we will have $|\mathfrak{v}|\in L^2_1(M;\mathbb{R})$ and
\begin{align*}
\|(|\mathfrak{v}|)\|_{L^2_1(M;\mathbb{R})}\leq C\|\mathfrak{v}\|_{L^2_1(M\setminus\Sigma;\mathbf{\mathcal{S}}\otimes \mathcal{I} )}.
\end{align*}

Now, by Sobolev inequality, we have
\begin{align}
\|\mathfrak{v}\|_{L^6(M\setminus\Sigma;\mathbf{\mathcal{S}}\otimes \mathcal{I} )}= \|(|\mathfrak{v}|)\|_{L^6(M;\mathbb{R})}\leq C_0\|(|\mathfrak{v}|)\|_{L^2_1(M;\mathbb{R})}\leq C\|\mathfrak{v}\|_{L^2_1(M\setminus\Sigma;\mathbf{\mathcal{S}}\otimes \mathcal{I} )}\label{6_1}
\end{align}
for some constant $C_0,C>0$.\\

 Another important tool we need is the following modified Poincare inequality.
\begin{lemma}
Let $\mathfrak{v}\in L^2_1$ and $\mathfrak{v}\perp \ker(D)$ in $L^2-$sense, then we have
\begin{align}
\|\mathfrak{v}\|_{L^2_1}\leq C \|D \mathfrak{v}\|_{L^2}\label{6_3}
\end{align}
for some $C$ depending only on the volume of $M$.
\end{lemma}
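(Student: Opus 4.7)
The plan is to reduce this estimate to the closed-range and finite-dimensional-kernel properties of $D|_{L^2_1}$ from Proposition 2.4. In fact, this inequality is essentially the estimate (1.6) used in Step 3 of that proof, and I would re-derive it cleanly by a compactness-and-contradiction argument so the dependence of the constant is transparent.

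Argue by contradiction. Suppose no such $C$ exists; then there is a sequence $\{\mathfrak{u}_n\}\subset L^2_1$ with $\mathfrak{u}_n\perp \ker(D|_{L^2_1})$, $\|\mathfrak{u}_n\|_{L^2}=1$, and $\|D\mathfrak{u}_n\|_{L^2}\to 0$. The Schr\"odinger--Lichnerowicz inequality (1.5) gives the uniform bound
\begin{align*}
\|\mathfrak{u}_n\|_{L^2_1}^2\leq \|D\mathfrak{u}_n\|_{L^2}^2+C_1\|\mathfrak{u}_n\|_{L^2}^2 \leq 1+C_1,
\end{align*}
so $\{\mathfrak{u}_n\}$ is bounded in $L^2_1$. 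Pass to a subsequence converging weakly in $L^2_1$ to a limit $\mathfrak{u}$; by the same weak-to-strong compact embedding invoked in Step 2 of Proposition 2.4 (obtained by extending $|\mathfrak{u}_n|$ across $\Sigma$, which has measure zero, to a bounded sequence in $L^2_1(M;\mathbb{R})$ and applying Rellich compactness on the compact manifold $M$), the convergence is strong in $L^2$, hence $\|\mathfrak{u}\|_{L^2}=1$.

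Next, boundedness of $D\colon L^2_1\to L^2$ together with the weak convergence $\mathfrak{u}_n\rightharpoonup \mathfrak{u}$ in $L^2_1$ gives $D\mathfrak{u}_n\rightharpoonup D\mathfrak{u}$ in $L^2$; but $D\mathfrak{u}_n\to 0$ strongly, so $D\mathfrak{u}=0$, i.e.\ $\mathfrak{u}\in \ker(D|_{L^2_1})$. On the other hand, $\ker(D|_{L^2_1})$ is finite dimensional by Proposition 2.4 and therefore closed in $L^2$, so the $L^2$-orthogonality $\mathfrak{u}_n\perp \ker(D|_{L^2_1})$ passes to the strong limit and yields $\mathfrak{u}\perp \ker(D|_{L^2_1})$. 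Combined with $\mathfrak{u}\in\ker(D|_{L^2_1})$ this forces $\mathfrak{u}=0$, contradicting $\|\mathfrak{u}\|_{L^2}=1$, and the desired inequality follows.

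The main delicate step is the compact embedding $L^2_1(M-\Sigma)\hookrightarrow L^2(M-\Sigma)$; this is what makes the kernel finite dimensional and what drives the contradiction, and it is handled exactly as in Step 2 of Proposition 2.4 by working with the extended scalar function $|\mathfrak{u}_n|$ on the compact manifold $M$. The constant $C$ produced by the argument is really the reciprocal of the spectral gap of $D|_{L^2_1}$ above zero; the statement's phrasing in terms of the volume should be read as a reminder that the bound is geometric and independent of $\mathfrak{u}$.
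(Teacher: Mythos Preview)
Your proof is correct and essentially equivalent to the paper's, though the presentations differ. The paper's own proof is a one-line citation: it observes that $D|_{L^2_1}$ has purely discrete spectrum (this was established in Step~3 of Proposition~2.4, where $(D-\lambda)^{-1}$ is shown to be compact), so on $\ker(D)^\perp$ there is a smallest nonzero eigenvalue $\lambda_1$ and one may take $C=1/|\lambda_1|$. Your compactness-and-contradiction argument is the standard alternative way to extract exactly this spectral gap; both routes rest on the same compact embedding $L^2_1\hookrightarrow L^2$, and your closing remark about the constant being the reciprocal of the gap is exactly the content of the paper's proof.

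One small caution on your justification of that embedding: strong $L^2$ convergence of the scalar functions $|\mathfrak{u}_n|$ does not by itself imply strong $L^2$ convergence of the sections $\mathfrak{u}_n$. What saves the argument is that you also have weak $L^2$ convergence $\mathfrak{u}_n\rightharpoonup\mathfrak{u}$, and in a Hilbert space weak convergence together with convergence of norms forces strong convergence. Alternatively (and more directly), the compact embedding for sections of $\mathcal{S}\otimes\mathcal{I}$ follows from standard Rellich on $M\setminus N_\epsilon$ combined with Lemma~2.6, which makes the $L^2$ mass on $N_\epsilon$ uniformly small; this is the mechanism the paper implicitly relies on in Step~2 of Proposition~2.4.
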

\begin{proof}
The inequality,
\begin{align}
\|\mathfrak{v}\|_{L^2}\leq C \|D \mathfrak{v}\|_{L^2}\label{6_3a},
\end{align}
can be obtained immediately by proving Dirac operator has empty residual spectrum, empty continuous spectrum and has nonnegative first eigenvalue. See Chapter 4 in \cite{D} for the proof. Then, (\ref{6_3}) can be obtained by (\ref{6_3a}) and (\ref{1_1}).
\end{proof}

We will use $L^2_1\cap\ker(D|_{L^2_1})^{\perp}$ to denote the collection of elements in $L^2_1$ which is perpendicular to $\ker(D|_{L^2_1})$ in $L^2$-{\bf sense}. The terminology ``$\perp$'' used in this section is always in $L^2$-sense.

\begin{definition}
Let $\mathfrak{f}\in L^2_{-1}$, we define the functional
\begin{align*}
E_{\mathfrak{f}}(\mathfrak{v})=\int_{M\setminus\Sigma}|D\mathfrak{v}|^2+\langle \mathfrak{v}, \mathfrak{f}\rangle
\end{align*}
for all $\mathfrak{v}\in L^2_1$.
\end{definition}
Since $D$ is self-adjoint, the Euler-Lagrange equation of $E_{\mathfrak{f}}$ will be
\begin{align}
D^2\mathfrak{v}=\mathfrak{f}\label{6_2}.
\end{align}
However, the following proposition and its corollary (Proposition 4.3 and Corollary 4.4) tell us a solutions of (\ref{6_2}) exists only if $\mathfrak{f}\in L^2_{-1}\cap\ker(D|_{L^2_1})^{\perp}$\footnote{The space $L^2_1\cap \ker(D|_{L^2_1})^{\perp}$ is composed by elements $\mathfrak{f}\in L^2_{-1}$ where $\int\langle \mathfrak{f},\mathfrak{v}\rangle=0$ for all $\mathfrak{v}\in  \ker(D|_{L^2_1})$. By Definition 2.1, this is well-defined. As I mentioned before, the perpendicularity is in $L^2$-sense.}. We will have further discussion in Section 4.5.

\begin{pro}
Let $\mathfrak{f}\in L^2_{-1}$ be given. There exists $\alpha>0, \beta\in \mathbb{R}$ such that for any $\mathfrak{v}\in L^2_1\cap \ker(D|_{L^2_1})^{\perp}$,
\begin{align}
E_{\mathfrak{f}}(\mathfrak{v})\geq \alpha\|D\mathfrak{v}\|^2_{L^2}-\beta\label{6_2_1}
\end{align}
(This property is usually called coercivity). Moreover, if we minimize $E_{\mathfrak{f}}$ in the admissible set $L^2_1\cap \ker(D|_{L^2_1})^{\perp}$, then we have a unique minimizer for $E_{\mathfrak{f}}$.
\end{pro}

\begin{proof}
The inequality (\ref{6_2_1}) can be obtained directly from Proposition 2.3 and Lemma 4.1. So we should only prove that $E_{\mathfrak{f}}$ has a unique minimizer in $L^2_1\cap \ker(D|_{L^2_1})^{\perp}$ by using (\ref{6_2_1}). Suppose we have a sequence $\{\mathfrak{v}_n\}\subset L^2_1\cap \ker(D|_{L^2_1})^{\perp}$ such that
\begin{align*}
\lim_{n\rightarrow \infty}E_{\mathfrak{f}}(\mathfrak{v}_n)=\inf_{\mathfrak{v}\in L^2_1\cap \ker(D)^{\perp}}E_{\mathfrak{f}}(\mathfrak{v}).
\end{align*}
Let us call $\inf_{\mathfrak{v}\in L^2_1\cap \ker(D)^{\perp}}E_{\mathfrak{f}}(\mathfrak{v})=m$. Then there exists $n_0\in \mathbb{N}$ such that
\begin{align*}
E_{\mathfrak{f}}(\mathfrak{v}_n)\leq m+1
\end{align*}
for all $n>n_0$. So
\begin{align*}
\alpha\|D\mathfrak{v}_n\|^2_{L^2}-\beta\leq E_{\mathfrak{f}}(\mathfrak{v}_n)\leq m+1
\end{align*}
for all $n>n_0$. This inequality implies that the sequence $\{\|D\mathfrak{v}_n\|_{L^2}\}_{n>n_0}$ is bounded. By Lemma 4.1, $\{\|\mathfrak{v}_n\|_{L^2_1}\}$ is bounded. So a subsequence of $\{\mathfrak{v}_n\}$ has a weak limit, say $\mathfrak{v}$. Because of the Lichnerowicz-Weitzenb\"ock formula
\begin{align*}
\|D\mathfrak{v}_n\|_{L^2}=\|\nabla\mathfrak{v}_n\|_{L^2}+\int\frac{\mathscr{R}}{4}|\mathfrak{v}_n|^2,
\end{align*}
one can apply Theorem 5 on p. 103 in \cite{O} to obtain the inequality $\|D\mathfrak{v}\|_{L^2}\leq \liminf_{n\rightarrow\infty} \|D\mathfrak{v}_n\|_{L^2}$. So $\mathfrak{v}$ is a minimizer of $E_{\mathfrak{f}}$.\\

Finally, we prove the uniqueness. Suppose we have $\mathfrak{v}_a,\mathfrak{v}_b$ are two minimizers in $L^2_1\cap \ker(D|_{L^2_1})^{\perp}$, then
\begin{align*}
E_{\mathfrak{f}}\Big(\frac{\mathfrak{v}_a+\mathfrak{v}_b}{2}\Big)&=\int\frac{1}{4}(|D\mathfrak{v}_a+D\mathfrak{v}_b|^2)+\frac{1}{2}\langle\mathfrak{v}_a,\mathfrak{f}\rangle+\frac{1}{2}\langle \mathfrak{v}_b,\mathfrak{f}\rangle\\
&\leq \int \frac{1}{2}|D\mathfrak{v}_a|^2+\frac{1}{2}|D\mathfrak{v}_b|^2+\frac{1}{2}\langle\mathfrak{v}_a,\mathfrak{f}\rangle+\frac{1}{2}\langle \mathfrak{v}_b,\mathfrak{f}\rangle\\
&=m
\end{align*}
by Cauchy's inequality. The equality holds if and only if $D\mathfrak{v}_a=D\mathfrak{v}_b$, which implies $\mathfrak{v}_a=\mathfrak{v}_b$ by Lemma 4.1.
\end{proof}
\begin{cor}
For any $\mathfrak{f}=\mathfrak{f}_0+\mathfrak{f}_1\in L^2_{-1}$ with $\mathfrak{f}_0\in\ker(D|_{L^2_1})$ and $\mathfrak{f}_1\in L^2_{-1}\cap\ker(D|_{L^2_1})^{\perp}$. The minimizer $\mathfrak{v}\in L^2_1\cap \ker(D|_{L^2_1})^{\perp}$ of $E_{\mathfrak{f}}$ given by Proposition 4.3 will satisfy the equation
\begin{align*}
D^2\mathfrak{v}=\mathfrak{f}_1.
\end{align*}
In particular, if $\mathfrak{f}\in L^2_{-1}\cap\ker(D|_{L^2_1})^{\perp}$, we have $D^2\mathfrak{v}=\mathfrak{f}$.
\end{cor}
\begin{proof}
For any $\mathfrak{w}\in L^2_1\cap \ker(D|_{L^2_1})^{\perp}$, we have
\begin{align}
0=\frac{d}{ds}E_{\mathfrak{f}}(\mathfrak{v}+s\mathfrak{w})\Big|_{s=0}=\int_{M\setminus\Sigma} \langle D\mathfrak{w},D\mathfrak{v}\rangle+\langle\mathfrak{w},\mathfrak{f}\rangle\label{DD_1}
\end{align}
following by Propositin 4.3 and the standard argument in Theorem 4, p. 473 in \cite{G}. Since $\mathfrak{w}\in L^2_1\cap \ker(D|_{L^2_1})^{\perp}$, we have
\begin{align*}
\int_{M\setminus\Sigma}\langle\mathfrak{w},\mathfrak{f}\rangle=\int_{M\setminus\Sigma}\langle \mathfrak{w},\mathfrak{f}_1\rangle.
\end{align*}
So (\ref{DD_1}) implies that
\begin{align}
\int_{M\setminus\Sigma}\langle D\mathfrak{w},D\mathfrak{v}\rangle+\langle \mathfrak{w},\mathfrak{f}_1\rangle
=0\label{DD_2}
\end{align}
for all $\mathfrak{w}\in L^2_1\cap \ker(D|_{L^2_1})^{\perp}$. Since $\mathfrak{f}_1\in \ker(D|_{L^2_1})^{\perp}$, (\ref{DD_2}) holds for all $\mathfrak{w}\in L^2_1$. In particular, it holds for any smooth section with compact support. So by taking integration by parts, we have
\begin{align}
\int_{M\setminus\Sigma}\langle w,(D^2\mathfrak{v}-\mathfrak{f}_1)\rangle=0\label{DD_3}
\end{align} 
for all $w\in C^{\infty}_{cpt}(M\setminus\Sigma;\mathcal{S}\otimes\mathcal{I})$. This implies Corollary 4.4 immediately.
\end{proof}

\subsection{Perturbation of $\Sigma$: Local trivialization} In this section, we define some notation and explain the local trivialization of $\mathcal{E}$ (We follow the notation in Section 1). First of all, let $N_R$ be the tubular neighborhood of $\Sigma\in\mathcal{A}$. There exists a neighborhood of $\Sigma$ in $\mathcal{A}$, say $\mathcal{V}_{\Sigma}$, such that
$\Sigma'\subset N_{\frac{R}{2}}$ for all $\Sigma'\in\mathcal{V}_{\Sigma}$. Therefore, we can parametrize elements in $\mathcal{V}_{\Sigma}$ by $\{\eta:S^1\rightarrow \mathbb{C}| \eta\in C^1\mbox{ and } \|\eta\|_{C^1}\leq C_R\}$ for some $C_R$ depending on $R$. We map $\eta$ to $\{(t,\eta(t))\}=\Sigma'\subset N_R$.\\

Here we choose a variable $\mathfrak{r}<\frac{R}{4}$. This variable will also be used in the rest of this paper. Also, we fix a $T>1$ which will be specified in the following sections. We can assume $\mathfrak{r}$, $R$ and $T$ are fixed although they will be modified finite many times in this paper (The precise value of $\mathfrak{r}$ and $R$ can be assumed to decrease between each successive appearance; $T>1$ can be assumed to increase between each successive appearance).\\

 We define
\begin{align}
\chi^{(\mathfrak{r})}=\left\{ \begin{array}{cc}
1-\chi_{\frac{\mathfrak{r}}{T},\mathfrak{r}} &\mbox{ on }N_{R}\\
0 &\mbox{ on } M\setminus N_{R}
\end{array} \right.\label{DD_4}
\end{align}
(We will omit the superscript $(\mathfrak{r})$ later, but keep in mind that this function depends on $\mathfrak{r}$).\\

For each $(\eta, \mathfrak{r})$, we now define the following map
\begin{align}
\phi^{(\mathfrak{r})}&:M\setminus\Sigma \rightarrow M\setminus\Sigma';\nonumber\\
\phi^{(\mathfrak{r})}&(p)=p\mbox{ for all }p\in M\setminus N_R,\label{DD_5}\\
&(t,z)\mapsto (t,z+\chi^{(\mathfrak{r})}(z)\eta(t))\mbox{ on }N_{R}\nonumber
\end{align}
with $\Sigma'=\{(\eta(t),t)\}$. This map is a diffeomorphism if $\|\eta\|_{C^1}\leq C_{\mathfrak{r}}$ for some constant $C_{\mathfrak{r}}$ depending on $\mathfrak{r}$.\\

We fix $g$ for a moment. Recall that the fiber of $\mathcal{E}$ over $(g,\Sigma',e)\in\mathcal{X}\times\mathcal{A}_{H}$ is the space $L^2_1(M\setminus\Sigma';\mathcal{S}_{g,\Sigma',e})$, which can be identified with $L^2_1(M\setminus\Sigma;\mathcal{S}_{\phi^{(\mathfrak{r})*}g,\Sigma,e})$. Therefore, for any element $(g,\Sigma)\in \mathcal{X}\times\mathcal{A}_{H}$, there exists $\mathcal{N}\subset  \mathcal{X}\times\mathcal{A}_{H}$, a neighborhood of $(g,\Sigma)$, such that the bundle $\mathcal{E}|_{\mathcal{N}}\simeq \pi_1(\mathcal{N})\times \mathbb{B}_{\varepsilon}\times L^2_1$ where $L^2_1\simeq L^2_1(M\setminus\Sigma;\mathcal{S}_{g,\Sigma,e})$ and $\mathbb{B}_{\varepsilon}=\{\eta:S^1\rightarrow \mathbb{C}|\eta\in C^1\mbox{ and }\|\eta\|_{C^1}\leq \varepsilon\}$ for some small $\varepsilon>0$.\\

By the same token, we have the local trivialization of $\mathcal{F}$ near $(g,\Sigma,e)$ to be $\pi_1(\mathcal{N})\times\mathbb{B}_{\varepsilon}\times L^2$. The Dirac operator $D:\mathcal{E}\rightarrow \mathcal{F}$ will be a family of first order differential operator mapping from $\mathbb{B}_{\varepsilon}\times L^2_1$ to $\mathbb{B}_{\varepsilon}\times L^2$. Therefore, the kernel of the lineariztion map of $\mathfrak{M}$ (when $g$ is fixed),$\mathbb{K}_0$, will be contained in $\mathbb{V}\times L^2_1$ where
\begin{align*}
\mathbb{V}=\{\eta:S^1\rightarrow \mathbb{C}|\eta\in C^1\}.
\end{align*}
By Proposition 2.4, we know the projection of $\mathbb{K}_0$ on the second factor ,$L^2_1$, is finite dimensional. We will prove that the projection of $\mathbb{K}_0$ on $\mathbb{V}$ is also finite dimensional in Section 6.

\subsection{Perturbation of $\Sigma$: Estimates} Recall that we assume the product metric being defined on $N_{R}$, which is $g_{N_R}=dt^2+dr^2+r^2d\theta$. In the following sections, we choose a positive constant $\mathfrak{r}<\frac{R}{4}$. The precise value of $\mathfrak{r}$ can be assumed to decrease between each successive appearance. Also, we fix a $T>1$ which will be specified in the following sections.\\

Consider a pair $(\chi,\eta)$ where $\eta\in C^{\infty}(S^1;\mathbb{C})$ (here $\chi=\chi^{(\tau)}$). 
We can define the corresponding one-parameter family of diffeomorphisms
\begin{align}
\phi_s:&M\setminus\Sigma \rightarrow M\setminus\Sigma_s;\nonumber\\
&(t, z)\mapsto (t, z+s\chi(z)\eta(t))\mbox{ on }N_R,\label{DD_6}\\
&\phi_s(p)=p \mbox{  for all }p\in M\setminus N_R\nonumber
\end{align}
with $0\leq s\leq t_0$ for some small $t_0$ and $\Sigma_s=\{(t,s\eta(t))\}$. We fix a positive $s\leq t_0$ and use $(\tau,u)\in [0,2\pi]\times\{z\in\mathbb{C}||z|< R\}$ to denote the coordinates on $\phi_s(N_R)$ in the following paragraphs. We also define the notation
\begin{align}
\eta_t&=\partial_t\eta,\\
\chi_z&=\partial_z\chi,\\
\chi_{\bar{z}}&=\partial_{\bar{z}}\chi.\label{DD_7}
\end{align}

If we write down the relationship of $\partial_t$, $\partial_z$ and $\partial_{\bar{z}}$ and the push-forward tangent vectors $(\phi_s^{-1})_*(\partial_\tau)$, $(\phi_s^{-1})_*(\partial_u)$ and $(\phi_s^{-1})_*(\partial_{\bar{u}})$,
\begin{align*}
\left\{ \begin{array}{cc}
\partial_t =(\phi_s^{-1})_*\Big(\dfrac{\partial \tau}{\partial t}\partial_{\tau}+\dfrac{\partial u}{\partial t}\partial_u+\dfrac{\partial \bar{u}}{\partial t}\partial_{\bar{u}}\Big)\\[3mm]
\partial_z =(\phi_s^{-1})_*\Big(\dfrac{\partial u}{\partial z}\partial_u+\dfrac{\partial \bar{u}}{\partial z}\partial_{\bar{u}}\Big)\mbox{ }\mbox{ }\mbox{ }\mbox{ }\mbox{ }\mbox{ }\mbox{ }\mbox{ }\mbox{ }\mbox{ }\\[3mm]
\partial_{\bar{z}} =(\phi_s^{-1})_*\Big(\dfrac{\partial u}{\partial \bar{z}}\partial_u+\dfrac{\partial \bar{u}}{\partial \bar{z}}\partial_{\bar{u}}\Big)\mbox{ }\mbox{ }\mbox{ }\mbox{ }\mbox{ }\mbox{ }\mbox{ }\mbox{ }\mbox{ }\mbox{ }
\end{array} \right.,
\end{align*}
we will have
\begin{align*}
(\phi_s^{-1})_*\left( \begin{array}{c}
\partial_{\tau}\\
\partial_{u}\\
\partial_{\bar{u}}
\end{array} \right)=
\mathcal{M}
\left( \begin{array}{c}
\partial_{t}\\
\partial_{z}\\
\partial_{\bar{z}}
\end{array} \right)
\end{align*}
where
\begin{align*}
\mathcal{M}=\frac{1}{1+s(\chi_z\eta+\chi_{\bar{z}}\bar{\eta})}\left( \begin{array}{ccc}
1+s(\chi_z\eta+\chi_{\bar{z}}\bar{\eta})&0&0\\
-s\chi\eta_t-s^2\chi\chi_{\bar{z}}(\eta_t\bar{\eta}-\bar{\eta}_t\eta)&1+s\chi_{\bar{z}}\bar{\eta}&-s\chi_{\bar{z}}\eta\\
-s\chi\bar{\eta}_t-s^2\chi\chi_z(\eta\bar{\eta}_t-\eta_t\bar{\eta})&-s\chi_z\bar{\eta}&1+s\chi_z\eta
\end{array} \right).
\end{align*}

Since the metric and spinor bundle are fixed over $M$ here, so the Clifford multiplication $\kappa:TM\rightarrow Cl(TM)$ will always send $\partial_{\tau},\partial_u,\partial_{\bar{u}}$ to $e_1=\left( \begin{array}{cc}
-i & 0\\
0 & i 
\end{array} \right),e_2=\left( \begin{array}{cc}
0 & 0\\
-1 & 0 
\end{array} \right),e_3=\left( \begin{array}{cc}
0 & 1\\
0 & 0 
\end{array} \right)$ respectively. Therefore, the Dirac operator $D_s$ defined on $\phi_s(N_R)$ will be
\begin{align*}
D_{s}=&e_1\cdot\partial_{\tau}+e_2\cdot\partial_u+e_3\cdot\partial_{\bar{u}}\\
&+\frac{1}{2}\sum_{i=1}^3e_i\sum_{k,l}\omega_{kl}(e_i)e_ke_l
\end{align*}
where $\omega_{kl}$ is the forms defining the Levi-Civita connection.\\

In the following sections, all these perturbed curves will be identified with $\Sigma$ by using the pull-back operator $(\phi_s^{-1})_*$. So we have to write down the corresponding Dirac operator explicitly
\begin{align*}
D_{s\chi\eta}=(\phi_s^{-1})_*\circ D_s=&e_1\cdot(\phi_s^{-1})_*(\partial_{\tau})+e_2\cdot(\phi_s^{-1})_*(\partial_u)+e_3\cdot(\phi_s^{-1})_*(\partial_{\bar{u}})\\
&+\frac{1}{2}\sum_{i=1}^3e_i\sum_{k,l}(\phi_s^{-1})_*(\omega_{kl}(e_i))e_ke_l.
\end{align*}

Suppose that we have the following assumptions: There exist $\kappa_0$ such that
\begin{align}
\|\eta\|_{L^2(S^1)}\leq \kappa_0 \mathfrak{r}^2,\label{6_a1}\\
\|\eta_t\|_{L^2(S^1)}\leq \kappa_0 \mathfrak{r},\label{6_a2}\\
\|{\eta}_{tt}\|_{L^2(S^1)}\leq \kappa_0.\label{6_a3}
\end{align}
We will see that these inequalities will imply that there exists $\kappa_1=O(\kappa_0)$ such that
\begin{align}
\max\{|\chi_z||\eta|,|\chi_{\bar{z}}||\eta|, |\eta_t|\}\leq \gamma_{_T}\kappa_1\mathfrak{r}^{\frac{1}{2}}\label{6_a4}\\
\|\chi_{z}\eta_t\|_{L^2}, \|\chi_{\bar{z}}\eta_t\|_{L^2}\leq \gamma_{_T}\kappa_1\label{6_a5}\\
\|\chi_{zz}\eta\|_{L^2},\|\chi_{z\bar{z}}\eta\|_{L^2}, \|\chi_{\bar{z}\bar{z}}\eta\|_{L^2}\leq \gamma_{_T}^2\kappa_1\label{6_a6}.
\end{align}
where we denote $(\frac{T}{T-1})$ by $\gamma_{_T}$.\\

For the perturbed Dirac operator $D_{s\chi\eta}$, we have the following proposition.
\begin{pro} There exists $\kappa_1=O(\kappa_0)$ depending on $\kappa_0$ with the following significance.
The perturbed Dirac operator $D_{s\chi\eta}$ with $\eta$ satisfying (\ref{6_a1}) - (\ref{6_a3}) can be written as follows:
\begin{align}
D_{s\chi\eta}=(1+\varrho_{s\chi\eta})D+s(\chi_z\eta+\chi_{\bar{z}}\bar{\eta})(e_1\partial_t)+\Theta_{s}^0
+\mathcal{R}_s^0+\mathcal{H}_s^0+\mathcal{F}_s^0\label{DD_8}
\end{align}
where\\[1mm]
$\bullet$ $\Theta_s^0=[e_1(s\chi\eta_t\partial_z+s\chi \bar{\eta}_t\partial_{\bar{z}})
+e_2(s\chi_{\bar{z}}\bar{\eta}\partial_z-s\chi_z\bar{\eta}\partial_{\bar{z}})
+e_3(-s\chi_{\bar{z}}\eta\partial_z+s\chi_z\eta\partial_{\bar{z}})]$ is a \\$\mbox{ }\mbox{ }\mbox{ }$first order differential operator.\\[1mm]
$\bullet$ $\mathcal{R}_s^0: L^2_1\rightarrow L^2$ is an $O(s^2)$-first order differential operator supported on $N_\mathfrak{r}-N_{\frac{\mathfrak{r}}{T}}$ $\mbox{ }\mbox{ }\mbox{ }$with its operator norm $\|\mathcal{R}_s\|\leq \gamma_{_T}^2\kappa_1^2s^2$.\\[1mm]
$\bullet$ $\mathcal{H}_s^0$ is an $O(s^2)$-zero order differential operator supported on $N_{\mathfrak{r}}-N_{\frac{\mathfrak{r}}{T}}$. Moreover, $\mbox{ }\mbox{ }\mbox{ }$let us denote $\partial_r$ by $\vec{n}$, the vector field defined on $N_R$, then 
\begin{align}
\int_{\{r=r_0\}}|\mathcal{H}_s^0|^2i_{\vec{n}}dVol(M)\leq \gamma_{_T}^2\kappa_1^4 \mathfrak{r} s^4\label{DD_9}
\end{align}
$\mbox{ }\mbox{ }\mbox{ }$for all $r_0\leq \mathfrak{r}$.\\[1mm]
$\bullet$ $\mathcal{F}_s^0$ is an $O(s)$-zero order differential operator where
\begin{align}
\mathcal{F}_s^0=D(s(\chi_z\eta+\chi_{\bar{z}}\bar{\eta})Id)+D\Big(\left(\begin{array}{cc}
 0&si\chi\eta_t\\
-si\chi\bar{\eta}_t&0
\end{array}\right)\Big).\label{DD_10}
\end{align}
\end{pro}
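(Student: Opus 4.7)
My plan is to unwind the explicit expression for $\mathcal{M}$ and the connection identity $(\phi_s)^*(\omega_{kl}(e_i))=(d\mathcal{M})\mathcal{M}^{-1}$, group terms by their order in $s$ and by the factors of $\chi$ and its derivatives they carry, and then verify the size bounds using the pointwise and $L^2$ estimates \eqref{6_a4}--\eqref{6_a6}. The argument splits into four steps.

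\emph{Step 1 (principal symbol).} Substitute the rows of $\mathcal{M}$ into $\sum_i e_i\,(\phi_s)^*(\partial_i)$ and expand the common prefactor as $\frac{1}{1+s(\chi_z\eta+\chi_{\bar z}\bar\eta)}=1+\varrho_{s\chi\eta}$. The first row yields $e_1\partial_t$ without any $(1+\varrho)$-multiplier; the linear-in-$s$ contributions from the second and third rows reproduce $(1+\varrho)(e_2\partial_z+e_3\partial_{\bar z})$ together with exactly the six first-order summands that form $\Theta_s$. Rewriting $e_1\partial_t=(1+\varrho)e_1\partial_t-\varrho e_1\partial_t$ and using $-\varrho=s(\chi_z\eta+\chi_{\bar z}\bar\eta)+O(s^2)$ produces the claimed term $s(\chi_z\eta+\chi_{\bar z}\bar\eta)(e_1\partial_t)$. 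All cross-products quadratic or higher in $s$, including the explicit $s^2$ entries in rows 2 and 3 of $\mathcal{M}$, are absorbed into $\mathcal{R}_s$.

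\emph{Step 2 (connection symbol).} Taylor-expanding $(d\mathcal{M})\mathcal{M}^{-1}$ in $s$, the $O(s)$ contribution has two pieces: the differential of $s(\chi_z\eta+\chi_{\bar z}\bar\eta)\mathrm{Id}$, and the differential of the off-diagonal skew-Hermitian block with entries $\pm is\chi\dot\eta$ and $\mp is\chi\dot{\bar\eta}$. Contracting with $\tfrac{1}{2}e_i\sum_{k,l}(\cdot)_{kl}e_ke_l$ and simplifying via Clifford relations gives precisely the stated $\mathcal{F}_s$; everything of order $O(s^2)$ or higher in $(d\mathcal{M})\mathcal{M}^{-1}$ is by definition $\mathcal{A}_s$.

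\emph{Step 3 ($\mathcal{R}_s$ bound).} Every coefficient of $\mathcal{R}_s$ carries at least one factor of $\chi_z$, $\chi_{\bar z}$ or $\chi\dot\eta$ beyond the leading $s^1$ order, and these factors all vanish outside the annulus $N_\mathfrak{r}\setminus N_{\mathfrak{r}/T}$, which localizes the support. Using \eqref{6_a4}, each of $|\chi_z\eta|,|\chi_{\bar z}\eta|,|\eta_t|$ is pointwise bounded by $\gamma_T\kappa_1\mathfrak{r}^{1/2}$, so a product of any two is bounded by $\gamma_T^2\kappa_1^2\mathfrak{r}$; combined with $|\chi_z|,|\chi_{\bar z}|\le C\gamma_T/\mathfrak{r}$ this gives $L^\infty$ bounds of order $\gamma_T^2\kappa_1^2 s^2$ on every first-order coefficient of $\mathcal{R}_s$, and hence the operator-norm bound $\|\mathcal{R}_s\|\le \gamma_T^2\kappa_1^2 s^2$ on $L^2_1\to L^2$.

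\emph{Step 4 ($\mathcal{A}_s$ circle integral).} The symbol of $\mathcal{A}_s$ is an $O(s^2)$ polynomial in $\chi,\chi_z,\chi_{\bar z},\chi_{zz},\eta,\dot\eta$, supported on the same annulus. Each monomial either contains two factors bounded by $\gamma_T\kappa_1\mathfrak{r}^{1/2}$ (from \eqref{6_a4}) or one such factor times one bounded by $\gamma_T^2\kappa_1$ (from \eqref{6_a6}), producing the pointwise bound $|\mathcal{A}_s|^2\le Cs^4\gamma_T^2\kappa_1^4/\mathfrak{r}$ on the support. Since $i_{\vec n}dV=r_0\,d\theta\,dt$ on $\{r=r_0\}$,
\begin{align*}
\int_{\{r=r_0\}}|\mathcal{A}_s|^2\,i_{\vec n}dV
\le Cs^4\gamma_T^2\kappa_1^4\cdot \frac{r_0}{\mathfrak{r}}\cdot 4\pi^2
\le \gamma_T^2\kappa_1^4\mathfrak{r} s^4,
\end{align*}
using $r_0\le \mathfrak{r}$ and absorbing constants into $\kappa_1$.

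The main obstacle is algebraic bookkeeping rather than analysis: the principal and connection expansions together produce on the order of a dozen first-order-in-$s$ terms, and one must carefully track Clifford identities (to match the $e_i\partial_j$ patterns in $\Theta_s$ and $\mathcal{F}_s$ with what the matrix expansion actually yields) together with cross-products (to ensure that all genuinely higher-order pieces land in either $\mathcal{R}_s$ or $\mathcal{A}_s$). Once the algebraic identification is made, the required estimates reduce mechanically to products of \eqref{6_a4}--\eqref{6_a6}.
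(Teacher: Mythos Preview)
Your proposal follows essentially the same route as the paper's proof: expand $\mathcal{M}$ via \eqref{6_1m}, sort the principal-symbol part into $(1+\varrho)D$, the $s(\chi_z\eta+\chi_{\bar z}\bar\eta)e_1\partial_t$ piece, $\Theta_s$, and the residual $\mathcal{R}_s=\hat{\mathcal{R}}_s+\delta\mathcal{R}_s^{(1)}+\delta\mathcal{R}_s^{(2)}$; then read $\mathcal{F}_s+\mathcal{A}_s$ off $(d\mathcal{M})\mathcal{M}^{-1}$ and bound $\mathcal{A}_s$ using \eqref{6_a3}, \eqref{6_a5}, \eqref{6_a6}. The overall strategy and the decomposition match the paper.

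There is one imprecision you should fix in Step~4. You assert a \emph{pointwise} bound $|\mathcal{A}_s|^2\le Cs^4\gamma_T^2\kappa_1^4/\mathfrak{r}$, invoking \eqref{6_a6} as if it were a sup-norm estimate. But \eqref{6_a3}, \eqref{6_a5}, \eqref{6_a6} are $L^2(S^1)$-in-$t$ bounds (with uniform control in $r,\theta$), and $\mathcal{A}_s$ does contain factors such as $\eta_{tt}$ that are not pointwise controlled. The circle integral $\int_{\{r=r_0\}}|\mathcal{A}_s|^2\,i_{\vec n}dV=\int_0^{2\pi}\!\!\int_0^{2\pi}|\mathcal{A}_s|^2\,r_0\,d\theta\,dt$ should instead be estimated by first integrating in $t$ using \eqref{6_a3}, \eqref{6_a5}, \eqref{6_a6} (each monomial has one $L^2(S^1)$-bounded factor and one pointwise factor from \eqref{6_a4}), and then trivially in $\theta$; this is what the paper does and immediately gives the stated $\gamma_T^2\kappa_1^4\mathfrak{r}\,s^4$ bound. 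A related slip in Step~3: ``$\chi\dot\eta$'' does not vanish outside the annulus; the localization of $\mathcal{R}_s$ comes from the factors $\varrho$, $\chi_z$, $\chi_{\bar z}$ that every one of its terms carries.
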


The reason to rewrite the perturbed Dirac operator in this form is because in Section 7 and Section 8, we will construct an iteration which defines the homeomorphism in Theorem 1.5. In the process of the iteration, the first two terms and $\Theta_s$ play the leading role in the iteration. The terms $\mathcal{R}_s$, $\mathcal{H}_s$ and $\mathcal{F}_s$ are relatively unimportant. They give some elements which converge to $0$.

\begin{proof}\ \\
{\bf Step 1}. We can see that, after some standard computation,
\begin{align}
(\phi_s^{-1})_*(\omega_{kl}(e_i))=\mathcal{M}(\omega(e_i))\mathcal{M}^{-1}+(d\mathcal{M})\mathcal{M}^{-1}=(d\mathcal{M})\mathcal{M}^{-1}.\label{DD_11}
\end{align}
Here we write down precisely the $O(s)$ order term of $\sum_{i=1}^3e_i\sum_{k,l}(\phi_s^{-1})_*(\omega_{kl}(e_i))e_ke_l$, which is
\begin{align*}
&-[(d\mathcal{M})_{11}(e_1)Id+(d\mathcal{M})_{11}(e_2)e_2+(d\mathcal{M})_{11}(e_3)e_3]\\
&+[-(d\mathcal{M})_{12}(e_1)e_2-(d\mathcal{M})_{13}(e_1)e_3
+(d\mathcal{M})_{23}(e_1)e_1e_2e_3+(d\mathcal{M})_{32}(e_1)e_1e_3e_2]\\
&=D(s(\chi_z\eta+\chi_{\bar{z}}\bar{\eta})Id)+D\Big(\left(\begin{array}{cc}
 0&si\chi\eta_t\\
-si\chi\bar{\eta}_t&0
\end{array}\right)\Big):=\mathcal{F}_s^0.
\end{align*}
So the term $\frac{1}{2}\sum_{i=1}^3e_i\sum_{k,l}(\phi_s^{-1})_*(\omega_{kl}(e_i))e_ke_l$ can be expressed as
\begin{align}
\frac{1}{2}\sum_{i=1}^3e_i\sum_{k,l}(\phi_s^{-1})_*(\omega_{kl}(e_i))e_ke_l=\mathcal{F}_s^0 +\mathcal{H}_s^0\label{DD_12}
\end{align}
where $\mathcal{F}_s^0 $ is the $O(s)$-zero order differential operator described as above and $\mathcal{H}_s^0 $
is an $O(s^2)$-zero order differential operator.\\

Here we prove (\ref{6_a4}), (\ref{6_a5}) and (\ref{6_a6}). First, notice that by Sobolev inequality, we have $\eta$ is continuous. So
\begin{align*}
|\eta|^2(t)&\leq\frac{1}{2\pi}\int_0^{2\pi}|\eta|^2+\int_0^{2\pi}\partial_t(|\eta|^2)\\
&\leq \frac{1}{2\pi}\|\eta\|_{L^2}^2+2\|\eta\|_{L^2}\|\eta_t\|_{L^2}\\
&\leq \frac{1}{2\pi}\kappa_0^2\mathfrak{r}^4+2 \kappa_0^2\mathfrak{r}^3\\
&\leq (\frac{1}{2\pi}+2)\kappa_0^2\mathfrak{r}^3.
\end{align*}
Meanwhile, we have $|\chi_{z}|,|\chi_{\bar{z}}|\leq C\frac{\gamma_{_T}}{\mathfrak{r}}$. Therefore,
\begin{align*}
|(\chi_i)_z||\eta_i|,|(\chi_i)_{\bar{z}}||\eta_i|\leq C\kappa_0\mathfrak{r}^{\frac{1}{2}}.
\end{align*}
This implies (\ref{6_a4}). The inequality (\ref{6_a5}) can be proved by the fact $|\chi_{z}|,|\chi_{\bar{z}}|\leq C\gamma_{_T}\frac{1}{\mathfrak{r}}$; (\ref{6_a6}) can be proved by the fact $|\chi_{zz}|,|\chi_{\bar{z}z}|, |\chi_{\bar{z}\bar{z}}|\leq C\gamma_{_T}^2\frac{1}{\mathfrak{r}^2}$ and (\ref{6_a1}).\\

Under these assumptions, for any $s$ small, we have
\begin{align*}
\Bigg|\frac{1}{1+s(\chi_z\eta+\chi_{\bar{z}}\bar{\eta})}-1\Bigg|\leq 2 s\gamma_{_T}\kappa_1\mathfrak{r}^{\frac{1}{2}}.
\end{align*}
We can write $\frac{1}{1+s(\chi_z\eta+\chi_{\bar{z}}\bar{\eta})}=1+\varrho_{s\chi\eta}$. Then
\begin{align}
|\varrho_{s\chi\eta}|\leq 2s\gamma_{_T}\kappa_1\mathfrak{r}^{\frac{1}{2}}.\label{DD_13}
\end{align}

{\bf Step 2}. Using the conventions defined above, we have
\begin{align}
\mathcal{M}=(1+\varrho_{s\chi\eta})\Bigg[
&\left( \begin{array}{ccc}
1+s(\chi_z\eta+\chi_{\bar{z}}\bar{\eta})&0&0\\
0&1&0\\
0&0&1
\end{array} \right)
+\left( \begin{array}{ccc}
0&0&0\\
-s\chi\eta_t&s\chi_{\bar{z}}\bar{\eta}&-s\chi_{\bar{z}}\eta\\
-s\chi \bar{\eta}_t&-s\chi_z\bar{\eta}&s\chi_z\eta
\end{array} \right)\label{6_1m}
\\
&\mbox{ }\mbox{ }\mbox{ }\mbox{ }\mbox{ }\mbox{ }\mbox{ }\mbox{ }\mbox{ }\mbox{ }\mbox{ }\mbox{ }\mbox{ }\mbox{ }\mbox{ }\mbox{ }\mbox{ }\mbox{ }\mbox{ }\mbox{ }\mbox{ }\mbox{ }\mbox{ }\mbox{ }\mbox{ }\mbox{ }\mbox{ }\mbox{ }\mbox{ }\mbox{ }\mbox{ }\mbox{ }\mbox{ }\mbox{ }\mbox{ }+
\left( \begin{array}{ccc}
0&0&0\\
-s^2\chi\chi_{\bar{z}}(\eta_t\bar{\eta}- \bar{\eta}_t\eta)&0&0\\
-s^2\chi\chi_z(\eta \bar{\eta}_t-\eta_t\bar{\eta})&0&0
\end{array} \right)
\Bigg].\nonumber
\end{align}

 Therefore, by (\ref{DD_12}), we can rewrite 
\begin{align}
D_{s\chi\eta}=&(1+\varrho_{s\chi\eta})(D+s(\chi_z\eta+\chi_{\bar{z}}\bar{\eta})(e_1\partial_t))\label{DD_14}\\
&+(1+\varrho_{s\chi\eta})[e_1(s\chi\eta_t\partial_z+s\chi\bar{\eta}_t\partial_{\bar{z}})\nonumber
+e_2(s\chi_{\bar{z}}\bar{\eta}\partial_z-s\chi_z\bar{\eta}\partial_{\bar{z}})\\
&\mbox{ }\mbox{ }\mbox{ }\mbox{ }\mbox{ }\mbox{ }\mbox{ }\mbox{ }\mbox{ }\mbox{ }\mbox{ }\mbox{ }\mbox{ }\mbox{ }\mbox{ }\mbox{ }\mbox{ }\mbox{ }\mbox{ }\mbox{ }\mbox{ }\mbox{ }\mbox{ }\mbox{ }\mbox{ }\mbox{ }\mbox{ }\mbox{ }\mbox{ }\mbox{ }\mbox{ }\mbox{ }\mbox{ }\mbox{ }\mbox{ }\mbox{ }\mbox{ }\mbox{ }\mbox{ }\mbox{ }\mbox{ }\mbox{ }
+e_3(-s\chi_{\bar{z}}\eta\partial_z+s\chi_z\eta\partial_{\bar{z}})]\nonumber\\
&+\hat{\mathcal{R}}_s+\mathcal{H}_s+\mathcal{F}_s\nonumber
\end{align}
where

\begin{align}
\hat{\mathcal{R}}_s:=\frac{-1}{1+s(\chi_z\eta+\chi_{\bar{z}}\bar{\eta})}[e_2(s^2\chi\chi_{\bar{z}}(\eta_t\bar{\eta}-\bar{\eta}_t\eta)\partial_t)+e_3(s^2\chi\chi_z(\eta \bar{\eta}_t-\eta_t\bar{\eta})\partial_t)]\label{DD_15}
\end{align}
is a first order differential operator satisfying $\|\hat{\mathcal{R}}_s\|\leq \gamma_{_T}^2\kappa_1^2s^2$.\\

Finally, we define the following two terms for the second term on the right-hand side of (\ref{DD_14}):
\begin{align*}
\Theta_{s}^0=&[e_1(s\chi\eta_t\partial_z+s\chi\bar{\eta}_t\partial_{\bar{z}})\nonumber\\
&+e_2(s\chi_{\bar{z}}\bar{\eta}\partial_z-s\chi_z\bar{\eta}\partial_{\bar{z}})\nonumber\\
&+e_3(-s\chi_{\bar{z}}\eta\partial_z+s\chi_z\eta\partial_{\bar{z}})]
\end{align*}
and
\begin{align*}
\delta\mathcal{R}^{(1)}_s=\varrho_{s\chi\eta}&[e_1(s\chi\eta_t\partial_z+s\chi \bar{\eta}_t\partial_{\bar{z}})\nonumber\\
&+e_2(s\chi_{\bar{z}}\bar{\eta}\partial_z-s\chi_z\bar{\eta}\partial_{\bar{z}})\nonumber\\
&+e_3(-s\chi_{\bar{z}}\eta\partial_z+s\chi_z\eta\partial_{\bar{z}})]
\end{align*}
where $\delta\mathcal{R}^{(1)}_s$ is an $O(s^2)$-first order differential operator.
We can also simplify the first term on the right-hand side of (\ref{DD_14}) by writing $(1+\varrho_{s\chi\eta})(s\chi_z\eta+\chi_{\bar{z}}\bar{\eta})(e_1\partial_t)=s(\chi_z\eta+\chi_{\bar{z}}\bar{\eta})(e_1\partial_t)+\delta\mathcal{R}^{(2)}_s$ where $\delta\mathcal{R}^{(2)}_s$ is also an $O(s^2)$-first order differential operator. So we can rewrite (\ref{DD_14}) as the following.
\begin{align*}
D_{s\chi\eta}=(1+\varrho_{s\chi\eta})D+s(\chi_z\eta+\chi_{\bar{z}}\bar{\eta})(e_1\partial_t)+\Theta_{s}^0
+\mathcal{R}_s^0+\mathcal{H}_s^0+\mathcal{F}_s^0\nonumber
\end{align*}
where $\mathcal{R}_s^0=\hat{\mathcal{R}}_s+\delta\mathcal{R}^{(1)}_s+\delta\mathcal{R}^{(2)}_s$.\\

To prove the estimate (\ref{DD_9}) for $\mathcal{H}_s$, we notice that
the term $(d\mathcal{M})\mathcal{M}^{-1}$ involves at most the second derivative of $\chi$ and $\eta$, which can be estimated by (\ref{6_a3}), (\ref{6_a5}) and (\ref{6_a6}). So we get (\ref{DD_9}).
\end{proof}

Using the same notation introduced in this proposition, we can state the following proposition. This proposition will be used in Section 7.3.

\begin{pro}
Let $\psi\in L^2_1$ be a harmonic section. Then
\begin{align*}
\|\mathcal{R}_s(\psi)^0\|_{L^2}\leq C \gamma_{_T}^\frac{3}{2}\kappa_1^2\mathfrak{r}^2s^2
\end{align*}
for some constant $C$ depending on the $\|\psi\|_{L^2_1}$. In fact, this estimate is true for any $\psi\in L^2_1$ which can be expressed as $\psi=\sqrt{r}v(t,\theta,r)$ with $v$ being a $C^1$-bounded section.
\end{pro}
\begin{proof}
By Proposition 3.6 {\bf b}, we have $\psi=\sqrt{r}v(t,\theta,r)$ where $v$ is a $C^1$-bounded section. We write down by the definition:
\begin{align*}
\mathcal{R}_s^0=&\frac{-1}{1+s(\chi_z\eta+\chi_{\bar{z}}\bar{\eta})}[e_2(s^2\chi\chi_{\bar{z}}(\eta_t\bar{\eta}- \bar{\eta}_t\eta)\partial_t)+e_3(s^2\chi\chi_z(\eta \bar{\eta}_t-\eta_t\bar{\eta})\partial_t)]\\
&+\varrho_{s\chi\eta}[e_1(s\chi\eta_t\partial_z+s\chi \bar{\eta}_t\partial_{\bar{z}})+e_2(s\chi_{\bar{z}}\bar{\eta}\partial_z-s\chi_z\bar{\eta}\partial_{\bar{z}})+e_3(-s\chi_{\bar{z}}\eta\partial_z+s\chi_z\eta\partial_{\bar{z}})]\\
&+\varrho_{s\chi\eta}(s\chi_z\eta+\chi_{\bar{z}}\bar{\eta})(e_1\partial_t).
\end{align*}
By (\ref{DD_13}), we can bound $\Big|\dfrac{-1}{1+s(\chi_z\eta+\chi_{\bar{z}}\bar{\eta})}\Big|$ by $1+2s\gamma_{_T}\kappa_1\mathfrak{r}^{\frac{1}{2}}$. Then by using (\ref{6_a1}), (\ref{6_a2}), (\ref{6_a3}), (\ref{6_a4}), (\ref{6_a5}) and (\ref{6_a6}) we notice that every term in $\mathcal{R}_s$ can be written as the type $\sum_{i=1}^3s^2\alpha_i\beta_i\partial_i$ with $(\partial_1,\partial_2,\partial_3)=(\partial_t,\partial_x,\partial_y)$, $\|\alpha_i\|_{L^{\infty}}\leq \gamma_{_T}\kappa_1 \mathfrak{r}^{\frac{1}{2}}$ and 
\begin{align*}
\int_{r=r_0}|\beta_i|^2i_{\vec{n}}dVol(M)\leq \gamma_{_T}\kappa_1^2 \mathfrak{r}^2.
\end{align*}
So we have
\begin{align*}
\|\mathcal{R}_s(\psi)^0\|_{L^2}\leq s^2\|v\|_{C^1}\gamma_{_T}^\frac{3}{2}\kappa_1^2 \mathfrak{r}^2.
\end{align*}
\end{proof}

\subsection{Series of perturbations: Estimates}
In this section, we discuss a series of perturbations and its corresponding Dirac operator. These results will be used in the Section 7 when we construct the iteration for the proof of Theorem 1.5.\\

Let $\mathfrak{r}<\frac{R}{4}$, $T>P>1$ be fixed for a moment. We consider a sequence $\{(\chi_i,\eta_i)\}$ satisfying the following conditions:\\

1. $\chi_i:=1-\chi_{\frac{\mathfrak{r}}{T^{i+1}},\frac{\mathfrak{r}}{T^i}}$ is a cut-off function (Recall the definition (\ref{ctf})).\\

2. There exists $\kappa_2>0$ such that
\begin{align}
\|\eta_i\|_{L^2(S^1)}\leq \kappa_2\frac{\mathfrak{r}^2}{T^{2i}},\label{6_2a0}\\
\|(\eta_i)_t\|_{L^2(S^1)}\leq \kappa_2\frac{\mathfrak{r}}{T^{i}},\label{6_2a1}\\
\|{(\eta_i)}_{tt}\|_{L^2(S^1)}\leq \kappa_2,\label{6_2a2}.
\end{align}
for all $i\in \mathbb{N}$.\\

 Similar to the argument of (\ref{6_a4}), (\ref{6_a5}) and (\ref{6_a6}), we have the following results
\begin{align}
\max\big\{|(\chi_i)_z||\eta_i|,|(\chi_i)_{\bar{z}}||\eta_i|,|(\eta_i)_t|\big\}\leq \gamma_{_T}\kappa_3\frac{\mathfrak{r}^{\frac{1}{2}}}{T^{\frac{i}{2}}}\label{6_2a3},\\
\|(\chi_i)_z\eta_i \|_{L^2}, \|(\chi_i)_{\bar{z}}\eta_i\|_{L^2}\leq\gamma_{_T}\kappa_3\label{6_2a4},\\
\|(\chi_i)_{zz}\eta_i\|_{L^2},\|(\chi_i)_{z\bar{z}}\eta_i\|_{L^2}, \|(\chi_i)_{\bar{z}\bar{z}}\eta_i\|_{L^2}\leq \gamma_{_T}^2\kappa_3,\label{6_2a5}
\end{align} for some $\kappa_3=O(\kappa_2)$. We define
\begin{align}
 \eta^i=\sum_{n=0}^{i}\chi_n\eta_n.\label{DD_16}
\end{align}

As we have shown in the previous section, we define the following family of diffeomorphisms
\begin{align}
\phi^{i}_s:&M\setminus\Sigma \rightarrow M\setminus\Sigma_s;\nonumber\\
&(t,z)\mapsto (t,z+s\eta^i(t)) \mbox{ on }N_R,\label{DD_17}\\
&\phi^{i}_s(p)=p \mbox{ for all }p\in M\setminus N_R\nonumber
\end{align}
with $0\leq s\leq t_0$ for some small $t_0$ and $\Sigma_s=\{(t,s(\eta^{i}(t)))\}$. Now, we fix $s$ and use $(u,\tau)$ to denote the coordinates on $\phi^{i}_s(N_R)$.\\

The Dirac operator $D_{s\eta^{i}}$ on $M\setminus\Sigma$ will be
\begin{align*}
D_{s\eta^i}=(\phi_s^i)^{-1}_*\circ D_s=&e_1\cdot(\phi_s^i)^{-1}_*(\partial_{\tau})+e_2\cdot(\phi_s^i)^{-1}_*(\partial_u)+e_3\cdot(\phi_s^i)^{-1}_*(\partial_{\bar{u}})\\
&+\frac{1}{2}\sum_{i=1}^3e_i\sum_{k<l}(\phi_s^i)^{-1}_*(\omega_{kl}(e_i))e_ke_l.
\end{align*}

\begin{pro}
There exists $\kappa_3=O(\kappa_2)$ depending on $\kappa_2$ with the following significance.
The perturbed Dirac operator $D_{s\eta^i}$ with $\eta^i$ satisfying (\ref{6_2a0}) - (\ref{6_2a2}) can be written as follows:
\begin{align}
D_{s\eta^{i+1}}=(1+\varrho^{i+1})D_{s\eta^i}+s((\chi_{i+1})_z\eta_{i+1}&+(\chi_{i+1})_{\bar{z}}\bar{\eta}_{i+1})e_1\partial_t\label{DD_18}\\
&+\Theta^{i}_s+\mathcal{R}^{i}_{s}+\hat{\mathcal{H}}^{i}_s+\mathcal{F}^{i}_s\nonumber
\end{align}
where\\[1mm]
$\bullet$ $\Theta^{i}_s$, the $(\chi,\eta)=(\chi_{i+1},\eta_{i+1})$ version of $\Theta_s^0$, is a first order differential operator\\
$\mbox{ }\mbox{ }\mbox{ }$with order $O(s)$.\\[1mm]
$\bullet$ $\mathcal{R}^{i}_s: L^2_1\rightarrow L^2$ is an $O(s^2)$-first order differential operator supported on\\ $\mbox{ }\mbox{ }\mbox{ }$ $N_{\frac{\mathfrak{r}}{T^i}}-N_{\frac{\mathfrak{r}}{T^{i}}}$ with its operator norm bounded in the following way:
\begin{align*}
\|\mathcal{R}_s^{i}\|\leq \gamma_{_T}^2\kappa_3^2s^2.
\end{align*}
$\bullet$ $\hat{\mathcal{H}}^{i}_s$ is an $O(s^2)$-zero order differential operator. Moreover, let us denote by\\$\mbox{ }\mbox{ }\mbox{ }$ $\vec{n}=\partial_r$ the vector field defined on $N_R$, then
\begin{align}
\int_{\{r=r_0\}}|\hat{\mathcal{H}}^{i}_s|^2i_{\vec{n}}dVol(M)\leq \gamma_{_T}^4\kappa_3^4(\frac{(i+1)\mathfrak{r}}{T^{i+1}}) s^4.\label{DD_19}
\end{align}
$\mbox{ }\mbox{ }\mbox{ }$for all $r_0\leq \frac{\mathfrak{r}}{T^i}$.\\[1mm]
$\bullet$ $\mathcal{F}^{i}_s$ is an $O(s)$-zero order differential operator where
\begin{align}
\mathcal{F}^{i}_s=D(s((\chi_{i+1})_z\eta_{i+1}&+(\chi_{i+1})_{\bar{z}}\bar{\eta}_{i+1})Id)\label{DD_20}\\
&+D\Big(\left(\begin{array}{cc}
 0&si\chi_{i+1}(\eta_{i+1})_t\\
-si\chi_{i+1}(\bar{\eta}_{i+1})_t&0
\end{array}\right)\Big).\nonumber
\end{align}
\end{pro}

\begin{proof}
We can define the matrix $\mathcal{M}^i$ to be
\begin{align*}
(\phi_s^i)^{-1}_*\left( \begin{array}{c}
\partial_{\tau}\\
\partial_{u}\\
\partial_{\bar{u}}
\end{array} \right)=
\mathcal{M}^i
\left( \begin{array}{c}
\partial_{t}\\
\partial_{z}\\
\partial_{\bar{z}}
\end{array} \right).
\end{align*}

Notice that the support of $(\chi_i)_z$ and $(\chi_j)_{\bar{z}}$ are disjoint for all $i\neq j$. Therefore, we can write $\mathcal{M}^{i+1}$ as follows
\begin{align}
\mathcal{M}^{i+1}=&\frac{1}{1+s((\chi_{i+1})_z\eta_{i+1}+(\chi_{i+1})_{\bar{z}}\bar{\eta}_{i+1})}\mathcal{M}^i + \mathcal{N}^{i+1}\label{matrix}
\end{align}
where $\mathcal{N}^{i+1}$ is a $(\chi_{i+1},\eta_{i+1})$ version of $\mathcal{M}$:
\begin{align*}
\mathcal{N}^{i+1}&=\frac{1}{1+s((\chi_{i+1})_z\eta_{i+1}+(\chi_{i+1})_{\bar{z}}\bar{\eta}_{i+1})}\cdot\\[3mm]
&\left( \begin{array}{ccc}
s((\chi_{i+1})_z\eta_{i+1}+(\chi_{i+1})_{\bar{z}}\bar{\eta}_{i+1})
&0&0\\
\ \\-s\chi_{i+1}(\eta_{i+1})_t\mbox{ }\mbox{ }\mbox{ }\mbox{ }\mbox{ }\mbox{ }\mbox{ }\mbox{ }\mbox{ }\mbox{ }\mbox{ }\mbox{ }\mbox{ }\mbox{ }\mbox{ }\mbox{ }\mbox{ }\mbox{ }\\
-s^2\chi_{i+1}(\chi_{i+1})_{\bar{z}}\big[(\eta_{i+1})_t\bar{\eta}_{i+1}
&s(\chi_{i+1})_{\bar{z}}\bar{\eta}_{i+1}&-s(\chi_{i+1})_{\bar{z}}\eta_{i+1}\\
\mbox{ }\mbox{ }\mbox{ }\mbox{ }\mbox{ }\mbox{ }\mbox{ }\mbox{ }\mbox{ }\mbox{ }\mbox{ }\mbox{ }\mbox{ }\mbox{ }\mbox{ }\mbox{ }\mbox{ }\mbox{ }\mbox{ }\mbox{ }\mbox{ }\mbox{ }\mbox{ }\mbox{ }\mbox{ }-(\bar{\eta}_{i+1})_t\eta_{i+1}\big]
\ \\
\ \\-s\chi_{i+1}(\bar{\eta}_{i+1})_t\mbox{ }\mbox{ }\mbox{ }\mbox{ }\mbox{ }\mbox{ }\mbox{ }\mbox{ }\mbox{ }\mbox{ }\mbox{ }\mbox{ }\mbox{ }\mbox{ }\mbox{ }\mbox{ }\mbox{ }\mbox{ }\\
-s^2\chi_{i+1}(\chi_{i+1})_z\big[\eta_{i+1}(\bar{\eta}_{i+1})_t&-s(\chi_{i+1})_z\bar{\eta}_{i+1}&s(\chi_{i+1})_z\eta_{i+1}\\
\mbox{ }\mbox{ }\mbox{ }\mbox{ }\mbox{ }\mbox{ }\mbox{ }\mbox{ }\mbox{ }\mbox{ }\mbox{ }\mbox{ }\mbox{ }\mbox{ }\mbox{ }\mbox{ }\mbox{ }\mbox{ }\mbox{ }\mbox{ }\mbox{ }\mbox{ }\mbox{ }\mbox{ }\mbox{ }-(\eta_{i+1})_t\bar{\eta}_{i+1}\big]
\end{array} \right).\\
\end{align*}

Let us define 
\begin{align*}
\frac{1}{1+s((\chi_{i})_z\eta_{i}+(\chi_{i})_{\bar{z}}\bar{\eta}_{i})}=1+\varrho^{i}.
\end{align*}
Define $\Theta^{i}_{s}$ and $\mathcal{R}^{i}_s$ to be the $(\chi_{i+1},\eta_{i+1})$ version of  $\Theta_{s}^0$ and $\mathcal{R}_s^0$. Then we have
\begin{align*}
D_{s\eta^{i+1}}=(1+\varrho^{i+1})(D_{s\eta^i}-\mathbb{A}^i_s)+s((\chi_{i+1})_z\eta_{i+1}&+(\chi_{i+1})_{\bar{z}}\bar{\eta}_{i+1})e_1\partial_t\\
&+\Theta^{i}_s+\mathcal{R}^{i}_{s}+\mathbb{A}^{i+1}_s\\
\mbox{ }=(1+\varrho^{i+1})D_{s\eta^i}+s((\chi_{i+1})_z\eta_{i+1}&+(\chi_{i+1})_{\bar{z}}\bar{\eta}_{i+1})e_1\partial_t\\
+\Theta^{i}_s+\mathcal{R}^{i}_{s}&+[\mathbb{A}^{i+1}_s-(1+\varrho^{i+1})\mathbb{A}^i_s]
\end{align*}
where $\mathbb{A}^{i+1}_s=\sum_{j=1}^3e_j\sum_{k<l}(\omega^{(i+1)}_{kl}(e_j))e_ke_l$ with $\omega^{(i+1)}$ being the pull back of Levi-Civita connection $(\phi_s^{i+1})^{-1}_*(\omega)$. Using these conventions, we have
\begin{align*}
|\mathbb{A}^{i+1}_s-(1+\varrho^{i+1})\mathbb{A}^i_s
|=|(d\mathcal{M}^{i+1})(\mathcal{M}^{i+1})^{-1}
-(d\mathcal{M}^{i})(\mathcal{M}^{i})^{-1}
-\varrho^{i+1}(d\mathcal{M}^{i})(\mathcal{M}^{i})^{-1}|.
\end{align*}

Now, by (\ref{DD_20}) and (\ref{matrix}), we can see that $\mathcal{F}^{i}_s$ is the $O(s)$ order term of $(d\mathcal{M}^{i+1})(\mathcal{M}^{i+1})^{-1}
-(d\mathcal{M}^{i})(\mathcal{M}^{i}))^{-1}$. Therefore, by using (\ref{6_2a0}), (\ref{6_2a1}) and (\ref{6_2a3}), we have
\begin{align*}
\int_{r=r_0}|(d\mathcal{M}^{i+1})(\mathcal{M}^{i+1})^{-1}
-(d\mathcal{M}^{i})(\mathcal{M}^{i})^{-1}-\mathcal{F}^{i+1}_s
|^2i_{\vec{n}}dVol(M)
\leq C\gamma_{_T}^4(\frac{\mathfrak{r}}{T^{i+1}})\kappa_0^4s^4
\end{align*}
for some universal constant $C$. Therefore, we can choose $\kappa_3=O(\kappa_2)$ large enough such that the right-hand side of this equation is smaller than $\frac{1}{4}\gamma_{_T}^4(\frac{\mathfrak{r}}{T^{i+1}})\kappa_3^4s^4$.\\

Meanwhile,
\begin{align}
\int_{r=r_0}|(d\mathcal{M}^{j+1})(\mathcal{M}^{j+1})^{-1}
-(d\mathcal{M}^{j})(\mathcal{M}^{j})^{-1}
|^2i_{\vec{n}}dVol(M)
\leq C\gamma_{_T}^2\kappa_3^2s^2\label{DD_21}
\end{align}
for all $j$, so we have
\begin{align*}
\int_{r=r_0}|(d\mathcal{M}^{j})(\mathcal{M}^{j})^{-1}|^2i_{\vec{n}}dVol(M)
&\leq \gamma_{_T}^2(i+1)\kappa_3^2s^2.
\end{align*}
Now recall that $|\varrho^{i+1}|\leq \gamma_{_T}s\kappa_3(\frac{\mathfrak{r}}{T^{i+1}})^{\frac{1}{2}}$. So we have
\begin{align}
\int_{r=r_0}|\varrho^{i+1}(d\mathcal{M}^{j})(\mathcal{M}^{j})^{-1}|^2i_{\vec{n}}dVol(M)\leq \gamma_{_T}^4(i+1)(\frac{\mathfrak{r}}{T^{i+1}})\kappa_3^4s^4.\label{DD_22}
\end{align}
Therefore, by taking
\begin{align*}
\hat{\mathcal{H}}^{i}_s=\mathbb{A}^{i+1}_s-(1+\varrho^{i+1})\mathbb{A}^i_s-\mathcal{F}^{i+1}_s,
\end{align*}
we prove this proposition.
\end{proof}

 We have a similar version of Proposition 4.6 as follows.
\begin{pro}
Let $\psi\in L^2_1$ be a harmonic section. Then
\begin{align*}
\|\mathcal{R}^{i+1}_s(\psi)\|_{L^2}\leq C \gamma_{_T}^\frac{3}{2}\kappa_3^2(\frac{\mathfrak{r}}{T^{i+1}})^2s^2. 
\end{align*}
for some constant $C$ depending on the $\|\psi\|_{L^2_1}$. In fact, this estimate is true for any $\psi\in L^2_1$ which can be expressed as $\psi=\sqrt{r}v(t,\theta,r)$ where $v$ is a $C^1$-bounded section.
\end{pro}

\subsection{Variational formula for perturbed Dirac operators} In Section 4.1, we proved that there exists a unique minimizer of $E_{\mathfrak{f}}$ in $L^2_1\cap \ker(D|_{L^2_1})^{\perp}$ when the metric $g$ is Euclidean on a tubular neighborhood of $\Sigma$. The argument in Section 4.1 works not only for $D$ but also for the perturbed Dirac operator $D_{s\chi\eta}$, $D_{s\eta^j}$ appearing in Section 4.3 and 4.4. However, using the variational method to find the solution $D_{s\eta^j}\mathfrak{u}_s=\mathfrak{f}$ wouldn't give us enough information about $\mathfrak{u}_s$ changing by varying $s$. Therefore, we need to prove the following proposition (Proposition 4.9) to clarify this part.\\

In addition, let $\mathfrak{v}$ be the minimizer of $E_{\mathfrak{f}}$ in $L^2_1 \cap \ker(D|_{L^2_1})^{\perp}$. Then $\mathfrak{u}:=D\mathfrak{v}$ will satisfy the equation $D\mathfrak{u}=\mathfrak{f}$ only if $\mathfrak{f}\in L^2_{-1}\cap \ker(D|_{L^2_1})^{\perp}$. Namely, Corollary 4.4 gives us the following statement:
For any $\mathfrak{f}\in L^2_{-1}$, there exists $\mathfrak{u}\in D(L^2_1 \cap \ker(D|_{L^2_1})^{\perp})$ such that
\begin{align*}
D \mathfrak{u}=\mathfrak{f}+\mbox{ some elements in }\ker(D|_{L^2_1}).
\end{align*}
We will use $mod(\ker(D|_{L^2_1}))$ to denote "$\mbox{some elements in }\ker(D|_{L^2_1})$" in the rest of our paper.

\begin{pro} Fix $j\in\mathbb{N}$ and $\mathfrak{f}\in L^2_{-1}$. Suppose that $\mathfrak{u}_0\in L^2$ satisfies
\begin{align*}
D\mathfrak{u}_0=\mathfrak{f}\mbox{ }mod(\ker(D|_{L^2_1})),
\end{align*}
then there exist $\mathfrak{u}=\mathfrak{u}_0+\mathfrak{u}^s$ and $t_0>0$ such that
\begin{align*}
D_{s\eta^j}\mathfrak{u}=\mathfrak{f}\mbox{ }mod(\ker(D|_{L^2_1}))
\end{align*}
and $\|\mathfrak{u}^s\|_{L^2}\leq C(\|\mathfrak{u}_0\|_{L^2}+\|\mathfrak{f}\|_{L^2_{-1}})s$ for $s\in [0,t_0]$ and $C$ being a universal constant $C$. Furthermore, the existence of $\mathfrak{u}_0$ will be given by Corollary 4.4 or Proposition 6.2 which appears later.
\end{pro}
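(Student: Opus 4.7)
The plan is to reduce the statement to an $L^2$-solvability problem for the perturbed operator $D_{s\eta^j}$. From the hypothesis $D\mathfrak{u}_0=\mathfrak{f}+k_0$ with $k_0\in\ker(D|_{L^2_1})$ and the desired identity $D_{s\eta^j}(\mathfrak{u}_0+\mathfrak{u}^s)=\mathfrak{f}+k_s$, a subtraction produces the equivalent equation
\[
 D_{s\eta^j}\mathfrak{u}^s\equiv\xi_s\pmod{\ker(D|_{L^2_1})},\qquad \xi_s:=(D-D_{s\eta^j})\mathfrak{u}_0,
\]
so it suffices to find $\mathfrak{u}^s\in L^2$ solving this equation with the claimed $s$-linear $L^2$-bound. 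The argument then splits into controlling $\xi_s$ in $L^2_{-1}$ and inverting $D_{s\eta^j}$ modulo the unperturbed kernel in a size-preserving way.

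For the first part, I would iterate Proposition 5.7 over $i=0,\dots,j-1$ to write $D-D_{s\eta^j}$ as a finite telescoping sum of the pieces $\varrho^{i+1}D_{s\eta^i}$, the first-order operators $\Theta^{i+1}_s$, the zero-order operators $\mathcal{F}^{i+1}_s$ and $\hat{\mathcal{A}}^{i+1}_s$, and the higher-order remainders $\mathcal{R}^{i+1}_s$. Every such term carries an explicit $s$-factor and is supported on a shell $N_{\mathfrak{r}/T^{i}}\setminus N_{\mathfrak{r}/T^{i+1}}$ with coefficient norms controlled by the surface estimates of that proposition. Since $\mathfrak{u}_0$ is harmonic on $N_{\mathfrak{r}}$ by the construction of Proposition 5.4, Proposition 4.4\,a produces the splitting $\mathfrak{u}_0=\mathfrak{u}_0^{\mathrm{dom}}+\mathfrak{u}_0^{\mathfrak{R}}$ into a $r^{-1/2}$-singular dominant part and an $L^2_1$ remainder. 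Testing the pairing $\langle\phi,\xi_s\rangle$ against $\phi\in L^2_1$ and integrating the first-order pieces by parts (boundary contributions at $\Sigma$ killed by Lemma 2.6) moves every derivative onto $\phi$; the contribution from $\varrho^{i+1}D_{s\eta^i}$ is handled by iteratively substituting $D_{s\eta^i}\mathfrak{u}_0=\mathfrak{f}+k_0+(D_{s\eta^i}-D)\mathfrak{u}_0$ and telescoping. Collecting the bounds yields $\|\xi_s\|_{L^2_{-1}}\leq Cs(\|\mathfrak{u}_0\|_{L^2}+\|\mathfrak{f}\|_{L^2_{-1}})$.

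For the second part I would apply the variational argument of Proposition 5.3 verbatim to the functional $E^{(s)}_{\xi_s}(\mathfrak{v}):=\int|D_{s\eta^j}\mathfrak{v}|^2+\langle\mathfrak{v},\xi_s\rangle$ on $L^2_1\cap\ker(D_{s\eta^j}|_{L^2_1})^{\perp}$, which is justified because the coercivity argument there depends only on $L^\infty$-type bounds on the symbol and the scalar curvature, both of which survive the $O(s)$ perturbation for $s$ in a sufficiently small interval $[0,c_0]$. The unique minimizer $\hat{\mathfrak{u}}^s$ satisfies $D_{s\eta^j}^2\hat{\mathfrak{u}}^s=\xi_s$ modulo $\ker(D_{s\eta^j}|_{L^2_1})$, so $\mathfrak{u}^s:=D_{s\eta^j}\hat{\mathfrak{u}}^s\in L^2$ is a solution modulo $\ker(D_{s\eta^j}|_{L^2_1})$, and the Cauchy-type duality inequality used at the end of Proposition 5.4 delivers $\|\mathfrak{u}^s\|_{L^2}\leq C\|\xi_s\|_{L^2_{-1}}$. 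Finally, because the finite-dimensional kernels $\ker(D_{s\eta^j}|_{L^2_1})$ and $\ker(D|_{L^2_1})$ are $O(s)$-close in the appropriate Grassmannian for small $s$, solving a finite-dimensional linear system to add a correction of order $s$ to $\mathfrak{u}^s$ converts the kernel residual into one in $\ker(D|_{L^2_1})$ at the cost of only a constant in the final $L^2$ bound.

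I expect the main obstacle to be the $L^2_{-1}$ estimate for $\xi_s$: naively applying a first-order operator to $\mathfrak{u}_0$, whose dominant part behaves like $r^{-1/2}$ at $\Sigma$, produces $r^{-3/2}$ contributions that are not in $L^2$, so $\xi_s$ is genuinely distributional near $\Sigma$. Turning these formal objects into honest $L^2_{-1}$ elements of size $O(s)$ requires the duality reformulation together with a careful use of Proposition 4.4\,a and the shell-by-shell surface bounds from Proposition 5.7, and tracking the constants at each scale $\mathfrak{r}/T^{i}$ so that they aggregate into a single $O(s)$ bound is the technically delicate part.
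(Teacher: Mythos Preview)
Your approach is correct but takes a genuinely different and considerably more elaborate route than the paper. The paper's proof is a straightforward Neumann--series iteration: it writes $D_{s\eta^j}=D+\delta^j_s$ where $\delta^j_s:L^2\to L^2_{-1}$ is a first-order operator with $O(s)$ coefficients (hence operator norm $\leq Cs$ as a map $L^2\to L^2_{-1}$, by duality with $L^2_1\to L^2$), then repeatedly inverts the \emph{unperturbed} $D$ via Proposition~5.4. Starting from $D\mathfrak{u}_0=\mathfrak{f}$, one has $D_{s\eta^j}\mathfrak{u}_0=\mathfrak{f}-\delta^j_s(\mathfrak{u}_0)$; solve $D\mathfrak{v}_1=\delta^j_s(\mathfrak{u}_0)$, then $D_{s\eta^j}(\mathfrak{u}_0+\mathfrak{v}_1)=\mathfrak{f}+\delta^j_s(\mathfrak{v}_1)$, and so on. The errors decay geometrically once $Cs<\tfrac12$, and $\mathfrak{u}^s=\sum_{i\geq1}\mathfrak{v}_i$ converges in $L^2$ with the stated bound.

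By contrast, you (i) estimate $\xi_s=(D-D_{s\eta^j})\mathfrak{u}_0$ in $L^2_{-1}$ via the detailed shell decomposition of Proposition~5.7 together with Proposition~4.4\,a, and (ii) invert the \emph{perturbed} operator directly by running the variational argument of Proposition~5.3 for $D_{s\eta^j}$. Step~(i) is much heavier than necessary: the paper replaces your entire shell-by-shell duality analysis with the single sentence that a first-order operator with bounded $O(s)$ coefficients has $L^2\to L^2_{-1}$ norm $O(s)$; your worry about $r^{-3/2}$ singularities evaporates once you view the operator this way. Step~(ii) forces you into a kernel-matching argument at the end, because your residual lives in $\ker(D_{s\eta^j}|_{L^2_1})$ rather than $\ker(D|_{L^2_1})$; the paper avoids this entirely since it only ever inverts $D$, so every residual is automatically in the unperturbed kernel. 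The paper's approach also yields analyticity of $\mathfrak{u}$ in $s$ for free (Remark~5.10), which is used later.
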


\begin{proof} We can assume $\ker(D|_{L^2_1})=0$ for a moment. The general case follows the same argument as below.\\

Suppose $D_{s\eta^j}$ is the perturbed Dirac operator and $\mathfrak{f}\in L^2_{-1}$. We want to solve $\mathfrak{u}\in L^2$ satisfying
\begin{align*}
D_{s\eta^j}\mathfrak{u}=\mathfrak{f}.
\end{align*} 

We solve this equation iteratively. First, we know that the perturbed Dirac operator $D_{s\eta^j}$ can be written as $D+\delta^j_s$ where $\delta^j_s:L^2\rightarrow L^2_{-1}$ is a first order differential operator with its operator norm $\|\delta_s^j\|\leq Cs$ for some $C>0$. Meanwhile, by Corollary 4.4, there exists $\mathfrak{u}_0\in L^2$ such that
\begin{align*}
D\mathfrak{u}_0=\mathfrak{f}.
\end{align*}
So we have
\begin{align*}
D_{s\eta^j}\mathfrak{u}_0=\mathfrak{f}-\delta_s^j(\mathfrak{u}_0).
\end{align*}
Since $\|\mathfrak{u}_0\|_{L^2}\leq C\|\mathfrak{f}\|_{L^2_{-1}}$, we have $\|\delta^j_s(\mathfrak{u}_0)\|_{L^2_{-1}}\leq Cs\|\mathfrak{f}\|_{L^2_{-1}}$. By taking $s$ small enough, we have
$\|\delta^j_s(\mathfrak{u}_0)\|_{L^2_{-1}}\leq \frac{1}{2}\|\mathfrak{f}\|_{L^2_{-1}}$.\\

Now we solve $\mathfrak{v}_1\in L^2$ such that
\begin{align*}
D\mathfrak{v}_1=\delta^j_s(\mathfrak{u}_0)
\end{align*}
by using Corollary 4.4. Then we have $D_{s\eta^j}(\mathfrak{u}_0+\mathfrak{v}_1)=\mathfrak{f}+\delta^j_s(\mathfrak{v}_1)$ where
$\|\delta^j_s(\mathfrak{v}_1)\|_{L^2_{-1}}\leq \frac{1}{2}\|\delta^j_s(\mathfrak{u}_0)\|\leq \frac{1}{4}\|\mathfrak{f}\|_{L^2_{-1}}$.\\

We call $\delta^j_s(\mathfrak{u}_0)=\mathfrak{z}_0$, $-\delta^j_s(\mathfrak{v}_1)=\mathfrak{z}_1$ and $\mathfrak{u}_0+\mathfrak{v}_1=\mathfrak{u}_1$. Suppose that we have $(\mathfrak{u}_i,\mathfrak{z}_i)$ satisfying
\begin{align*}
D_{s\eta^j}\mathfrak{u}_i=\mathfrak{f}-\mathfrak{z}_i
\end{align*} 
with $\|\mathfrak{z}_i\|_{L^2_{-1}}\leq \frac{1}{2^{i+1}}\|\mathfrak{f}\|_{L^2_{-1}}$ for some $i\in\mathbb{N}$, then we can solve $\mathfrak{v}_{i+1}\in L^2$ which satisfies
\begin{align*}
D\mathfrak{v}_{i+1}=\mathfrak{z}_i
\end{align*}
by Corollary 4.4. So we have
\begin{align*}
D_{s\eta^j}(\mathfrak{u}_i+\mathfrak{v}_{i+1})=\mathfrak{f}+\delta^j_s(\mathfrak{v}_{i+1})
\end{align*}
where $\|\delta^j_s(\mathfrak{v}_{i+1})\|_{L^2_{-1}}\leq \frac{1}{2}\|\mathfrak{z}_{i}\|\leq \frac{1}{2^{i+2}}\|\mathfrak{f}\|_{L^2_{-1}}$. By taking $\mathfrak{u}_i+\mathfrak{v}_{i+1}=\mathfrak{u}_{i+1}$ and $-\delta^j_s(\mathfrak{v}_{i+1})=\mathfrak{z}_{i+1}$, we can repeat this argument inductively.\\

Finally, by taking the limit $i\rightarrow \infty$, then we have $\mathfrak{u}_{i+1}\rightarrow \mathfrak{u}$ in $L^2$-sense which satisfies
\begin{align*}
D_{s\eta^j}(\mathfrak{u})=\mathfrak{f}.
\end{align*}
Moreover, since $\mathfrak{u}-\mathfrak{u}_0=\sum_{i=1}^{\infty}\mathfrak{v}_i$ and $D\mathfrak{v}_i=(-1)^{(i-1)}\delta^j_s(\mathfrak{v}_{i-1})$, we have $\sum_i^{\infty}\mathfrak{v}_i$ is an $O(s)$-order $L^2$ section. We call $\sum_i^{\infty}\mathfrak{v}_i=\mathfrak{u}^s$.\\

Therefore, $\mathfrak{u}_0+\mathfrak{u}^s$ satisfies
\begin{align}
D_{s\eta^j}(\mathfrak{u}_0+\mathfrak{u}^s)=\mathfrak{f}.\label{6_4_1}
\end{align}
\end{proof}

\begin{remark}
In our proof, since we can always write $\delta^j_s=\sum_{i=1}^{\infty} s^i\delta^j_i$ where the operator norm of $\delta^j_{i}$ is bounded uniformly, $\mathfrak{u}$ can be written as $\sum_{i=0}^{\infty}s^i\mathfrak{u}^{(i)}$.\\ $\|\sum_{i=m}^{\infty}s^i\mathfrak{u}^{(i)}\|_2\rightarrow 0$ as $m\rightarrow \infty$.
\end{remark}

\section{$\Sigma$ with a non-Euclidean neighborhood} We now try to derive same results as we did in previous section without assuming that $\Sigma$ has a product type metric on the tubular neighborhood. The discussion in this section is necessary because even if we start with a Euclidean metric, it will change to a non-Euclidean one when we perturb the curve $\Sigma$ and use a diffeomorphism to identify it with the original curve.\\

There are two parts in this section. In Section 5.1, we prove an important result in Lemma 5.1. This lemma shows that the regularity properties proved in Section 3 for the leading terms of $\mathbb{Z}/2$-harmonic spinors in $L^2_1$ still holds when the metric is non-Euclidean near $\Sigma$; In Section 5.2, we formulate the way Dirac operator changes when we deform $\Sigma$. This part generalizes the results in Section 4.3 by taking away the assumption that $\Sigma$ has a Euclidean metric on the tubular neighborhood. The results in this section will be used in Section 7 (see also Section 9.1).

\subsection{Asymptotic behavior of the $L^2_1$-harmonic section} Let $g$ be a smooth metric and $\Sigma\subset M$ be a $C^1$ curve embedded in $M$. We use the exponential map to send elements in the normal bundle $\{v\in \nu_{\Sigma}||v|\leq R\}$ to the tubular neighborhood of $\Sigma$ in $M$. We can parametrize this neighborhood by a cylindrical coordinates $(t,r,\theta)$ and $g=dt^2+dr^2+r^2d\theta^2+O(r^2)$ on $N_{2R}$ for some $R>0$. Let $\mathbf{\mathcal{S}}\otimes\mathcal{I}$ be the twisted spinor bundle defined on $M\setminus\Sigma$ with respect to $g$.\\

Now, we define $g_{E}:=(\chi_{R,2R})g+(1-\chi_{R,2R})(dt^2+dr^2+r^2d\theta^2)$ and let $\mathbf{\mathcal{S}}_{E}\otimes\mathcal{I}$ be the twisted spinor bundle defined on $M\setminus\Sigma$ with respect to $g_E$. Then we can see that they are isomorphic (recall that they are classified by $H^1(M;\mathbb{Z}_2)$). So we can regard them as the same complex vector space with different Clifford multiplications and Dirac operators. Denote by $D$ the Dirac operator with respact to $g$ and $D_{E}$ the Dirac operator with respect to $g_E$.\\

The main result of this section is Lemma 5.1 below.
\begin{lemma}
Let $\mathfrak{v}\in L^2_1(N_R;\mathcal{S}\otimes\mathcal{I})$ satisfies $\|\mathfrak{v}\|_{C^{0,\frac{1}{2}}(M\setminus\Sigma)}<\infty$ and $D(\mathfrak{v})=0$. Then there exists $\mathfrak{v}^*\in L^2_1(N_R;\mathcal{S}\otimes\mathcal{I})$ and $w^{\pm}(t)\in L^2_2(S^1;\mathbb{C})$ such that $D_{E}\mathfrak{v}^*=0$ and
\begin{align}
\Bigg\|\mathfrak{v}-\mathfrak{v}^*-\left(\begin{array}{c}
w^+(t)\sqrt{z}\\
w^-(t)\sqrt{\bar{z}}
\end{array}
\right)(1-\chi_{\frac{R}{2},R})
\Bigg\|_{L^2(N_R)}\leq O(r^\frac{5}{2}).\label{EE_2}
\end{align}
and
\begin{align}
\Bigg\|\mathfrak{v}-\mathfrak{v}^*-\left(\begin{array}{c}
w^+(t)\sqrt{z}\\
w^-(t)\sqrt{\bar{z}}
\end{array}
\right)(1-\chi_{\frac{R}{2},R})
\Bigg\|_{L^2_1(N_R)}\leq O(r^\frac{3}{2}).\label{EE_2fix}
\end{align}
\end{lemma}
\begin{proof} We divide our proof into two parts.\\
{\bf Step 1}. 
Here we set up the strategy of the proof. First, it is clear that we can write $D=D_{E}+O(r^2)\mathcal{L}_1+O(r)\mathcal{L}_0$ where $\mathcal{L}_1$ is a bounded first order differential operator and $\mathcal{L}_0$ is a zero order operator, composed by Clifford multiplications.\\

Second, the argument in Lemma 2.6 still works for elements in $L^2_1(N_r;\mathcal{S}\otimes\mathcal{I})$. So the right hand side of the equation
\begin{align*}
D_{E}\mathfrak{v}=O(r^2)\mathcal{L}_1(\mathfrak{v})+O(r)\mathcal{L}_0(\mathfrak{v})
\end{align*}
will satisfy $\|O(r^2)\mathcal{L}_1(\mathfrak{v})+O(r)\mathcal{L}_0(\mathfrak{v})\|_{L^2(N_a)}\leq O(a^2)$ for all $a<R$.
Namely, we have
\begin{align}
D_{E}\mathfrak{v}=f\label{ODEs}
\end{align}
for some $f$ satisfying 
\begin{align}
\|f\|_{L^2(N_a)}\leq O(a^2)\label{est_f}
\end{align}
for all $a<R$.\\

By the standard theory of ODEs, the solution of (\ref{ODEs}) has the form
\begin{align}
\mathfrak{v}=\mathfrak{w}+\mathfrak{v}^*\label{STODEs}
\end{align}
where $D_E\mathfrak{w}=f$ is a particular solution and $\mathfrak{v}^*$ satisfies $D_E\mathfrak{v}^*=0$. In particular, if we can prove that $\mathfrak{w}$ has the leading term
\begin{align}
\left(\begin{array}{c}
w^+(t)\sqrt{z}\\
w^-(t)\sqrt{\bar{z}}
\end{array}
\right)
\end{align}
satisfying $w^{\pm}(t)\in L^2_2(S^1;\mathbb{C})$ and the remainder term
\begin{align}
\mathfrak{v}-\mathfrak{v}^*-\left(\begin{array}{c}
w^+(t)\sqrt{z}\\
w^-(t)\sqrt{\bar{z}}
\end{array}
\right)(1-\chi_{\frac{R}{2},R}):=\mathfrak{w}_\mathfrak{R}
\end{align}
satisfying (\ref{EE_2}), then we prove Lemma 5.1.\\
\ \\
{\bf Step 2}. Here we study the solution $\mathfrak{w}\in L^2_1$. We write down the Fourier expression of $\mathfrak{w}$ on $N_R$ as we have done in Section 3.
\begin{align*}
\mathfrak{w}(t,r,\theta)=\sum_{l,k}e^{ilt}\left( \begin{array}{c}
e^{i(k-\frac{1}{2})\theta}W^+_{k,l}\\
e^{i(k+\frac{1}{2})\theta}W^-_{k,l}
\end{array} \right).
\end{align*}
The equation $D_E\mathfrak{w}=f$ will give us
\begin{align*}
\frac{d}{dr}W_{k,l}^-+\frac{(k+\frac{1}{2})}{r} W_{k,l}^-+ 2lW^+_{k,l}+P^+_{k,l}(f)=0;\\
\frac{d}{dr}W_{k,l}^+-\frac{(k+\frac{1}{2})}{r} W_{k,l}^++2lW^-_{k,l}+P^-_{k,l}(f)=0
\end{align*}
where $P^+$ is the projection mapping to the Fourier modes of the first component and $P^-$ is the projection to the modes of the second component.\\

Therefore, we have
\begin{align}
&\frac{d}{dr}(r^{k+\frac{1}{2}}W^-_{k,l})=-r^{k+\frac{1}{2}}2l W^+_{k,l}-r^{k+\frac{1}{2}}P^+_{k,l}(f);\label{EE_4}\\
&\frac{d}{dr}(r^{-k-\frac{1}{2}}W^+_{k,l})=-r^{-k-\frac{1}{2}}2l W^-_{k,l}-r^{-k-\frac{1}{2}}P^-_{k,l}(f)\label{EE_5}
\end{align}
for all $k,l$.\\

The integral of (\ref{EE_4}) shows that there exists a double sequence $n_{k,l}>0$ satisfying $\sum_{k,l\in\mathbb{Z},l\neq -1}n_{k,l}^2<\infty$ such that
\begin{align}
|b^{k+\frac{1}{2}} W_{k,l}^-(b)-a^{k+\frac{1}{2}}W_{k,l}^-(a)|&\leq \int_a^br^{k+\frac{1}{2}}(|W^+_{k,l}|+|P^+_{k,l}(f)|)\label{EE_6}\\
&\leq (b^{2k+2}-a^{2k+2})^{\frac{1}{2}}(\int_a^b O(1))^{\frac{1}{2}}\nonumber\\
&\leq n_{k,l}(b^{2k+2}-a^{2k+2})^{\frac{1}{2}}(b-a)^{\frac{1}{2}}\nonumber
\end{align}
for any $b>a>0$ and $k\neq -1$. Meanwhile, since we have $|W^+_{k,l}|$ $|P^+_{k,l}(f)|$ are $o(1)$, we have
\begin{align}
|b^{-\frac{1}{2}}W_{-1,l}^-(b)-a^{-\frac{1}{2}}W_{-1,l}^-(a)|&\leq \int_a^br^{-\frac{1}{2}}(|W^+_{-1,l}|+|P^+_{-1,l}(f)|)\leq C(b-a)^{\frac{1}{2}}\label{EE_6*}
\end{align}
for some $C>0$, $b>a>0$.\\

 Suppose $k\geq 0$, (\ref{EE_6}) implies
\begin{align*}
\lim_{r\rightarrow 0}r^{k+\frac{1}{2}}W^-_{k,l}(r)=c
\end{align*}
for some $c\in \mathbb{C}$. $|W^-_{k,l}|>\frac{|c|}{2}r^{-k-\frac{1}{2}}\geq \frac{|c|}{2}r^{-\frac{1}{2}}$ which is contradictory to Lemma 2.6 if $c\neq 0$. So we have $\lim_{r\rightarrow 0}r^{k+\frac{1}{2}} W^-_{k,l}(r)=0$. By taking $a\rightarrow 0$ in (\ref{EE_6}), we have
\begin{align*}
C_1b^{k+\frac{1}{2}}|W^-_{k,l}|(b)\leq b^{k+\frac{3}{2}}. 
\end{align*}
So we have
\begin{align}
|W^-_{k,l}|(r)\leq n_{k,l}r\label{EE_7}
\end{align}
for all $k\geq 0$ with $\sum_{k,l}|n_{k,l}|^2<\infty$. Similarly, by using the same argument, we can also prove that
\begin{align}
|W^+_{k,l}|(r)\leq n_{k,l}r.\label{EE_8}
\end{align}
for all $k\leq 0$.\\

For the case $k=-1$, by (\ref{EE_6*}) we have $\lim_{r\rightarrow 0}r^{-\frac{1}{2}}W^-_{k,l}=c$ for some $c\in\mathbb{C}$. So we have
\begin{align}
W^-_{-1,l}(r)= w_{-1,l}^-r^{\frac{1}{2}}+o(r^{\frac{1}{2}}).\label{EE_2a}
\end{align}
Similarly, we have
\begin{align}
W^+_{1,l}(r)= w_{1,l}^+r^{\frac{1}{2}}+o(r^{\frac{1}{2}}).\label{EE_3a}
\end{align}

For the case $k<-1$, if we have
\begin{align*}
\limsup_{r\rightarrow 0}|r^{k+\frac{1}{2}}W_{k,l}^-|(r)=c<\infty
\end{align*}
then $|W^-_{k,l}|(r)\leq cr^{-k-\frac{1}{2}}\leq cr^{\frac{3}{2}}$. On the other hand, if we have
\begin{align*}
\limsup_{r\rightarrow 0}|r^{k+\frac{1}{2}}W_{k,l}^-|(r)=\infty,
\end{align*}
$k<-2$ by (\ref{EE_6}). Moreover, (\ref{EE_6}) implies that
\begin{align*}
|b^{k+\frac{1}{2}}W^-_{k,l}(b)-a^{k+\frac{1}{2}}W^-_{k,l}(a)|\leq C a^{k+\frac{5}{2}}.
\end{align*}
So
\begin{align*}
\Bigg|\frac{b^{k+\frac{1}{2}}}{a^{k+\frac{5}{2}}}W^-_{k,l}(b)-a^{-2}W^-_{k,l}(a)\Bigg|\leq n_{k,l}O(1).
\end{align*}
Therefore, we have
\begin{align*}
\limsup_{a\rightarrow 0}|a^{-2}W^-_{k,l}(a)|\leq n_{k,l}O(1)
\end{align*}
which implies
\begin{align*}
|W^-_{k,l}|(r)\leq n_{k,l}r^2.
\end{align*}
So we can conclude that
\begin{align}
|W^-_{k,l}|(r)\leq n_{k,l}r^{\frac{3}{2}}\label{EE_4a}
\end{align}
for all $k<-1$.\\

We summarize (\ref{EE_7}), (\ref{EE_2a}), (\ref{EE_3a}) and (\ref{EE_8}): There exists a double sequence $n_{k,l}>0$ with $\sum_{k,l\in\mathbb{Z}}n_{k,l}^2<\infty$ such that 
\begin{align}
W^-_{k,l}(r)=\left\{\begin{array}{cc}
n_{k,l}r&\mbox{ when }k\neq -1,\\
w^-_{-1,l}r^{\frac{1}{2}}+o(r^{\frac{1}{2}})&\mbox{ when }k=-1
\end{array}\right.\label{EEE_x1}
\end{align}
and
\begin{align}
W^+_{k,l}(r)=\left\{\begin{array}{cc}
n_{k,l}r&\mbox{ when }k\neq 1,\\
w^+_{1,l}r^{\frac{1}{2}}+o(r^{\frac{1}{2}})&\mbox{ when }k=1
\end{array}\right..\label{EEE_x2}
\end{align}
Now, by using (\ref{EE_6}) and (\ref{EE_7}) again, we have
\begin{align}
W^-_{k,l}(r)=\left\{\begin{array}{cc}
n_{k,l}r^{\frac{3}{2}}&\mbox{ when }k\neq -1,\\
w^-_{-1,l}r^{\frac{1}{2}}+n_{1,l}(r^{\frac{3}{2}})&\mbox{ when }k=-1
\end{array}\right.\label{EEE_y1}
\end{align}
and
\begin{align}
W^+_{k,l}(r)=\left\{\begin{array}{cc}
n_{k,l}r^{\frac{3}{2}}&\mbox{ when }k\neq 1,\\
w^+_{1,l}r^{\frac{1}{2}}+n_{-1,l}(r^{\frac{3}{2}})&\mbox{ when }k=1
\end{array}\right..\label{EEE_y2}
\end{align}
\ \\
{\bf Step 3}. Here we prove $\{w^+_{-1,l}\}$, $\{w^-_{1,l}\}\in \ell^2_2$. By (\ref{EEE_y1}) and (\ref{EEE_y2}), this implies (5.1).\\

First of all, by Definition 1.4, we have
\begin{align}
[\partial_t, D]=O(r^2)\mathcal{L}^*_1+O(r)\mathcal{L}^*_0
\end{align}
for some continuous first order differential operator $\mathcal{L}^*_1$ and zero order differential operator $\mathcal{L}^*_0$. So we have
\begin{align}
D\mathfrak{v}_t=O(r^2)\mathcal{L}^*_1(\mathfrak{v})+O(r)\mathcal{L}^*_0(\mathfrak{v})\in L^2\label{eq_fix}
\end{align}
By (\ref{EEE_y1}), (\ref{EEE_y2}), (\ref{eq_fix}) and Lichnerowicz-Weitzenb\"ock formula, $\mathfrak{v}_t$ is in $L^2_1$. Meanwhile, since $\mathfrak{v}^*\in L^2_1$ satisfies $D_E\mathfrak{v}^*=0$, we also have $\mathfrak{v}^*_t\in L^2_1$. Therefore, by (\ref{STODEs}), we have $\mathfrak{w}_t$ is in $L^2_1$ and
\begin{align}
D_E\mathfrak{w}_t=&\big(O(r^2)\mathcal{L}_1(\mathfrak{v}_t)+O(r)\mathcal{L}_0(\mathfrak{v}_t))+\big(O(r^2)\mathcal{L}_1^*(\mathfrak{v})+O(r)\mathcal{L}_0^*(\mathfrak{v}))\\
:=&f^*.\nonumber
\end{align}
One can check directly that
\begin{align}
\|f^*\|_{L^2(N_r\setminus N_{\frac{r}{2}})}\leq C r^2(\|\mathfrak{v}_t\|_{L^2_1(N_{R})}+\|\mathfrak{v}\|_{L^2_1(N_R)})\label{fix1}
\end{align}
for all $r<R$. Consider the following partition of unity for $\mathbb{R}^+$:
\begin{align}
\chi_n:=\chi_{\frac{R}{2^{2n+3}},\frac{R}{2^{2n+2}}}(1-\chi_{\frac{R}{2^{2n+1}},\frac{R}{2^{2n}}})
\end{align}
which satisfies $\sum_{n\in\mathbb{N}}\chi_n=1$. By taking $f^*_n=\chi_n f^*$, a particular solution of the equation $D_E \mathfrak{q}_n=f^*_n$ can be solve by using the method of variation of constants in section 2.2.2 of \cite{F} and it will satisfies
\begin{align}
\|\partial_t\mathfrak{q}_n\|_{L^2}\leq C\|f^*_n\|_{L^2}\label{fix2}
\end{align}
by using the argument in Lemma 2.2.3 of \cite{F}. Now, since $f_n^*=0$ on $N_{\frac{R}{2^{2n+3}}}$, by using Proposition 3.9, (\ref{fix1}) and (\ref{fix2}), we have
\begin{align}
\Big(\frac{R}{2^{2n+3}}\Big)^{\frac{3}{2}}\|\partial_t q^{\pm}_n(t)\|_{L^2(S^1;\mathbb{C})}&\leq C\|\partial_t\mathfrak{q}_n\|_{L^2}
\leq C\|f^*_n\|_{L^2}\label{fix_estimate}\\
&\leq C \Big(\frac{R}{2^{2n}}\Big)^{2}(\|\mathfrak{v}_t\|_{L^2_1(N_R)}+\|\mathfrak{v}\|_{L^2_1(N_R)})\nonumber
\end{align}
where $q^{\pm}_n(t)$ is the leading term of $\mathfrak{q}_n$. (\ref{fix_estimate}) implies that
\begin{align}
\|q^{\pm}_n(t)\|_{L^2_1(S^1;\mathbb{C})}\leq C\sqrt{R}\frac{1}{2^n}(\|\mathfrak{v}_t\|_{L^2_1(N_R)}+\|\mathfrak{v}\|_{L^2_1(N_R)})
\end{align}
So we have $\sum_{n\in\mathbb{N}} \mathfrak{q}_n$ satisfies the equation
\begin{align}
D_E(\sum_{n\in\mathbb{N}} \mathfrak{q}_n)=\sum_{n\in\mathbb{N}} f^*_n=f^*
\end{align}
with its leading terms in $L^2_1(S^1;\mathbb{C})$. Since $\mathfrak{w}_t$ and $\sum_{n\in\mathbb{N}} \mathfrak{q}_n$ satisfy the same equation, we have
\begin{align}
\mathfrak{w}_t=\sum_{n\in\mathbb{N}} \mathfrak{q}_n+\mathfrak{p}
\end{align}
for some $\mathfrak{p}\in L^2_1$ satisfies $D_E\mathfrak{p}=0$. Proposition 3.9 implies the leading terms of $\mathfrak{p}$ is smooth. So the leading terms of $\mathfrak{w}_t$ is in $L^2_1(S^1;\mathbb{C})$, which implies the leading terms of $\mathfrak{w}$ is in $L^2_2(S^1;\mathbb{C})$, i.e., $\{w^+_{-1,l}\}$, $\{w^-_{1,l}\}\in \ell^2_2$.\\
\ \\
{\bf Step 4}. We should prove (\ref{EE_2fix}) now. Let $\chi=(1-\chi_{\frac{R}{2},R})$. We have
\begin{align}
\|f\|_{L^2}^2=\|D_E\mathfrak{w}\|_{L^2}^2=\Bigg\| D_E \Bigg[\left(\begin{array}{c}
w^+(t)\sqrt{z}\\
w^-(t)\sqrt{\bar{z}}
\end{array}\right)
\chi+\mathfrak{w}_{\mathfrak{R}}\Bigg]  \Bigg\|_{L^2}^2.
\end{align}
By (\ref{est_f}) and the fact that $w^{\pm}(t)\in L^2_2$, we have
\begin{align}
\|D_E\mathfrak{w}_{\mathfrak{R}}\|_{L^2}^2&\leq \|f\|_{L^2}^2+\Bigg\| D_E\Bigg[\left(\begin{array}{c}
w^+(t)\sqrt{z}\\
w^-(t)\sqrt{\bar{z}}
\end{array}\right)\chi\Bigg] \Bigg\|_{L^2}^2\nonumber\\
&\leq\|f\|_{L^2}^2+ \Bigg\| \left(\begin{array}{c}
\frac{d}{dt}w^+(t)\sqrt{z}\\
\frac{d}{dt}w^-(t)\sqrt{\bar{z}}
\end{array}\right)\chi \Bigg\|_{L^2}^2+C\Bigg\| \left(\begin{array}{c}
w^+(t)\sqrt{z}\\
w^-(t)\sqrt{\bar{z}}
\end{array}\right) \Bigg\|_{L^2}^2=O(r^{\frac{3}{2}}).
\end{align}
By Lichnerowicz-Weitzenb\"ock formula and (\ref{EE_2}), we have
\begin{align}
\|\mathfrak{w}_{\mathfrak{R}}\|_{L^2_1}^2\leq \|D_E\mathfrak{w}_{\mathfrak{R}}\|_{L^2}^2+C\|\mathfrak{w}_{\mathfrak{R}}\|_{L^2}^2
\leq O(r^{\frac{3}{2}}).
\end{align}
This implies (\ref{EE_2fix}).
\end{proof}
\begin{remark}
By the same token, we can also show that elements in $\ker(D|_{L^2})$ have a similar decomposition. To be more precisely, for any $\mathfrak{u}\in \ker(D|_{L^2})^0$, there is a decomposition 
\begin{align}
\mathfrak{u}=\left(\begin{array}{c}
u^+(t)\frac{1}{\sqrt{z}}\\
u^-(t)\frac{1}{\sqrt{\bar{z}}}
\end{array}\right)+\mathfrak{u}_{\mathfrak{R}}
\end{align}
such that $u^{\pm}(t)\in L^2(S^1;\mathbb{C})$ and $\|\mathfrak{u}_{\mathfrak{R}}\|_{L^2(N_r)}=O(r^\frac{3}{2})$.
\end{remark}

\subsection{Properties on a non-Euclidean tubular neighborhood} Now we modify results in Section 4 without assuming a Euclidean metric on the tubular neighborhood.\\

First of all, we should set up some notation. Let $N_R$ to be the tubular neighborhood of $\Sigma$,  and $D_{E}$ to be the Dirac operator with respect to Euclidean metric on $N_R$. We define
$D^{(n)}=\chi_n D_{E}+(1-\chi_n)D$, where $\chi_n=1-\chi_{\frac{\mathfrak{r}}{T^{n+1}},\frac{\mathfrak{r}}{T^{n}}}$ is defined in Section 4.4. So we have
\begin{align*}
D^{(n)}=D_{E}\mbox{ on }N_{\frac{\mathfrak{r}}{T^{n+1}}}.
\end{align*}
Moreover, we have the following proposition (Here we take $\partial_1=\partial_r$, $\partial_2=\partial_{\theta}$ and $\partial_3=\partial_t$).
\begin{pro}
Let $(D^{(n)}-D)=\delta^{(n)}$, we have
\begin{align*}
\delta^{(n)}=\delta_1^{(n)}+\delta_0^{(n)} 
\end{align*}
where\\[1mm]
$\bullet$ $\delta_1^{(n)}$ is a first order differential operator supported on $N_{\frac{\mathfrak{r}}{T^n}}$ such that 
\begin{align*}
\delta_1^{(n)}=\sum_{i=1}^3 a_i\partial_i\mbox{ with }|a_1|\leq O(r^2)\mbox{ and }|a_2|, |a_3|\leq O(r).
\end{align*}
$\bullet$ $\delta_0^{(n)}$ is a zero order differential operator supported on $N_{\frac{\mathfrak{r}}{T^n}}$ such that 
\begin{align*}
|\delta_0^{(n)}|=O(r).
\end{align*}
\end{pro}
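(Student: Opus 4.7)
From the definition $D^{(n)}=\chi_n D_{prod}+(1-\chi_n)D$ one reads off
\[
\delta^{(n)} \;=\; D^{(n)}-D \;=\; \chi_n(D_{prod}-D),
\]
so both pieces of the claimed decomposition are automatically supported in $\mathrm{supp}\,\chi_n\subset N_{\mathfrak{r}/T^n}$, and the task reduces to a pointwise estimate of $D_{prod}-D$ on the tubular neighborhood. The plan is to carry out a Fermi coordinate expansion along $\Sigma$: the Gauss lemma gives $h_{rr}=h_{r\theta}=h_{rt}=0$ identically, while the standard normal-coordinate expansion in Cartesian normal coordinates, converted to polar, yields $h_{tt}=O(r^2)$, $h_{\theta t}=O(r^3)$ and $h_{\theta\theta}=O(r^4)$, where $h=g-g_{prod}$ and $g_{prod}=dr^2+r^2d\theta^2+dt^2$.

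For the first-order part I compare the two orthonormal frames. Setting $e_a^{prod}=(\partial_r,\tfrac{1}{r}\partial_\theta,\partial_t)$ and writing $e_a^g=(\delta_a^b+M_a^b)\,e_b^{prod}$, the condition $g(e_a^g,e_b^g)=\delta_{ab}$ forces the symmetric part of $M$ in flat indices to equal $-\tfrac12 h_{ab}$, which is $O(r^2)$ uniformly after one converts the component bounds via $(E_{prod})_2^i=\tfrac{1}{r}\delta^i_\theta$. Fixing the skew part by a gauge choice gives $M_a^b=O(r^2)$, and translating back to the coordinate basis produces
\[
\delta e_a \;=\; e_a^g-e_a^{prod} \;=\; O(r^2)\,\partial_r + O(r)\,\partial_\theta + O(r^2)\,\partial_t;
\]
the principal symbol difference $\gamma^a\,\delta e_a$ then yields coefficients $a_1,a_2,a_3$ with exactly the bounds claimed (in fact $a_3=O(r^2)$, stronger than the stated $O(r)$).

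The zero-order part is the difference of the induced spin connections. Writing $D=\gamma^a E_a^i(\partial_i+\Omega_i)$ and expanding to first order, $\delta_0^{(n)}=\chi_n\gamma^a\bigl[(\delta E_a^i)\Omega_{prod,i}+(E_{prod})_a^i\,\delta\Omega_i\bigr]$ plus higher-order cross terms. The first bracket is $O(r)$ because $\Omega_{prod}$ has only a $\theta$-component (from $\omega_{12,prod}=-d\theta$) while $\delta E_a^\theta=O(r)$. For the second bracket the dangerous factor is $(E_{prod})_2^\theta\,\delta\Omega_\theta=\tfrac{1}{r}\,\delta\Omega_\theta$, so one needs $\delta\Omega_\theta=O(r^2)$; this I extract from the Cartan structure equation $\delta\omega^a_b\wedge e^b_{prod}=-d\,\delta e^a-\omega^a_{b,prod}\wedge\delta e^b$, noting that the dual-coframe perturbations are $\delta F^a_i=O(r^2)$ for $i=r,t$ and $O(r^3)$ for $i=\theta$, so the right-hand side is an $O(r)$ two-form in the orthonormal basis and the resulting Ricci rotation coefficients $C^a_{bc}$ are $O(r)$. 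Consequently $\delta\omega^a_{b,\theta}=r\,C^a_{b2}=O(r^2)$, and therefore $\delta_0^{(n)}=O(r)$. The main obstacle is precisely this last step: verifying that the potential $\tfrac{1}{r}$ singularity is tamed requires using the refined orders $h_{\theta\theta}=O(r^4)$, $h_{\theta t}=O(r^3)$ and the Gauss-lemma vanishing of the radial components, rather than the crude $h=O(r^2)$ estimate alone.
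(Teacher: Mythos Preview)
Your proposal is correct and in fact considerably more detailed than what the paper provides. The paper does not give a proof of this proposition at all: it is simply stated as a direct consequence of the metric expansion $g=dr^2+r^2d\theta^2+dt^2+O(r^2)$ asserted at the beginning of Section~6.1, together with the earlier remark (in the proof of Lemma~6.1) that ``it is clearly that we can write $D=D_{prod}+O(r^2)\mathcal{L}_1+O(r)\mathcal{L}_2$'' with $\mathcal{L}_1$ a bounded first-order operator and $\mathcal{L}_2$ zero order. No further justification is offered.

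Your Fermi-coordinate computation is exactly the honest way to verify this. A few remarks: your sharper bound $a_3=O(r^2)$ is consistent with (indeed required by) the $O(r^2)\mathcal{L}_1$ formulation used in the proof of Lemma~6.1, so you are recovering something the paper tacitly relies on. The refined component bounds $h_{\theta\theta}=O(r^4)$, $h_{\theta t}=O(r^3)$ you invoke are precisely the translation of the paper's ``$+O(r^2)$'' into coordinate components once one interprets that $O(r^2)$ as a pointwise tensor bound relative to $g_{prod}$; this is the only sensible reading, and your zero-order analysis correctly identifies that these refined orders (not merely the crude $h=O(r^2)$) are what tame the $\tfrac{1}{r}$ coming from $(E_{prod})_2^\theta$. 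One caveat worth flagging: the expansion $h_{tt}=O(r^2)$ in genuine Fermi coordinates requires $\Sigma$ to be a geodesic (otherwise the second fundamental form contributes an $O(r)$ term), and the paper is silent on this point; but since the paper explicitly \emph{assumes} the $O(r^2)$ expansion, you are entitled to use it.
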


We follow the setting in Section 4. Suppose $(\eta_1,\chi_1)$ satisfies (\ref{6_a1}), (\ref{6_a2}), (\ref{6_a3}). We also define
\begin{align*}
\phi_s(t,z)=(t,z+s\eta_1(t))\mbox{ on }N_R,\\
\phi_s(p)=p\mbox{ on }M\setminus N_R
\end{align*}
and
\begin{align*}
D_{s\eta^i}=\sum_{i=1}^3e_i\cdot (\phi_s^{-1})_*(e_i)+\sum_{i=1}^3e_i\sum_{j,k=1}^3(\phi_s^{-1})_*(w_{jk})e_je_k.
\end{align*}

 Then we have the following proposition.
\begin{pro}
The perturbed Dirac operator can be written as
\begin{align*}
D_{s\eta_1}=(1+\rho^1)D^{(1)}+s((\chi_1)_z\eta_1+(\chi_1)_{\bar{z}}\bar{\eta}_1)(e_1\partial_t)+\Theta_s^0+\mathcal{R}_s^0+\mathcal{H}_s^0+\mathcal{F}_s^0+\delta^{(1)}.
\end{align*}
where\\[1mm]
$\bullet$ $\Theta_s^0=[e_1(s\chi \eta_t\partial_z+s\chi \bar{\eta}_t\partial_{\bar{z}})
+e_2(s\chi_{\bar{z}}\bar{\eta}\partial_z-s\chi_z\bar{\eta}\partial_{\bar{z}})
+e_3(-s\chi_{\bar{z}}\eta\partial_z+s\chi_z\eta\partial_{\bar{z}})]$ is a $\mbox{ }\mbox{ }\mbox{ }$first order differential operator.\\[1mm]
$\bullet$ $\mathcal{R}_s^0: L^2_1\rightarrow L^2$ is an $O(s^2)$-first order differential operator supported on $N_\mathfrak{r}-N_{\frac{\mathfrak{r}}{T}}$ $\mbox{ }\mbox{ }\mbox{ }$with its operator norm $\|\mathcal{R}_s^0\|\leq \gamma_{_T}^2\kappa_1^2s^2$. Moreover, for any $\psi\in L^2_1\cap \ker(D)$, we\\$\mbox{ }\mbox{ }\mbox{ }$have
\begin{align*}
\|\mathcal{R}_s(\psi)^0\|_{L^2}\leq C\gamma_{_T}^{\frac{3}{2}}\kappa_1^2\mathfrak{r}^2s^2
\end{align*}
$\mbox{ }\mbox{ }\mbox{ }$for some constant $C$ depending on $\|\psi\|_{L^2_1}$.\\[1mm]
$\bullet$ $\mathcal{H}_s^0$ is an $O(s^2)$-zero order differential operator supported on $N_{\mathfrak{r}}-N_{\frac{\mathfrak{r}}{T}}$. Moreover, $\mbox{ }\mbox{ }\mbox{ }$let us denote $\partial_r$ by $\vec{n}$, the vector field defined on $N_R$, then 
\begin{align}
\int_{\{r=r_0\}}|\mathcal{H}_s^0|^2i_{\vec{n}}dVol(M)\leq \gamma_{_T}^4\kappa_1^4 \mathfrak{r} s^4\label{EE_9}
\end{align}
$\mbox{ }\mbox{ }\mbox{ }$for all $r_0\leq \mathfrak{r}$.\\[1mm]
$\bullet$ $\mathcal{F}_s^0$ is an $O(s)$-zero order differential operator where
\begin{align}
\mathcal{F}^0_s=D\Big(s(\chi_z\eta+\chi_{\bar{z}}\bar{\eta})Id)+D(\left(\begin{array}{cc}
 0&si\chi \eta_t\\
-si\chi \bar{\eta}_t&0
\end{array}\right)\Big).\label{EE_10}
\end{align}
$\bullet$ $\delta^{(1)}$ can be written as $\delta^{(1)}=\delta^{(1)}_0+\delta^{(1)}_1$ where $\delta_1^{(1)}$ is a first order operator with
\begin{align*}
\delta_1^{(1)}=\sum a_i\partial_i\mbox{ with }|a_i|\leq O(r^2)\mbox{ and }|a_2|, |a_3|\leq O(r)
\end{align*}
$\mbox{ }\mbox{ }\mbox{ }$and $\delta_0^{(1)}$ is a zero order operator with
\begin{align*}
|\delta_0^{(1)}|=O(r).
\end{align*}
$\mbox{ }\mbox{ }\mbox{ }$Moreover, $\delta^{(1)}$ is supported on $N_R$.
\end{pro}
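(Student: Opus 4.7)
The plan is to reduce the statement to Proposition 5.5 by treating the ambient Dirac operator $D$ as a perturbation of the product model $D_{prod}$ on the tubular neighborhood $N_R$. Write $D = D_{prod} + (D-D_{prod})$; the residual $(D-D_{prod})$ has exactly the structure of $\delta^{(n)}$ in Proposition 6.3 but without the cutoff $\chi_n$, namely a first-order piece with radial coefficient $O(r^2)$ and angular/axial coefficients $O(r)$, plus a zero-order piece of size $O(r)$. Since $\phi_s$ is the identity outside $\mathrm{supp}\,\chi_1 \subset N_{\mathfrak{r}/T}$, we immediately have $D_{s\eta_1}=D$ away from $N_{\mathfrak{r}/T}$, so it suffices to analyze the pullback in $N_{\mathfrak{r}/T}$.

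Next I would pull back each summand separately. For the product summand, Proposition 5.5 applies without modification and yields
\begin{align*}
(\phi_s)^*D_{prod} = (1+\varrho_{s\chi_1\eta_1})D_{prod} + s\bigl((\chi_1)_z\eta_1+(\chi_1)_{\bar z}\bar\eta_1\bigr)e_1\partial_t + \Theta_s + \mathcal{R}_s + \mathcal{A}_s + \mathcal{F}_s.
\end{align*}
For the residual summand I write $(\phi_s)^*(D-D_{prod}) = (D-D_{prod}) + E_s$, where $E_s = (\phi_s)^*(D-D_{prod}) - (D-D_{prod})$ is $O(s)$ and supported in $\mathrm{supp}\,\chi_1$. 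The coefficients of $D-D_{prod}$ already vanish at $\Sigma$ at the rates above, so multiplying them by the $O(s)$ diffeomorphism correction buys enough decay to split $E_s$ into an $O(s^2)$ first-order piece, absorbable into $\mathcal{R}_s$ with the sharpened bound of Proposition 5.6 still valid through Proposition 4.4 {\bf b}, and an $O(s)$ zero-order piece absorbable into $\mathcal{A}_s$ without violating its slicewise $L^2$ estimate.

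To convert the expansion around $D$ into one around $D^{(1)} = \chi_1 D_{prod}+(1-\chi_1)D$ I use the identity $D = D^{(1)} + \chi_1(D-D_{prod})$, which is exactly the relation that defines $\delta^{(1)}$ of Proposition 6.3 (up to sign). Relabel $\varrho_{s\chi_1\eta_1}$ as $\rho^1$; combining the $D_{prod}$ contribution coming from the pullback with the untransformed $\chi_1(D-D_{prod})$ then reassembles to $(1+\rho^1)D^{(1)}+\delta^{(1)}$ up to the single cross-term $\varrho_{s\chi_1\eta_1}\cdot \chi_1(D-D_{prod})$. This cross-term is $O(s)$ times a first-order operator whose coefficients inherit the $O(r^2)/O(r)$ vanishing of $D-D_{prod}$, so it has exactly the structure allowed for $\delta^{(1)}$ on $N_R$ and can be absorbed there. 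The terms $\Theta_s, \mathcal{R}_s, \mathcal{A}_s, \mathcal{F}_s$ carry through from the product case unchanged, and $\mathcal{F}_s$ keeps its form because it captures only the parts of $(d\mathcal{M})\mathcal{M}^{-1}$ that are first order in $s$ and independent of the ambient metric correction.

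The main obstacle I foresee is the careful bookkeeping in the annulus $\{\mathfrak{r}/T<r<\mathfrak{r}\}$, where $\chi_1$, $(\chi_1)_z$, $(\chi_1)_{\bar z}$, $\eta_1$, and the coefficients of $D-D_{prod}$ are all simultaneously nonzero: one must verify that every cross term produced by distributing $(1+\varrho_{s\chi_1\eta_1})$ and by pulling back the residual summand lands either as an $O(s^2)$ first-order perturbation satisfying the $\|\mathcal{R}_s\psi\|_{L^2}$ bound of Proposition 5.6, or as an $O(s)$ zero-order perturbation obeying the slicewise $L^2$ bound required of $\mathcal{A}_s$, or within the structural template of $\delta^{(1)}$. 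The $r$-decay of the coefficients of $D-D_{prod}$ together with the hypotheses (5.24)–(5.26) on $(\eta_1,\chi_1)$ should supply the needed margin, after which the rest of the proof is routine operator algebra.
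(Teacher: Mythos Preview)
Your approach is correct and is essentially the explicit realization of what the paper leaves implicit: the paper gives no proof for this proposition, simply stating it as the ``new version'' of Proposition~5.5 once Proposition~6.4 has supplied the structure of $\delta^{(n)}=D^{(n)}-D$. Your decomposition $D=D_{prod}+(D-D_{prod})$, the invocation of Proposition~5.5 on the product piece, and the regrouping via $D^{(1)}=\chi_1 D_{prod}+(1-\chi_1)D$ are exactly the natural way to carry this out, and the cross-terms you isolate indeed land where you claim.

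Two small corrections of no real consequence: first, $\mathrm{supp}\,\chi_1\subset N_{\mathfrak r}$ (not $N_{\mathfrak r/T}$), since $\chi_1$ transitions on the annulus $N_{\mathfrak r}\setminus N_{\mathfrak r/T}$; second, the cross-term $\varrho_{s\chi_1\eta_1}\cdot\chi_1(D-D_{prod})$ is $O(s)$, so absorbing it into $\delta^{(1)}$ makes that piece $s$-dependent. The proposition's bullet for $\delta^{(1)}$ only specifies the structural template and support, not $s$-independence, so this is permissible---but it is worth flagging that your $\delta^{(1)}$ is then not literally $D-D^{(1)}$ but that plus an $O(s)$ correction of the same type.
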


Similarly, we have a new version of Proposition 4.7. Suppose that we have a sequence of pairs, $\{(\chi_i,\eta_i)\}$, which is defined in Section 4.4. Moreover, we suppose that $\eta_i$ satisfies (\ref{6_2a0}), (\ref{6_2a1}), (\ref{6_2a2}) and we write $\eta^i=\sum_{j=0}^i\chi_j\eta_j$. Then we have
\begin{pro}
There exists $\kappa_3=O(\kappa_2)$ depending on $\kappa_2$ with the following significance.
The perturbed Dirac operator $D_{s\eta^i}$ which satisfies (\ref{6_2a0}) - (\ref{6_2a2}) can be written as follows:
\begin{align}
D_{s\eta^{i+1}}=(1+\varrho^{i+1})D^{(i+1)}_{s\eta^i}&+s((\chi_{i+1})_z\eta_{i+1}+(\chi_{i+1})_{\bar{z}}\bar{\eta}_{i+1})e_1\partial_t\label{EE_11}\\
&+\Theta^{i}_s+\mathcal{R}^{i}_{s}+\hat{\mathcal{H}}^{i}_s+\mathcal{F}^{i}_s+\delta^{(i+1)}\nonumber
\end{align}
where\\[1mm]
$\bullet$ $\Theta^{i}_s$, the $(\chi,\eta)=(\chi_{i+1},\eta_{i+1})$ version of $\Theta_s^0$, is a first order differential operator\\
$\mbox{ }\mbox{ }\mbox{ }$with order $O(s)$.\\[1mm]
$\bullet$ $\mathcal{R}^{i}_s: L^2_1\rightarrow L^2$ is an $O(s^2)$-first order differential operator supported on\\ $\mbox{ }\mbox{ }\mbox{ } N_{\frac{\mathfrak{r}}{T^i}}-N_{\frac{\mathfrak{r}}{T^{i+1}}}$ with its operator norm
\begin{align}
\|\mathcal{R}_s^{i}\|\leq \gamma_{_T}^2\kappa_3^2s^2.\label{EE_12}
\end{align}
$\bullet$ $\hat{\mathcal{H}}^{i}_s$ is an $O(s^2)$-zero order differential operator. Moreover, let us denote $\vec{n}=\partial_r$\\$\mbox{ }\mbox{ }\mbox{ }$ be the vector field defined on $N_R$, then
\begin{align}
\int_{\{r=r_0\}}|\hat{\mathcal{H}}^{i}_s|^2i_{\vec{n}}dVol(M)\leq \gamma_{_T}^4\kappa_3^4(\frac{(i+1)\mathfrak{r}}{T^{i+1}}) s^4.\label{EE_13}
\end{align}
$\mbox{ }\mbox{ }\mbox{ }$for all $r_0\leq \frac{\mathfrak{r}}{T^i}$.\\[1mm]
$\bullet$ $\mathcal{F}^{i}_s$ is an $O(s)$-zero order differential operator where
\begin{align}
\mathcal{F}^{i}_s=D(s((\chi_{i+1})_z\eta_{i+1}
&+(\chi_{i+1})_{\bar{z}}\bar{\eta}_{i+1})Id)\label{EE_14}\\&+D\Big(\left(\begin{array}{cc}
 0&si\chi_{i+1} (\eta_{i+1})_t\\
-si\chi_{i+1} (\bar{\eta}_{i+1})_t&0
\end{array}\right)\Big).\nonumber
\end{align}
$\bullet$ $\delta^{(i+1)}$ can be written as $\delta^{(i+1)}=\delta^{(i+1)}_0+\delta^{(i+1)}_1$ where $\delta_1^{(i+1)}$ is a first order\\
$\mbox{ }\mbox{ }\mbox{ }$operator with
\begin{align}
\delta_1^{(i+1)}=\sum a_i\partial_i\mbox{ with }|a_i|\leq O(r^2)\mbox{ and }|a_2|, |a_3|\leq O(r)\label{EE_15}
\end{align}
$\mbox{ }\mbox{ }\mbox{ }$and $\delta_0^{(i+1)}$ is a zero order operator with
\begin{align}
|\delta_0^{(i+1)}|=O(r).\label{EE_16}
\end{align}
$\mbox{ }\mbox{ }\mbox{ }$Moreover, $\delta^{(i+1)}$ is supported on $N_{\frac{\mathfrak{r}}{T^i}}$.
\end{pro}

\section{Fredholm property}

\subsection{Basic setting}
 In this section, we develop an important theorem which indicates that the the perturbation along $\mathbb{V}$ is finite dimensional as I mentioned in Section 4.2. The operator $\mathcal{T}_{a^+,a^-}$, which we construct in this section, is an important part in the linear approximation of the moduli space $\mathfrak{M}$ we defined in our main theorem. We explain the idea of construction this operator in the following sections first.\\

The idea comes from \cite{H}. Let $N$ be a tubular neighborhood of $\Sigma$ equipped with the Euclidean metric. By the computation in Section 3.1, we know that for any $\mathfrak{u}$ in the $\ker(D|_{L^2(N;\mathcal{S}\otimes \mathcal{I})})$ can be written as
\begin{align*}
\mathfrak{u}=
\sum_{l\in\mathbb{Z}}e^{ilt}\Bigg[\hat{u}^{+}_{0,l}\left( \begin{array}{c}
\frac{e^{2|l|r}}{\sqrt{z}}\\
-\mbox{sign}(l)\frac{e^{2|l|r}}{\sqrt{\bar{z}}}
\end{array} \right)+
\hat{u}^{-}_{0,l}\left( \begin{array}{c}
\frac{e^{-2|l|r}}{\sqrt{z}}\\
\mbox{sign}(l)\frac{e^{-2|l|r}}{\sqrt{\bar{z}}}
\end{array} \right)\Bigg]\\
+\mbox{ higher order term}
\end{align*}
where the higher order term is $O(r^{p})$ for some $p>-\frac{1}{2}$.\\

Recall the space $\ker(D|_{L^2(N;\mathcal{S}\otimes \mathcal{I})})^0$ shown in Definition 3.3, we firstly define
\begin{align}
&B:\ker(D|_{L^2(N;\mathcal{S}\otimes \mathcal{I})})^0\rightarrow L^2(S^1;\mathbb{C}^2);\label{F_1}\\
&\mathfrak{u}\mapsto(\sum_l \hat{u}^+_{0,l}e^{ilt}+\sum_l \hat{u}^-_{0,l}e^{ilt},-\sum_l \mbox{sign}(l)\hat{u}^+_{0,l}e^{ilt}+\sum_l \mbox{sign}(l)\hat{u}^-_{0,l}e^{ilt}).\nonumber
\end{align}

Secondly, we define the following spaces
\begin{align*}
&Exp^+=\Big\{(\sum_l u_le^{ilt}, \sum_l -\mbox{sign}(l)u_le^{ilt})|\{u_l\}_{l\in\mathbb{Z}}\in \ell^2\Big\}\mbox{ and}\\
&Exp^-=\Big\{(\sum_l u_le^{ilt}, \sum_l \mbox{sign}(l)u_le^{ilt})|\{u_l\}_{l\in\mathbb{Z}}\in \ell^2\Big\}.
\end{align*}
Then we have the corresponding projections $\pi^{\pm}:L^2(S^1;\mathbb{C}^2)\rightarrow Exp^{\pm}$. For any $u=\big(\sum_{l}a_le^{ilt},\sum_l b_le^{ilt}\big)\in L^2(S^1;\mathbb{C}^2)$, it can be written as $u=u^++u^-$ where
\begin{align*}
u^+&=\Big(\sum_l\frac{(a_l-sign(l)b_l)}{2}e^{ilt},\sum_l\frac{(b_l-sign(l)a_l)}{2}e^{ilt}\Big)\in Exp^+;\\
u^-&=\Big(\sum_l\frac{(a_l+sign(l)b_l)}{2}e^{ilt},\sum_l\frac{(b_l+sign(l)a_l)}{2}e^{ilt}\Big)\in Exp^-.
\end{align*}
So we have $L^2(S^1;\mathbb{C}^2)=Exp^+\oplus Exp^-$.\\

Finally, we define the space
\begin{align*}
\ker(D|_{L^2(M\setminus\Sigma;\mathcal{S}\otimes \mathcal{I})})^0:=\big\{\mathfrak{u}\in&\ker(D|_{L^2(M\setminus\Sigma;\mathcal{S}\otimes\mathcal{I})})\\
&\mbox{ with its leading coefficients in }\ell^2\times\ell^2\big\}.\nonumber
\end{align*}
It is a Banach space with norm $\|\mathfrak{u}\|_0:=\|\mathfrak{u}\|_{L^2(M\setminus\Sigma;\mathcal{S}\otimes\mathcal{I})}+\|B(\mathfrak{u})\|_{L^2(S^1;\mathbb{C}^2)}$. Clearly, $B:\ker(D|_{L^2(M\setminus\Sigma;\mathcal{S}\otimes \mathcal{I})})^0\rightarrow L^2(S^1;\mathbb{C}^2)$ is a bounded linear operator.\\

Then we have the following proposition.
\begin{pro}
Define the maps $p^{\pm}=\pi^{\pm}\circ B$ in the following diagram.
\begin{diagram}
      &    &     Exp^+   \\
      &\ruTo^{p^+}    &    \uTo \mbox{ }\pi^+    \\
\ker(D|_{L^2(M\setminus\Sigma;\mathcal{S}\otimes \mathcal{I})})^0&    \rTo^{\mbox{ } \mbox{ } \mbox{ }B\mbox{ } \mbox{ } \mbox{ }}        &L^2(S^1;\mathbb{C}^2) \\
     &\rdTo^{p^-}    &   \dTo \mbox{ }\pi^- \\
     &    &    Exp^-    
\end{diagram}
We have\\[1mm]
{\bf a}. $p^-$ is a Fredholm operator.\\[1mm]
{\bf b}. $p^+$ is a compact operator.
\end{pro}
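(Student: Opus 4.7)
Both claims rest on Proposition 4.5 and Proposition 5.4, and I would prove (b) first since its compactness enters the error analysis for (a).

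\emph{Part (b).} For $\mathfrak u\in\ker(p^-)$ every decaying leading coefficient vanishes, so the $\mathcal K_R$-leading coefficient $u^+_l$ equals $\hat u^+_{0,l}=p^+(\mathfrak u)_l$, and Proposition 4.5 yields
\begin{equation*}
\sum_l |l|^{2k}\,|p^+(\mathfrak u)_l|^2 \;\le\; C_k\,\|\mathfrak u\|_{L^2}^2 \qquad(\forall\,k\in\mathbb N).
\end{equation*}
Hence $p^+|_{\ker(p^-)}$ sends $L^2$-bounded sets into $H^1(S^1)$-bounded ones, and the compact embedding $H^1(S^1)\hookrightarrow L^2(S^1)$ gives compactness.

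\emph{Part (a).} Set $A:=p^-|_{\ker(p^+)}$ and, for $f\in\mathrm{Exp}^-$, let $\mathfrak u^{\mathrm{mod}}_f$ denote the local decaying harmonic model on $N_R$ with leading data $f$. If $B(\mathfrak u)=0$ the Bessel expansion of Section 4 forces $\mathfrak u=O(r^{1/2})$, so $\mathfrak u\in L^2_1$ and $\ker A=\ker(D|_{L^2_1})$ is finite-dimensional by Proposition 2.4. For coercivity, given $\mathfrak u\in\ker(p^+)\cap\ker(D|_{L^2_1})^\perp$ I set $f:=p^-(\mathfrak u)$ and write $\mathfrak u=\chi\mathfrak u^{\mathrm{mod}}_f+\mathfrak u_{\mathrm{rest}}$ with $\chi\equiv 1$ near $\Sigma$; the $r^{-1/2}$ leading terms cancel at $\Sigma$, so $\mathfrak u_{\mathrm{rest}}\in L^2_1(M)$ solves $D\mathfrak u_{\mathrm{rest}}=-D(\chi\mathfrak u^{\mathrm{mod}}_f)$ with source supported away from $\Sigma$ and $L^2$-norm bounded by $C\|f\|_{\mathrm{Exp}^-}$. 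Proposition 2.4 then bounds $\|\mathfrak u_{\mathrm{rest}}\|_{L^2_1}$, and hence $\|\mathfrak u\|_{L^2}$, by $\|p^-(\mathfrak u)\|_{\mathrm{Exp}^-}$, yielding closed range.

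For finite cokernel I would build a right parametrix using Proposition 5.4: setting $Sf:=\chi\mathfrak u^{\mathrm{mod}}_f+\mathfrak h_f$ with $\mathfrak h_f\in L^2\cap\ker D$ and $\mathfrak h_f|_{N_r}\in\mathcal K_r$ gives $p^-(Sf)=f$ up to finite-rank low-mode contributions from $\mathfrak h_f$, while $Kf:=p^+(Sf)$ depends compactly on $f$ by part (b). Proposition 4.5 guarantees exponential decay of the Fourier coefficients of $Kf$, so an analogous growing-model construction on a sufficiently small tubular neighborhood $N_{R'}$ yields $\mathfrak w_{Kf}\in\ker(D|_{L^2})$ with $p^+(\mathfrak w_{Kf})=Kf$ up to finite rank; a finite-dimensional correction of $Sf-\mathfrak w_{Kf}$ lands in $\ker(p^+)$ and produces $A\tilde R=\mathrm{id}_{\mathrm{Exp}^-}-K''$ with $K''$ compact, and the Fredholm alternative delivers finite cokernel. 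The technical heart is this growing-model half of the parametrix: the exponential rate supplied by Proposition 4.5 is too slow to form a growing local model on all of $N_R$, so one must shrink to $N_{R'}$ below a threshold determined by the constants of Proposition 4.5 and track the finite-rank corrections carefully so that the final error $K''$ remains compact.
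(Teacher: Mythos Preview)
Your argument for part (b) via Proposition 4.5 is correct and in fact gives a stronger bound than needed. The paper instead obtains the weaker estimate $\sum_l |l|\,|\hat u^+_{0,l}|^2\le C\|\mathfrak u\|_{L^2}^2$ directly from the Schr\"odinger--Lichnerowicz formula applied on $M-N_r$ and letting $r\to 0$; the boundary term at $\partial N_r$ produces $\pm\sum_l|l|\,|\hat u^{\pm}_{0,l}|^2$ according to whether the mode is growing or decaying. Either route gives compactness.

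Your identification $\ker A=\ker(D|_{L^2_1})$ for part (a) is correct. The substantial divergence from the paper is in how you establish closed range and finite cokernel. The paper never builds a parametrix and never touches a growing model. Instead, the same boundary--term computation of the Schr\"odinger--Lichnerowicz formula gives, for $\mathfrak u\in\ker(p^+)$, the single inequality
\[
\|\mathfrak u\|_{L^2_1(M-\Sigma)}^2\;\le\;\sum_l|l|\,|\hat u^-_{0,l}|^2+C\|\mathfrak u\|_{L^2}^2,
\]
from which finiteness of the kernel follows immediately. For the cokernel, the paper proves directly that for all sufficiently large $n$, every $f\in\mathrm{Exp}^-$ supported on modes $|l|>n$ admits $\mathfrak u\in\ker(p^+)$ with $\|p^-(\mathfrak u)-f\|\le\tfrac12\|f\|$: one takes $\mathfrak u_0=\chi\,\mathfrak u^{\mathrm{mod}}_f$ (the cutoff decaying model), observes $\|D\mathfrak u_0\|_{L^2}\le Ce^{-nR}/R$, and corrects by a variational $L^2_1$ solution of $D\mathfrak u^*=D\mathfrak u_0$. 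This confines the cokernel to the $(2n{+}1)$-dimensional low-mode space.

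Your parametrix route is not wrong, but the growing-model step you flag as the ``technical heart'' is precisely what the paper's method avoids. To make your argument close you would need the exponential decay $|\hat u^+_{0,l}|\lesssim e^{-|l|r}$ (which does follow from the $L^2(N_r)$-boundedness of a harmonic section, by the computation in Proposition 4.5's proof) and then a careful iteration on a shrunken neighbourhood; this is doable but substantially longer. Note also a small gap in your coercivity step: $\mathfrak u_{\mathrm{rest}}$ need not lie in $\ker(D|_{L^2_1})^\perp$ even if $\mathfrak u$ does, so you must project out the finite-dimensional kernel before invoking the estimate from Proposition 2.4.
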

\begin{proof}
{\bf (proof of part a)}. Let $\{\mathfrak{u}_n\}_{n\in\mathbb{N}}$ be a bounded sequence in $\ker(p^-)$. Assuming that $\|\mathfrak{u}_n\|_{0}\leq 1$. If we can show that there exists a convergent subsequence, then we have $\ker(p^-)$ is finite dimensional.\\

Since $\mathfrak{u}_n\in\ker(p^-)$, we have $B(\mathfrak{u}_n)\in Exp^+$. So $\mathfrak{u}_n\in\mathcal{K}_R$. By Proposition 3.6, we have
\begin{align}
\mathfrak{u}_n=\left(\begin{array}{c}
u^+_n(t)\frac{1}{\sqrt{z}}\\
u^-_n(t)\frac{1}{\sqrt{\bar{z}}}
\end{array}\right)+\mathfrak{u}_{n,\mathfrak{R}}
\end{align}
on the tubular neighborhood of $N_R$ of $\Sigma$. So we have
\begin{align}
\mathfrak{u}_n=\left(\begin{array}{c}
u^+_n(t)\frac{1}{\sqrt{z}}\\
u^-_n(t)\frac{1}{\sqrt{\bar{z}}}
\end{array}\right)(1-\chi_{\frac{R}{2},R})+\mathfrak{u}_{n,\mathfrak{R}}^*
\end{align}
with
\begin{align}
\|\mathfrak{u}_{n,\mathfrak{R}}^*\|_{L^2_1(M\setminus\Sigma)}\leq C\|\mathfrak{u}_n\|_{L^2(M\setminus\Sigma)}\leq C\|\mathfrak{u}_n\|_0=C
\end{align}
for some $C>0$ by Proposition 3.6 and Lichnerowicz-Weitzenb\"ock formula (\ref{1_4}). Therefore, there exists a subsequence of $\{\mathfrak{u}_{n,\mathfrak{R}}^*\}_{n\in\mathbb{N}}$ convergences weakly in $L^2_1$-sense to $\mathfrak{u}_{\mathfrak{R}}^*$, so it converges strongly in $L^2$.\\

Meanwhile, $\mathfrak{u}_n\in\mathcal{K}_R$ also implies that
\begin{align}
\|u^{\pm}_n(t)\|_{L^2_2(S^1;\mathbb{C})}\leq C_R\|\mathfrak{u}_n\|_{L^2(M\setminus\Sigma)}\leq C\|\mathfrak{u}_n\|_0=C.
\end{align}
By Proposition 3.7. This means that there exists a subsequence of $\{u^{\pm}_n\}_{n\in\mathbb{N}}$ converging strongly in $L^2(S^1;\mathbb{C})$. So
\begin{align}
\left(\begin{array}{c}
u^+_n(t)\frac{1}{\sqrt{z}}\\
u^-_n(t)\frac{1}{\sqrt{\bar{z}}}
\end{array}\right)(1-\chi_{\frac{R}{2},R})
\end{align}
also converges in $L^2(M\setminus\Sigma;\mathcal{S}\otimes\mathcal{I})$ as $n\rightarrow\infty$. Therefore, we have a subsequence of $\{\mathfrak{u}_n\}_{n\in\mathbb{N}}$ converges to $\mathfrak{u}$ in $L^2$-sense. It is easy to check that $\mathfrak{u}$ satisfies the Dirac equation. So $\ker(p^-)$ is finite dimensional.\\

To prove that $p^-$ has finite dimensional cokernel, we need several steps. Firstly we consider the extension
\begin{align}
\overline{p^-}:\ker(D|_{L^2(M\setminus\Sigma;\mathcal{S}\otimes \mathcal{I})})\rightarrow \overline{Exp^-}.\label{FF_3a}
\end{align}
where $\overline{Exp^-}$ is the completion of $Exp^-$ with its Fourier coefficients in $\ell^2_{-1}$. $\overline{Exp^-}$ has the norm $\|\cdot\|_{L^2_{-1}(S^1)}$ induced by the norm defined on $\ell^2_{-1}\times \ell^2_{-1}$:
\begin{align*}
\Big\langle (\sum_{l\in\mathbb{Z}} a^+_le^{ilt}, \sum_{l\in\mathbb{Z}} a^-_le^{ilt}),(\sum_{l\in\mathbb{Z}} b^+_le^{ilt},\sum_{l\in\mathbb{Z}} b^-_le^{ilt}) \Big\rangle=\sum_{l\in\mathbb{Z}}(1+|l|)^{-2}\big(a_l^+\bar{b}_l^++a^-_l\bar{b}^-_l\big).
\end{align*}
Since $p^-=\overline{p^-}\big|_{\ker(D|_{L^2(M\setminus\Sigma;\mathcal{S}\otimes \mathcal{I})})^0}$, we can prove $\overline{p^-}$ has finite dimensional cokernel instead.\\
\ \\
{\bf Claim :} There exists $n>0$ with the following significance: For any 
\begin{align*}
u=\big(\sum _{|l|\leq n}\hat{u}_{0,l}^-e^{ilt}, \sum_{|l|\leq n}{\rm sign}(l)\hat{u}_{0,l}^-e^{ilt} \big)\in Exp^-,
\end{align*}
there exists $\mathfrak{u}\in \ker(p^+)$ such that 
\begin{align*}
\|B(\mathfrak{u})-u\|_{L^2_{-1}(S^1)}\leq \frac{1}{2}\|u\|_{L^2_{-1}(S^1)}.
\end{align*}
\ \\

Suppose this claim is true. Let 
\begin{align*}
\mathbb{W}:=\Big\{\sum_le^{ilt}\hat{u}_{0,l}^-\big| \hat{u}_{0,l}^-=0\mbox{ for all }|l|> n\Big\}.
\end{align*}
We prove that $\text{range}(p^-)+\mathbb{W}=Exp^-$ as follows. Suppose not; there exists $v\in L^2(S^1;\mathbb{C})$ such that $v\notin \text{range}(p^-)+\mathbb{W}$. Then we can assume that $v\perp (\text{range}(p^-)+\mathbb{W})$. So by using the claim in previous paragraph, for any 
\begin{align*}
\big(\sum _l\hat{u}_{0,l}^-e^{ilt}, \sum _l{\rm sign}(l)\hat{u}_{0,l}^-e^{ilt} \big)\in Exp^-
\end{align*}
with $\|\sum_l e^{ilt}\hat{u}_{0,l}^-\|_{L^2_{-1}(S^1)}=1$, we have
\begin{align*}
&\Big\langle v, \big(\sum _l\hat{u}_{0,l}^-e^{ilt}, \sum _l{\rm sign}(l)\hat{u}_{0,l}^-e^{ilt} \big)\Big\rangle\\ = &\Big\langle v, \big(\sum_{|l|\leq n}\hat{u}_{0,l}^-e^{ilt}, \sum_{|l|\leq n}{\rm sign}(l)\hat{u}_{0,l}^-e^{ilt} \big)\Big\rangle+\Big\langle v, \big(\sum_{|l|> n}\hat{u}_{0,l}^-e^{ilt}, \sum_{|l|> n}{\rm sign}(l)\hat{u}_{0,l}^-e^{ilt} \big)\Big\rangle\\ 
=&\Big\langle v, \big(\sum_{|l|\leq n}\hat{u}_{0,l}^-e^{ilt}, \sum_{|l|\leq n}{\rm sign}(l)\hat{u}_{0,l}^-e^{ilt} \big)\Big\rangle+\Big\langle v, B(\mathfrak{u}) \Big\rangle + X \\
=&X
\end{align*}
for some $|X|\leq \frac{1}{2}\|v\|_{L^2_{-1}(S^1)}$, which is a contradiction. Therefore, we have 
\begin{align*}
dim(\text{coker}(p^-))\leq 2n+1.
\end{align*}

 To prove the claim, we can consider the following section
\begin{align*}
\mathfrak{u}_0&= \chi\sum_{|l|\geq n}e^{ilt}u^-_{0,l}\Bigg[\left( \begin{array}{c}
e^{-i\frac{1}{2}\theta}\mathfrak{I}_{-\frac{1}{2},l}(r)\\
-le^{i\frac{1}{2}\theta}\mathfrak{I}_{\frac{1}{2},l}(r)
\end{array} \right)+
\mbox{sign}(l)\left( \begin{array}{c}
-le^{-i\frac{1}{2}\theta}\mathfrak{I}_{\frac{1}{2},l}(r)\\
e^{i\frac{1}{2}\theta}\mathfrak{I}_{-\frac{1}{2},l}(r)
\end{array} \right)\Bigg]\\
&=
\chi\sum_{|l|\geq n}e^{ilt}
\hat{u}^{-}_{0,l}\left( \begin{array}{c}
\frac{e^{-2|l|r}}{\sqrt{z}}\\
\mbox{sign}(l)\frac{e^{-2|l|r}}{\sqrt{\bar{z}}}
\end{array} \right)
\end{align*}
with $\chi=1-\chi_{\frac{2R}{3},R}$. So by this setting, we have
\begin{align*}
\|D(\mathfrak{u}_0)\|_{L^2}\leq C\frac{e^{-nR}}{R}.
\end{align*}
By using the arguemnt in Corollary 4.4, we minimize the functional $E_{D(\mathfrak{u}_0)}$ among $L^2_1\cap \ker(D|_{L^2_1})^{\perp}$. We can find $\mathfrak{u}^*$ such that $D(\mathfrak{u}^*)=D(\mathfrak{u}_0)$. Moreover, we have $\|B(\mathfrak{u}^*)\|_{L^2_{-1}(S^1)}\leq C\frac{e^{-nR}}{R}$. So by taking $\mathfrak{u}=\mathfrak{u}_0-\mathfrak{u}^*$, we finish the proof of this claim.\\

{\bf (proof of part b)}. Notice that the coefficients of $\mathfrak{u}$ in $Exp^+$ are corresponding to exponential increasing Fourier modes. Therefore, we have
\begin{align*}
 \sum_l|l||\hat{u}_{0,l}^+|^2\leq C\|\mathfrak{u}\|^2_{L^2(M\setminus\Sigma)}.
\end{align*}
So any bounded sequence $\{\mathfrak{u}^{(n)}\}$ such that $\{p^+(\mathfrak{u}^{(n)})=(\hat{u}_{0,l}^{(n)+})\}$ converges, we have 
\begin{align*}
 \sum_l|\hat{u}_{0,l}^+|^2+\sum_l|l||\hat{u}_{0,l}^+|^2\leq C.
\end{align*}
This implies that there exists a convergent subsequence of $\{\mathfrak{u}^{(n)}\}$ which converges to some $\mathfrak{u}$ and $\lim_{n\rightarrow \infty} p^+(\mathfrak{u}^{(n)})=p^+(\mathfrak{u})$. Therefore, $p^+$ is compact.
\end{proof}
We should remember that under a small perturbation of the metric and $\Sigma$, the dimension of cokernel of $p^+$ will be an upper semi-continuous function. I will leave this proof in Appendix 9.2.\\

Corollary 4.4 shows that the equation
\begin{align*}
D\mathfrak{h}=\mathfrak{f}\mbox{ }mod(\ker(D|_{L^2_1}))
\end{align*}
is solvable for any $\mathfrak{f}\in L^2_{-1}$. By Proposition 6.1, we have the following enhanced result. It tells us that we can find the solution $\mathfrak{h}$ such that $B(\mathfrak{h})\in L^2_2(S^1;\mathbb{C}^2)$ when $\mathfrak{f}$ is identically zero near $\Sigma$.
\begin{pro}
Suppose that $\mathfrak{f}\in L^2_{-1}(M\setminus\Sigma;\mathcal{S}\otimes \mathcal{I})$ and $\mathfrak{f}|_{N_{r_0}}=0$ for some $r\leq \mathfrak{r}$. Then there exists $\mathfrak{h}\in L^2(M\setminus\Sigma;\mathcal{S}\otimes \mathcal{I})$ such that
$
D\mathfrak{h}=\mathfrak{f}\mbox{ }mod(ker(D|_{L^2_1}))
$
and\\[1mm]
{\bf a}. $\|\mathfrak{h}\|_{L^2}\leq C\|\mathfrak{f}\|_{L^2_{-1}}$ for some universal constant $C>0$.\\[1mm]
{\bf b}. The leading term of $\mathfrak{h}$, $h^{\pm}$, will satisfy
\begin{align*}
{r_0}\|h^{\pm}\|^2_{L^2},{r_0}^3\|(h^{\pm})_t\|^2_{L^2},{r_0}^5\|(h^{\pm})_{tt}\|^2_{L^2}\leq C\|\mathfrak{\mathfrak{f}}\|^2_{L^2_{-1}}
\end{align*}
$\mbox{ }\mbox{ }\mbox{ }\mbox{ }$for some universal constant $C>0$.
\end{pro}
\begin{proof}
First of all, we claim that, for any $l>0$, there exists $\mathfrak{u}_l\in L^2(M\setminus\Sigma;\mathcal{S}\otimes \mathcal{I})$ with
\begin{align*}
\mathfrak{u}_l=e^{ilt}\left(\begin{array}{c}
\frac{e^{-2|l|r}}{\sqrt{z}}\\
\mbox{sign}(l)\frac{e^{-2|l|r}}{\sqrt{\bar{z}}}
\end{array}
\right)
\end{align*}
on $N_R$ such that $D\mathfrak{u}_l=0$ on $M\setminus\Sigma$. Since Proposition 6.1 tells us that $p^+$ is a compact operator, limit of finite dimensional operator, and $p^-$ is a Fredholm operator, this claim can be regarded as a special case. We will modify the proof of this claim and get the proof for general cases later.\\

We have
\begin{align}
\|\mathfrak{u}_l\|_{L^2}\leq \frac{2C}{|l|^{\frac{1}{2}}}.\label{FF_4}
\end{align}

Meanwhile, by using Corollary 4.4, there exists $\mathfrak{v}\in L^2_1(M\setminus\Sigma;\mathcal{S}\otimes\mathcal{I})$ such that 
\begin{align*}
D^2\mathfrak{v}=\mathfrak{f}.
\end{align*}
Taking $\tilde{\mathfrak{h}}=D\mathfrak{v}$, we have 
\begin{align*}
D\tilde{\mathfrak{h}}=\mathfrak{f}.
\end{align*}
Now, since $\tilde{\mathfrak{h}}\in \text{range}(D|_{L^2_1})$, it is perpendicular to $\ker(D|_{L^2})$ by Proposition 2.4. So it is perpendicular to $\mathfrak{u}_l$. Suppose that the Fourier coefficients of $\tilde{\mathfrak{h}}$ are $h^{\pm}_{k,l}$. We define
\begin{align*}
\hat{\mathfrak{u}}_l:=\frac{\hat{h}^-_{0,l}}{|\hat{h}^-_{0,l}|}\mathfrak{u}_l
\end{align*}
where $\hat{h}^+_{0,l}=(h^+_{0,l}-\mbox{sign}(l)h^-_{0,l})$ and $\hat{h}^-_{0,l}=(h^+_{0,l}+\mbox{sign}(l)h^-_{0,l})$. Then we have 
\begin{align}
\|\hat{\mathfrak{u}}_l\|_{L^2}\leq \frac{2C}{|l|^{\frac{1}{2}}}.\label{FF_5a}
\end{align}
Now, by a straightforward computation, we have
\begin{align*}
\int_{M\setminus\Sigma}\langle \tilde{\mathfrak{h}},\hat{\mathfrak{u}}_l \rangle=0=|\hat{h}^-_{0,l}|\int_0^{r_0}e^{-4|l|r}dr+\int_{M\setminus N_r}\langle \tilde{\mathfrak{h}},\hat{\mathfrak{u}}_l \rangle.
\end{align*}
This implies that
\begin{align}
|\hat{h}^-_{0,l}|\leq \frac{4C|l|^{\frac{1}{2}}}{1-e^{-2|l|{r_0}}}\|P_l(\tilde{\mathfrak{h}})\|_{L^2}\label{FF_6a}
\end{align}
where $P_l$ is the orthogonal projection from $L^2(M\setminus N_{r_0};\mathcal{S}\otimes\mathcal{I})$ to $\mbox{span}\{\mathfrak{u}_l\}$. Now define
\begin{align*}
\mathfrak{y}=\sum_{|l|>\frac{1}{2{r_0}}} \hat{h}^-_{0,l}\mathfrak{u}_{l}.
\end{align*}
Then, by (\ref{FF_6a}), we have
\begin{align*}
\|\mathfrak{y}\|_{L^2}\leq C\|{r_0}^{\frac{1}{2}}\mathfrak{y}|_{\partial N_{r_0}}\|_{L^2_{-1/2}}&=\sum_{|l|>\frac{1}{2{r_0}}}\frac{|\hat{h}^-_{0,l}|^2}{|l|}\leq \sum_{|l|>\frac{1}{2{r_0}}}\frac{4C}{(1-e^{-2|l|{r_0}})^2}\|P_l(\tilde{\mathfrak{h}})\|_{L^2}^2
\\
&\leq \frac{4C}{(1-e^{-2})^2}\sum_{l}\|P_l(\tilde{\mathfrak{h}})\|^2_{L^2}\leq C\|\tilde{\mathfrak{h}}\|^2_{L^2}.
\end{align*}

Let $\mathfrak{h}=\tilde{\mathfrak{h}}-\mathfrak{y}$, which satisfies $D\mathfrak{h}=0$ and $\mathfrak{h}\in \mathcal{K}_{r_0}$. We have
\begin{align*}
\|\mathfrak{h}\|_{L^2}\leq C\|\tilde{\mathfrak{h}}\|_{L^2}.
\end{align*}
Notice that by Lemma 4.1, we have by integration by parts and Cauchy inequality
\begin{align*}
\|\mathfrak{v}\|^2_{L^2_1}\leq C\|\tilde{\mathfrak{h}}\|^2_{L^2}\leq C\|\mathfrak{v}\|_{L^2_1}\|\mathfrak{f}\|_{L^2_{-1}}\leq \varepsilon \|\mathfrak{v}\|_{L^2_1}+\frac{C}{4\varepsilon}\|\mathfrak{f}\|_{L^2_{-1}}.
\end{align*}
So by choosing $\varepsilon$ small enough, we have
\begin{align*}
\|\tilde{\mathfrak{h}}\|_{L^2}\leq \|\mathfrak{v}\|_{L^2_1}\leq C\|\mathfrak{f}\|_{L^2_{-1}}.
\end{align*}
Therefore, we prove {\bf a}. For {\bf b}, we can get it immediately by using Proposition 3.7.\\

For the general case ($p^+$ is nonzero and $p^-$ is Fredholm), we have similar argument by modifying $\mathfrak{u}_l$ to be 
\begin{align*}
\mathfrak{u}_l=e^{ilt}\left(\begin{array}{c}
\frac{e^{-2|l|r}}{\sqrt{z}}\\
\mbox{sign}(l)\frac{e^{-2|l|r}}{\sqrt{\bar{z}}}
\end{array}
\right)+O_l
\end{align*}
where $\|O_l\|_{L^2}\leq Ce^{-|l|\frac{R}{2}}$ (We can choose $\mathfrak{r}$ small such that $|l|>\frac{1}{2\mathfrak{r}}$ is very large). The existence of these $\mathfrak{u}_l$ can be proved by using Corollary 4.4. The term 
\begin{align*}
e^{ilt}\left(\begin{array}{c}
\frac{e^{-2|l|r}}{\sqrt{z}}\\
\mbox{sign}(l)\frac{e^{-2|l|r}}{\sqrt{\bar{z}}}
\end{array}
\right)
\end{align*}
has $L^2$-norm $O(\frac{1}{|l|^{\frac{1}{2}}})$, which will dominate $O_l$. So we can check that the argument above works for these $\mathfrak{u}_l$ and the argument in Proposition 3.7 works for them, too. Therefore, we prove this proposition.
\end{proof}

\subsection{Linearization: The crucial observation}
Here we derive the linearization of $\mathfrak{M}$. Suppose that $\psi$ is an $L^2_1$-harmonic spinor with respect to metric $g$, which is locally written as
\begin{align}
\psi=\left(\begin{array}{c}
a^+(t)\sqrt{z}\\
a^-(t)\sqrt{\bar{z}}
\end{array}\right)+\mbox{higher order term}.\label{FF_7}
\end{align}
$\Sigma$ is its zero locus. Denote by $p$ the triple $(g, \Sigma, \psi)\in \mathfrak{M}$;
\begin{align*}
\mathcal{B}&=\{C^{\infty}-\mbox{real valued }(2,0)\mbox{-tensor }\delta\mbox{ with }supp(\delta)\cap\Sigma=\emptyset\};\\
\mathbb{V}&=\{\eta:S^1\rightarrow \mathbb{C}\mbox{ }|\mbox{ }\eta\in C^1\}.
\end{align*}

 Now suppose that we have a differentiable one-parameter perturbation $(g_s,\Sigma_s, \psi_s)$ with $(g_0,\Sigma_0,\psi_0)=(g, \Sigma, \psi)$ which can be written as
\begin{align}
g_s&=g_0+s\delta+O(s^2),\label{F_2}\\
\Sigma_s&=\{(t,s\eta(t)+O(s^2))\},\label{F_3}\\
\psi_s(t,z)&=\psi(t,z-s\eta+O(s^2))+s\phi_s=\left(\begin{array}{c}
a^+(t)\sqrt{z-s\eta}\\
a^-(t)\sqrt{\bar{z}-s\bar{\eta}}
\end{array}\right)+O_{L^2_1}(s)+s\phi_s\label{F_4}
\end{align}
for some $\delta \in \mathcal{B}$, $\eta\in\mathbb{V}$ and 
\begin{align*}
\phi_s:=\phi+O_{L^2_1}(s)\in L^2_1(M\setminus\Sigma_s;\mathcal{S}_{g_s,\Sigma_s})\cong L^2_1(M\setminus\Sigma;\mathcal{S}_{g,\Sigma})
\end{align*}
Here we use $O_{L^2_1}(s)$ to denote a one-parameter family of sections $f_s$ satisfying $\|f_s\|_{L^2_1}\leq Cs$ for some constant $C$. Let $D^{(s)}$ be the Dirac operator with respect to $g_s$, then we have
\begin{align}
D^{(s)}=D+sT+O(s^2)\label{F_5}
\end{align}
for some first order differential operator $T$. Notice that the support of $T$ is disjoint from a tubular neighbourhood of $\Sigma$.\\

 Therefore, the linearization of $\mathfrak{M}$ at $p$ can be written as
\begin{align}
\mathfrak{L}_p(\delta,\eta,\phi):&=\frac{\partial}{\partial s}(D^{(s)}\psi_s)\Big|_{s=0}\label{F_6}\\
&=T(\psi)+D\big(\frac{\partial}{\partial s}\Big(\psi(t,z-s\eta+O(s^2))\Big)\Big|_{s=0}+O_{L^2_1}(1)+\phi\big).\nonumber
\end{align}
$\mathfrak{L}_p$ is a map from $\mathcal{B}\times\mathbb{V}\times L^2_1(M\setminus\Sigma;\mathcal{S}_{g,\Sigma})$ to $L^2(M\setminus\Sigma;\mathcal{S}_{g,\Sigma})$.\\

Notice that $T(\psi)\in L^2$ is compactly supported away from $\Sigma$. By Proposition 6.2, there exists $\mathfrak{h}\in L^2$ such that $D\mathfrak{h}=-T(\psi)\mbox{ }mod(\ker(D|_{L^2_1}))$ with
\begin{align}
\mathfrak{h}=\left(
\begin{array}{c}
\frac{h^+}{\sqrt{z}}\\
\frac{h^-}{\sqrt{\bar{z}}}
\end{array}
\right)+\mbox{higher order term}\label{F_7}.
\end{align}
Therefore, the right-hand side of (\ref{F_2}) can be rewritten as
\begin{align*}
D\Big(\left(
\begin{array}{c}
\frac{a^+\eta}{2\sqrt{z}}\\
\frac{a^-\bar{\eta}}{2\sqrt{\bar{z}}}
\end{array}
\right)
+
\left(
\begin{array}{c}
\frac{h^+}{\sqrt{z}}\\
\frac{h^-}{\sqrt{\bar{z}}}
\end{array}
\right)+O_{L^2_1}(1)+\phi\Big).
\end{align*}
This implies that if $(\delta,\eta,\phi)\in \ker(\mathfrak{L}_p)$, the element 
\begin{align}
\Big(\left(
\begin{array}{c}
\frac{a^+\eta}{2\sqrt{z}}\\
\frac{a^-\bar{\eta}}{2\sqrt{\bar{z}}}
\end{array}
\right)
+
\left(
\begin{array}{c}
\frac{h^+}{\sqrt{z}}\\
\frac{h^-}{\sqrt{\bar{z}}}
\end{array}
\right)+O_{L^2_1}(1)+\phi\Big)\label{F_8}
\end{align}
 is an $L^2$-harmonic spinor. Using the notation of Proposition 6.1, we can rewrite this condition as follows:
\begin{align*}
(a^+\eta+2h^+,a^-\bar{\eta}+2h^-)=B(\mathfrak{u})
\end{align*}
for some $\mathfrak{u}\in \ker(D|_{L^2})^0$. In particular, this defines a map $\Psi:\mathcal{B}\times\mathbb{V}\times L^2_1\rightarrow \ker(D|_{L^2})^0$ with $\Psi(\delta,\eta,\phi)=\mathfrak{u}$. Our goal is to prove that for any $h^{\pm}$ given, there are only finite dimensional solutions $\eta$ satisfying $(a^+\eta+2h^+,a^-\bar{\eta}+2h^-)\in B(\ker(D|_{L^2})^0)$. Namely, we have to show that the equations
\begin{align*}
a^+\eta+c^+=-2h^+,\\
a^-\bar{\eta}+c^-=-2h^-.
\end{align*}
for $(c^{\pm})\in B(\ker(D|_{L^2})^0)$ have a finite dimensional solution space. These equations have the following constraint:
\begin{align}
|a^+|^2+|a^-|^2> 0.\label{F_9}
\end{align}
which comes from the assumption that $\frac{|\psi|(p)}{\mbox{dist}(p,\Sigma)^{\frac{1}{2}}}>0$ for all $p$. By some basic computation, these equations imply
\begin{align}
\bar{a}^-c^+-a^+\bar{c}^-=-2\bar{a}^-h^++2a^+\bar{h}^-.\label{F_10}
\end{align}
Therefore, we can define the following operator
\begin{align}
\mathcal{T}_{a^+,a^-}:L^2(S^1;\mathbb{C}^2)\rightarrow L^2(S^1;\mathbb{C});\label{F_11}\\
\mathcal{T}_{a^+,a^-}(c^{\pm})=\bar{a}^-c^+-a^+\bar{c}^-.\nonumber
\end{align}

One can check that 
\begin{align*}
\ker(\mathfrak{L}_p|_{\delta=0})= \ker(\mathcal{T}_{a^+,a^-}\circ B)
\end{align*}
and
\begin{align*}
\text{coker}(\mathfrak{L}_p|_{\delta=0})\subset \text{coker}(\mathcal{T}_{a^+,a^-}\circ B)\oplus \ker(D|_{L^2_1}),
\end{align*}
we leave the proof of this part in Appendix 9.3. Therefore, we define
\begin{align}
&\mathbb{K}_0=\ker(\mathcal{T}_{a^+,a^-}\circ B);\label{F_12}\\
&\mathbb{K}_1=\text{coker}(\mathcal{T}_{a^+,a^-}\circ B)\oplus \ker(D|_{L^2_1}).\label{F_13}
\end{align}
So our goal in this section is to show the following Proposition
\begin{pro}
$\mathcal{T}_{a^+,a^-}\circ B$ is Fredholm.
\end{pro}
It implies that $\mathbb{K}_1$ and $\mathbb{K}_0$ are finite dimensional.\\

To prove Proposition 6.3, recall that $p^+$ is a compact operator. So 
\begin{align*}
\mathcal{T}_{a^+,a^-}\circ B=\mathcal{T}_{a^+,a^-}|_{Exp^-}\circ p^-+\mbox{ a compact operator}.
\end{align*}
We have that $\mathcal{T}_{a^+,a^-}\circ B$ is Fredholm if and only if $\mathcal{T}_{a^+,a^-}|_{Exp^-}$ is Fredholm (because $p^-$ is a Fredholm operator). This will be proved in Theorem 6.14.

\begin{remark}
By (\ref{F_5}), we can see that the operator $T$ depends only on $\delta$. So the operator $\Phi$ in (\ref{AA_10}) is defined by $\Phi(\delta)=\mathfrak{h}$ where $\mathfrak{h}$ is defined in (\ref{F_7}).
\end{remark}

\subsection{Fredholmness of finite Fourier mode case}
The main result of Section 6.3 are Theorem 6.6 and Lemma 6.12. Theorem 6.6 shows the Fredholmness for $\mathcal{T}_{a^+,a^-}$ when $a^+$ and $a^-$ have only finite Fourier modes. The reason we have to deal with this special case first is because $\mathcal{T}_{a^+,a^-}$ is in general not Fredholm when the condition (\ref{F_9}) fails. So we have to find a sequence of Fredholm operators converging to $\mathcal{T}_{a^+,a^-}$ and prove the Fredholmness holds under this limit.\\

The proof of Theorem 6.6 is quite long. In fact, there should be a easier way to prove it. However, by following our proof, it is more easier to see how the Fredholmness holds when $a^{\pm}$ have infinite Fourier modes. Lemma 6.12 plays the crucial rule to prove this statement in Section 6.4.\\

Consider the equation
\begin{align}
\left\{ \begin{array}{c}
a^+\eta+c^+=-2h^+,\\
a^-\bar{\eta}+c^-=-2h^-.
\end{array} \right.\label{F_EQ}
\end{align}
with constraint (\ref{F_4}) and $c^{\pm}\in Exp^-$. So there is the following relationship between $c^+$ and $c^-$: if we write $c^+=\sum p_le^{ilt}$, we have
$c^-=\sum \mbox{sign}(l)p_le^{ilt}$. Namely, $c^-$ is determined by $c^+$.\\

In this section, we will use the following notation.
\begin{definition}
Let $g=\sum_{l\in\mathbb{Z}}g_le^{ilt}\in L^2(S^1;\mathbb{C})$. We define $g^{aps}=\sum_{l\in\mathbb{Z}}\mbox{sign}(l)g_le^{ilt}$.
\end{definition}

So we can rewrite operator $\mathcal{T}_{a^+,a^-}$ in the following way:
\begin{align*}
\mathcal{T}_{a^+,a^-}(c):=\bar{a}^-c-a^+\overline{c^{aps}}
\end{align*} 
with $\mathcal{T}_{a^+,a^-}:L^{2}(S^1;\mathbb{C}) \rightarrow L^{2}(S^1;\mathbb{C})$. Here we should explain the meaning of this $L^2(S^1;\mathbb{C})$ space: We can easily see that, $\mathcal{T}_{a^+,a^-}$ is not a $\mathbb{C}-$linear operator, since the conjugate term $\overline{c^{aps}}$ involved. However, it is still an $\mathbb{R}-$linear operator. Therefore, we define our index under the real vector spaces. To simplify the notation, we sometime use $L^2(S^1)$ to denote $L^2(S^1;\mathbb{C})$ in the rest of this paper.\\
 
So in our case, we define the inner product to be
\begin{align*}
(f,g):=Re(\int_{\mathbb{S}^1}f\cdot \bar{g} dt)
\end{align*}
for all $f,g\in C^{\infty}(S^1;\mathbb{C})$. We can see that, under this definition, the $L^2-$bounded space will be coincident with the one equipped with the usual inner product over $\mathbb{C}$.\\

The following is the main Theorem of Section 6.3:

\begin{theorem}
 $\mathcal{T}_{a^+,a^-}$ is a Fredholm operator and $index(\mathcal{T}_{a^+,a^-})=0$ when both $a^+$ and $a^-$ have only finite many Fourier modes:
\begin{align*}
a^+=\sum_{-M}^{M}a^+_{l}e^{ilt}\mbox{; }
a^-=\sum_{-M}^{M}a^-_le^{ilt}
\end{align*}
 for some $M\in \mathbb{N}$. Namely, we have\\[1mm]
{\bf a.} $\ker(\mathcal{T}_{a^+,a^-})$ and $\ker(\mathcal{T}^*_{a^+,a^-})$ are finite dimensional.\\[1mm]
{\bf b.} ${\rm range}(\mathcal{T}_{a^+,a^-})$ and ${\rm range}(\mathcal{T}^*_{a^+,a^-})$ are closed.\\[1mm]
Here $\mathcal{T}^*_{a^+,a^-}$ is the dual operator of $\mathcal{T}_{a^+,a^-}$.
\end{theorem}

To obtain the proof, we need few notation.

\begin{definition}
Let $a=(x,y)\in \mathbb{C}\times \mathbb{C}$, we define the {\bf spouse} of $a$, denoted by $\hat{a}$, to be $(\bar{y},-\bar{x})\in \mathbb{C}\times \mathbb{C}$. We can easily see that $\hat{\hat{a}}=-a$.
\end{definition}
Similarly, for any $p$-tuple of complex pairs, we have the following definition.

\begin{definition}
Let $A=(a_1, a_{2},...,a_{p-1}, a_p)\in (\mathbb{C}\times \mathbb{C})^{p}$ for some $p\in\mathbb{N}$. We define the {\bf spouse} of $A$, denoted by $\hat{A}$, to be $(\hat{a}_{p}, \hat{a}_{p-1},...,\hat{a}_{2}, \hat{a}_1)\in (\mathbb{C}\times \mathbb{C})^{p}$.
\end{definition}

In the following 7 Steps , we will prove part {\bf a} of Theorem 6.6 from Step 1 to Step 3 and prove part {\bf b} of Theorem 6.6 from Step 4 to Step 7.\\

{\bf Step 1}. In Step 1 and Step 2, we prove that $\ker(\mathcal{T}_{a^+,a^-})$ is finite dimensional. Let $c=\sum_{l\in\mathbb{Z}}p_le^{ilt}$. First, we notice that the $n$-th Fourier coefficient of $(\bar{a}^-c-a^+\overline{c^{aps}})$ can be written as
\begin{align*}
(\bar{a}^-c-a^+\overline{c^{aps}})_n= \sum_{l=-M}^{M}\bar{a}^-_{-l}p_{n-l}+\mbox{sign}(l-n)a^+_l\bar{p}_{l-n}.
\end{align*}
When $n>M$, $\mbox{sign}(l-n)=-1$ for all $l=-M,...,M$. So we have
\begin{align}
(\bar{a}^-c-a^+\overline{c^{aps}})_n= \sum_{l=-M}^{M}\bar{a}^-_{-l}p_{n-l}-a^+_l\bar{p}_{l-n}\label{F_14}
\end{align}
for $n>M$.\\

Similarly
\begin{align}
(\bar{a}^-c-a^+\overline{c^{aps}})_{n}= \sum_{l=-M}^{M}\bar{a}^-_{-l}p_{n-l}+a^+_l\bar{p}_{l-n}\label{FF_8}
\end{align}
for $n<-M$.\\

If we take $n=-n'$ and then take the conjugation on both side of the equation (\ref{FF_8}) above, we will have the following equation:
\begin{align}
(a^-\bar{c}-\bar{a}^+c^{aps})_{n'}=\sum_{l=-M}^{M}\bar{a}^+_{-l}p_{n'-l}+a^-_{l}\bar{p}_{l-n'}.\label{F_15}
\end{align}
for $n'>M$.\\

{\bf Step 2}. To show that the kernel of $\mathcal{T}_{a^+,a^-}$ is finite dimensional, here is the idea: We claim that every element in $\ker(\mathcal{T}_{a^+,a^-})$ can be determined by their Fourier coefficients from $-2M$ to $2M$. Therefore, the dimension of $\ker(\mathcal{T}_{a^+,a^-})$ cannot exceed $4M+2$. To prove this claim, we have to show that $c_1-c_2=0$ for any $c_1$ and $c_2$ in $\ker(\mathcal{T}_{a^+,a^-})$ which have the same Fourier coefficients from $-2M$ to $2M$. Therefore, our claim is true iff any $c\in \ker(\mathcal{T}_{a^+,a^-})$ which has zero Fourier coefficients from $-2M$ to $2M$ is identically zero.\\

Now we prove this claim. Suppose that $c\in \ker(\mathcal{T}_{a^+,a^-})$ has zero Fourier coefficients from $-2M$ to $2M$. Because $c\in \ker(\mathcal{T}_{a^+,a^-})$, we have
\begin{align}
\left\{
\begin{array}{c}
\sum_{l=-M}^{M}\bar{a}^-_{-l}p_{n-l}-a^+_l\bar{p}_{l-n}=0\\
\sum_{l=-M}^{M}\bar{a}^+_{-l}p_{n-l}+a^-_{l}\bar{p}_{l-n}=0
\end{array}
\right.\label{FF_1}
\end{align}
for $n>M$. We can rewrite (\ref{FF_1}) by pairing $(p_{j},\bar{p}_{-j}):=v_{j}$ and $(\bar{a}^-_{-j}, -a^+_j):=d_j$ for all $j\in\mathbb{Z}$:
\begin{align}
\left\{
\begin{array}{c}
\sum_{l=-M}^{M}\langle d_l, \bar{v}_{n-l}\rangle=0\\
\sum_{l=-M}^{M}\langle \hat{d}_{-l}, \bar{v}_{n-l}\rangle=0
\end{array}
\right.\label{FF_2}
\end{align}
with the bracket $\langle\cdot,\cdot\rangle$ denoting the usual inner product over $\mathbb{C}$. Here we can use the following convention: Let $U=(u_i),W=(w_i)\in (\mathbb{C}\times \mathbb{C})^{\mathbb{Z}}$. Define a new bracket $\langle\langle\cdot,\cdot\rangle\rangle_n$ to be
\begin{align*}
\langle\langle U,W\rangle\rangle_{n}= \sum_{i\in \mathbb{Z}} \langle u_i,w_{n-i}\rangle.
\end{align*}
So (\ref{FF_2}) can be written as
\begin{align}
\left\{
\begin{array}{c}
\langle\langle D, \bar{V} \rangle\rangle_n=0\\
\langle\langle \hat{D}, \bar{V}  \rangle\rangle_{n}=0
\end{array}
\right.\label{FF_3}
\end{align}
where $D=(d_l)$, $V=(v_{l})$ and $n>M$.\\

Now we apply the following lemma.
\begin{lemma} Given $A=(a_j)_{j=1,2,...,p}\in(\mathbb{C}\times \mathbb{C})^{p}$. 
If $V = (v_j)_{j\in\mathbb{Z}}\in (\mathbb{C}\times \mathbb{C})^{\mathbb{Z}}$ satisfies
\begin{align}
\langle\langle A, \bar{V} \rangle\rangle_m=0; \mbox{ }
\langle\langle \hat{A}, \bar{V}  \rangle\rangle_{m}=0\label{FF_4a}
\end{align}
for all $m>0$, then there is $B=(0,...,0,b_1,...b_q)\in (\mathbb{C}\times \mathbb{C})^{p}$ with $\det\left( \begin{array}{c}b_q\\
\hat{b}_{1}
\end{array} \right)\neq 0$ such that
\begin{align}
\langle\langle B, \bar{V} \rangle\rangle_m=0;\mbox{ }
\langle\langle B^*, \bar{V}  \rangle\rangle_{m}=0,\label{FF_5}
\end{align}
where $B^*=(0,...,0,\hat{b}_q,...,\hat{b}_1)$, for all $m>0$.
\end{lemma}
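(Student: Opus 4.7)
The plan is to prove the lemma by a minimality argument. Consider the set
\[
\mathcal{S}=\bigl\{B\in(\mathbb{C}\times\mathbb{C})^p: B\neq 0,\ B=(0,\dots,0,b_1,\dots,b_q),\ \langle\langle B,\bar V\rangle\rangle_m=\langle\langle B^{\ast},\bar V\rangle\rangle_m=0\ \forall m>0\bigr\}.
\]
The hypothesis guarantees $A\in\mathcal{S}$ (taking $q=p$, for which $B^{\ast}=\hat A$). I would pick $B\in\mathcal{S}$ realizing the minimum value of $q$ and show, by contradiction, that the non-degeneracy $\det\begin{pmatrix}b_q\\ \hat b_1\end{pmatrix}\neq 0$ is automatic for this minimal $B$.

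Suppose the determinant vanishes. In the degenerate subcases $b_q=0$ or $b_1=0$, one extra leading zero appears after dropping the vanishing endpoint and relabeling, producing a new member of $\mathcal{S}$ of strictly smaller effective length and contradicting minimality. Otherwise $b_q=\mu\hat b_1$ for some $\mu\in\mathbb{C}\setminus\{0\}$, and I form the combination $\tilde B:=B-\mu B^{\ast}$. Using the conjugate-linearity of the spouse and the identity $\hat{\hat a}=-a$, one checks
\[
\langle\langle\tilde B,\bar V\rangle\rangle_m=\langle\langle B,\bar V\rangle\rangle_m-\mu\,\langle\langle B^{\ast},\bar V\rangle\rangle_m,\qquad
\langle\langle\tilde B^{\ast},\bar V\rangle\rangle_m=\langle\langle B^{\ast},\bar V\rangle\rangle_m+\bar\mu\,\langle\langle B,\bar V\rangle\rangle_m,
\]
so both vanish for $m>0$, while the last entry $b_q-\mu\hat b_1$ is zero by construction. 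After an appropriate shift of $\tilde B$ to restore the canonical leading-zeros shape, I obtain a new element of $\mathcal{S}$ with effective length $q-1$, contradicting the minimality of $q$.

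The main obstacle is the boundary bookkeeping at $m=1$. Shifting $\tilde B$ one position to the right converts the required vanishing $\langle\langle\tilde B_{\mathrm{shifted}},\bar V\rangle\rangle_m=0$ for $m>0$ into a statement about $\langle\langle\tilde B,\bar V\rangle\rangle$ at $m\geq 0$, i.e.\ one index beyond what the hypothesis directly provides. The key observation is that the definition of $\mathcal{S}$ is symmetric in the pair $(B,B^{\ast})$, and the shift is intertwined by this pairing with the spouse involution $\hat{\hat{}}=-\mathrm{id}$: the boundary value that disappears from the shift of one constraint is precisely supplied by the dual constraint in the pair, so the two together survive the shift intact. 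Carrying out this index-tracking, together with the direct verification of the base case $q=1$ (where $a_1=\lambda\hat a_1$ combined with $\hat{\hat a_1}=-a_1$ forces $a_1=0$ and hence the base case is vacuous), completes the proof.
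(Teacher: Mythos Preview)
Your approach—reduce the effective length $q$ by forming a linear combination that annihilates an endpoint, framed as a minimality argument—is exactly the paper's inductive reduction. The paper takes $\hat A-\alpha A$ (which kills the last entry) and then shifts; your $\tilde B=B-\mu B^{\ast}$ does the same thing up to a scalar.

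Where your write-up has a genuine gap is the boundary step, and your proposed resolution does not actually work. After forming $\tilde B$ (last entry zero) you must shift right to restore the form $(0,\dots,0,c_1,\dots,c_{q-1})$; a right shift gives $\langle\langle C,\bar V\rangle\rangle_m=\langle\langle\tilde B,\bar V\rangle\rangle_{m-1}$, so $m=1$ needs $\langle\langle\tilde B,\bar V\rangle\rangle_0=0$, which neither hypothesis supplies. Your claim that the dual constraint ``precisely supplies'' this missing value is not correct: if you track indices, you find $C^{\ast}=\tilde B^{\ast}$ (no shift, no problem for that equation), but the $C$–equation still requires the $m=0$ value and nothing produces it. The same gap is present in the paper's own proof, which asserts the shift goes as $m\mapsto m+1$ when in fact it goes as $m\mapsto m-1$.

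There is a clean fix. Drop the requirement that the block sit at the \emph{right} end of the $p$-tuple; allow it to float. Concretely, keep $C:=\tilde B$ with block on $[p-q+1,\,p-1]$ (no shift), and compute $C^{\ast}$ with respect to \emph{that} shorter block. One checks $(C^{\ast})_j=-(\tilde B^{\ast})_{j+1}$, i.e.\ $C^{\ast}$ is the \emph{left} shift of $-\tilde B^{\ast}$, so $\langle\langle C^{\ast},\bar V\rangle\rangle_m=-\langle\langle\tilde B^{\ast},\bar V\rangle\rangle_{m+1}=0$ for all $m>0$. Both constraints survive, the length drops, and the minimality argument closes. The final $B$ then has the form $(b_1,\dots,b_q,0,\dots,0)$ rather than $(0,\dots,0,b_1,\dots,b_q)$; this is harmless for the application (where many $v_l$ vanish), or you can simply restate the lemma with the zeros on the right.
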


\begin{proof}
If $\det\left( \begin{array}{c}
a_p\\
\hat{a}_{1}
\end{array} \right)\neq 0$, then we can just take $A=B$. The lemma holds trivially.\\

Suppose now $\det\left( \begin{array}{c}
a_p\\
\hat{a}_{1}
\end{array} \right)= 0$. Then we have $\alpha a_p=\hat{a}_1$ for some $\alpha\in \mathbb{C}-\{0\}$. So (\ref{FF_4a}) implies
\begin{align}
\langle\langle \hat{A}, \bar{V}  \rangle\rangle_{m} - \alpha \langle\langle A, \bar{V} \rangle\rangle_m
=\langle\langle \hat{A}-\alpha  A , \bar{V} \rangle\rangle_m
=0\label{FF_9}
\end{align}
and
\begin{align}
\langle\langle A, \bar{V} \rangle\rangle_m+\bar{\alpha}
\langle\langle \hat{A}, \bar{V}  \rangle\rangle_m
=\langle\langle A+\bar{\alpha}
\hat{A}, \bar{V}  \rangle\rangle_m
=0.\label{FF_10}
\end{align}
We define
\begin{align*}
B'_1=(\hat{A}-\alpha  A)=(\hat{a}_p-\alpha a_1, \hat{a}_{p-1}-\alpha a_2,....,\hat{a}_2-\alpha a_{p-1}, 0).
\end{align*}
Notice that: Since $\alpha a_p=\hat{a}_1$ implies $\bar{\alpha} \hat{a}_p=-a_1$, we have $\hat{a}_p-\alpha a_1=\hat{a}_p+|\alpha|^2\hat{a}_p=(1+|\alpha|^2)\hat{a}_p\neq 0$. This implies $B'_1\neq 0$.\\

Now let $B_1=(0, \hat{a}_p-\alpha a_1, \hat{a}_{p-1}-\alpha a_2,....,\hat{a}_2-\alpha a_{p-1})$. We can easily verify that
\begin{align*}
\langle\langle \hat{A}-\alpha  A , \bar{V} \rangle\rangle_{m+1}=\langle\langle B_1 , \bar{V} \rangle\rangle_m=0
\end{align*}
for all $m>0$ by (\ref{FF_9}).\\

Since $(\hat{A}-\alpha A)=(A+\bar{\alpha} \hat{A})^{\wedge}$, (\ref{FF_10}) gives us
\begin{align*}
\langle\langle A-\bar{\alpha}
\hat{A}, \bar{V}  \rangle\rangle_m=\langle\langle B^*_1 , \bar{V} \rangle\rangle_m=0
\end{align*}
for all $m>0$.\\

By repeating this process inductively, we prove this lemma.
\end{proof}

Back to our problem, we have (\ref{FF_3}):
\begin{align*}
\left\{\begin{array}{c}
\langle\langle D, \bar{V} \rangle\rangle_n=0\\
\langle\langle \hat{D}, \bar{V}  \rangle\rangle_{n}=0
\end{array}
\right.
\end{align*}
for $n>M$. We can apply Lemma 6.9 with $A=(d_{-M},d_{-(M-1)},..., d_M)$, $m=n-M$. So there exists $B\in(\mathbb{C}\times \mathbb{C})^{p}$ such that $\det\left( \begin{array}{c}b_q\\
\hat{b}_{1}
\end{array} \right)\neq 0$ and
\begin{align*}
\langle\langle B, \bar{V} \rangle\rangle_n=0\\
\langle\langle B^*, \bar{V}  \rangle\rangle_{n}=0
\end{align*}
for all $n>M$. Combining with the condition $v_l=0$ for $l=0,1,...,2M$, we have
\begin{align*}
\langle\langle B, \bar{V} \rangle\rangle_{M+1}=\langle b_q, v_{2M+1}\rangle= b^+_q p_{(2M+1)}+b^-_q \bar{p}_{-(2M+1)}= 0\\
\langle\langle B^*, \bar{V}  \rangle\rangle_{M+1}=\langle \hat{b}_1, v_{2M+1}\rangle= \hat{b}^-_1 p_{(2M+1)}+\bar{b}^+_1 \bar{p}_{-(2M+1)}= 0,
\end{align*}
which implies $v_{2M+1}=0$ because $\det\left( \begin{array}{c}b_q\\
\hat{b}_{1}
\end{array} \right)\neq 0$. Now we can solve $v_k$ inductively: Suppose $v_{1},v_{2},...,v_{M+k}$ are all zero for some $k>M+1$. Then the equation tells us that
\begin{align*}
\langle\langle B, \bar{V} \rangle\rangle_{k+1}=\langle b_q, v_{M+k+1}\rangle= b^+_q p_{(M+k+1)}+b^-_q \bar{p}_{-(M+k+1)}= 0\\
\langle\langle B^*, \bar{V}  \rangle\rangle_{k+1}=\langle \hat{b}_1, v_{M+k+1}\rangle= \hat{b}^-_1 p_{(M+k+1)}+\bar{b}^+_1 \bar{p}_{-(M+k+1)}= 0.
\end{align*}
So we have $v_{M+k+1}=0$. Therefore, we have $v_{l}=0$ for all $l$, which implies $c\equiv 0$.\\

{\bf Step 3}. Here we prove that $\ker(\mathcal{T}^*_{a^+,a^-})$ is finite dimensional. We can get the following computation by the definition: Let $c=\sum p_le^{ilt}$, $k=\sum q_le^{ilt}\in L^2(S^1)$
\begin{align*}
(\mathcal{T}_{a^+,a^-}(c),k)&= Re(\int_{\mathbb{S}^1}\mathcal{T}_{a^+,a^-}(c)\cdot\bar{k} dt). \\ 
&= \frac{1}{2}(\int_{\mathbb{S}^1}\mathcal{T}_{a^+,a^-}(c)\cdot\bar{k} dt + \int_{\mathbb{S}^1}k\cdot\overline{\mathcal{T}_{a^+,a^-}(c)} dt)\\
&=\sum_{n\in \mathbb{Z}}\sum_{l=-M}^{M}(\bar{a}_{-l}p_{n-l}+\mbox{sign}(l-n)a^+_l\bar{p}_{l-n})\bar{q}_{n}\\
&\mbox{ }\mbox{ }+\sum_{n\in \mathbb{Z}}\sum_{l=-M}^{M}q_n(a^-_{-l}\bar{p}_{n-l}-\mbox{sign}(l-n)\bar{a}^+_l p_{l-n})\\
&= \sum_{n\in \mathbb{Z}}(\sum_{l=-M}^{M}a^-_{-l}q_{n+l}+\mbox{sign}(n)a_l^+\bar{q}_{-n+l})\bar{p}_{n}\\
&\mbox{ }\mbox{ }+\sum_{n\in \mathbb{Z}}p_{n}(\sum_{l=-M}^{M}\overline{(a^-_{-l}q_{n+l}+\mbox{sign}(n)a_l^+\bar{q}_{-n+l}})\\
&=(c,\mathcal{T}^*_{a^+,a^-}(k)).
\end{align*}
We get the last equality by taking
\begin{align*}
\mathcal{T}^*_{a^+,a^-}(k)=\sum_{n\in \mathbb{Z}}(\sum_{l=-M}^Ma^-_{-l}q_{n+l}+\mbox{sign}(n)a^+_l\bar{q}_{-n+l})e^{int}.
\end{align*}\\

Now we can repeat the argument in Step 1 and 2 on $\mathcal{T}^*_{a^+,a^-}$, then we will get $dim(\ker(\mathcal{T}^*_{a^+,a^-}))<\infty$.\\

{\bf Step 4}. The proof of closeness for ${\rm range}(\mathcal{T}_{a^+,a^-})$ and ${\rm range}(\mathcal{T}^*_{a^+,a^-})$ are similar. Here we only prove that ${\rm range}(\mathcal{T}_{a^+,a^-})$ is closed from Step 4 to Step 7. Readers can prove the other result by applying the same argument\footnote{ In fact, the Fredholmness holds when property {\bf a}. and closeness of ${\rm range}(\mathcal{T}_{a^+,a^-})$ hold. It is redundant to check the closeness of ${\rm range}(\mathcal{T}^*_{a^+,a^-})$ because it will automatically hold (although this statement is non-trivial).}.\\

\begin{lemma}
Let $P_k:L^2(S^1)\rightarrow L^2(S^1)$ is the projection defined by
\begin{align*}
P_k:\sum f_n e^{int}\longmapsto \sum_{|n|\leq k}f_n e^{int}.
\end{align*}
Then we have
\begin{align*}
\mathcal{T}_{a^+,a^-}|_{{\rm range}(I-P_{2M}) }:(I-P_{2M})(L^2(S^1)) \rightarrow (I-P_M)(L^2(S^1))
\end{align*}
is injective.
\end{lemma}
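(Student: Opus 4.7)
The plan is essentially to repackage Step 2 of the proof of Proposition 7.3 as a standalone injectivity statement. Note first that the map actually lands where claimed: if $c=\sum p_l e^{ilt}\in(I-P_{2M})(L^2)$, then $p_l=0$ for $|l|\leq 2M$, so for $|n|\leq M$ the indices $n-l$ and $l-n$ (with $l\in[-M,M]$) all fall in $[-2M,2M]$, forcing $(\mathcal{T}_{d^+,d^-}(c))_n=0$. Hence the image lies in $(I-P_M)(L^2)$, and injectivity amounts to showing: if $c$ is supported on $|l|>2M$ and $\mathcal{T}_{d^+,d^-}(c)=0$ as an element of $(I-P_M)(L^2)$, then $c\equiv 0$.

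I would first record the Fourier relations derived in Step 1: for $n>M$,
\[
\sum_{l=-M}^{M}\bar{d}^-_{-l}p_{n-l}-d^+_l\bar{p}_{l-n}=0,
\]
and its conjugated partner for $n<-M$. Repackaging via the pairings $v_j=(p_j,\bar{p}_{-j})$ and $d_l=(\bar{d}^-_{-l},-d^+_l)$ recasts these as
\[
\langle\langle D,\bar{V}\rangle\rangle_n=0,\qquad \langle\langle\hat{D},\bar{V}\rangle\rangle_n=0\quad\text{for }n>M,
\]
exactly as in the proof of Proposition 7.3.

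Next I would invoke the squeezing lemma (Lemma 7.6) on $A=D=(d_{-M},\ldots,d_M)$ to produce a shifted block $B=(0,\ldots,0,b_1,\ldots,b_q)$ with $\det\left(\begin{smallmatrix}b_q\\ \hat{b}_1\end{smallmatrix}\right)\neq 0$ and paired equations
\[
\langle\langle B,\bar{V}\rangle\rangle_n=0,\qquad \langle\langle B^*,\bar{V}\rangle\rangle_n=0\quad\text{for all }n>M.
\]
Combined with the hypothesis $v_j=0$ for $|j|\leq 2M$, evaluating at $n=M+1$ yields the single pair of relations
\[
\langle b_q,v_{2M+1}\rangle=0,\qquad \langle\hat{b}_1,v_{2M+1}\rangle=0,
\]
so the determinant condition forces $v_{2M+1}=0$. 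An induction on $n$ of the same form (at each step all $v_j$ with $j<M+n$ are already known to vanish, leaving only the top-index term in the sum) then gives $v_j=0$ for every $j>2M$. Applying the same argument to the conjugate equations for $n<-M$ disposes of the negative indices, and we conclude $c\equiv 0$.

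The proof is purely combinatorial and there is no analytic obstacle — the finite-mode hypothesis on $d^\pm$ is what makes everything terminate. The only real care needed is in the bookkeeping: making sure that the index shift produced by Lemma 7.6 aligns with the vanishing window $|l|\leq 2M$ so that the induction base case is genuinely the lowest unknown, and that the invertibility condition $\det\left(\begin{smallmatrix}b_q\\ \hat{b}_1\end{smallmatrix}\right)\neq 0$ is what powers each inductive step. This is the only delicate point, and it is already handled in the squeezing lemma.
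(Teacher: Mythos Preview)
Your proposal is correct and follows essentially the same route as the paper: both invoke the squeezing lemma (Lemma 7.6) on $D=(d_{-M},\ldots,d_M)$ and then run the same induction on the pairings $v_j=(p_j,\bar p_{-j})$ to force all coefficients to vanish. One small redundancy: your final sentence about ``applying the same argument to the conjugate equations for $n<-M$'' is unnecessary, since the pairing $v_j=(p_j,\bar p_{-j})$ already encodes both signs --- once $v_j=0$ for all $j>2M$ you have $p_j=0$ and $p_{-j}=0$ simultaneously, and together with $p_l=0$ for $|l|\le 2M$ this gives $c\equiv 0$ outright.
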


\begin{proof}
Let $f\in (I-P_{2M})(L^{2}(S^1))$. Clearly $\mathcal{T}_{a^+,a^-}(f)\in (I-P_M)(L^2(S^1))$, so we should prove this map is one to one. We prove this by induction.\\
 
Suppose $f=\sum f_k e^{ikt} \in (I-P_M)(L^2(S^1))$, by the equation given by Lemma 6.9,
\begin{align*}
\left\{
\begin{array}{c}
\langle\langle B, \bar{V} \rangle\rangle_{M+1}=\langle b_q, v_{2M+1}\rangle= b^+_q p_{(2M+1)}+b^-_q \bar{p}_{-(2M+1)}= f_{M+1},\\
\langle\langle B^*, \bar{V}  \rangle\rangle_{M+1}=\langle \hat{b}_1, v_{2M+1}\rangle= \hat{b}^-_1 p_{(2M+1)}+\bar{b}^+_1 \bar{p}_{-(2M+1)}= \bar{f}_{-(M+1)}
\end{array}
\right.
\end{align*}
 we can solve $v_{(2M+1)}=(p_{2M+1},\bar{p}_{-(2M+1)})$, which is unique.\\

Now suppose $v_{(2M+1)},...,v_{M+k}$ are uniquely determined (where $k>M+1$) , we consider the equation
\begin{align*}
\langle\langle B, \bar{V} \rangle\rangle_{k+1}&=\langle b_q, v_{M+k+1}\rangle\\&= b^+_q p_{(M+k+1)}+b^-_q \bar{p}_{-(M+k+1)}+F_k(v_{(2M+1)},...,v_{M+k})= f_{k+1}\\
\langle\langle B^*, \bar{V}  \rangle\rangle_{k+1}&=\langle \hat{b}_1, v_{M+k+1}\rangle\\&= \hat{b}^-_1 p_{(M+k+1)}+\bar{b}^+_1 \bar{p}_{-(M+k+1)}+G_k(v_{(2M+1)},...,v_{M+k})= \bar{f}_{-(k+1)}.
\end{align*}
where $F_k(v_{(2M+1)},...,v_{M+k})= f_{k+1}$ and $G_k(v_{(2M+1)},...,v_{M+k})$ are determined by $\{v_{(2M+1)},...,v_{M+k}\}$. So we can solve $v_{(M+k+1)}$ uniquely.\\

Therefore, $\mathcal{T}_{a^+,a^-}|_{(I-P_{2M})(L^2(S^1)) }$ is an injective map from $(I-P_{2M})(L^2(S^1))$ to $(I-P_M)(L^2(S^1))$.
\end{proof}

Notice that, by using the notations of this lemma, we have
\begin{align*}
\mathcal{T}_{a^+,a^-}(P_{2M}(L^2(S^1)))\subset P_{3M}(L^2(S^1))
\end{align*}
and
\begin{align*}
\mathcal{T}_{a^+,a^-}((I-P_{2M})(L^2(S^1)))\subset (I-P_M)(L^2(S^1)).
\end{align*}
\ \\

{\bf Step 5}. Suppose we have $c^k\in L^2(S^1)$, $k\in\mathbb{N}$ such that 
\begin{align*}
\lim_{k\rightarrow\infty}\mathcal{T}_{a^+,a^-}(c^k)=f
\end{align*}
in $L^2$-sense for some $f\in L^2(S^1)$. Here we can assume that $c^k\perp \ker(\mathcal{T}_{a^+,a^-})$ without loss of generality. In this step, we will show that there exists $c$ such that $\mathcal{T}_{a^+,a^-}(c)=f$ when $\{c^k\}_{k\in\mathbb{N}}$ is bounded.\\

First of all, suppose that $\{c^k\}_{k\in\mathbb{N}}$ is bounded by some constant $K$. Denote by $\{v_p^k\}$ the corresponding pairing $\ell^2$-coefficients of $c^k$. We choose a subsequence, which is denoted by $c^k$ again, such that $\{v_p^k\}_{k\in \mathbb{N}}$ converges for all $p\leq 3J$ with some $J>M$. Let us say
\begin{align*}
v_p^k\rightarrow v_p
\end{align*}
for $p\leq 3J$ and choose $J$ large enough such that $v_p\neq 0$. Now, by Lemma 6.9, there is a unique solution $c$ such that
\begin{align*}
\mathcal{T}_{a^+,a^-}(c)=f
\end{align*}
where the corresponding $\ell^2$-coefficients of $c$ are $v_p$ when $p\leq 3M$. We shall show that $c$ is in $L^2(S^1)$.\\

Now, for any $r\in \mathbb{N}$, we have
\begin{align*}
\sum_{i\leq r}\|v_i\|^2_{\ell^2}\leq \sum_{i\leq r}\|v_i^k-v_i\|^2_{\ell^2}+\sum_{i\leq r}\|v^k_i\|^2_{\ell^2}\\
\leq \sum_{i\leq r}\|v_i^k-v_i\|^2_{\ell^2}+K.
\end{align*}
We notice that the first term converges to 0 as $k\rightarrow \infty$. Therefore, we have
\begin{align*}
\sum_{i\leq r}\|v_i\|^2_{\ell^2} \leq 1+K
\end{align*}
for any $r>0$. So $c\in L^2(S^1)$.\\

{\bf Step 6}. Suppose that $c^k$ is unbounded, say $\|c^k\|_{L^2(S^1)}\rightarrow \infty$ (by taking subsequence if this is necessary). we can take $\dot{c}^k=\frac{c^k}{\|c^k\|_{L^2(S^1)}}$ which satisfies $\mathcal{T}_{a^+,a^-}(\dot{c}^k)\rightarrow 0$. We will prove that this case will lead a contradiction in Step 6 and Step 7. This is the part that condition (\ref{F_4}) involved.\\

To begin with, we should define the following notation.
\begin{definition}
Let $\varepsilon>0$. We define the number $\tau=\inf\{ \sqrt{|a^+|^2+|a^-|^2}\}$ and the following sets:\\[1mm]
$\bullet$ $\Omega_1=\big\{|a^+|=|a^-|\big\}\subset S^1$, \\[1mm]
$\bullet$  $\Omega_{1,\varepsilon}=\big\{\big||a^-|-|a^+|\big|\leq\varepsilon\tau\big\}$,\\[1mm]
$\bullet$  $\Omega^+_{\varepsilon}=\big\{|a^+|>|a^-|+\varepsilon\tau\big\}$,\\[1mm]
$\bullet$  $\Omega^-_{\varepsilon}=\big\{|a^-|>|a^+|+\varepsilon\tau\big\}$.\\[1mm]
Clearly, we have $S^1=\Omega_{1,\varepsilon}\cup\Omega^+_{\varepsilon}\cup\Omega^-_{\varepsilon}$.
\end{definition}

Now we fix a $\varepsilon\leq \frac{1}{6}$ which will be specified later. We define 
 $\chi_{1,\varepsilon}$ to be a nonnegative real valued function defined on $S^1$ which has value $1$ in $\Omega_{1,\frac{\varepsilon}{2}}$ and $0$ in $\Omega^+_{\varepsilon}\cup\Omega^-_{\varepsilon}$. Also, define $\chi_{2,\varepsilon}$ to be $1$ in $\Omega^+_{\varepsilon}$ and $0$ on $\{|a^+|\leq |a^-|+\frac{\varepsilon}{2}\tau\}$. Define
 $\chi_{3,\varepsilon}$ to be $1$ in $\Omega^-_{\varepsilon}$ and $0$ on $\{|a^+|\geq |a^-|+\frac{\varepsilon}{2}\tau\}$. Moreover, suppose that $\{\chi_{i,\varepsilon}\}_{i=1,2,3}$ is a partition of unity, i.e.,
\begin{align*}
\chi_{1,\varepsilon}+\chi_{2,\varepsilon}+\chi_{3,\varepsilon}\equiv 1.
\end{align*}

In the the rest of Section 6, we will use $\|\cdot\|_{L^2}$ to denote $\|\cdot\|_{L^2(S^1)}$ (Similarly, we will use $\|\cdot\|_{L^{\infty}}$ to denote $\|\cdot\|_{L^{\infty}(S^1)}$). If we consider the $L^2$-norm of some functions defined on a set $\Omega\subset S^1$, we will use $\|\cdot\|_{L^2(\Omega)}$ to denote it.\\

{\bf Step 7}. In this Step, we will rephrase the statement in the first paragraph of Step 6 by some observation. Then we will prove that the new statement leads to a contradiction.\\

 First, by Step 6, we have a bounded sequence $\{\dot{c}^k\}$ with their $L^2$ norms equalling 1 and
\begin{align*}
\lim_{k\rightarrow\infty}\mathcal{T}_{a^+,a^-}(\dot{c}^k)=0
\end{align*}
in $L^2$ sense. Denote by $\{v_p^k\}$ the corresponding pairing $\ell^2$-coefficients of $\dot{c}^k$. For any $i\in\mathbb{Z}$ fixed, suppose that 
\begin{align*}
\limsup_{k\rightarrow\infty} |v^k_i|\neq 0,
\end{align*}
than we can use the argument in Step 5 by taking $J>i$ to achieve a contradiction. So we must have
\begin{align}
\lim_{k\rightarrow\infty} |v^k_i|= 0\label{FF_11}
\end{align}
for all $i\in\mathbb{Z}$.\\

Now, for any $L\in\mathbb{N}$ let $P_L:L^2(S^1)\rightarrow L^2(S^1)$ be the projection which maps $\sum_{l\in \mathbb{Z}}q_le^{ilt}$ to $\sum_{|l|\leq L}q_le^{ilt}$. By using (\ref{FF_11}), for any $L\in\mathbb{N}$ given, we can add the additional assumption into our statement: $P_L(\dot{c}^k)=0$ for some $k$. This number $L$, which will be specified later, is determined by $\varepsilon$ and $\chi_{i,\varepsilon}$. Now, we can rephrase the statement in the first paragraph of Step 6 as follows.
\begin{lemma}
There exists $L\in \mathbb{N}$ depending only on $a^{\pm}$, such that for any sequence $\{c^k\}_{k\in \mathbb{N}}\subset L^2(S^1)$ satisfying
\begin{align*}
\|c^k\|_{L^2}=1, \mbox{ }P_L(c^k)=0 \mbox{ for all } k\in\mathbb{N},
\end{align*}
we have $\inf_{k\in\mathbb{N}}\big\{\|\mathcal{T}_{a^+,a^-}(c^k)\|_{L^2}\big\}>C_0$, where $C_0$ depending only on the $C^1$-norm of $a^{\pm}$ and $\tau$.
\end{lemma}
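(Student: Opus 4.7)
The plan is to identify $\mathcal T_{d^+,d^-}$, restricted to sufficiently high Fourier modes, with a matrix--valued convolution operator on $\ell^2(\mathbb Z;\mathbb C^2)$ whose Fourier symbol is uniformly invertible because of the non-vanishing condition $|d^+|^2+|d^-|^2\ge \tau^2$. Write $c=\sum p_l e^{ilt}$ and pair up the coefficients by $v_k:=(p_k,\bar p_{-k})\in \mathbb C^2$ for $k>0$. Using the Fourier--coefficient formula derived at the start of Step~1 in the proof of Proposition~7.3, one reads off that for every $n>M$,
\[
U_n\;:=\;\bigl((\mathcal T(c))_n,\;\overline{(\mathcal T(c))_{-n}}\bigr)\;=\;\sum_{l=-M}^{M}A_l\,v_{n-l},\qquad A_l\;=\;\begin{pmatrix}\bar d^{\,-}_{-l} & -d^{\,+}_{l}\\[2pt] \bar d^{\,+}_{-l} & d^{\,-}_{l}\end{pmatrix}.
\]
Extend $v$ by zero to $k\le 0$, so that the convolution $A\ast v$ is defined on all of $\mathbb Z$ and becomes the Fourier multiplier with symbol
\[
\widehat A(\xi)\;=\;\begin{pmatrix}\overline{d^{\,-}(-\xi)} & -d^{\,+}(-\xi)\\[2pt]\overline{d^{\,+}(-\xi)} & d^{\,-}(-\xi)\end{pmatrix},\qquad \det\widehat A(\xi)\;=\;|d^{\,-}(-\xi)|^2+|d^{\,+}(-\xi)|^2\;\ge\;\tau^2.
\]
The entries of $\widehat A(\xi)$ are bounded by $\|d^\pm\|_{C^0}$ and $|\det\widehat A(\xi)|\ge \tau^2$, so $\widehat A(\xi)^{-1}$ has norm bounded uniformly in $\xi$ by a constant depending only on $\tau$ and $\|d^\pm\|_{C^1}$. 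Plancherel then gives $\|A\ast v\|_{\ell^2(\mathbb Z;\mathbb C^2)}\ge c_0\,\|v\|_{\ell^2(\mathbb Z;\mathbb C^2)}$ for some $c_0>0$ depending only on $d^\pm$.

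The second step is to choose $L$ large enough that the ``boundary'' constraints (those with $|n|\le M$) cost nothing. Set $L:=2M$. If $P_L(c)=0$ then $p_k=0$ for every $|k|\le 2M$, hence (together with the zero extension) $v_k=0$ for every $k\le 2M$. For any $n\le M$ and $l\in[-M,M]$ one has $n-l\le 2M$, so $v_{n-l}=0$ and $(A\ast v)_n=0$. Therefore
\[
\|A\ast v\|_{\ell^2(\mathbb Z;\mathbb C^2)}^{2}\;=\;\sum_{n>M}\|U_n\|_{\mathbb C^2}^{2}\;=\;\sum_{|n|>M}|(\mathcal T(c))_n|^{2}\;\le\;\|\mathcal T(c)\|_{L^2}^{2},
\]
while $\|v\|_{\ell^2(\mathbb Z;\mathbb C^2)}^{2}=\sum_{k>0}(|p_k|^2+|p_{-k}|^2)=\|c\|_{L^2}^{2}$ since $p_0=0$. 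Combining these gives $\|\mathcal T_{d^+,d^-}(c)\|_{L^2}\ge c_0\|c\|_{L^2}$, which applied to the sequence in the lemma yields $\|\mathcal T(c^k)\|_{L^2}\ge C_0:=c_0>0$.

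The main delicate point is Step~1: one has to carry out the index manipulations so that the off--diagonal entries of $A_l$ end up carrying opposite signs, which is exactly what makes $\det\widehat A$ equal the \emph{sum} $|d^-|^2+|d^+|^2$ rather than the difference $|d^-|^2-|d^+|^2$. Any sign slip there produces a symbol which vanishes on part of $S^1$ and kills the Fourier-multiplier argument; once $\widehat A$ is computed correctly, everything else is a routine boundary-term count and a standard Fourier multiplier lower bound, and the partition of unity $\chi_{1,\varepsilon},\chi_{2,\varepsilon},\chi_{3,\varepsilon}$ erected in Step~7 is in fact not needed for the finite--Fourier--mode statement of Lemma~7.9.
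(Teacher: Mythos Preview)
Your argument is correct, and it is a genuinely different (and cleaner) route than the paper's.

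The paper localises in physical space via the partition of unity $\chi_{1,\varepsilon}+\chi_{2,\varepsilon}+\chi_{3,\varepsilon}\equiv 1$ tailored to the three regimes $|d^+|\approx|d^-|$, $|d^+|>|d^-|$, $|d^-|>|d^+|$, and then argues case by case: on $\Omega_{1,\varepsilon}$ it divides by $\bar d^-$, approximates $d^+/\bar d^-$ by a trigonometric polynomial $S$, and exploits an algebraic identity involving $P^+$ and $P^-$; on $\Omega^{\pm}_{\varepsilon}$ it uses the contraction $|d^\mp/d^\pm|<1$. The cutoff parameter $\varepsilon$ and the truncation levels $N_1,N_2$ (hence $L$) are all chosen to make these approximations commute with the nonlocal map $c\mapsto c^{aps}$ up to negligible error.

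You instead stay in frequency space and observe that, once $n>M$, the pair $\bigl((\mathcal T c)_n,\overline{(\mathcal T c)_{-n}}\bigr)$ is literally a finite matrix convolution in $v_k=(p_k,\bar p_{-k})$; the associated symbol
\[
\widehat A(\xi)=\begin{pmatrix}\overline{d^-} & -d^+\\ \overline{d^+} & d^-\end{pmatrix}(\xi),\qquad \det\widehat A(\xi)=|d^-(\xi)|^2+|d^+(\xi)|^2\ge\tau^2,
\]
is uniformly invertible, and Plancherel gives the lower bound immediately. Padding $v_k$ by zero for $k\le 0$ and choosing $L=2M$ makes all the ``boundary'' convolution terms vanish, so no error terms appear at all. (A minor point: the entries of $\widehat A$ are evaluated at $\xi$, not $-\xi$, but this is harmless since the determinant is the same function of $|d^\pm|$.) Your constant $C_0$ depends only on $\tau$ and $\|d^\pm\|_{C^0}$, and your $L=2M$ is explicit and minimal; the paper's $L$ depends on the auxiliary approximations $\zeta_{i,\varepsilon}$ and $S$, and its $C_0$ is obtained only after optimising in $\varepsilon$. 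Your final paragraph correctly identifies the one sign bookkeeping step that makes the determinant a sum rather than a difference; with that checked, the partition of unity machinery is indeed unnecessary here.
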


\begin{proof}
We consider the function 
\begin{align*}
Q=\frac{a^+}{\bar{a}^-}
\end{align*}
defined on $\Omega_{1,\varepsilon}$. Extend this function as a $C^1$ function defined on $S^1$. Then we can approximate it by its first $N_2$ Fourier modes, $S$, such that the $L^2$-norm and $L^{\infty}$-norm of $|Q-S|$ are $O(\varepsilon)$. Notice that by taking $\varepsilon$ small, we can assume
\begin{align}
\|S\|_{L^{\infty}}\leq 2.\label{FF_12}
\end{align}
\ \\
Since $\chi_{1,\varepsilon}+\chi_{2,\varepsilon}+\chi_{3,\varepsilon}\equiv 1$, we have
\begin{align*}
1=\|c^k\|_{L^2}\leq \|\chi_{1,\varepsilon}c^k\|_{L^2}+\|\chi_{2,\varepsilon}c^k\|_{L^2}+\|\chi_{3,\varepsilon}c^k\|_{L^2}.
\end{align*}
Therefore, there exists $i\in\{1,2,3\}$ such that $\|\chi_{i,\varepsilon}c^k\|_{L^2}\geq \frac{1}{3}$ infinite many times. We take this subsequence and renumber them consecutively from 1. Since $\chi_{i,\varepsilon}$ is a smooth function, we approximate $\chi_{i,\varepsilon}$ by its first $N_1$ Fourier mode, denoted by $\zeta_{i,\varepsilon}$, such that $\|\chi_{i,\varepsilon}-\zeta_{i,\varepsilon}\|_{L^2}\leq \varepsilon<\frac{1}{6}$ and $\sup|\chi_{i,\varepsilon}-\zeta_{i,\varepsilon}|\leq \varepsilon$, so by Cauchy's inequality, we have
$\|\zeta_{i,\varepsilon}c^k\|_{L^2}\geq \frac{1}{6}$. Notice that by taking $\varepsilon<\frac{1}{2}$, we can assume
\begin{align}
\|\zeta_{i,\varepsilon}\|_{L^{\infty}}\leq 2.\label{FF_13}
\end{align}

 Now we shall prove Lemma 6.12 case by case.\\
\ \\
{\bf Case 1}. If $i=1$,
\begin{align}
\|\zeta_{1,\varepsilon}c^k\|_{L^2}\geq \frac{1}{6}.\label{FF_14}
\end{align}
Meanwhile, we have
\begin{align}
\zeta_{1,\varepsilon}\mathcal{T}_{a^+,a^-}(c^k)=\zeta_{1,\varepsilon}f^k\label{FF_15}
\end{align}
for some $f^k\in L^2(S^1)$. The left hand side of (\ref{FF_15}) can be written as
\begin{align*}
\zeta_{1,\varepsilon}\mathcal{T}_{a^+,a^-}(c^k)=\mathcal{T}_{a^+,a^-}(\zeta_{1,\varepsilon}c^k)+(\zeta_{1,\varepsilon}\mathcal{T}_{a^+,a^-}(c^k)-\mathcal{T}_{a^+,a^-}(\zeta_{1,\varepsilon}c^k)).
\end{align*}
The second term on the right can be written as $[\zeta_{1,\varepsilon}, \mathcal{T}_{a^+,a^-}](c^k)$. Let 
\begin{align*}
\zeta_{1,\varepsilon}=\sum_{|l|\leq N_1}\varsigma_le^{ilt}.
\end{align*}
Then we get
\begin{align*}
[\zeta_{1,\varepsilon}, \mathcal{T}_{a^+,a^-}](c^k)&=\zeta_{1,\varepsilon}((c^k)^{aps})-(\zeta_{1,\varepsilon}c^k)^{aps}\\
&=\sum_{n\in \mathbb{Z}}\Big[(\sum_{|j|\leq N_1}\varsigma_j\mbox{sign}(n-j)p^k_{n-j})-(\sum_{|j|\leq N_1}\mbox{sign}(n)\varsigma_jp_{n-j})\Big]e^{int}\\
&=\sum_{|n|\leq N_1}\pm 2(\sum_{|j|\leq N_1}\varsigma_j p_{n-j})e^{int}.
\end{align*}
So this term will be $0$ by taking $L>2N_1$.\\

Therefore, we have
\begin{align}
\mathcal{T}_{a^+,a^-}(\zeta_{1,\varepsilon}c^k)&=\zeta_{1,\varepsilon}f^k\label{FF_16}\\
&=\bar{a}^-\zeta_{1,\varepsilon}c^k-a^+\overline{(\zeta_{1,\varepsilon}c^k)^{aps}}.\nonumber
\end{align}
Dived both side of (\ref{FF_16}) by $\bar{a}^-$, we have
\begin{align*}
\zeta_{1,\varepsilon}c^k-\frac{a^+}{\bar{a}^-}\overline{(\zeta_{1,\varepsilon}c^k)^{aps}}=\frac{\zeta_{1,\varepsilon}f^k}{\bar{a}^-}
\end{align*}
on $\Omega_{1,\varepsilon}$. Notice that $|\bar{a}^-|\geq \tau(1-\varepsilon)$ on $\Omega_{1,\varepsilon}$. Now, because $\frac{a^+}{\bar{a}^-}=Q$ on $\Omega_{1,\varepsilon}$ and $|\zeta_{1,\varepsilon}|\leq \varepsilon$ outside $\Omega_{1,\varepsilon}$, so we have
\begin{align*}
\zeta_{1,\varepsilon}c^k-Q\overline{(\zeta_{1,\varepsilon}c^k)^{aps}}=\frac{\zeta_{1,\varepsilon}f^k}{a^{*}}+O_{L^2}(\varepsilon)
\end{align*}
for some $a^*\in C^1(S^1;\mathbb{C})$ with $a^*=\bar{a}^-$ on $\Omega_{1,\varepsilon}$ and
\begin{align}
|a^*|\geq \frac{1}{2}\tau(1-\varepsilon)\label{FF_17}
\end{align}
every where. Here $O_{L^2}(\varepsilon)$ term has its $L^2$-norm with order $O(\varepsilon)$. Write $Q=S+(Q-S)$ where the $L^2$-norm and $L^{\infty}$-norm of $Q-S$ are $O(\varepsilon)$. So we have
\begin{align}
\zeta_{1,\varepsilon}c^k-S\overline{(\zeta_{1,\varepsilon}c^k)^{aps}}=\frac{\zeta_{1,\varepsilon}f^k}{a^*}+O_{L^2}(\varepsilon).\label{FF_18}
\end{align}
\\

Finally, we define the projections $P^{\pm}:L^2(S^1)\rightarrow L^2(S^1)$ by
\begin{align*}
P^+(\sum_{l\in\mathbb{Z}}p_le^{ilt})=\sum_{l>0}p_le^{ilt},\\
P^-(\sum_{l\in\mathbb{Z}}p_le^{ilt})=\sum_{l<0}p_le^{ilt}.
\end{align*}
and define $A^k:=\zeta_{1,\varepsilon}c^k$, $B^k:=\overline{(\zeta_{1,\varepsilon}c^k)^{aps}}$. Then (\ref{FF_18}) implies
\begin{align}
P^{\pm}A^k+P^{\pm}SB^k=P^{\pm}\Big(\frac{\zeta_{1,\varepsilon}f^k}{a^*}\Big)+O_{L^2}(\varepsilon).\label{FF_19}
\end{align}
Notice that
\begin{align}
P^{\pm}(A^k)=\overline{P^{\mp}(B^k)}.\label{FF_20}
\end{align}
Let $S=\sum_{|n|\leq N_2}S_ne^{int}$. We shall also notice that
\begin{align}
[P^+,S]B^k&=(P^+SB^k-SP^+B^k)\label{FF_21}\\
&=\sum_{n>0}(\sum_{|j|\leq N_2}S_jB_{n-j}e^{int})-\sum_{n\geq j}(\sum_{|j|\leq N_2}S_jB_{n-j}e^{int})\nonumber\\
&=\pm\sum_{ |n| < N_2}\sum_{|j|\leq N_2}S_jB_{n-j}e^{int}=0\nonumber
\end{align}
when we take $L>2N_1+2N_2$.\\

Therefore, by (\ref{FF_20}) and (\ref{FF_21}), the equation (\ref{FF_19}) implies
\begin{align*}
&P^+A^k+SP^+B^k=O_{L^2}(\varepsilon)+P^+\Big(\frac{\zeta_{1,\varepsilon}f^k}{a^*}\Big),\\
&\overline{P^+B^k}-S\overline{P^+A^k}=O_{L^2}(\varepsilon)+\overline{P^-\Big(\frac{\zeta_{1,\varepsilon}f^k}{a^*}\Big)}.
\end{align*}
Using these two equations, we have
\begin{align}
&P^+A^k+SP^+B^k-S(P^+B^k-\bar{S}{P^+A^k})\label{FF_22}\\=&(1+|S|^2)P^+A^k=O_{L^2}(\varepsilon)+P^+\Big(\frac{\zeta_{1,\varepsilon}f^k}{a^*}\Big)+\bar{S}P^-\Big(\frac{\zeta_{1,\varepsilon}f^k}{a^*}\Big).\nonumber
\end{align}
\ \\

By (\ref{FF_14}), $\|A^k\|_{L^2}\geq \frac{1}{6}$. We have either $\|P^+A^k\|_{L^2}>\frac{1}{12}$ or $\|P^-A^k\|_{L^2}>\frac{1}{12}$. Suppose that $\|P^+A^k\|_{L^2}>\frac{1}{12}$. By (\ref{FF_12}), (\ref{FF_13}), (\ref{FF_17}) and (\ref{FF_22}), we have
\begin{align*}
\frac{1}{12}\leq \|P^+A^k\|_{L^2}&\leq \|(1+|S|^2)P^+A^k\|_{L^2}\\
&\leq O(\varepsilon)+\Big\|P^+\Big(\frac{\zeta_{1,\varepsilon}f^k}{a^*}\Big)\Big\|_{L^2}+\Big\|\bar{S}P^-\Big(\frac{\zeta_{1,\varepsilon}f^k}{a^*}\Big)\Big\|_{L^2}\\
&\leq O(\varepsilon)+\frac{1}{\tau}16\|f^k\|_{L^2}
\end{align*}
for $\varepsilon$ arbitrary. So we have
\begin{align*}
\|f^k\|_{L^2}\geq \frac{\tau}{193}
\end{align*}
for all $k$ by taking $\varepsilon$ sufficiently small. The same result holds when $\|P^-A^k\|_{L^2}>\frac{1}{12}$. By taking $C_0=\frac{\tau}{193}$, we prove Lemma 6.12.\\
\ \\
{\bf Case 2}. If $i=2$ (Or $i=3$. These two cases are similar), we have
\begin{align*}
\zeta_{2,\varepsilon}\mathcal{T}_{a^+,a^-}(c^k)&=\mathcal{T}_{a^+,a^-}(\zeta_{2,\varepsilon}c^k)=\zeta_{2,\varepsilon}f^k\\
&= \bar{a}^-(\zeta_{2,\varepsilon}c^k)-a^+\overline{(\zeta_{2,\varepsilon}c^k)^{aps}}=\zeta_{2,\varepsilon}f^k
\end{align*}
Dividing both side by $a^+$ and notice that $|\frac{a^-}{a^+}|\leq 1-\frac{\tau}{2}\varepsilon$ on $\Omega_{\varepsilon}^+$, we have
\begin{align*}
\Big\|\frac{\zeta_{2,\varepsilon}f^k}{a^+}\Big\|_{L^2(\Omega_{\varepsilon}^+)}&=\Big\|\frac{\bar{a}^-}{a^+}(\zeta_{2,\varepsilon}c^k)-\overline{(\zeta_{2,\varepsilon}c^k)^{aps}}\Big\|_{L^2(\Omega_{\varepsilon}^+)}\\
&\geq \|\overline{(\zeta_{2,\varepsilon}c^k)^{aps}}\|_{L^2(\Omega_{\varepsilon}^+)}-(1-\frac{\tau}{2}\varepsilon)\|\zeta_{2,\varepsilon}c^k\|_{L^2(\Omega_{\varepsilon}^+)}\\
&= \frac{\tau}{2}\varepsilon\|\zeta_{2,\varepsilon}c^k\|_{L^2(\Omega_{\varepsilon}^+)}.
\end{align*}
Therefore, we have
\begin{align*}
\frac{\tau}{2}\varepsilon(\frac{1}{3}-O(\varepsilon))&\leq \frac{\tau}{2}\varepsilon\|\zeta_{2,\varepsilon}c^k\|_{L^2(\Omega_{\varepsilon}^+)}\\
&\leq \Big\|\frac{\zeta_{2,\varepsilon}f^k}{a^+}\Big\|_{L^2(\Omega_{\varepsilon}^+)} \leq \frac{2}{\tau}\|f^k\|_{L^2}.
\end{align*}
Then fix a small $\varepsilon$ such that the left end is a positive constant. We get
\begin{align*}
\|f^k\|_{L^2}\geq C\tau^2
\end{align*}
where $C$ is a constant depending only on $C^1$-norm of $a^{\pm}$.
\end{proof}

\begin{remark}
Notice that the lower bounded $C_0$ is a constant depending on  $\tau$, $\|a^+\|_{C^1}$ and $\|a^-\|_{C^1}$. We will write $C_0=C_0(\tau,\|a^+\|_{C^1},\|a^-\|_{C^1})$. Moreover, if we have a sequence of $\{a^{\pm,(k)}\}_{k\in\mathbb{N}}$ such that the corresponding $\tau^{(k)}$, $\|a^{\pm,(k)}\|_{C^1}$ are bounded and do not accumulate at 0, then \begin{align*}
\inf_{k\in\mathbb{N}}\big\{C_0(\tau^{(k)},\|a^{+,(k)}\|_{C^1},\|a^{-,(k)}\|_{C^1})\big\}>0.
\end{align*}
\end{remark}

\subsection{The general case}
Now we turn to the proof of the general case: $a^{\pm}$ have infinite many Fourier modes. We will prove the following theorem.

\begin{theorem}
Let
\begin{align*}
\mathcal{T}_{a^+,a^-}(c)=\bar{a}^-c-a^+\overline{c^{aps}}
\end{align*}
be the operator from $L^2(S^1;\mathbb{C})$ to $L^2(S^1;\mathbb{C})$,
with the following constraint:
\begin{align}
|a^+|^2+|a^-|^2> 0\label{F_16}.
\end{align}
Suppose that
\begin{align*}
\|a^+\|_{C^1}, \|a^-\|_{C^1}<\infty.
\end{align*}
 Then we have $\mathcal{T}_{a^+,a^-}$ is a Fredholm operator with the index 0.
\end{theorem}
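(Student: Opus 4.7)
The strategy is approximation by the finite Fourier mode case already handled in Proposition 7.3, combined with the standard openness and locally-constant-index property of the Fredholm operators in $B(L^2)$.

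First I would verify that the map $(d^+,d^-) \mapsto \mathcal{T}_{d^+,d^-}$ is continuous from $L^\infty(S^1)\times L^\infty(S^1)$ into $B(L^2,L^2)$ in operator norm. This is immediate: the ``antipodal projection'' operator $c \mapsto c^{aps}$ is an $L^2$-isometry (it simply multiplies each Fourier coefficient by $\pm 1$), so
\begin{align*}
\|\mathcal{T}_{d^+_1,d^-_1}-\mathcal{T}_{d^+_2,d^-_2}\|_{B(L^2)} \le \|d^-_1-d^-_2\|_{L^\infty}+\|d^+_1-d^+_2\|_{L^\infty}.
\end{align*}
Next, let $d^{\pm}_N$ denote the truncation of $d^\pm$ to the first $N$ Fourier modes. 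Since $d^\pm \in C^1(S^1)$, its Fourier series is absolutely convergent, so $d^\pm_N \to d^\pm$ uniformly on $S^1$ as $N\to\infty$. In particular, by the continuity remark above, $\mathcal{T}_{d^+_N,d^-_N} \to \mathcal{T}_{d^+,d^-}$ in the operator norm on $L^2$.

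The hypothesis $|d^+|^2+|d^-|^2\neq 0$ on the compact manifold $S^1$, together with continuity of $d^\pm$, gives a uniform constant $\tau>0$ with $|d^+(t)|^2+|d^-(t)|^2 \ge \tau^2$ for all $t \in S^1$. By uniform convergence, for all $N$ large enough we have $|d^+_N(t)|^2+|d^-_N(t)|^2 \ge \tau^2/2$ uniformly in $t$; moreover $\|d^\pm_N\|_{C^1}$ is uniformly bounded. Thus, for $N$ large, $(d^+_N,d^-_N)$ satisfies the hypotheses of Proposition 7.3, so $\mathcal{T}_{d^+_N,d^-_N}$ is Fredholm of index $0$.

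Finally, I would invoke the standard fact that the set of Fredholm operators is open in $B(L^2)$ and the index is locally constant on this set. Combined with the operator-norm convergence $\mathcal{T}_{d^+_N,d^-_N}\to \mathcal{T}_{d^+,d^-}$, this yields that $\mathcal{T}_{d^+,d^-}$ is itself Fredholm with
\begin{align*}
\mathrm{index}(\mathcal{T}_{d^+,d^-})=\lim_{N\to\infty} \mathrm{index}(\mathcal{T}_{d^+_N,d^-_N}) = 0,
\end{align*}
which is the desired conclusion. The main step requiring care is the uniform lower bound on $|d^+_N|^2 + |d^-_N|^2$ — but this follows immediately from uniform convergence and compactness of $S^1$, so there is no serious obstacle once the finite-mode case (Proposition 7.3) is in hand. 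As an alternative, one could bypass the approximation by observing that the key a priori estimate (Lemma 7.9 together with the remark following it) only uses $\tau$ and $\|d^\pm\|_{C^1}$, and the commutator terms there involve only the mollifiers $\zeta_{i,\varepsilon}$ and $S$ which we choose ourselves; combined with an analogous estimate for the adjoint, this reproves Fredholmness directly, and homotoping $(d^+,d^-)$ to a finite-mode pair within the nondegeneracy locus shows the index equals $0$.
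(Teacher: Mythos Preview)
Your main argument contains a genuine gap. Openness of the Fredholm set says that a neighborhood of each Fredholm operator consists of Fredholm operators; it does \emph{not} say that a norm limit of Fredholm operators is Fredholm. (Counterexample: $\tfrac{1}{n}I \to 0$ on an infinite-dimensional space.) To pass from ``$\mathcal{T}_{d^+_N,d^-_N}$ is Fredholm for large $N$'' and ``$\mathcal{T}_{d^+_N,d^-_N}\to\mathcal{T}_{d^+,d^-}$ in norm'' to ``$\mathcal{T}_{d^+,d^-}$ is Fredholm'', you must know that the Fredholm radius around $\mathcal{T}_{d^+_N,d^-_N}$ does not shrink to zero as $N\to\infty$---equivalently, that one can choose parametrices $S^N$ with $\sup_N\|S^N\|<\infty$. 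The paper explicitly flags this point (``Since the Fredholm operators form an open set \ldots\ this is insufficient'') and then fills the gap in Steps~2--3 of \S7.4 by constructing $S^N$ with uniformly bounded norm, using precisely the uniform lower bound $C_0=C_0(\tau,\|d^\pm\|_{C^1})$ of Lemma~7.9 and Remark~7.10.

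Your ``alternative'' paragraph is in fact the missing ingredient: the uniform a priori estimate from Lemma~7.9/Remark~7.10 is what controls $\|S^N\|$ uniformly, and this is exactly the paper's route. So the proof is salvageable, but the main line as written is incomplete---you should promote the alternative from an afterthought to the core of the argument. Once Fredholmness is established, your index computation via local constancy of the index is correct.
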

Recall that we have the following well-known equivalent statement for the Fredholm operators \cite{I}. This will be the key lemma for proving Theorem 6.14.
\begin{lemma}
Let $X$ be a Hilbert space and $F\in Hom(X)$. Then $F$ is a Fredholm operator iff there is $S\in Hom(X)$ such that
\begin{align*}
SF=FS=I \mbox{ } mod(Com(X))
\end{align*}
where $Com(X)$ is the subring(ideal) consisted by all compact operators mapping from $X$ to itself.
\end{lemma}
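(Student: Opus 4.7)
The plan is to prove both directions of this Atkinson-type characterization. For the forward direction, assume $F$ is Fredholm, so $\ker F$ is finite-dimensional and $\mathrm{range}(F)$ is closed with finite-dimensional orthogonal complement. Decompose $X = \ker F \oplus (\ker F)^{\perp} = \mathrm{range}(F) \oplus \mathrm{range}(F)^{\perp}$ and consider the restriction $F_0 := F|_{(\ker F)^{\perp}} \colon (\ker F)^{\perp} \to \mathrm{range}(F)$. Since $F_0$ is a bounded bijection between Hilbert spaces and $\mathrm{range}(F)$ is closed, the open mapping theorem gives a bounded inverse $F_0^{-1}$. Define $S \in \mathrm{Hom}(X)$ by $S = F_0^{-1}$ on $\mathrm{range}(F)$ and $S = 0$ on $\mathrm{range}(F)^{\perp}$. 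A direct check then yields $SF = I - P_{\ker F}$ and $FS = I - P_{\mathrm{range}(F)^{\perp}}$, where both $P_{\ker F}$ and $P_{\mathrm{range}(F)^{\perp}}$ are finite-rank projections and hence lie in $\mathrm{Com}(X)$.

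For the converse, suppose $S \in \mathrm{Hom}(X)$ satisfies $SF = I - K_1$ and $FS = I - K_2$ with $K_1, K_2 \in \mathrm{Com}(X)$. From $\ker F \subseteq \ker(SF) = \ker(I - K_1)$ and Riesz--Schauder theory for compact operators, $\ker(I - K_1)$ is finite-dimensional, so $\dim \ker F < \infty$. Similarly $\mathrm{range}(F) \supseteq \mathrm{range}(FS) = \mathrm{range}(I - K_2)$; Riesz--Schauder gives that $\mathrm{range}(I - K_2)$ is closed and of finite codimension in $X$. Hence $\mathrm{range}(F)$ has finite codimension. Finally, any linear subspace of a Hilbert space that contains a closed subspace of finite codimension is itself closed (its quotient by that subspace is a subspace of a finite-dimensional space, hence closed, and one pulls this back via the quotient map). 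Therefore $\mathrm{range}(F)$ is closed with finite-dimensional cokernel, so $F$ is Fredholm.

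The step I expect to be the main substantive input is the invocation of the Riesz--Schauder theorem in the backward direction, specifically the facts that $\ker(I - K)$ is finite-dimensional and $\mathrm{range}(I - K)$ is closed of finite codimension whenever $K \in \mathrm{Com}(X)$. These are classical but nontrivial; everything else in the proof (the open mapping theorem, orthogonal decompositions, and the automatic closedness of finite-codimensional superspaces of closed subspaces) is linear-analytic bookkeeping. Since the result is entirely standard, I would likely write it as a short reference to a functional analysis textbook rather than reproducing the Riesz--Schauder argument in full.
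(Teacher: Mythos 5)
Your proof is correct: the forward direction (building the parametrix $S=F_0^{-1}P_{\mathrm{range}(F)}$ via the open mapping theorem) and the backward direction (Riesz--Schauder for $\ker(I-K_1)$ and $\mathrm{range}(I-K_2)$, plus the observation that a subspace containing a closed finite-codimensional subspace is closed) together give the standard Atkinson characterization, and I see no gaps. The paper itself does not prove this lemma at all --- it simply cites it as well known (reference [9]) --- so your write-up supplies exactly the textbook argument the citation points to, and your closing remark that one would normally just reference a functional analysis text is in fact what the paper does.
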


\begin{proof}[Proof of Theorem 6.14]\ \\
\emph{\bf Step 1}. To prove this theorem, notice that we can approximate the operator $\mathcal{T}_{a^+,a^-}$ by a sequence of Fredholm operators
$\{\mathcal{T}_{a^{+,(k)},a^{-,(k)}}\}_{k\in\mathbb{N}}$, where $a^{\pm,(k)}$ are summations of the first $k$ Fourier modes of $a^{\pm}$. Since that the Fredholm operators form an open set in $Hom(L^2(S^1;\mathbb{C}))$, this is insufficient to say that $\mathcal{T}_{a^+,a^-}$ itself is a Fredholm operator. However, by using Lemma 6.15, we have the following argument:\\

 Since $\mathcal{T}_{a^{+,(k)},a^{-,(k)}}$ is a Fredholm operator for all $k\in\mathbb{N}$ by Theorem 6.6, by Lemma 6.15, there exists a sequence of right inverse $\{S^{k}\}_{k\in\mathbb{N}}$ such that
\begin{align*}
\mathcal{T}_{a^{+,(k)},a^{-,(k)}}S^{k}=I \mbox{ } mod(Com(L^2(S^1;\mathbb{C}))).
\end{align*} 
Suppose that $\{\|S^k\|\}_{k\in\mathbb{N}}$ is bounded uniformly by a number $K$. For any $\varepsilon>0$, there exists a constant
$N>0$ such that $\|\mathcal{T}_{a^+,a^-}-T_{a^{+,(k)},a^{-,(k)}}\|\leq \varepsilon$ for all $k\geq N$. So we have
\begin{align*}
\mathcal{T}_{a^+,a^-}S^N=T_{a^{+,(N)},a^{-,(N)}}S^N+O(\varepsilon)S^N=I+O(\varepsilon)S^N \mbox{ } mod(Com(L^2(S^1;\mathbb{C}))).
\end{align*}
Since $\|O(\varepsilon)S^N\|\leq O(\varepsilon)K$, we can choose $\varepsilon$ small enough such that $\|O(\varepsilon)S^N\|\leq \frac{1}{2}$. Therefore, we have $I+O(\varepsilon)S^N$ invertible. Let $V$ be the inverse of $I+O(\varepsilon)S^N$, we have
\begin{align*}
\mathcal{T}_{a^+,a^-}S^NV=I \mbox{ } mod(Com(L^2(S^1;\mathbb{C}))).
\end{align*}
So $\mathcal{T}_{a^+,a^-}$ has the right inverse $S^NV$ modulo the ideal of compact operators. Similar result is true for the existence of the left inverse. Therefore, it is a Fredholm operator.\\

{\bf Step 2}. In Step 1, we prove that if there is a uniform bound for $\{\|S^k\|\}_{k\in\mathbb{N}}$, then Theorem 6.14 will be immediately true. To prove this claim, we should know how to construct inverse $S^k$ for each $k$. In the following paragraphs, we use $\mathcal{T}^k$ to denote the operator $\mathcal{T}_{a^{+,(k)},a^{-,(k)}}$ and $\mathcal{T}$ to denote $\mathcal{T}_{a^+,a^-}$.\\

 A standard way to construct $S^k$ is to use the decomposition 
\begin{align*}
 L^2(S^1)=N(\mathcal{T}^k)\oplus N(\mathcal{T}^k)^{\perp}=R(\mathcal{T}^k)\oplus N(\mathcal{T}^{k*}).
\end{align*} 
By standard Fredholm alternative, we have 
\begin{align*}
\mathcal{T}: N(\mathcal{T}^k)^{\perp}\rightarrow R(\mathcal{T}^k)
\end{align*}
is a bijection. Therefore, by open mapping theorem (see p. 168 in \cite{O}), there is a bounded inverse map $\hat{S}^k: R(\mathcal{T}^k)\rightarrow N(\mathcal{T}^k)^{\perp} $. Now, we define $S^k$ to be $\hat{S}^k\circ P_{R(\mathcal{T}^k)}$.\\
 
 Here we should imitate this idea to construct $S^k$. Here we know that 
 \begin{align*}
 {T}^k:(I-P_{L})(L^2(S^1))\rightarrow \mathcal{T}^k((I-P_{L})(L^2(S^1)))\subset (I-P_{L-k})(L^2(S^1))
 \end{align*}
is a bijection, where $L$ is the number given by Lemma 6.12. Moreover, we can prove that $\mathcal{T}^k((I-P_{2k})(L^2(S^1)))$ is a closed subspace by using the argument in Step 5,6,7 in Section 6.3. Therefore, we have a bounded inverse $\hat{R}^k:\mathcal{T}^k((I-P_{L})(L^2(S^1)))\rightarrow (I-P_{L})(L^2(S^1))$. Meanwhile, Remark 6.13 tells us that $\hat{R}^k$ have a uniform bounded norm. Now we set our $S^k$ to be $\hat{R}^k\circ P_{\mathcal{T}^k((I-P_{L})(L^2(S^1)))}$. So $\{\|S^k\|\}_{k\in\mathbb{N}}$ has a uniform bound.\\
 
 {\bf Step 3}. Finally, we should prove that $S^k$ is actually an inverse of $\mathcal{T}^k$, modulo the ideal of compact operators. To prove this, just recall that both $(I-P_L)(L^2(S^1))$ and $\mathcal{T}^k((I-P_L)(L^2(S^1)))$ are finite codimensional. We define 
\begin{align*}
A=(I-P_L)(L^2(S^1)),\\
B=\mathcal{T}^k((I-P_L)(L^2(S^1)))
\end{align*} 
for a while, then $L^2(S^1)=A\oplus A^{\perp}=B\oplus B^{\perp}$ and
 \begin{align*}
 (\mathcal{T}^kS^k-I)(v)=0 \mbox{ for any } v\in B.
 \end{align*}
So for any bounded sequence $\{v^k=(v_1^k,v_2^k)\in B\oplus B^{\perp}=L^2(S^1)\}_{k\in\mathbb{N}}$, we have
\begin{align*}
 (\mathcal{T}^kS^k-I)(v^k)= (\mathcal{T}^kS^k-I)(v_2^k)
\end{align*}
where $\{v_2^k\}$ lies in a finite dimensional space $B^{\perp}$. We can get a convergence subsequence of $\{v_2^k\}$ easily. This implies
\begin{align*}
 (\mathcal{T}^kS^k-I)=0 \mbox{ } mod(Com(L^2(S^1;\mathbb{C}))).
\end{align*}
Similarly, we have $ (S^k\mathcal{T}^k-I)=0 \mbox{ } mod(Com(L^2(S^1;\mathbb{C})))$, too. Therefore, we finish our proof for the Fredholmness.\\

The computation of the index is simple: One can choose $a^+$ be nonzero everywhere and $a^-=0$. In this case, $\mathcal{T}_{a^+,a^-}$ is invertible. So $index(\mathcal{T}_{a^+,a^-})=0$. 
\end{proof}

\begin{remark}
Remember that $(a^{+},a^-)$ is the leading term of an $L^2_1(M\setminus\Sigma;\mathcal{S}\otimes\mathcal{I})$ harmonic section, so by Proposition 3.9, it is smooth. Meanwhile, notice that $\mathcal{T}_{a^+,a^-}$ maps from $L^2_k(S^1)$ to $L^2_k(S^1)$ for any $k\in\mathbb{N}$. We can show that all these maps are Fredholm by using the same argument. In addition, since $C^{\infty}(S^1;\mathbb{C})$ is dense in $L^2_k(S^1)$ for all $k\geq 0$, $\mathcal{T}_{a^+,a^-}:L^2_k(S^1)\rightarrow L^2_k(S^1)$ share the same the kernel and cokernel in $C^{\infty}(S^1;\mathbb{C})$ for all $k\geq 0$. When the metric is not Euclidean, by Lemma 5.1, we have $a^{\pm}\in L^2_2(S^1)$. So $\mathcal{T}_{a^+,a^-}$ is a Fredholm operator form $L^2_k(S^1)$ to $L^2_k(S^1)$ when $k\leq 2$.
\end{remark}

\subsection{Relations between $\mathcal{T}$ and the original equation} Recall that by the argument in Section 6.2, we want to solve the equation
\begin{align*}
a^+\eta+c=-2h^+,\\
a^-\bar{\eta}+c^{aps}=-2h^-
\end{align*}
which will give us the equation $\mathcal{T}_{a^+,a^-}(c)=-2(\bar{a}^-h^+-a^+\bar{h}^-)$. Here we define the map 
\begin{align*}
\mathcal{J}:L^2(S^1;\mathbb{C}^2)\rightarrow L^2(S^1;\mathbb{C})
\end{align*}
by $\mathcal{J}(h^+,h^-)=-2(\bar{a}^-h^+-a^+\bar{h}^-)$ and the map 
\begin{align*}
\mathcal{O}:\ker(\mathcal{T})\rightarrow L^2(S^1;\mathbb{C})
\end{align*}
by
\begin{align}
\mathcal{O}(c)=-\frac{\bar{a}^+c+a^-\overline{c^{aps}}}{|a^+|^2+|a^-|^2}.\label{FF_23}
\end{align} 
This map will give us $\eta$ when $h^{\pm}=0$.\\

Now, using the notation in Section 6.1, we can always be decomposed the pair $(u^+,u^-)\in L^2(S^1;\mathbb{C})\times L^2(S^1;\mathbb{C})$ as $\pi^+(u^+,u^-)+\pi^-(u^+,u^-)$. By using this proposition and the Fredholmness of $\mathcal{T}_{a^+,a^-}$, we can find a finite dimensional vector space $\mathbb{U}\subset Exp^+$ such that $\text{range}(\mathcal{T}_{a^+,a^-})\oplus \mathcal{J}(\mathbb{U})=L^2(S^1;\mathbb{C})$.

\section{Proof of the main theorem: Part I}

So far, we have proved the Fredholmness for the linearization of $\mathfrak{M}$. To obtain the proof of Theorem 1.5, we need a version of implicit function theorem. Unfortunately, these types of theorems are usually established based on a suitable Sobolev norm or a suitable H\"older norm (see ,\cite{R} for example). In our case, however, since Theorem 1.5 involves the $L^2_1$-Sobolev norm for $\mathbb{Z}/2$-harmonic spinors and $C^1$-norm for the embedding circles $\Sigma$, we should build an ``hybrid'' type of our own.\\

In Section 7, we prove Theorem 1.5 of the version without showing $f$ is $C^1$. In the Section 8, we will prove that $f$ is a $C^1$ map.\\

The argument in this section assumes that the metric $g$ defined on a tubular neighborhood is Euclidean. The case with a general metric is more complicated but follows the similar argument. To be precise, in the general case, we need apply Lemma 5.1 and remark 5.2 whenever we apply Proposition 3.6 and replace Proposition 4.4 and Proposition 4.6 by Proposition 5.4 and Proposition 5.5 in our argument. See Appendix 9.1 for details.

\subsection{Reformulate $\mathbb{K}_0$ and $\mathbb{K}_1$} The definition of $\mathbb{K}_1$ and $\mathbb{K}_0$ are given by (\ref{F_12}) and (\ref{F_13}). We also notice that $\ker(D|_{L^2})^0=B(\ker(D|_{L^2})^0)\oplus \ker(D|_{L^2_1})$. Use $\mathbb{H}_0$ to denote the space $\mathcal{O}[ B(\ker(\mathcal{T}_{a^+,a^-}\circ B))]$ (see (\ref{FF_23}) for the definition of $\mathcal{O}$). In addition, we define $\mathbb{H}_1=\text{coker}(\mathcal{T}_{a^+,a^-}\circ B)$. Then $\mathbb{K}_0$ and $\mathbb{K}_1$ can be written as follows ($\mathbb{K}_1$ remains the same as (\ref{F_13})):
\begin{align*}
&\mathbb{K}_0\cong\mathbb{H}_0\oplus \ker(D|_{L^2_1});\\
&\mathbb{K}_1=\mathbb{H}_1\oplus \ker(D|_{L^2_1}).
\end{align*}
To prove this, we notice that the map $\mathcal{O}$ is injective on $\ker(\mathcal{T}_{a^+,a^-})$ since
the equation
\begin{align*}
\left\{
\begin{array}{cc}
\bar{a}^+c+a^-\overline{c^{aps}}&=0,\\
\bar{a}^-c-a^+\overline{c^{aps}}&=0
\end{array}
\right.
\end{align*}
implies $c=0$. So 
\begin{align*}
\mathbb{H}_0\oplus \ker(D|_{L^2_1})\cong B(\ker(\mathcal{T}_{a^+,a^-}\circ B))\oplus \ker(D|_{L^2_1})\cong \ker(\mathcal{T}_{a^+,a^-}\circ B).
\end{align*}

\subsection{Basic setting}
Before we start our argument, we define some notation. \\

First, in the following paragraphs, we fix two constants $\mathfrak{r}<\frac{R}{4}$, $T>1$ in the beginning. The precise values of $\mathfrak{r}$ and $T$ will be specified later (again, $\mathfrak{r}$ can be assumed to decrease between each successive appearance and $T$ can be assumed to increase between each successive appearance in this section). Moreover, let us assume that $\|\mathcal{T}^{-1}_{a^+,a^-}|_{\text{range}(\mathcal{T}^{-1}_{a^+,a^-})}\|\leq 1$.\\

Second, we suppose that there exists $t_0>0$ which is the upper bound for $s$. The precise value of $t_0$ can be assumed to decrease between each successive appearance.
\begin{definition}
For any $A\subset M$, we say that a section $\mathfrak{u}:[0,t_0]\times A\rightarrow \mathcal{S}\otimes\mathcal{I}$ is in $C^{\omega}([0,t_0];L^2_i(A;\mathcal{S}\otimes\mathcal{I}))$ if and only if $\|\mathfrak{u}(s,\cdot)\|_{L^2_i(A;\mathcal{S}\otimes\mathcal{I})}<\infty$ for all $s\in [0,t_0]$ and the function $f(s):=\|\mathfrak{u}(s,\cdot)\|_{L^2_i(A;\mathcal{S}\otimes\mathcal{I})}$ varies analytically on $[0,t_0]$. (The remainder of Taylor series will converge to zero in $L^2_i$-sense).
\end{definition}
\begin{definition}
For any $i\in \mathbb{N}$ $\kappa>0$, we define
\begin{align}
&\mathfrak{A}_{i+1}^{\kappa}=\big\{\mathfrak{f}\in C^{\omega}([0,t_0];L^2_{-1}(M\setminus N_{R};\mathbf{\mathcal{S}}\otimes \mathcal{I}))\big|\|\mathfrak{f}(s,\cdot)\|_{L^2_{-1}}\leq \frac{\kappa}{T^\frac{5i}{2}}\big\};\label{GGG_1}\\
&\mathfrak{B}_{i+1}^{\kappa}=\big\{\mathfrak{f}\in C^{\omega}([0,t_0];L^2(N_{\frac{\mathfrak{r}}{T^i}}-N_{\frac{\mathfrak{r}}{T^{i+1}}};\mathbf{\mathcal{S}}\otimes \mathcal{I}))\big|\|\mathfrak{f}(s,\cdot)\|_{L^2_{-1}}\leq \frac{\kappa}{T^\frac{5i}{2}}\big\};\label{GGG_2}\\
&\mathfrak{C}_{i+1}^{\kappa}=\big\{\mathfrak{f}\in C^{\omega}([0,t_0];L^2(N_{\frac{\mathfrak{r}}{T^{i}}};\mathbf{\mathcal{S}}\otimes \mathcal{I}))\big|\label{GGG_3}\\
&\mbox{ }\mbox{ }\mbox{ }\mbox{ }\mbox{ }\mbox{ }\mbox{ }\mbox{ }\mbox{ }\mbox{ }\mbox{ }\mbox{ }\mbox{ }\mbox{ }\|\mathfrak{f}(s,\cdot)\|^2_{L^2(N_{r_1}-N_{r_2})}\leq \kappa({r_1}^{3}-{r_2}^3)\big(\frac{\mathfrak{r}}{T^{i}}\big)^{\frac{1}{4}} \mbox{ for all } r_2<r_1\leq \frac{\mathfrak{r}}{T^{i}}\big\}.\nonumber
\end{align}
\end{definition}
Since these sets are subsets of $C^{\omega}([0,t_0];L^2_{-1}(M\setminus \Sigma;\mathbf{\mathcal{S}}\otimes \mathcal{I}))$, we use $s\mathfrak{C}_{i+1}^{\kappa}$ to denote the collection of $s\mathcal{C}$ for 
all $\mathcal{C}\in\mathfrak{C}_{i+1}^{\kappa}$,  $s^2\mathfrak{B}_{i+1}^{\kappa}+s\mathfrak{C}_{i+1}^{\kappa}$ to denote the collection of $s^2\mathcal{B}+s\mathcal{C}$ where $\mathcal{B}\in\mathfrak{B}_{i+1}^{\kappa}$ and $\mathcal{C}\in\mathfrak{C}_{i+1}^{\kappa}$, etc.\\

 Third, suppose that we perturb the metrics $g$ on the region $M\setminus N_R$ analytically with the parameter $s$. Let us call this family of perturbed metric $g_s$. We use the notation $D_{pert}=D+T^s$ to denote the Dirac operator perturb by metric. The operator $T^s:L^2(M\setminus\Sigma;\mathcal{S}\otimes\mathcal{I})\rightarrow L^2_{-1}(M\setminus\Sigma;\mathcal{S}\otimes\mathcal{I})$ will be a first order differential operator with its operator norm $\|T^s\|\leq Cs$.\\

Therefore, given $((g,\Sigma,e),\psi)\in\mathfrak{M}$, we have
\begin{align}
D_{pert}\psi=s\mathfrak{f}_0\label{GGG_4}
\end{align}
for some $\mathfrak{f}_0=T^s(\psi)\in C^{\omega}( [0,t_0]; L^2(M\setminus\Sigma;\mathcal{S}\otimes\mathcal{I}))$.\\

To prove Theorem 1.5, we should prove the following Proposition:
\begin{pro}
There exists $\varepsilon>0$ with the following significance: For any $\xi\in\mathbb{H}_0$ with $\|\xi\|_{L^2_2}= \varepsilon$ and $\hat{\psi}\in \ker(D|_{L^2_1})$ with $\|\hat{\psi}\|_{L^2_1}<\varepsilon$, there exist\\
a unique 
\begin{align*}
\eta_s=\eta_s(\xi,\hat{\psi})\in C^{\omega}([0,t_0];C^1(S^1;\mathbb{C}))
\end{align*}
and a unique 
\begin{align*}
\mathfrak{k}_s=\mathfrak{k}_s(\xi,\hat{\psi})\in \big\{\mathfrak{u}\in L^2(M\setminus\Sigma;\mathcal{S}\otimes\mathcal{I})\big|&B(\mathfrak{u})\in L^2(S^1;\mathbb{C}^2)\\&\mbox{ and }B(\mathfrak{u})\perp \ker(\mathcal{T}_{a^+,a^-})\big\}
\end{align*}
such that
\begin{align}
D_{pert,\eta_s}(\psi+\hat{\psi}+s\mathfrak{k}_s)=0\mbox{ }mod(\ker(D|_{L^2_1}))\label{r4_7_2}
\end{align}
for all $s\in [0,t_0]$ with the constraint $\eta_s=s\xi+\eta_s^{\perp}$ for some $\eta_s^{\perp}\perp \mathbb{H}_0$.
\end{pro}

 By using this Proposition, we can define the map $f$ by 
\begin{align}
 f(g_s, s\xi, \hat{\psi})=\big(\mathcal{T}_{a^+,a^-}\circ B(s\mathfrak{k}_{t_0}), P_{\ker(D|_{L^2_1})}(D_{pert,\eta_s}(\psi+\hat{\psi}+s\mathfrak{k}_s)\big)\label{GGG_5}
 \end{align} 
 for any $\hat{\psi}\in \ker(D|_{L^2_1})$ with $\|\hat{\psi}\|_{L^2_1}<\varepsilon$. Here $P_{\ker(D|_{L^2_1})}$ is the $L^2$-orthogonal projection from $L^2$ to $\ker(D|_{L^2_1})$. 
Therefore, once we prove Proposition 7.3, we will obtain the main theorem if we prove $f$ is $C^1$ and it induces a homeomorphism $\Upsilon:f^{-1}(0)\rightarrow\mathcal{N}\cap\mathfrak{M}$ for some $\mathcal{N}$, neighborhood of $(g,\Sigma,\psi)$ in $\mathcal{E}$.\\

We will complete the proof of Proposition 7.3 in  Section 7. In the Section 8, we will prove $f$ is $C^1$ and it induces the homeomorphism $\Upsilon$.
\begin{remark}
In fact, the $\xi$ we choose in our claim can be replaced by a smooth family $\xi(s):[0,t_0]\rightarrow \mathbb{H}_0$ with $\|\xi(s)\|_{L^2_2}=\varepsilon$ and $\psi$ can be chosen as a smooth family $\psi(s)\in \ker(D|_{L^2_1})$. The argument in the rest of this section will still hold under this setting.
\end{remark}

\subsection{Proof of Proposition 7.3: First order approximation of $\eta_s$ and $\mathfrak{k}_s$} We start our proof of Proposition 7.3. In Section 7.3, we will introduce two constant $\kappa_0$ and $\kappa_1$. The precise value of these two constants can be assumed to increase between each successive appearance. Then, after Section 7.3, we will fixed these two constants in the rest of this paper.\\

{\bf Step 1}. Let $\mathfrak{f}_0$ be the section defined in (\ref{GGG_4}). By using Proposition 6.2, there exists $\mathfrak{h}_0\in L^2$ such that
\begin{align}
D\mathfrak{h}_0=\mathfrak{f}_0 \mbox{ } mod(\ker(D|_{L^2_1})).\label{GGG_6}
\end{align}
Combine (\ref{GGG_4}) (\ref{GGG_6}) and the fact $D_{pert}=D+T^s$, we have 
\begin{align}
D_{pert}(\psi-s\mathfrak{h}_0)=-sT^s(\mathfrak{h}_0)\mbox{ }mod(\ker(D|_{L^2_1})).\label{GGG_7}
\end{align}
Since $T^s$ is a first order differential operator, we have
\begin{align*}
\|T^s(\mathfrak{h}_0)\|_{L^2_{-1}}\leq Cs\|\mathfrak{h}_0\|_{L^2}
\leq Cs\|\mathfrak{f}_0\|_{L^2_{-1}}.
\end{align*}
This implies
\begin{align}
sT^s(\mathfrak{h}_0)\in s^2\mathfrak{A}_1^{\frac{\kappa_0}{2}}\label{GG_1}
\end{align}
by taking $\kappa_0\geq 2C\|\mathfrak{f}_0\|_{L^2_{-1}}$.\\

{\bf Step 2}. In this step we construct $\eta_0$ and prove that $\eta_0$ will satisfy the condition (\ref{6_a1}), (\ref{6_a2}), (\ref{6_a3}).\\

 Since $\mathfrak{f}_0=0$ on $N_{\mathfrak{r}}$, we have $D\mathfrak{h}_0=0$ on $N_{\mathfrak{r}}$. So by Proposition 3.6, we can write
\begin{align*}
\mathfrak{h}_0=\left( \begin{array}{c}
\frac{h_0^+}{\sqrt{z}} \\
\frac{h^-_0}{\sqrt{\bar{z}}} 
\end{array} \right)+\mathfrak{h}_{\mathfrak{R},0}.
\end{align*}
on $N_{r}$. By Theorem 6.14, there exists $(\eta_0,c_0)$ such that
\begin{align}
\left\{ \begin{array}{ccc}
2h^+_0+a^+\eta_0-c_0=k^+_0\\
2h^-_0+a^-\bar{\eta}_0-c_0^{aps}=k^-_0
\end{array} \right.\label{GGG_8}
\end{align}
where $(k^+_0,k^-_0)\perp ker(\mathcal{T}_{a^+,a^-})$ and $(k^+_0,k^-_0)\perp (2h^+_0-k^+_0,2h^-_0-k^-_0)$ in $L^2_2$-sense (Recall that elements in $\mathbb{H}_1$ are smooth by Remark 6.16 and $(h^+_0,h^-_0)$ is in $L^2_2$ by Proposition 6.2 {\bf b}). Meanwhile, there is $\mathfrak{c}_0$ which satisfies $D\mathfrak{c}_0=0$ on $M\setminus\Sigma$ and
\begin{align*}
\mathfrak{c}_0=\left( \begin{array}{c}
\frac{c_0}{2\sqrt{z}} \\
\frac{c^{aps}_0}{2\sqrt{\bar{z}}}
\end{array} \right)+\mathfrak{c}_{\mathfrak{R},0}
\end{align*}
because $(c_0, c^{aps}_0)\in B(\ker(D|_{L^2(M\setminus\Sigma;\mathcal{S}\otimes\mathcal{I})})^0)$\footnote{It is inappropriate to denote the leading terms of a $L^2$-harmonic spinor by $(c_0,c_0^{aps})$ unless $p^-=0$ in Proposition 6.1. However, since $p^-$ is compact, we have the same argument by taking $(c_0, c^{aps}_0)$ to be an element in $B(\ker(D|_{L^2(M\setminus\Sigma;\mathcal{S}\otimes\mathcal{I})})^0)$.}.\\

 Since we have $\mathfrak{h}_0$ is given by Proposition 6.2, so
\begin{align*}
\mathfrak{r} \|h_0^{\pm}\|_{L^2}^2,
\mathfrak{r}^3\|(h_0^{\pm})_t\|^2_{L^2}, \mathfrak{r}^5\|(h_0^{\pm})_{tt}\|^2_{L^2}\leq C\|\mathfrak{h}_0\|_{L^2(N_\frac{R}{2})}^2 \leq C  \|\mathfrak{f}_{0}\|_{L_{-1}^2}^2
\end{align*}
by part {\bf c} of Proposition 6.2. By taking $\kappa_0\geq 4\frac{C}{\mathfrak{r}^5}\|\mathfrak{f}_{0}\|^2_{L^2_{-1}}$, we have
\begin{align}
\|h_0^{\pm}\|^2_{L^2}\leq \frac{\kappa_0}{2}\mathfrak{r}^2, \|(h_0^{\pm})_t\|^2_{L^2}\leq \frac{\kappa_0}{2}\mathfrak{r},\label{GGG_9}\\
 \|(h_0^{\pm})_{tt}\|^2_{L^2}\leq \frac{\kappa_0}{2}, \|\mathfrak{h}_0\|^2_{L^2}\leq \kappa_0.\nonumber
\end{align}
 Moreover, since $\mathcal{T}_{a^+,a^-}(c_0)=\bar{a}^-h^+-a^+\bar{h}^-$ $mod(\mathcal{J}(\mathbb{H}_1))$, by (\ref{GGG_9}) and Remark 6.16, we can choose $c_0$ such that
\begin{align}
\|c_0\|^2_{L^2}\leq \frac{\kappa_0}{2}\mathfrak{r}^2, \|(c_0)_t\|^2_{L^2}\leq \frac{\kappa_0}{2}\mathfrak{r},\label{GGG_10}\\ \|(c_0)_{tt}\|^2_{L^2}\leq \frac{\kappa_0}{2},  \|\mathfrak{c}_0\|^2_{L^2}\leq \kappa_0.\nonumber
\end{align}
Then
\begin{align}
\eta_0=\dfrac{\bar{a}^+}{(|a^+|^2+|a^-|^2)}(k^+_0-2h_0^+-c_0)+\dfrac{a^-}{(|a^+|^2+|a^-|^2)}(\overline{k^-_0-2h_0^--c_0^{aps}})\label{GGG_11}
\end{align}
will satisfy (\ref{6_a1}), (\ref{6_a2}) and (\ref{6_a3}). So it satisfies (\ref{6_a4}),  (\ref{6_a5}) and (\ref{6_a6}).\\

We should notice that the condition $\kappa_0\geq 2\frac{C}{\mathfrak{r}^{\frac{5}{2}}}\|\mathfrak{f}_{0}\|_{L^2_{-1}}$ will give us a constraint for $g_s$. In the following paragraphs, we should always assume $\|\mathfrak{f}_{0}\|_{L^2_{-1}}\leq \mathfrak{r}^{\frac{5}{2}}$. This assumption will give us some restriction to define $\mathcal{N}$ in Theorem 1.5. We will discuss this part in Section 7.5.\\

By (\ref{GGG_8}),(\ref{GGG_9}) and (\ref{GGG_10}), we will also have
\begin{align}
\|k_0^{\pm}\|^2_{L^2}\leq \frac{\kappa_0}{2}\mathfrak{r}^2, \|(k_0^{\pm})_t\|^2_{L^2}\leq \frac{\kappa_0}{2}\mathfrak{r},
 \|(k_0^{\pm})_{tt}\|^2_{L^2}\leq \frac{\kappa_0}{2}.\label{GGG_12}
\end{align}
Furthermore, since
\begin{align*}
\|T^s(\mathfrak{c}_0)\|_{L^2_{-1}}\leq Cs\|\mathfrak{c}_0\|_{L^2}\leq s\frac{\kappa_0}{2},
\end{align*}
we have 
\begin{align}
sT^s(\mathfrak{c}_0)\in s^2\mathfrak{A}_1^{\frac{\kappa_0}{2}}.\label{GG_2}
\end{align}

Finally, notice that we still have some freedom for the choice the $c_0$: It can be chosen differently by choosing a different leading terms or by adding a element in $\ker(D|_{L^2_1})$. By finding a suitable leading term for $c_0$, i.e., adding $c_0$ by an element in $B[\ker(\mathcal{T}\circ B)]$, we choose $c_0$ such that the corresponding $\eta_0=\xi+\eta_0^{\perp}$ with $\eta_0^{\perp}\perp \mathbb{H}_0$ and
$\xi$ satisfying (\ref{6_a1}), (\ref{6_a2}) and (\ref{6_a3}) (replacing $\eta$ by $\xi$). So (\ref{GGG_10}) still holds in this case.

\begin{remark}
 We know that $\eta_0$ satisfies (\ref{6_a1}), (\ref{6_a2}) and (\ref{6_a3}). By using the same argument in the proof of (\ref{6_a4}), we have
 \begin{align*}
\|\eta_0\|^2_{C^1}\leq C (\|\eta_0\|_{L^2_1}^2+ \|\eta_0\|_{L^2}\|(\eta_0)_t\|_{L^2}+ \|(\eta_0)_t\|_{L^2}\|(\eta_0)_{tt}\|_{L^2})\leq C\kappa_0^2\mathfrak{r}.
 \end{align*}
 Meanwhile, we can estimate the following H\"older seminorm (I follow the standard way to estimate the H\"older norms, readers can see \cite{M} for details):
 \begin{align*}
 [|\eta_t|]_{0,\frac{1}{4}}=\sup_{a\neq b} \frac{|\eta_t|(a)-|\eta_t|(b)}{|a-b|^{\frac{1}{4}}}.
 \end{align*}
 When $|a-b|\leq \mathfrak{r}$, we have
 \begin{align*}
  [|\eta_t|]_{0,\frac{1}{4}}\leq\sup_{a\neq b}\Big|\frac{1}{|a-b|^{\frac{1}{4}}}\int_a^b\partial_t|\eta_t|(s)ds\Big|\leq \sup \|\eta_{tt}\|_{L^2}|a-b|^{\frac{1}{4}}\leq C\kappa_0 \mathfrak{r}^{\frac{1}{4}}; 
 \end{align*}
when $|a-b|>\mathfrak{r}$, we have
\begin{align*}
 [|\eta_t|]_{0,\frac{1}{4}}\leq C\sup |\eta_t|\frac{1}{\mathfrak{r}^{\frac{1}{4}}}\leq C\kappa_0\mathfrak{r}^{\frac{1}{4}}.
\end{align*}
So we have the H\"older estimate
\begin{align}
\|\eta_0\|_{C^{1,\frac{1}{4}}}\leq C\kappa_0\mathfrak{r}^{\frac{1}{4}}.\label{GGG_13}
\end{align}
\end{remark}

\begin{remark}
We should also notice that the choice of $(\eta_0,k_0^{\pm})$ is unique. More precisely, for any $\xi \in\mathbb{H}_0$, the choice of $\eta_0^{\perp}$ is unique.
\end{remark}

{\bf Step 3}. First of all, since $\eta_0$ satisfies (\ref{6_a4}) , (\ref{6_a5}) and (\ref{6_a6}) (These hold because $\eta_0$ satisfies (\ref{6_a1}), (\ref{6_a2}) and (\ref{6_a3}) by Remark 7.5), we should assume that $\kappa_1$ is the constant appearing in these estimates in the beginning.\\

 On $N_R$, we can define
\begin{align*}
\mathfrak{h}_0^b=\chi_0 \left( \begin{array}{c}
\frac{h_0^+}{\sqrt{z}} \\
\frac{h^-_0}{\sqrt{\bar{z}}}
\end{array} \right);\mbox{ }\mathfrak{c}_0^b=\chi_0 \left( \begin{array}{c}
\frac{c_0}{2\sqrt{z}} \\
\frac{c^{aps}_0}{2\sqrt{\bar{z}}}
\end{array} \right);\mbox{ }\mathfrak{k}_0^b=\chi_0 \left( \begin{array}{c}
-i\frac{k_0^+}{\sqrt{z}} \\
i\frac{k^{-}_0}{\sqrt{\bar{z}}}
\end{array} \right).
\end{align*}
We also define $\mathfrak{h}^g_0=\mathfrak{h}_0-\mathfrak{h}^b_0$ and $\mathfrak{c}^g_0=\mathfrak{c}_0-\mathfrak{c}^b_0$. Here the superscript ``g'' means ``good'' because all these good terms are in $L^2_1$. On the contrary, the superscript ``b'' indicates these terms are ``bad'', which means that we should find a corresponding $\Sigma$-perturbation to eliminate them (with some finite dimensional exceptions).\\

By (\ref{GGG_7}), we have
\begin{align*}
D_{pert}(\psi+s\mathfrak{c}_0-s\mathfrak{h}_0)&= sT^s(\mathfrak{c}_0-\mathfrak{h}_0)\\
&=D_{pert}(\psi+s\mathfrak{c}^g_0-s\mathfrak{h}^g_0)+D_{pert}(s\mathfrak{c}^b_0-s\mathfrak{h}^b_0)\\
&=D_{pert}(\psi+s\mathfrak{c}^g_0-s\mathfrak{h}^g_0)+D|_{N_R}(s\mathfrak{c}^b_0-s\mathfrak{h}^b_0) \mbox{ }mod(\ker(D|_{L^2_1})),
\end{align*}
which implies 
\begin{align}
D_{pert}(\psi+s\mathfrak{c}^g_0-s\mathfrak{h}^g_0)+D|_{N_R}(s\mathfrak{c}^b_0-s\mathfrak{h}^b_0)=s^2\mathcal{A} \mbox{ }mod(\ker(D|_{L^2_1}))\label{GGG_14}
\end{align}
for some $\mathcal{A}\in\mathfrak{A}^{\kappa_0}_1$ by (\ref{GG_1}) and (\ref{GG_2}).\\

By a straightforward computation, we have
\begin{align}
D|_{N_R}(s\mathfrak{c}^b_0-s\mathfrak{h}^b_0)&=sD|_{N_R}\Bigg(\chi_0\left( \begin{array}{c}
\frac{c_0-2h_0^+}{2\sqrt{z}} \nonumber\\
\frac{c^{aps}_0-2h^-_0}{2\sqrt{\bar{z}}}
\end{array} \right)\Bigg)\label{GGG_15}\\
&=
s\chi_0\left( \begin{array}{c}
-i\frac{\partial_t a^+\eta_0+a^+ (\eta_0)_t}{\sqrt{z}} \\
i\frac{\partial_t a^-\bar{\eta}_0+a^-(\bar{\eta}_0)_t}{\sqrt{\bar{z}}}
\end{array} \right)+s\sigma(\chi_0)\mathfrak{c}^b_0-s\sigma(\chi_0)\mathfrak{h}^b_0\nonumber\\
&\mbox{ }\mbox{ }\mbox{ }\mbox{ }\mbox{ }\mbox{ }\mbox{ }\mbox{ }\mbox{ }\mbox{ }\mbox{ }\mbox{ }\mbox{ }\mbox{ }\mbox{ }\mbox{ }\mbox{ }\mbox{ }\mbox{ }\mbox{ }\mbox{ }\mbox{ }\mbox{ }\mbox{ }\mbox{ }\mbox{ }\mbox{ }\mbox{ }\mbox{ }\mbox{ }\mbox{ }\mbox{ }\mbox{ }\mbox{ }
-s\mathfrak{k}^b_0-s\sigma(\chi_0)\mathfrak{k}^b_0-sD|_{N_R}(\mathfrak{k}^b_0)\nonumber\\
&=
s\chi_0\left( \begin{array}{c}
-i\frac{\partial_t a^+\eta_0}{\sqrt{z}} \\
i\frac{\partial_t a^-\bar{\eta}_0}{\sqrt{\bar{z}}}
\end{array} \right)+
s\chi_0\left( \begin{array}{c}
-i\frac{a^+(\eta_0)_t}{\sqrt{z}} \\
i\frac{a^-(\bar{\eta}_0)_t}{\sqrt{\bar{z}}}
\end{array} \right)+s\sigma(\chi_0)\mathfrak{c}^b_0\\
&\mbox{ }\mbox{ }\mbox{ }\mbox{ }\mbox{ }\mbox{ }\mbox{ }\mbox{ }\mbox{ }\mbox{ }\mbox{ }\mbox{ }\mbox{ }\mbox{ }\mbox{ }\mbox{ }\mbox{ }\mbox{ }\mbox{ }\mbox{ }\mbox{ }\mbox{ }\mbox{ }\mbox{ }\mbox{ }\mbox{ }
-s\sigma(\chi_0)\mathfrak{h}^b_0-s\sigma(\chi_0)\mathfrak{k}^b_0-sD|_{N_R}(\mathfrak{k}^b_0).\nonumber
\end{align}
Using the operator $\Theta^0_s$ defined in Proposition 4.4, we can check that
\begin{align*}
\Theta^0_{s}(\psi)=s\chi_0\left( \begin{array}{c}
-i\frac{a^+(\eta_0)_t}{\sqrt{z}} \\
i\frac{a^-(\bar{\eta}_0)_t}{\sqrt{\bar{z}}}
\end{array} \right)+s\sigma(\chi_0)\mathfrak{c}^b_0-s\sigma(\chi_0)\mathfrak{h}^b_0-s\sigma(\chi_0)\mathfrak{k}^b_0.
\end{align*}
So (\ref{GGG_15}) implies
\begin{align}
D|_{N_R}(s\mathfrak{c}^b_0-s\mathfrak{h}^b_0)=s\chi_0\left( \begin{array}{c}
-i\frac{\partial_t a^+\eta_0}{\sqrt{z}} \\
i\frac{\partial_t a^-\bar{\eta}_0}{\sqrt{\bar{z}}}
\end{array} \right)+\Theta^0_{s}(\psi)-D_{pert}(s\mathfrak{k}^b_0).\label{6.43}
\end{align}
Now, we define
\begin{align}
\mathfrak{e_0}&=\chi_0\left( \begin{array}{c}
-i \partial_t a^-\bar{\eta}_0 \sqrt{\bar{z}}\\
-i\partial_t a^+\eta_0 \sqrt{z}
\end{array} \right)\label{GA_1}\\
\mathfrak{e}'_0&=-(\chi_z\eta+\chi_{\bar{z}}\bar{\eta})\psi-\left(\begin{array}{cc}
 0&i\chi\eta_t\\
-i\chi \bar{\eta}_t&0
\end{array}\right)\psi\label{GA_2}\\
\hat{\Theta}_s^0&=e_1(s\chi_0(\eta_0)_t\partial_z+s\chi_0 (\bar{\eta}_0)_t\partial_{\bar{z}}),\label{GA_3}\\ \mathcal{W}^0_s&=e_2(s(\chi_0)_{\bar{z}}\bar{\eta}_0\partial_z-s(\chi_0)_z\bar{\eta}_0\partial_{\bar{z}})
+e_3(-s(\chi_0)_{\bar{z}}\eta_0\partial_z+s(\chi_0)_z\eta_0\partial_{\bar{z}})\label{GA_4}
\end{align}
where $\hat{\Theta}_s^0+\mathcal{W}_s^0=\Theta_s^0$ (The RHS defined in Proposition 4.5). Then by Proposition 4.5, Proposition 4.6, (\ref{6_a4}), (\ref{6_a5}), (\ref{6_a6}) and a straightforward computation, (\ref{GGG_14}) and (\ref{6.43}) implies 
\begin{align}
D_{pert,s\chi_0\eta_0}(\psi+s\mathfrak{c}_0^g-s\mathfrak{h}^g_0+s\mathfrak{e}_0)=&s^2(\sum_{i=1}^3\mathcal{Q}_i)-D_{pert}(s\mathfrak{k}^b_0)\label{GGG_17}\\
+&s^2\mathcal{A}+s^2\mathcal{B}+s\mathcal{C}\mbox{ }mod(\ker(D|_{L^2_1}))\nonumber
\end{align}
where $\mathcal{A}\in\mathfrak{A}_1^{\kappa_0}$, $\mathcal{B}\in\mathfrak{B}_1^{\kappa_1}$, $\mathcal{C}\in\mathfrak{C}_1^{\kappa_1}$ and
\begin{align}
s^2\mathcal{Q}_1&=\hat{\Theta}_s^0(s\mathfrak{e}_0);\label{GGG_18}\\
s^2\mathcal{Q}_2&=-\hat{\Theta}_s^0(s\mathfrak{c}^g_0-s\mathfrak{h}^g_0)\label{GGG_19}\\
s^2\mathcal{Q}_3&=\hat{\Theta}_s^0(s\mathfrak{e}'_0).\label{GGG_20}
\end{align}
\ \\

{\bf Step 4}. In this step we prove that there exists $\mathfrak{e}'_i\in L^2_1$ such that $D_{s\chi_0\eta_0}(s^2\mathfrak{e}'_i)=s^2\mathcal{Q}_i+s^3\mathcal{B}+s^2\mathcal{C}$ for some $\mathcal{B}\in \mathfrak{B}_1^{\kappa_1}$ and $\mathcal{C}\in \mathfrak{C}^{\kappa_1}_1$ where $i=1,2,3$.

\begin{lemma} 
Let $Q$ be one of the following two types:
\begin{align*}
\mathcal{Q}=s^2\chi_0\left(\begin{array}{c}\frac{q^+(t)}{\sqrt{z}}\\ \frac{q^-(t)}{\sqrt{\bar{z}}}\end{array}\right)\mbox{ or }\mathcal{Q}=s^2\chi_0\left(\begin{array}{c}\frac{q^+(t)}{\sqrt{\bar{z}}}\\ \frac{q^-(t)}{\sqrt{z}}\end{array}\right)
\end{align*}
for some $q^{\pm}\in L^2_1(S^1;\mathbb{C})$ satisfying $\|q^{\pm}\|_{L^2}\leq \kappa_1 \mathfrak{r}$ and $\|(q^{\pm})_t\|_{L^2}\leq \kappa_1$. Then there exists an $L^2_1$ section $\mathfrak{e}'$ which can be written as
\begin{align*}
\mathfrak{e}'=\sum_{i\geq 2} s^i\chi_0^i\left(\begin{array}{c} e_i^+(t) \sqrt{\bar{z}} \\  e_i^-(t) \sqrt{z} \end{array}\right)
\end{align*}
for the first type and
\begin{align*}
\mathfrak{e}'=\sum_{i\geq 0} s^i\chi_0^i\left(\begin{array}{c}e_i^+(t) \sqrt{z} \\  e_i^-(t)  \sqrt{\bar{z}}\end{array}\right)
\end{align*}
for the second such that $D_{s\chi_0\eta_0}(s^2\mathfrak{e}')=s^2\mathcal{Q}+s^2\mathcal{B}+s^2\mathcal{C}$ for some $\mathcal{B}\in \mathfrak{B}_1^{\kappa_1}$ and $\mathcal{C}\in \mathfrak{C}_1^{\kappa_1}$ for all $s\leq \frac{1}{2\gamma_{_T}^2\kappa_1 \mathfrak{r}^{\frac{1}{2}}}$. Furthermore, we have $\|\mathfrak{e}'\|_{L^2_1}\leq 2\kappa_1$.
\end{lemma}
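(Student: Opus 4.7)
\medskip

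\noindent\textbf{Proof proposal.} The plan is to build $\mathfrak{e}'$ as a formal series in the index $i$, choosing each pair $(e_i^+, e_i^-)$ so that the $i$-th term cancels the leading singular piece produced by the previous terms and the residual is a factor of at most $s\gamma_T\kappa_1\mathfrak{r}^{1/2}$ smaller in the combined $\mathfrak{C}_1^{\kappa_1}\oplus\mathfrak{B}_1^{\kappa_1}$ measure. The smallness hypothesis $s\le 1/(2\gamma_T^2\kappa_1\mathfrak{r}^{1/2})$ then drives a geometric contraction, so the series converges in $L^2_1$ with $\|\mathfrak{e}'\|_{L^2_1}\le 2\kappa_1$ after summing termwise bounds inherited from the $L^2$-hypotheses on $q^\pm$ and $\dot q^\pm$.

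For the first type $\mathcal{Q}=s^2\chi_0(q^+/\sqrt{z},\,q^-/\sqrt{\bar z})^T$, the key identity is
\[
D\bigl(e^+\sqrt{\bar z},\, e^-\sqrt{z}\bigr)^T \;=\; \Bigl(\tfrac{e^-}{2\sqrt{z}} - i\dot e^+\sqrt{\bar z},\; -\tfrac{e^+}{2\sqrt{\bar z}} + i\dot e^-\sqrt{z}\Bigr)^T.
\]
I would therefore set $\mathfrak{e}_2' := \chi_0^2(-2q^-\sqrt{\bar z},\,2q^+\sqrt{z})^T$; then $D\mathfrak{e}_2'$ matches the $\chi_0^2$-localized singular part of $\mathcal{Q}/s^2$, with the remainder consisting of (i) the $i\dot q^\pm$-terms, which lie in $\mathfrak{C}_1^{\kappa_1}$ by the hypothesis $\|\dot q^\pm\|_{L^2}\le\kappa_1$; (ii) annular $\partial(\chi_0^2)$-contributions, absorbed into $\mathfrak{B}_1^{\kappa_1}$; and (iii) the $(\chi_0-\chi_0^2)$-discrepancy between $D\mathfrak{e}_2'$ and $\mathcal{Q}$, which is again annular. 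The non-$D$ pieces of $D_{s\chi_0\eta_0}$ acting on $\mathfrak{e}_2'$ contribute a further factor of $s\gamma_T\kappa_1\mathfrak{r}^{1/2}$ (via propositions 5.5--5.6) and seed the iteration $i\ge 3$.

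For the second type $\mathcal{Q}=s^2\chi_0(q^+/\sqrt{\bar z},\,q^-/\sqrt{z})^T$, observe that $D(e^+\sqrt{z},\,e^-\sqrt{\bar z})^T=(-i\dot e^+\sqrt{z},\,i\dot e^-\sqrt{\bar z})^T$ produces \emph{no} singular pieces, so the matching must come from $D_{s\chi_0\eta_0}-D$. I would first invert $D$ against a regularized version of $s^2\tilde{\mathcal{Q}}$ via proposition 5.4, producing a global $L^2$-solution whose $\mathcal{K}_\mathfrak{r}$-dominant term on $N_{\mathfrak{r}/T}$ has the shape $(e_0^+\sqrt{z},\,e_0^-\sqrt{\bar z})^T$; this is the $i=0$ piece (which indeed carries no $\chi_0$ cutoff, $\chi_0^0=1$). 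The further identity
\[
\hat\Theta_s^0\bigl(e^+\sqrt{z},\,e^-\sqrt{\bar z}\bigr)^T \;=\; \Bigl(-\tfrac{is\chi_0\dot\eta_0\,e^+}{2\sqrt{z}},\;\tfrac{is\chi_0\dot{\bar\eta}_0\,e^-}{2\sqrt{\bar z}}\Bigr)^T
\]
shows that $\hat\Theta_s^0$ converts second-form ansatz into first-type singular patterns, while the $\mathcal{W}_s^0$-component yields second-type singularities supported on the annulus $N_\mathfrak{r}-N_{\mathfrak{r}/T}$. Iteration for $i\ge 1$ then corrects the residuals produced by $\Theta_s^0,\mathcal{F}_s^0,\mathcal{A}_s^0,\mathcal{R}_s^0$ applied to the running partial sum, with contraction ratio bounded by $s\|\dot\eta_0\|_\infty\le s\gamma_T\kappa_1\mathfrak{r}^{1/2}\le 1/2$.

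The main obstacle will be the bookkeeping at each iteration: verifying that the residual after $i$ steps decomposes cleanly into a $\mathfrak{B}_1^{\kappa_1}$-part (annular, in $L^2_{-1}$) and a $\mathfrak{C}_1^{\kappa_1}$-part (satisfying the $r^3$-decay), while keeping the accumulated constant below $\kappa_1$ so that the geometric convergence survives. This requires the simultaneous tracking of the $s$-order, the $\chi_0^i$-localization (invoking the $\gamma_T=T/(T-1)$-bound on $|\nabla\chi_0|$), and the singular behaviour near $\Sigma$, using the $L^2$, $L^2_t$, $L^2_{tt}$ bounds on $q^\pm$ together with propositions 5.5--5.6 applied to the specific ansatz sections.
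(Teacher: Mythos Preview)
Your treatment of the first type is essentially aligned with the paper's: take the explicit primitive $\chi_0(q^-\sqrt{\bar z},\,q^+\sqrt z)^T$ (the paper uses a single $\chi_0$ rather than $\chi_0^2$, and slightly different sign and factor conventions, but the mechanism is identical), then iterate away the residual produced by $\hat\Theta_s^0$ with geometric contraction ratio $s\|\dot\eta_0\|_\infty$.

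The genuine gap is in your handling of the second type. Your claim that $D(e^+\sqrt z,\,e^-\sqrt{\bar z})^T$ has no singular output is a convention artifact. In the paper's (nonstandard) convention $\partial_z=\partial_x+i\partial_y$, one has $\partial_z\sqrt{\bar z}=1/\sqrt{\bar z}$ and $\partial_{\bar z}\sqrt z=1/\sqrt z$, so $D$ applied to the second ansatz produces precisely the second-type singular pattern $(e^-/\sqrt{\bar z},\,-e^+/\sqrt z)^T$, and the same local scheme works verbatim. Your proposed detour through Proposition~5.4 fails on two counts: that proposition requires $\mathfrak f|_{N_r}=0$, whereas your $\mathcal Q$ is supported \emph{inside} $N_{\mathfrak r}$; and even where applicable it yields an $L^2$ section whose $\mathcal K_{\mathfrak r}$-dominant term is of order $r^{-1/2}$, not $r^{1/2}$, so you would not recover the claimed $i=0$ piece $(e_0^+\sqrt z,\,e_0^-\sqrt{\bar z})^T$.

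The paper also packages the iteration more cleanly than your bookkeeping sketch suggests. Rather than tracking $\mathfrak B$- and $\mathfrak C$-residuals at each step, it defines a linear map $J$ on the $L^2(S^1;\mathbb C)$-module spanned by monomials $z^a\bar z^b$ with $a+b=\tfrac12$, encoding exactly how $\hat\Theta_s^0$ transforms one ansatz piece into the next. Each application of $J$ shifts the type $(a,b)$ by $(\pm 1,\mp 1)$; the paper verifies combinatorially that starting from $(1/2,0)$ or $(0,1/2)$ the $J$-orbit never reaches the undefined types $(-1,3/2)$ or $(3/2,-1)$, which is what guarantees the iteration is well-posed. This single observation replaces most of the step-by-step residual analysis you anticipate as the ``main obstacle''.
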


\begin{proof}
First of all, let $\mathcal{Q}$ is of the first type. We start with the element
\begin{align*}
\mathfrak{e}'_0=\chi_0\left(\begin{array}{c}
q^-\sqrt{\bar{z}}\\
q^+\sqrt{z}
\end{array}\right).
\end{align*}
Under a straightforward computation, we have
\begin{align*}
D(s^2 \mathfrak{e}'_0)= s^2\mathcal{Q}+s^2\mathcal{B}+s^2\mathcal{C}
\end{align*}
with $\mathcal{B}\in \mathfrak{B}^{s\gamma_{_T}^2\kappa_1^2}_1$ and $\mathcal{C}\in\mathfrak{C}_1^{s\kappa_1}$. Recall that by Proposition 4.4, we have
\begin{align*}
D_{s\chi_0\eta_0}=(1+\varrho^0)D+s((\chi_0)_z\eta_0+(\chi_0)_{\bar{z}}\bar{\eta}_0)e_1\partial_t+\Theta_s^0+\mathcal{H}^0_s+\mathcal{F}^0_s+\mathcal{R}_s^0.
\end{align*}
By Proposition 4.5, Proposition 4.6, (\ref{6_a4}), (\ref{6_a5}) and (\ref{6_a6}), we have
\begin{align*}
s((\chi_0)_z\eta_0+(\chi_0)_{\bar{z}}\bar{\eta}_0)e_1\partial_t(s^2\mathfrak{e}'_0)+(\mathcal{H}^0_s+\mathcal{F}^0_s)(s^2\mathfrak{e}'_0)\in s^2 \mathfrak{C}_1^{s \kappa_1^2}\\
\varrho^0 D(s^2\mathfrak{e}'_0)+\mathcal{R}^0_s(s^2\mathfrak{e}'_0)\in s^2 \mathfrak{B}_1^{s^2\gamma_{_T}^2\kappa_1^3}.
\end{align*}
Meanwhile, recall that $\Theta_s^0=[e_1(s\chi\eta_t\partial_z+s\chi\bar{\eta}_t\partial_{\bar{z}})
+e_2(s\chi_{\bar{z}}\bar{\eta}\partial_z-s\chi_z\bar{\eta}\partial_{\bar{z}})
+e_3(-s\chi_{\bar{z}}\eta\partial_z+s\chi_z\eta\partial_{\bar{z}})]$ and the decomposition
\begin{align*}
\Theta_s^0=\hat{\Theta}_s^0+\mathcal{W}^0_s
\end{align*}
given by (\ref{GA_3}) and (\ref{GA_4}). $\mathcal{W}^0_s$ is an $O(s\kappa_1)$-first order differential operator with its support on $N_{\mathfrak{r}}-N_{\frac{\mathfrak{r}}{T}}$, which implies $\mathcal{W}^0_s(s^2\mathfrak{e}'_0)\in s^2\mathfrak{B}_1^{s\gamma_{_T}^2\kappa_1^2}$. So we have
\begin{align*}
\Theta_s^0(s^2\mathfrak{e}'_0)=\hat{\Theta}_s^0(s^2\mathfrak{e}_0)
+s^2\mathcal{B}
\end{align*}
for some $\mathcal{B} \in \mathfrak{B}_1^{s\gamma_{_T}^2\kappa_1^2}$. Moreover, since
\begin{align*}
\Theta_s^0(s^2\mathfrak{e}'_0)=\chi_0 \Theta_s^0\Big(s^2\frac{\mathfrak{e}'_0}{\chi_0}\Big)+\Theta_s^0(\chi_0)s^2\frac{\mathfrak{e}'_0}{\chi_0}
\end{align*}
with the second term on the right hand side in $s^2\mathfrak{B}_1^{s\gamma_{_T}^2\kappa_1^2}$, we have
\begin{align*}
\Theta_s^0(s^2\mathfrak{e}'_0)=\chi_0 \Theta_s^0(s^2\frac{\mathfrak{e}'_0}{\chi_0})
+s^2\mathcal{B}
\end{align*}
for some $\mathcal{B} \in \mathfrak{B}_1^{s\gamma_{_T}^2\kappa_1^2}$.\\

Now we call $\mathcal{Q}^1=\chi_0 \Theta_s^0(\frac{\mathfrak{e}'_0}{\chi_0})$, which can be simplified as
\begin{align*}
\mathcal{Q}^1=s\chi_0^2\left(\begin{array}{c}\frac{q_1^+(t)}{\sqrt{\bar{z}}}\\ \frac{q_1^-(t)}{\sqrt{z}}\end{array}\right)
\end{align*}
with
\begin{align*}
q^+_1=-i(\chi_0(\bar{\eta}_0)_t)q^+\\
q^-_1=-i(\chi_0(\eta_0)_t)q^-.
\end{align*}
By using the fact $\|q^{\pm}\|_{L^2}\leq \kappa_1 \mathfrak{r}$, $\|(q^{\pm})_t\|_{L^2}\leq \kappa_1$, fundamental theorem of calculus and H\"older's inequality, we have $\|q^{\pm}\|_{L^{\infty}}\leq C\kappa_1\mathfrak{r}^{\frac{1}{2}}$. Therefore, by using (\ref{6_a3}), (\ref{6_a4}), we have $\|q^{\pm}_1\|_{L^2}\leq \kappa_1^2 \mathfrak{r}^{\frac{3}{2}}$, $\|(q^{\pm}_1)_t\|_{L^2}\leq \kappa_1^2\mathfrak{r}^{\frac{1}{2}}$. So we have
\begin{align}
D_{s\chi_0\eta_0}(s^2\mathfrak{e}'_0)=s^2\mathcal{Q}^1+s^2\mathcal{B}^0+s^2\mathcal{C}^0\label{GGG_21}
\end{align}
for some $\mathcal{B}^0 \in \mathfrak{B}_1^{s\gamma_{_T}^2\kappa_1^2 \mathfrak{r}}$ and $\mathcal{C}^0 \in \mathfrak{C}_1^{s\kappa_1 \mathfrak{r}^{\frac{1}{2}}}$.\\

Here we define an $L^{2}(S^1;\mathbb{C})$-module $\mathbb{V}$ which is generalized by
\begin{align*}
\Bigg\{\left(\begin{array}{c}
z^a\bar{z}^b\\
0
\end{array}\right),
\left(\begin{array}{c}
0\\
z^a\bar{z}^b
\end{array}\right)\Bigg|(a,b)\in (\mathbb{Z}+\frac{1}{2})\times\mathbb{Z} \mbox{ or } (a,b)\in \mathbb{Z}\times(\mathbb{Z}+\frac{1}{2}), a+b=\frac{1}{2}\Bigg\}.
\end{align*}
Now, we define a linear map $J$ by the following rule:
\begin{align*}
J\left(\begin{array}{c}
q^+z^a\bar{z}^b\\
q^-z^b\bar{z}^a
\end{array}
\right)=\left(\begin{array}{c}
-i(\eta_0)_tq^+z^b\bar{z}^a\\
i (\bar{\eta}_0)_tq^-z^a\bar{z}^b
\end{array}\right)
+
\frac{b}{a+1}\left(\begin{array}{c}
-i(\bar{\eta}_0)_tq^+z^{b-1}\bar{z}^{a+1}\\
i (\eta_0)_tq^-z^{a+1}\bar{z}^{b-1}
\end{array}\right)
\end{align*}
This map is not will defined on the entire $\mathbb{V}$ since it makes no sense when $a=-1$. However, if we start with $x=\left(\begin{array}{c}
q^+z^a\bar{z}^b\\
q^-z^b\bar{z}^a
\end{array}
\right)$ with $(a,b)=(\frac{1}{2},0)$ or $(a,b)=(0,\frac{1}{2})$, we can always define $J^n(x)$ for any $n$. Here we call that $x=\left(\begin{array}{c}
q^+z^a\bar{z}^b\\
q^-z^b\bar{z}^a
\end{array}
\right)$ is of the type $(a,b)$. To prove that $J^n(x)$ is well-defined for all $n$ when $x$ is of the type $(\frac{1}{2},0)$ or $(0,\frac{1}{2})$, we should prove that there is no term in $J^n(x)$ which is of the type $(-1,\frac{3}{2})$ or $(\frac{3}{2},-1)$. We show this fact inductively. When $n=0$, this statement is obviously true. Suppose there exists a smallest $n\in \mathbb{N}$ such that $J^n(x)$ has a component of the type $(-1,\frac{3}{2})$ or $(\frac{3}{2},-1)$. For the first case that the component appearing in $J^n(x)$ is of the type $(-1,\frac{3}{2})$, it must be generated from a component in $J^{n-1}(x)$ of the type $(\frac{3}{2},-1)$, which is a contradiction ($n$ is the smallest). For the second case that the component appearing in $J^n(x)$ is of the type $(\frac{3}{2},-1)$, either this component comes from a component in $J^{n-1}(x)$ of the type $(-1,\frac{3}{2})$, which is a contradiction again, or it comes from a component in $J^{n-1}(x)$ of the type $(\frac{5}{2},-2)$. The later case is also impossible because we start from the term of the type $(\frac{1}{2},0)$ or $(0,\frac{1}{2})$. At each time we apply $J$ on it, it will only change $(a,b)$ by adding $(\pm 1,\pm 1)$. So there must be a number $m<n-1$ such that $J^{m}(x)$ contains a component of the type $(-1,\frac{3}{2})$ or $(\frac{3}{2},-1)$, which leads a contradiction. Therefore, all the components in $J^n(x)$ are not of the type $(-1,\frac{3}{2})$, which means $J^n(x)$ is well-defined for all $n$.\\

Now we define $\mathfrak{e}'_k$ inductively by
\begin{align}
\mathfrak{e}'_{k}=s\chi_0^{k+1}J\Big(\frac{\mathfrak{e}'_{k-1}-\mathfrak{e}'_{k-2}}{\chi_0^{k}}\Big)+\mathfrak{e}'_{k-1}.\label{GGG_22}
\end{align}
By induction hypothesis, we suppose that $\mathfrak{e}_k\in L^2_1$ satisfying
\begin{align*}
D_{s\chi_0\eta_0}(s^2\mathfrak{e}'_k)=s^2\mathcal{Q}^{k+1}+s^2\mathcal{B}^k +s^2\mathcal{C}^k
\end{align*}
where $\mathcal{B}^k\in \mathfrak{B}^{\sum_{j=0}^ks^{k+1}(k+1)\gamma_{_T}^2\kappa_1^2 \mathfrak{r}}_1$,  $\mathcal{C}^k\in \mathfrak{C}^{\sum_{j=0}^ks^{k+1}(k+1)\kappa_1 \mathfrak{r}^{\frac{1}{2}}}_1$ and
\begin{align*}
\mathcal{Q}^{k+1}=\chi_0^{k+1}\hat{\Theta}^0_s\Big(\frac{\mathfrak{e}'_k-\mathfrak{e}'_{k-1}}{\chi_0^{k+1}}\Big).
\end{align*}

By taking $s<\frac{1}{\kappa_1 \mathfrak{r}^{\frac{1}{2}}}$, we can see that the sequence $\{\mathfrak{e}_{k}\}$ will converge in $L^2_1$ sense to some $\mathfrak{e}'$. Meanwhile, we can see that 
\begin{align*}
D_{s\chi_0\eta_0}(s^2\mathfrak{e}'_{k+1})=\chi_0^{k+2}\hat{\Theta}_s^0\Big(s^2\frac{(\mathfrak{e}'_{k+1}-\mathfrak{e}'_{k})}{\chi_0^{k+2}}\Big)+s^2\delta\mathcal{B}^{k+1}+s^2\delta\mathcal{C}^{k+1}+s^2\mathcal{B}^k+s^2\mathcal{C}^k.
\end{align*}
where $\delta\mathcal{B}^{k+1}\in \mathfrak{B}^{s^{k+2}(k+2)\gamma_{_T}^2\kappa_1^2}_1$ and $\delta\mathcal{C}^{k+1}\in \mathfrak{C}^{s^{k+2}(k+2)\kappa_1}_1$. We define inductively that $\mathcal{B}^{k+1}=\delta\mathcal{B}^{k+1}+\mathcal{B}^{k}$, $\mathcal{C}^{k+1}=\delta\mathcal{C}^{k+1}+\mathcal{C}^{k}$ and
\begin{align*}
\chi_0^{k+2}\hat{\Theta}_s^0\Big(\frac{\mathfrak{e}'_{k+1}-\mathfrak{e}'_{k}}{\chi_0^{k+2}}\Big)=\mathcal{Q}^{k+2}.
\end{align*}\\
Furthermore, if we take $s$ small enough such that $\sum_{j=0}^{\infty}s^{k+1}(k+1)=\frac{s}{(1-s)^2}\leq \frac{1}{\gamma_{_T}^2\kappa_1}$, e.g., $s\leq \frac{3-\sqrt{5}}{2}\frac{1}{\gamma_{_T}^2\kappa_1}$, then we have $\mathcal{B}^{k}\rightarrow \mathcal{B}\in \mathfrak{B}^{\kappa_1}_1$ and $\mathcal{C}^k\rightarrow \mathcal{C}\in \mathfrak{C}^{\kappa_1}_1$.\\

Therefore, by taking $k\rightarrow \infty$, we finish our proof by induction.\\

To get the $L^2_1$-estimate of $\mathfrak{e}'$, we have
\begin{align*}
\|\mathfrak{e}'_{k+1}-\mathfrak{e}'_{k}\|_{L^2_1}=\Big\|s^{k+1}\chi_0^{k+2}J^k\Big(\frac{\mathfrak{e}'_0}{\chi_0}\Big)\Big\|_{L^2_1}\leq \frac{1}{2^{k}}\kappa_1
\end{align*}
by using (\ref{GGG_22}), $\|q^{\pm}_{k+1}\|_{L^2}\leq \frac{\kappa_1^{k+1}\mathfrak{r}}{4}$ and $\|(q^{\pm}_{k+1})_t\|\leq \kappa_1^{k+2}$. So we have $\|\mathfrak{e}'\|_{L^2_1}\leq 2\kappa_1$
\end{proof}

Now we apply this lemma to the $\mathcal{Q}_1$, $\mathcal{Q}_2$ and $\mathcal{Q}_3$ in (\ref{GGG_18}), (\ref{GGG_19}) and (\ref{GGG_20}). Notice that for every $i\in\{1,2,3\}$, we have
\begin{align*}
s^2\mathcal{Q}_i=s^2\mathcal{Q}+s^2\mathcal{B}+s\mathcal{C}
\end{align*}
for some $\mathcal{B}\in\mathfrak{B}_1^{\kappa_1}$ and $\mathcal{C}\in\mathfrak{C}_1^{\kappa_1}$ with $\mathcal{Q}$ being one of the type in Lemma 7.7. So we can find $\mathfrak{e}'_i\in L^2_1$ such that
\begin{align}
D_{s\chi_0\eta_0}(s^2\mathfrak{e}'_i)=s^2\mathcal{Q}_i+s^2\mathcal{B}+s\mathcal{C} \mbox{ for } i=1,2,3.\label{GGG_23}
\end{align}
\\

Finally, by Proposition 4.8, we can find $\hat{\mathfrak{k}}^b_{0,s}\in L^2$ such that 
\begin{align}
D_{pert, s\chi_0\eta_0}(s\hat{\mathfrak{k}}^b_{0,s})=D_{pert}(s\mathfrak{k}^b_0).\label{GGGG_1}
\end{align}
We decompose $\hat{\mathfrak{k}}^b_{0,s}=\mathfrak{k}^b_{0,s}+s\mathfrak{k}^{\perp}_{0,s}$ where $\mathcal{T}_{a^+,a^-}\circ B(\mathfrak{k}^b_{0,s})\in\mathbb{H}_1$ and $\mathcal{T}_{a^+,a^-}\circ B(\mathfrak{k}^{\perp}_{0,s})\in \mathbb{H}_1^{\perp}$. Again, by Proposition 6.2, we have the following estimates for $B(\mathfrak{k}^{\perp}_{0,s})$:
\begin{align}
\|B(\mathfrak{k}^{\perp}_{0,s})\|^2_{L^2}\leq \frac{\kappa_0}{2}\mathfrak{r}^2, \|(B(\mathfrak{k}^{\perp}_{0,s}))_t\|^2_{L^2}\leq \frac{\kappa_0}{2}\mathfrak{r},
 \|(B(\mathfrak{k}^{\perp}_{0,s}))_{tt}\|^2_{L^2}\leq \frac{\kappa_0}{2}.\label{GGG_24}
\end{align}
 Therefore, by (\ref{GGG_23}) and (\ref{GGGG_1}), we can rewrite (\ref{GGG_17}) as
\begin{align}
D_{pert,s\chi_0\eta_0}(\psi+s\mathfrak{c}_0^g-s\mathfrak{h}^g_0+s\mathfrak{e}^g_0&+s\mathfrak{k}^b_{0,s}+s\mathfrak{k}^{\perp}_{0,s})\label{GGG_25}\\
&=s^2\mathcal{A}+s^2\mathcal{B}+s\mathcal{C} \mbox{ }mod(\ker(D|_{L^2_1}))\nonumber
\end{align}
where $\mathfrak{e}^g_0=\mathfrak{e}_0+\mathfrak{e}'_0+
s\sum_{i=1}^3\mathfrak{e}'_i$, $\mathcal{A}\in \mathfrak{A}^{\kappa_0}_1$, $\mathcal{B}\in \mathfrak{B}^{\kappa_1}_1$ and $\mathcal{C}\in \mathfrak{C}^{\kappa_1}_1$. We define
\begin{align*}
\mathfrak{k}^0:=s\mathfrak{c}_0^g-s\mathfrak{h}^g_0+s\mathfrak{e}^g_0+s\mathfrak{k}^b_{0,s}.
\end{align*}
Notice that, the argument in Section 7.3 still holds if we replace $\mathfrak{c}_0$ by any element in $\mathfrak{c}_0+ \hat{\psi}$ for some $\hat{\psi}\in\ker(D|_{L^2_1})$. So we can choose a suitable $\mathfrak{c}_0$ such that
\begin{align}
\mathfrak{k}^0\perp\ker(D|_{L^2_1}).\label{GGG_26}
\end{align}

Now we fix $\kappa_0$ and $\kappa_1$ in the rest of this paper.

\subsection{Proof of Proposition 7.3: Iteration of $(\eta_{i}, (\mathfrak{c}^g_{i}, \mathfrak{h}^g_{i}, \mathfrak{e}^g_{i},\mathfrak{k}^b_{i,s},\mathfrak{k}^{\perp}_{i,s}),\mathfrak{f}_{i},)$}
 In this section we will construct an iterative process by determining the following three constants $\mathfrak{r}<1$ and $P\in (T^{\frac{1}{8}}+1,T^{\frac{1}{5}})$ and $T>512$. We will use another constant $\varepsilon>0$ whose upper bound depends only on these constants. In addition, the upper bound for $t_0$ will be determined by these constants, too. We divide our argument into the following 4 steps.\\

{\bf Step 1}. Let $\mbox{ }\mbox{ }\mbox{ }(\eta_{j}, (\mathfrak{c}^g_{j}, \mathfrak{h}^g_{j},\mathfrak{e}^g_{j},\mathfrak{k}_{j,s}^b, \mathfrak{k}_{j,s}^{\perp}), \mathfrak{f}_{j})$ be given in the space\\
$
L^2(S^1;\mathbb{C})\times L^2_1(M\setminus\Sigma;\mathbf{\mathcal{S}}\otimes \mathcal{I})^3\times L^2(M\setminus\Sigma;\mathbf{\mathcal{S}}\otimes \mathcal{I})^2 \times (s^2\mathfrak{A}^{P^i\kappa_0}_{i+1}+s^2\mathfrak{B}^{P^i\kappa_1}_{i+1}+s\mathfrak{C}^{P^i\kappa_1}_{i+1})
$
 for all $j\leq i$ which satisfy the following condition: First, by taking
\begin{align*}
\mathfrak{k}^i:=\sum_{j=0}^i(s\mathfrak{c}_j^g-s\mathfrak{h}_j^g+s\mathfrak{e}_j^g+\mathfrak{k}_{j,s}^b)
\end{align*}
and
\begin{align*}
\psi_i:=\psi+s\mathfrak{k}^i\in L^2
\end{align*}
we have
\begin{align}
D_{pert,s\eta^i}(\psi_i+s^2\mathfrak{k}^{\perp}_{i,s})=s\mathfrak{f}_i\mbox{ }mod(\ker(D|_{L^2_1}))\label{6_5i}
\end{align}
where $\eta^i=\sum_{j=0}^i\chi_j\eta_j$.\\

 Moreover, we assume the following conditions:\\
\begin{align}
\mbox{{\bf Inductive Assumptions:}}\label{IA}
\end{align}
\ \\[-4mm]
{\bf 1.} $s\mathfrak{f}_i$ can be decomposed as
\begin{align*}
s\mathfrak{f}_i=s^{2}\mathcal{A}+s^2\mathcal{B}+s\mathcal{C}
\end{align*}
$\mbox{ }\mbox{ }\mbox{ }\mbox{ }\mbox{ }\mbox{ }\mbox{ }$where
$\mathcal{A}\in \mathfrak{A}^{P^i\kappa_0}_i$, $\mathcal{B}\in \mathfrak{B}^{P^i\kappa_1}_{i}$ and  $\mathcal{C}\in\mathfrak{C}_{i}^{P^i\kappa_1}$.\\
\ \\
{\bf 2.} The sequence $\{(\chi_j,\eta_j)\}_{1<j\leq i}$ satisfies (\ref{6_2a0}), (\ref{6_2a1}), (\ref{6_2a2}) with $\kappa_2=\varepsilon P^j \kappa_0$.\\
\ \\
{\bf 3.} We have $\mathfrak{k}^i\perp \ker(D|_{L^2_1})$ and $\{\mathfrak{k}^i \}$ converges in $L^2$ sense; $\sum_{j=0}^i(s\mathfrak{c}_j^g-s\mathfrak{h}_j^g+s\mathfrak{e}_j^g)$\\$\mbox{ }\mbox{ }\mbox{ }$  converges in $L^2_1$-sense.\\

To obtain the iteration, we need to construct the following data\\ \\
$\mbox{ }\mbox{ }\mbox{ }(\eta_{i+1}, (\mathfrak{c}^g_{i+1}, \mathfrak{h}^g_{i+1},\mathfrak{e}^g_{i+1},\mathfrak{k}_{i+1,s}^b, \mathfrak{k}_{i+1,s}^{\perp}), \mathfrak{f}_{i+1}) $\\$\in L^2(S^1;\mathbb{C})\times L^2_1(M\setminus\Sigma;\mathbf{\mathcal{S}}\otimes \mathcal{I})^3\times L^2(M\setminus\Sigma;\mathbf{\mathcal{S}}\otimes \mathcal{I})^2 \times (s^2\mathfrak{A}^{P^i\kappa_0}_{i+1}+s^2\mathfrak{B}^{P^i\kappa_0}_{i+1}+s\mathfrak{C}^{P^i\kappa_0}_{i+1})$ \\ \\form all previous data $\{(\eta_{j}, (\mathfrak{c}^g_{j}, \mathfrak{h}^g_{j}, \mathfrak{e}^g_{j},,\mathfrak{k}_{j,s}^b, \mathfrak{k}_{j,s}^{\perp}), \mathfrak{f}_{j})\}_{j\leq i}$. We will show that all conditions in (\ref{IA}) will hold inductively.\\

{\bf Step 2}. In this step, we will construct $\mathfrak{h}_{i+1}$ and determine the constant $t_0$ in terms of $\varepsilon$, $\mathfrak{r}$ and $T$. First of all, since $\mathcal{C}\in \mathfrak{C}_{i}^{P^i\kappa_1}$ so we have
\begin{align}
\chi_{i+1}\mathcal{C}\in \mathfrak{C}_{i+1}^{T^{\frac{1}{8}}P^i\kappa_1}\label{GGG_27}
\end{align}
and
\begin{align*}
(1-\chi_{i+1})\mathcal{C}\in \mathfrak{B}_{i}^{P^i\kappa_1}.
\end{align*}
Now we can rewrite
\begin{align}
s\mathfrak{f}_i=s^{2}\mathfrak{f}_{i,A}+s\mathfrak{f}_{i,B}+s\epsilon_i \label{6_5ic}
\end{align}
where
$\mathfrak{f}_{i,A}=\mathcal{A}\in \mathfrak{A}^{P^i\kappa_0}_i$, $\mathfrak{f}_{i,B}=s\mathcal{B}+(1-\chi_{i+1})\mathcal{C}$ and $\epsilon_i=\chi_{i+1}\mathcal{C}\in\mathfrak{C}_{i+1}^{T^{\frac{1}{8}}P^i\kappa_1}$.\\

 Before we start to solve $\mathfrak{h}_{i+1}$, we show that 
\begin{align}
 \mathfrak{f}_{i,B}\in \frac{\varepsilon\mathfrak{r}^{\frac{5}{2}}}{4T^{\frac{5}{2}}}\mathfrak{B}^{ P^i\kappa_1}_{i}.\label{GGG_28}
\end{align} 
 
 First, by taking $s$ small enough, we will have $s\mathcal{B}\in \frac{\varepsilon\mathfrak{r}^{\frac{5}{2}}}{8T^{\frac{5}{2}}}\mathfrak{B}^{P^i
 \kappa_1}_{i}$. This fact can be achieved if we assume $t_0\leq\frac{\varepsilon\mathfrak{r}^{\frac{5}{2}}}{8T^{\frac{5}{2}}}$.\\
 
 Second, by Lemma 2.6, for any $\zeta\in L^2_1$ and $\|\zeta\|_{L^2_1}=1$, we have
 \begin{align*}
\Big| \int\langle\zeta, (1-\chi_{i+1})\mathcal{C}\rangle\Big|&=\Big| \int\langle(1-\chi_{i+1})\zeta, \mathcal{C}\rangle\Big|\\
&\leq C\frac{\mathfrak{r}}{T^i}\|\mathcal{C}\|_{L^2}\\
&\leq  C(\frac{\mathfrak{r}}{T^i})^{\frac{5}{2}}P^i\kappa_1(\frac{\mathfrak{r}}{T^i})^{\frac{1}{8}}\\
&\leq  \frac{\varepsilon}{8}P^i\kappa_1(\frac{\mathfrak{r}}{T^{i+1}})^{\frac{5}{2}}
 \end{align*}
by taking $\mathfrak{r}$ small enough. Therefore, we have 
\begin{align*}
\|(1-\chi_{i+1})\mathcal{C}\|_{L^2_{-1}}\leq \frac{\varepsilon \mathfrak{r}^{\frac{5}{2}} P^i\kappa_1}{8T^{\frac{5(i+1)}{2}}},
\end{align*}
which implies (\ref{GGG_28}).\\

Because (\ref{6_5i}) and (\ref{6_5ic}) are true for $i$, we can solve
\begin{align*}
D_{s\eta^i}\mathfrak{h}_{i+1,A}=s\mathfrak{f}_{i,A}\mbox{ }mod(\ker(D|_{L^2_1}))\\
D_{s\eta^i}\mathfrak{h}_{i+1,B}=\mathfrak{f}_{i,B}\mbox{ }mod(\ker(D|_{L^2_1}))
\end{align*}
by using Proposition 4.9. Since $\mathfrak{f}_{i,A}|_{N_{\frac{R}{T^{i+1}}}}=\mathfrak{f}_{i,B}|_{N_{\frac{R}{T^{i+1}}}}=0$, we have
\begin{align*}
(\frac{\mathfrak{r}}{T^{i+1}})\|h^{\pm}_{i+1,A}\|_{L^2}^2,\mbox{ }  
(\frac{\mathfrak{r}}{T^{i+1}})^3&\|(h^{\pm}_{i+1,A})_t\|^2_{L^2},\mbox{ }(\frac{\mathfrak{r}}{T^{i+1}})^5\|(h^{\pm}_{i+1,A})_{tt}\|^2_{L^2}\\ &\leq \|\mathfrak{h}_{i+1,A}\|_{L^2}^2\leq s^2\|\mathfrak{f}_{i,A}\|_{L^2_{-1}}^2\leq s^2\frac{P^{2i}\kappa^2_0}{T^{5i}} .
\end{align*}
This implies that
\begin{align}
&\|h^{\pm}_{i+1,A}\|_{L^2}\leq \frac{\varepsilon P^i\kappa_0}{4T^{2(i+1)}},\mbox{ }
\|(h^{\pm}_{i+1,A})_t\|_{L^2}\leq \frac{\varepsilon P^i\kappa_0}{4T^{i+1}},\label{6_hest1}\\
&\|(h^{\pm}_{i+1,A})_{tt}\|_{L^2}\leq \frac{\varepsilon P^i\kappa_0}{4},\mbox{ }
\|\mathfrak{h}_{i+1,A}\|_{L^2}\leq \frac{\varepsilon P^i\kappa_0}{4T^{\frac{5(i+1)}{2}}}\nonumber
\end{align}
by taking $t_0\leq \frac{\varepsilon}{4}(\frac{\mathfrak{r}}{T})^\frac{5}{2}$.\\

Meanwhile, we have
\begin{align}
&\|h^{\pm}_{i+1,B}\|_{L^2}\leq \frac{\varepsilon P^i\kappa_0}{4T^{2(i+1)}},\mbox{ }
\|(h^{\pm}_{i+1,B})_t\|_{L^2}\leq \frac{\varepsilon P^i\kappa_0}{4T^{(i+1)}},\label{6_hest2}\\
&\|(h^{\pm}_{i+1,B})_{tt}\|_{L^2}\leq \frac{\varepsilon P^i\kappa_0}{4},\mbox{ }
\|\mathfrak{h}_{i+1,B}\|_{L^2}\leq \frac{\varepsilon P^i\kappa_0}{4T^{\frac{5(i+1)}{2}}}.\nonumber
\end{align}
\\

So we put these data together. Define
\begin{align*}
\mathfrak{h}_{i+1}=\mathfrak{h}_{i+1,A}+\mathfrak{h}_{i+1,B}-s\mathfrak{k}_{i,s}^{\perp}.
\end{align*}
We have
\begin{align*}
D_{pert, s\eta^i}(\psi_i-s\mathfrak{h}_{i+1})=sT^s(\mathfrak{h}_{i+1})\mbox{ }mod(\ker(D|_{L^2_1}))
\end{align*}
with
\begin{align}
sT^s(\mathfrak{h}_{i+1})\in \mathfrak{A}_{i+1}^{P^{i}\frac{\kappa_0}{2}}.\label{GGG_29}
\end{align}

{\bf Step 3}. By Theorem 6.14, we can find $(\eta_{i+1},c_{i+1})$ such that
\begin{align}
\left\{ \begin{array}{ccc}
2h^+_{i+1}+a^+\eta_{i+1}-c_{i+1}=k^+_{i+1}\\
2h^-_{i+1}+a^-\bar{\eta}_{i+1}-c_{i+1}^{aps}=k^-_{i+1}
\end{array} \right.\label{GB_1}
\end{align}
for some $(k^+_{i+1},k^-_{i+1})\perp \ker(\mathcal{T}_{a^+,a^-})$ where $(k^+_{i+1},k^-_{i+1})\perp (2h^+_{i+1}-k^+_{i+1},2h^-_{i+1}-k^-_{i+1})$ in $L^2_2$-sense. So
\begin{align}
&\|k^{\pm}_{i+1}\|^2_{L^2}\leq \frac{\varepsilon P^i\kappa_0}{2T^{2(i+1)}},\mbox{ }
\|(k^{\pm}_{i+1})_t\|^2_{L^2}\leq \frac{\varepsilon P^i\kappa_0}{2T^{i+1}},
\|(k^{\pm}_{i+1})_{tt}\|^2_{L^2}\leq \frac{\varepsilon P^i\kappa_0}{2}. \label{GGG_30}
\end{align}
 By Proposition 4.8, there exists $\mathfrak{c}_{i+1}$ where $D_{s\eta^{i}}\mathfrak{c}_{i+1}=0$ and
\begin{align*}
\mathfrak{c}_{i+1}=\left( \begin{array}{c}
\frac{c_{i+1}}{2\sqrt{z}} \\
\frac{c^{aps}_{i+1}}{2\sqrt{\bar{z}}}
\end{array} \right)+\mathfrak{c}_{\mathfrak{R},i+1}+\mathfrak{c}^s_{i+1}.
\end{align*}
Since $c_{i+1}$ satisfies $\mathcal{T}_{a^+,a^-}(c_{i+1})=\bar{a}^-(k^+_{i+1}-2h_{i+1}^+)-a^+(\overline{k^-_{i+1}-2h_{i+1}^-})$, we have
\begin{align}
&\|c_{i+1}\|^2_{L^2}\leq \frac{\varepsilon P^i\kappa_0}{2T^{2(i+1)}},\mbox{ }
\|(c_{i+1})_t\|^2_{L^2}\leq \frac{\varepsilon P^i\kappa_0}{2T^{i+1}},\label{6_cest}\\
&\|(c_{i+1})_{tt}\|^2_{L^2}\leq \frac{\varepsilon P^i\kappa_0}{2},\mbox{ }\|\mathfrak{c}_{i+1}\|^2_{L^2}\leq \varepsilon P^i\kappa_0. \nonumber
\end{align}
According to these estimates, we can show that 
\begin{align}
sT^s(\mathfrak{c}_{i+1})\in \mathfrak{A}_{i+1}^{P^{i}\frac{\kappa_0}{2}}.\label{GGG_31}
\end{align}
Moreover, we can choose $\mathfrak{c}_{i+1}$ such that $\eta_{i+1}$ in (\ref{GB_1}) satisfies
\begin{align}
\eta_{i+1}\perp \mathbb{H}_0.\label{GGG_32}
\end{align}

Meanwhile, by (\ref{6_hest1}), (\ref{6_hest2}), (\ref{GGG_30}) and (\ref{6_cest}), we can check that $\eta_{i+1}$ satisfies $i+1$-th version of (\ref{6_2a0}), (\ref{6_2a1}) and (\ref{6_2a2}) with $(\kappa_2,\kappa_3)=(\varepsilon P^i\kappa_0, \varepsilon P^i \kappa_1)$ and so it satisfies the condition (\ref{6_2a3}), (\ref{6_2a4}) and (\ref{6_2a5}). Therefore, the inductive assumption 2 in (\ref{IA}) holds. Also, we have the $\kappa_3=\varepsilon P^i \kappa_1$ version of Proposition 4.6 and Proposition 4.7. Therefore,
\begin{align}
\int_{\{r=r_0\}}|\hat{\mathcal{H}}^{i+1}_s|^2i_{\vec{n}}dVol(M)\leq \gamma_{_T}^4\varepsilon^4 P^{4i}\kappa_1^4 (\frac{\mathfrak{r}}{T^{i+1}})^{\frac{15}{16}} s^4\leq \varepsilon^2 P^{2i}\kappa_1^2 (\frac{\mathfrak{r}}{T^{i+1}})^{\frac{1}{2}} s^2\label{GGG_33}
\end{align}
by taking $P\leq T^{\frac{1}{5}}$ and $s$ small enough.\\

\begin{remark}
Here we show the estimate of the H\"older norm of $\eta_i\in \mathbb{H}_0^{\perp}$. By the argument similar to Remark 7.5, we have the following H\"older estimate
\begin{align}
\|\eta_i\|_{C^{1,\frac{1}{4}}}\leq C\kappa_0P^i\Big(\frac{\mathfrak{r}}{T^i}\Big)^{\frac{1}{4}}\leq C\kappa_0 T^{\frac{i}{5}}\Big(\frac{\mathfrak{r}}{T^i}\Big)^{\frac{1}{4}}\leq C\kappa_0\frac{\mathfrak{r}^{\frac{1}{4}}}{T^\frac{i}{20}}\label{GGG_34}
\end{align}
for all $i$.
\end{remark}

{\bf Step 4}. In this step and the next step, we construct $\mathfrak{f}_{i+1}$ and prove the inductive assumption 1 in (\ref{IA}). We follow the argument in Step 3 of Section 7.3.\\

 We can write $\mathfrak{h}_{i+1}=\mathfrak{h}_{i+1}^g+\mathfrak{h}_{i+1}^b+\mathfrak{h}_{i+1}^s$ and $\mathfrak{c}_{i+1}=\mathfrak{c}_{i+1}^g+\mathfrak{c}_{i+1}^b+\mathfrak{c}_{i+1}^s$ as follows: By Proposition 4.9, we have
$\mathfrak{h}_{i+1}=\mathfrak{h}^0_{i+1}+\mathfrak{h}^s_{i+1}$ and $\mathfrak{c}_{i+1}=\mathfrak{c}^0_{i+1}+\mathfrak{c}^s_{i+1}$ such that
\begin{align*}
D\mathfrak{h}^0_{i+1}=s\mathfrak{f}_{i,A}+\mathfrak{f}_{i,B}\mbox{ }mod(\ker(D|_{L^2_1}));\\
D\mathfrak{c}^0_{i+1}=0\mbox{ }mod(\ker(D|_{L^2_1})).
\end{align*}
Since $s\mathfrak{f}_{i,A}+\mathfrak{f}_{i,B}=0$ on $N_{\frac{r}{T^{i+1}}}$, we have
\begin{align*}
\mathfrak{h}^0_{i+1}= \left( \begin{array}{c}
\frac{h_{i+1}^+}{\sqrt{z}} \\
\frac{h^-_{i+1}}{\sqrt{\bar{z}}}
\end{array} \right)+\mathfrak{h}_{\mathfrak{R},i+1};\mbox{ }\mathfrak{c}^0_{i+1}= \left( \begin{array}{c}
\frac{c_{i+1}}{2\sqrt{z}} \\
\frac{c^{aps}_{i+1}}{2\sqrt{\bar{z}}}
\end{array} \right)+\mathfrak{c}_{\mathfrak{R},i+1}.
\end{align*}
So we define 
\begin{align*}
\mathfrak{h}^b_{i+1}=\chi_{i+1} \left( \begin{array}{c}
\frac{h_{i+1}^+}{\sqrt{z}} \\
\frac{h^-_{i+1}}{\sqrt{\bar{z}}}
\end{array} \right);\mbox{ }\mathfrak{c}^b_{i+1}= \chi_{i+1}\left( \begin{array}{c}
\frac{c_{i+1}}{2\sqrt{z}} \\
\frac{c^{aps}_{i+1}}{2\sqrt{\bar{z}}}\end{array} \right);\mbox{ }\mathfrak{k}^b_{i+1}= \chi_{i+1}\left( \begin{array}{c}
\frac{k^+_{i+1}}{\sqrt{z}} \\
\frac{k^-_{i+1}}{\sqrt{\bar{z}}}
\end{array} \right).
\end{align*}

Since $D_{s\eta^{i}}\mathfrak{c}_{i+1}=0$, (3.31) and (3.35) imply
\begin{align*}
D_{pert, s\eta^i}(\psi+s\mathfrak{c}_{i+1}-s\mathfrak{h}_{i+1})&=sT^s_0(\mathfrak{c}_{i+1}-\mathfrak{h}_{i+1})\mbox{ }mod(\ker(D|_{L^2_1}))\\
&=s^2\mathcal{A}\mbox{ }mod(\ker(D|_{L^2_1}))
\end{align*}
for some $\mathcal{A}\in\mathfrak{A}_{i+1}^{P^{i}\kappa_0}$. So we have
\begin{align}
D_{pert, s\eta^i}&(\psi+s\mathfrak{c}^g_{i+1}-s\mathfrak{h}^g_{i+1})\label{GGG_35}\\
&\mbox{ }\mbox{ }\mbox{ }\mbox{ }\mbox{ }\mbox{ }+D_{pert, s\eta^i}|_{N_{\frac{\mathfrak{r}}{T^{i+1}}}}(s\mathfrak{c}^b_{i+1}+s\mathfrak{c}^s_{i+1}-s\mathfrak{h}^b_{i+1}-s\mathfrak{h}^s_{i+1})\nonumber\\
&=s^2\mathcal{A}\mbox{ }mod(\ker(D|_{L^2_1}))\nonumber
\end{align}
for some $\mathcal{A}\in\mathfrak{A}_{i+1}^{P^{i}\kappa_0}$.\\

 We define 
\begin{align}
\mathfrak{e}_{i+1}=&\chi_{i+1}\left(\begin{array}{c}-i\partial_t a^-\bar{\eta}_{i+1}\sqrt{\bar{z}}\\-i\partial_t a^+\eta_{i+1}\sqrt{z}\end{array}\right),\label{GGG_36}\\
\hat{\Theta}_s^{i+1}=&e_1(s\chi_{i+1}(\eta_{i+1})_t\partial_z+s\chi_{i+1}(\bar{\eta}_{i+1})_t\partial_{\bar{z}}),\label{GGG_37}\\
\mathcal{W}^{i+1}_s=&e_2(s(\chi_{i+1})_{\bar{z}}\bar{\eta}_{i+1}\partial_z-s(\chi_{i+1})_z\bar{\eta}_{i+1}\partial_{\bar{z}})\label{GGG_38}\\
&\mbox{ }\mbox{ }\mbox{ }+e_3(-s(\chi_{i+1})_{\bar{z}}\eta_{i+1}\partial_z+s(\chi_{i+1})_z\eta_{i+1}\partial_{\bar{z}}).\label{GGG_39}
\end{align}

Then by $\kappa_3=\varepsilon P^i \kappa_1$ version of (\ref{DD_19}), $(\kappa_2,\kappa_3)=(\varepsilon P^i\kappa_0,\varepsilon P^i \kappa_1)$ version of (\ref{6_2a0}) - (\ref{6_2a5}), Proposition 4.7 and Proposition 4.8, (\ref{GGG_35}) implies the following equation:
\begin{align}
D_{pert, s\eta^i}(\psi_i+s\mathfrak{c}_{i+1}-\mathfrak{h}_{i+1})=s^2\mathcal{A}+s^2\mathcal{B}&+s^2\mathcal{C}+s\sum_{j=1}^2\mathcal{Q}_j\label{GGG_40}\\
&-D_{pert}(s\mathfrak{k}^b_{i+1})\mbox{ }mod(\ker(D|_{L^2_1}))\nonumber
\end{align}
where $\mathcal{A}\in  s\mathfrak{A}^{((1+C\varepsilon)P^i)\kappa_0}_{i+1}$, $\mathcal{B}\in \mathfrak{B}^{(C\varepsilon P^i)\kappa_1}_{i+1}$, $\mathcal{C}\in \mathfrak{C}^{((T^{\frac{1}{8}}+C\varepsilon)P^i)\kappa_1}_{i+1}$ for some universal constant $C>0$ and
\begin{align}
s^2\mathcal{Q}_0&=\hat{\Theta}_s^{i+1}(\psi_i-\psi)+\sum_{j=0}^i\Theta_s^j(s\mathfrak{c}_{i+1}^g-s\mathfrak{h}_{i+1}^g),\label{GGG_41}\\
s^2\mathcal{Q}_1&=\hat{\Theta}_s^{i+1}(s\mathfrak{e}_{i+1})+\sum_{j=0}^i\Theta_s^j(s\mathfrak{e}_{i+1}),\label{GGG_42}\\
s^2\mathcal{Q}_2&=-\hat{\Theta}_s^{i+1}(s\mathfrak{c}^g_{i+1}-s\mathfrak{h}^g_{i+1}).\label{GGG_43}
\end{align}
\ \\

{\bf Step 5}. In this step, we state the following lemma which is the $i+1$-th version of Lemma 7.7. The proof of this lemma can follow from the argument of Lemma 7.7 directly. So we omit the proof.
\begin{lemma}
Suppose $\mathcal{Q}$ be either the following 4 types:
\begin{align*}
s^2\chi_{i+1}\left(\begin{array}{c}\frac{q^+(t)}{\sqrt{z}}\\ \frac{q^-(t)}{\sqrt{\bar{z}}}\end{array}\right),s^2\chi_{i+1}\left(\begin{array}{c}\frac{q^+(t)}{\sqrt{\bar{z}}}\\ \frac{q^-(t)}{\sqrt{z}}\end{array}\right), s^2\left(\begin{array}{c}\frac{q^+(t)}{\sqrt{z}}\\ \frac{q^-(t)}{\sqrt{\bar{z}}}\end{array}\right)\mbox{ or }s^2\left(\begin{array}{c}\frac{q^+(t)}{\sqrt{\bar{z}}}\\ \frac{q^-(t)}{\sqrt{z}}\end{array}\right)
\end{align*}
 where $\|q^{\pm}\|_{L^2}\leq \kappa_3\frac{\mathfrak{r}}{T^{i+1}}$, $\|(q^{\pm})_t\|_{L^2}\leq \kappa_3$. Then there exists an $L^2_1$ section $\mathfrak{e}'$ which can be written as
\begin{align*}
\mathfrak{e}'=&\sum_{j\geq 0} s^j\chi_{i+1}^{j+1}\left(\begin{array}{c} e_j^+(t) \sqrt{\bar{z}} \\  e_j^-(t) \sqrt{z} \end{array}\right)
,
\sum_{j\geq 0} s^j\chi_{i+1}^{j+1}\left(\begin{array}{c}e_j^+(t) \sqrt{z} \\  e_j^-(t)  \sqrt{\bar{z}}\end{array}\right)\\
&,
\sum_{j\geq 0} s^j\chi_{i+1}^j\left(\begin{array}{c} e_j^+(t) \sqrt{\bar{z}} \\  e_j^-(t) \sqrt{z} \end{array}\right)
\mbox{ or }
\sum_{j\geq 0} s^j\chi_{i+1}^j\left(\begin{array}{c}e_j^+(t) \sqrt{z} \\  e_j^-(t)  \sqrt{\bar{z}}\end{array}\right)
\end{align*}
for the each type respectively such that $D_{s\eta^{i+1}}(s^2\mathfrak{e}')=s^2\mathcal{Q}+s^2\mathcal{B}+s\mathcal{C}$ where $\mathcal{B}\in \mathfrak{B}_{i+1}^{\kappa_3}$ and $\mathcal{C}\in \mathfrak{C}_{i+1}^{\kappa_3}$ for all $s\leq \frac{T^{\frac{i+1}{2}}}{2\gamma_{_T}^2\kappa_3 \mathfrak{r}^{\frac{1}{2}}}$. Furthermore, we have $\|\mathfrak{e}'\|_{L^2_1}\leq 2\kappa_3$.
\end{lemma}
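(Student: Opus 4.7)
The plan is to mimic, step for step, the iterative construction used in the proof of Lemma 8.6, with all estimates rescaled to the $(i+1)$-th scale. First I would invoke Proposition 5.7 to write
\[
D_{s\eta^{i+1}} = (1+\varrho^{i+1})D_{s\eta^i} + s\bigl((\chi_{i+1})_z\eta_{i+1}+(\chi_{i+1})_{\bar z}\bar\eta_{i+1}\bigr)e_1\partial_t + \hat{\Theta}_s^{i+1} + \mathcal{W}_s^{i+1} + \hat{\mathcal{A}}_s^{i+1} + \mathcal{R}_s^{i+1} + \mathcal{F}_s^{i+1},
\]
isolating $\hat{\Theta}_s^{i+1}=e_1(s\chi_{i+1}\dot\eta_{i+1}\partial_z + s\chi_{i+1}\dot{\bar\eta}_{i+1}\partial_{\bar z})$ as the sole ``driving'' piece that again produces terms with $\sqrt{z}$, $\sqrt{\bar z}$ structure. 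The remaining operators $\mathcal{W}_s^{i+1}$ and $s((\chi_{i+1})_z\eta_{i+1}+\cdots)e_1\partial_t$ are supported on the annulus $N_{\mathfrak{r}/T^i}\setminus N_{\mathfrak{r}/T^{i+1}}$, so applied to any section of the prescribed form they land in $s^2\mathfrak{B}_{i+1}^{\kappa_3}$ by the $(\kappa_2,\kappa_3)$-version of $(5.29)$--$(5.31)$. The contributions from $\mathcal{R}_s^{i+1}$, $\hat{\mathcal{A}}_s^{i+1}$, $\mathcal{F}_s^{i+1}$ are absorbed into $s^2\mathfrak{B}_{i+1}^{\kappa_3}$ and $s\mathfrak{C}_{i+1}^{\kappa_3}$ by the $(i+1)$-th versions of Propositions~5.7 and 5.8, exactly as in the derivations (8.51)--(8.52).

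For each of the four types of $\mathcal{Q}$ I would start with the ``base ansatz'' obtained by swapping the roles of $\sqrt{z}$ and $\sqrt{\bar z}$, i.e. for the first type $\mathfrak{e}'_0=\chi_{i+1}(q^-\sqrt{\bar z},\,q^+\sqrt{z})^T$ and for the third type the same expression without the $\chi_{i+1}$ prefactor (and analogously for types two and four). A direct computation yields $D(s^2\mathfrak{e}'_0)=s^2\mathcal{Q}+s^2\mathcal{B}_0+s^2\mathcal{C}_0$ for suitable $\mathcal{B}_0\in\mathfrak{B}_{i+1}^{\kappa_3}$, $\mathcal{C}_0\in\mathfrak{C}_{i+1}^{\kappa_3}$, after which all operators above other than $\hat{\Theta}_s^{i+1}$ contribute only additional errors in these same classes. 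The new term $\mathcal{Q}_1 := \hat{\Theta}_s^{i+1}(\mathfrak{e}'_0)$ (or its $\chi_{i+1}^{-1}$-renormalized version once $\mathfrak{e}'_0$ already carries a $\chi_{i+1}$) is again of the required $\sqrt{z}/\sqrt{\bar z}$ form, but now picks up a factor of $\chi_{i+1}$ and a small multiplicative constant of order $s\gamma_T\kappa_3(\mathfrak{r}/T^{i+1})^{1/2}$ coming from $\|\chi_{i+1}\dot\eta_{i+1}\|_{L^\infty}$ via $(5.29)$. This is precisely the $(i+1)$-analog of the operator $J$; the algebraic argument of Lemma 8.6 that $J^k(x)$ never produces the forbidden types $(-1,\tfrac{3}{2})$ or $(\tfrac{3}{2},-1)$ is purely combinatorial and carries over verbatim.

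Iterating $\mathfrak{e}'_k = s\chi_{i+1}^{k+1}J\bigl((\mathfrak{e}'_{k-1}-\mathfrak{e}'_{k-2})/\chi_{i+1}^k\bigr)+\mathfrak{e}'_{k-1}$, successive differences satisfy
\[
\|\mathfrak{e}'_{k+1}-\mathfrak{e}'_k\|_{L^2_1} \leq \bigl(s\gamma_T^2\kappa_3(\mathfrak{r}/T^{i+1})^{1/2}\bigr)^{k+1}\kappa_3 \leq 2^{-(k+1)}\kappa_3,
\]
where the last inequality uses precisely the hypothesis $s\leq T^{(i+1)/2}/(2\gamma_T^2\kappa_3\mathfrak{r}^{1/2})$. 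Summing gives $\|\mathfrak{e}'\|_{L^2_1}\leq 2\kappa_3$ together with cumulative remainders still in $s^2\mathfrak{B}_{i+1}^{\kappa_3}$ and $s\mathfrak{C}_{i+1}^{\kappa_3}$ after summing the geometric series of constants.

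The main obstacle, and the only place where careful rescaling is needed beyond a bookkeeping change, is verifying that every remainder produced at each step really lies in $\mathfrak{B}_{i+1}^{\kappa_3}$ or $\mathfrak{C}_{i+1}^{\kappa_3}$. The class $\mathfrak{C}_{i+1}^{\kappa_3}$ is delicate because it demands the sliced bound $\|\cdot\|_{L^2(N_{r_1}\setminus N_{r_2})}^2 \leq \kappa_3(r_1^3-r_2^3)(\mathfrak{r}/T^i)^{1/4}$ on every sub-annulus, and for this one needs the pointwise bound $\|q^\pm\|_{L^\infty}\leq C\kappa_3(\mathfrak{r}/T^{i+1})^{1/2}$ obtained by Sobolev embedding on $S^1$ from $\|q^\pm\|_{L^2}\leq \kappa_3\mathfrak{r}/T^{i+1}$ and $\|q^\pm_t\|_{L^2}\leq \kappa_3$; these are exactly the hypotheses of the lemma and they propagate with the correct scaling under each application of $J$, since $J$ multiplies the coefficients by $\chi_{i+1}\dot\eta_{i+1}$ whose $L^\infty$ norm is $O(\kappa_3(\mathfrak{r}/T^{i+1})^{1/2})$. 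Once these scaling considerations are matched, the proof of Lemma 8.6 transfers line by line.
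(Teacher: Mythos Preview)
Your proposal is correct and is precisely what the paper intends: the paper explicitly states that ``the proof of this lemma can follow from the argument of lemma 8.6 directly'' and omits the details, so your step-by-step transcription of the Lemma~8.6 iteration with the $(i+1)$-rescaled constants (in particular the use of $\|\chi_{i+1}\dot\eta_{i+1}\|_{L^\infty}=O(\kappa_3(\mathfrak{r}/T^{i+1})^{1/2})$ from (5.29) to drive the geometric decay, and the verification that the hypothesis $s\leq T^{(i+1)/2}/(2\gamma_T^2\kappa_3\mathfrak{r}^{1/2})$ gives contraction ratio $\leq 1/2$) is exactly the intended argument.
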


By using this lemma, we can show that there exist $\mathfrak{e}'_{i+1,j}$, $j=0,1,2,$ such that
\begin{align*}
D_{pert, s\eta^{i+1}}(s^2\mathfrak{e}'_{i+1,j})=s^2\mathcal{Q}_j+s^2\mathcal{B}_j+s\mathcal{C}_j
\end{align*}
(with $\kappa_3=\varepsilon P^i\kappa_1$). Meanwhile, by Proposition 4.8 and Proposition 6.2, we can show that there exist $\mathfrak{k}^b_{i+1,s}$ and $\mathfrak{k}^{\perp}_{i+1,s}$ satisfying
\begin{align*}
D_{pert, s\eta^{i+1}}(\mathfrak{k}^b_{i+1,s}+s\mathfrak{k}^{\perp}_{i+1,s})=D_{pert}\mathfrak{k}^b_{i+1},
\end{align*}
$\mathcal{T}_{a^+,a^-}\circ B(\mathfrak{k}^b_{i+1,s})\in\mathbb{H}_1$, $\mathcal{T}_{a^+,a^-}\circ B(\mathfrak{k}^{\perp}_{i+1,s})\in\mathbb{H}_1^{\perp}$ and 
\begin{align}
\|B(\mathfrak{k}^{\perp}_{i+1,s})\|^2_{L^2}&\leq \frac{\varepsilon P^i\kappa_0}{2T^{2(i+1)}},\mbox{ }
\|(B(\mathfrak{k}^{\perp}_{i+1,s}))_t\|^2_{L^2}\leq \frac{\varepsilon P^i\kappa_0}{2T^{i+1}},\label{GGG_44}\\
&\|(B(\mathfrak{k}^{\perp}_{i+1,s}))_{tt}\|^2_{L^2}\leq \frac{\varepsilon P^i\kappa_0}{2}.\nonumber 
\end{align}
Therefore, we can rewrite (\ref{GGG_40}) as the following:
\begin{align}
D_{pert, s\eta^{i+1}}(\psi_i+s\mathfrak{c}^g_{i+1}-s\mathfrak{h}^g_{i+1}&+s\mathfrak{e}^g_{i+1}+s\mathfrak{k}^b_{i+1,s}+s^2\mathfrak{k}^{\perp}_{i+1,s})\label{GGG_45}\\
&=s^2\mathcal{A}+s^2\mathcal{B}+s\mathcal{C}:= \mathfrak{f}_{i+1}\mbox{ }mod(\ker(D|_{L^2_1}))\nonumber
\end{align}
with $\mathfrak{e}^g_{i+1}=\mathfrak{e}_{i+1}+\sum_{j}\mathfrak{e}'_{i+1,j}$, $\mathcal{A}\in \mathfrak{A}_{i+1}^{(1+C\varepsilon)P^i \kappa_0}$, $\mathcal{B}\in \mathfrak{B}_{i+1}^{C\varepsilon P^i\kappa_1}$ and $\mathcal{C}\in \mathfrak{C}_{i+1}^{(T^{\frac{1}{8}}+C\varepsilon) P^i \kappa_1}$. So by taking $\varepsilon \leq \frac{P-1}{2C}$ and $P>\frac{1}{T^{\frac{1}{7}}}$, we prove the inductive assumption 1 in (\ref{IA}).\\

{\bf Step 6}. Finally, we should prove the inductive assumption 3 in (\ref{IA}). To prove this part, we notice that
both $\mathfrak{h}_{i+1}^g$ and $\mathfrak{c}_{i+1}^g$ vanish on $\Sigma$. Therefore, we can do the integration by parts to get
\begin{align*}
\|\mathfrak{h}^g_{i+1}\|_{L^2_1}^2\leq \|D_{s\eta^i}\mathfrak{h}^g_{i+1}\|^2_{L^2}+C\|\mathfrak{h}^g_{i+1}\|^2_{L^2}
\end{align*}
for some constant $C$ depending on the curvature of $M$. Now, by the fact $D_{s\eta^i}\mathfrak{h}_{i+1}=0$ on $N_{\frac{\mathfrak{r}}{T^{i+1}}}$ and Corollary 3.8, we have
\begin{align*}
\|D_{s\eta^i}\mathfrak{h}_{i+1}^g\|_{L^2}\leq |\sigma(\chi_{i+1})|\|\mathfrak{h}_{i+1}\|_{L^2}+\bigg\|D_{s\eta^i}\left(\begin{array}{c}
h^+_{i+1}\sqrt{z}\\
h^-_{i+1}\sqrt{\bar{z}}
\end{array}\right)\bigg\|_{L^2(N_{\frac{\mathfrak{r}}{T^{i}}})}
\leq C\frac{P^{i+1}\kappa_1}{T^{4(i+1)}}
\end{align*}
and by (\ref{6_hest1}) and (\ref{6_hest2}) and Corollary 3.8, we have
\begin{align*}
\|\mathfrak{h}^g_{i+1}\|_{L^2}\leq C\|\mathfrak{h}_{i+1}\|_{L^2}\leq C\frac{P^{i+1}}{T^{i+1}}\kappa_1.
\end{align*}
So we have
\begin{align*}
\|\mathfrak{h}^g_{i+1}\|_{L^2_1}\leq C\frac{P^{i+1}\kappa_1}{T^{(i+1)}}.\\
\end{align*}

Similarly, we have
\begin{align*}
\|\mathfrak{c}^g_{i+1}\|_{L^2_1}\leq C\frac{P^{i+1}\kappa_1}{T^{i+1}}.
\end{align*}

For $L^2$-bounds, we have
\begin{align*}
\|\mathfrak{k}^b_{i+1,s}\|_{L^2}\leq C\|\mathfrak{k}^b_{i+1}\|_{L^2}\leq C\frac{P^i\kappa_0}{T^{2(i+1)}};\\
\|\mathfrak{k}^{\perp}_{i+1,s}\|_{L^2}\leq C\|\mathfrak{k}^{b}_{i+1}\|_{L^2}\leq C\frac{P^i\kappa_0}{T^{2(i+1)}}.
\end{align*}
So $\mathfrak{k}^{\perp}_{i+1,s}\rightarrow 0$ in $L^2$-sense. Therefore, we finish the proof of the inductive assumption 3 in (\ref{IA}).\\

By induction, we get a sequence $\psi_i\in L^2$ and a family of perturbations $\eta^i=\sum_{j=0}^i\chi_j\eta_j$ such that
\begin{align*}
D_{pert,s\eta^i}(\psi_i+s^2\mathfrak{k}^{\perp}_{i+1,s})\rightarrow 0
\end{align*}
$\mbox{ }mod(\ker(D|_{L^2_1}))$ as $i\rightarrow \infty$ in $L^2_{-1}$ sense. Moreover, since $\|\psi_{i+1}-\psi_{i}\|_{L^2}\leq C\kappa_3(\frac{P}{T})^i$ for some $C>0$, so we have $\psi_i\rightarrow \psi_s$ in $L^2$ sense. Meanwhile, since $\|\eta_i\|_{L^2_1}\leq C\kappa_3(\frac{P}{T})^i$ for some $C>0$, we have $\sum\eta_i\rightarrow \eta_s$ in $L^2_1$ sense. In addition, we choose $\mathfrak{c}_{i+1}$ (by adding an element in $\ker(D|_{L^2_1})$) such that $(\psi_{i+1}-\psi_i)\perp\ker(D|_{L^2_1})$ for all $i$. So
\begin{align}
(\psi_s-\psi_0)\perp \ker(D|_{L^2_1}).\label{GGG_46}
\end{align}

To prove that $\eta^i$ converges to a $C^1-$function, we only need to use the H\"older estimates in Remarks 7.5 and 7.8. We have
\begin{align*}
\|\eta_i\|_{C^{1,\frac{1}{4}}}\leq C\kappa_0\Big(\frac{\mathfrak{r}^{\frac{1}{4}}}{T^\frac{i}{20} }\Big).
\end{align*}
for all $i$. Therefore, by Arzela-Ascoli theorem, there is a subsequence of the partial sum $\{\eta^i\}$ converging in $C^1-$sense. So the limit, $\eta$, will be a $C^1$ circle.\\

Because $B(\psi_s)=0$, $\psi_s$ will vanish on $\Sigma$ and $D_{pert,s\eta}(\psi_s)=0\mbox{ }mod(\ker(D|_{L^2_1}))$, we have $\psi_s\in L^2_1$.

\begin{remark}
Suppose we consider a smaller neighborhood of $((g,\Sigma,e),\psi)$. This means we can take $\mathfrak{r}$, $t_0$ smaller. In this case, the constant $\varepsilon$ can be chosen smaller, too. We can see that
\begin{align*}
\frac{1}{\mathfrak{r}^{\frac{1}{4}}}\Big\|\sum_{j=1}^\infty \eta_j\Big\|_{C^1(S^1;\mathbb{C})}\rightarrow 0
\end{align*} 
as $\mathfrak{r}$ goes to 0. Similarly, we have $\mathfrak{k}_s-\mathfrak{k}^0$ is $O(\varepsilon)$. So all these terms we derived in this iteration process have order $o(s)$. 
\end{remark}

\subsection{Proof of Proposition 7.3: uniqueness of $(\eta_s,\psi_s)$}
To complete the proof of Proposition 7.3, we have to show that the solution $(\eta_s,\psi_s)$ we found in Section 7.4 is unique. The uniqueness can be obtained immediately by the following proposition.
\begin{pro}
 For any two solutions $(\eta_s,\psi_s)$ and $(\eta^*_s,\psi^*_s)$ satisfying
\begin{align}
 \eta_s-\eta_s^*=o(s)\label{G4_0}
\end{align} 
and
\begin{align}
D_{pert,s\eta_s}(\psi_s)=0,\label{G4_1a}\\
D_{pert,s\eta_s^*}(\psi^*_s)=0,\label{G4_1b}\\
(\psi_s-\psi_0)\perp\ker(D|_{L^2_1}),\label{G4_2}\\
(\psi^*_s-\psi^*_0)\perp\ker(D|_{L^2_1})\label{G4_3}
\end{align} 
 for all $s\in [0,t_0]$ and
\begin{align*}
\psi_0=\psi_0^*,
\end{align*} 
then we have $\psi_s-\psi^*_s\equiv 0$ and $\eta_s=\eta_s^*$.
\end{pro}
\begin{proof}
We can write $D_{pert,s\eta_s}=D+P^s$ where $P^s$ is a first order differential operator with the operator norm $O(s)$ and is analytic with respect to $s$. Meanwhile, since we have $\psi_s\in C^{\omega}([0,t_0];L^2_1)$, $\psi^*_s\in C^{\omega}([0,t_0];L^2_1)$ (with different zero locus), so by (\ref{G4_0}), (\ref{G4_1a}) and (\ref{G4_1b}), we have
\begin{align}
D_{pert,s\eta_s}(\psi_s-\psi^*_s)=D(\psi_s-\psi^*_s)-P^s(\psi_s-\psi^*_s)=o(s)\label{G4_4}
\end{align}
on the complement of a small neighborhood $N_R$ of $\Sigma$. So inductively, since $P^s=O(s)$, by Lemma 4.1, (\ref{G4_2}) and (\ref{G4_3}), (\ref{G4_4}) implies $(\psi_s-\psi^*_s)=O(s^k)$ for all $k$. This implies $(\psi_s-\psi^*_s)\equiv 0$ on a $M\setminus N_R$ for all $s$ small. By unique continuation property of Dirac equation (see p. 43, \cite{K}), $\psi_0=\psi_0^*$ everywhere. This implies $\eta_s=\eta_s^*$.
\end{proof}

By this proposition, we complete the proof of Proposition 7.3.

\subsection{The set $p_1(\mathcal{N})$} Here we should say more about the neighborhood $\mathcal{N}$. We define the topology on $\mathcal{Y}$ as follows. Let $((g,\Sigma,e),\psi)\in\mathfrak{M}$, we recall the definitions of $\mathcal{V}_{\Sigma,r,C}$ and $\mathscr{V}_{g, r,C'}$ in (\ref{AA_2}) and (\ref{AA_3}).We can generate the topology on $\mathcal{Y}$ by the family of open sets $\{\mathscr{V}_{g,r,C'}\times\mathcal{V}_{\Sigma,r,C}\}$ for $r<R$, $C,C'\in \mathbb{R}^+$.\\

Now we define our $\mathcal{N}=\bigcup_{r>\mathfrak{r}} \mathscr{V}_{g,r,Cr^{5/2}}\times\mathcal{V}_{\Sigma,r,C}$ for some $C$ small enough. Reader can double check the argument in Step 2 of Section 7.3: By taking $\mathcal{N}$ in this way, we have all elements in $p_1(\mathcal{N})$ will follow the argument in Section 7.

\begin{remark}
It seems to be impossible to take $\mathcal{N}$ to be $\bigcup_{r>0} \mathscr{V}_{g,r,Cr^{5/2}}\times\mathcal{V}_{\Sigma,r,C}$ because the map $f$ is not differentiable on this set. However, the choice of $\mathfrak{r}$ can be arbitrarily small.
\end{remark}

\section{Proof of the main theorem: Part II} In this section, we prove two statements. First, we have to show that $f$ we defined in (\ref{GGG_5}) is $C^1$. Second, we have to find the homeomorphism $\Upsilon:f^{-1}(0)\rightarrow\mathcal{N}\cap\mathfrak{M}$.

\subsection{$C^1$ regularity of $f$} Since the function $f$ is defined on an infinity dimensional space, so the definition of $C^1$ will be in the sense of Fr\'echet $C^1$. Here we recall the definition of Fr\'echet $C^1$.
\begin{definition}
Let $B_1$, $B_2$ are two Banach spaces. $\mathscr{F}:\mathscr{B}_1\rightarrow \mathscr{B}_2$ be a bounded operator. Then $\mathscr{F}$ is differentiable at $p$ if and only if there exists a bounded linear operator $d_p\mathscr{F}:\mathscr{B}_1\rightarrow \mathscr{B}_2$ such that
\begin{align*}
\|\mathscr{F}(p+x)-d_p\mathscr{F}(x)-\mathscr{F}(p)\|_{\mathscr{B}_2}=o( \|x\|_{\mathscr{B}_1}).
\end{align*}
In addition, if $\mathscr{F}$ is differentiable everywhere and $d_p\mathscr{F}$ vary continuously. Then we call $\mathscr{F}$ a $C^1$ map.
\end{definition}

Now let $\mathscr{F}$ maps from $\mathbb{R}^n\times \mathscr{B}$ to $\mathbb{R}^m$. Suppose we have
\begin{align}
&\frac{\partial}{\partial x_i}\mathscr{F}(p):=h_i(p)\mbox{ is continuous near }0.\label{H_1}\\
&\mbox{ The family of directional derivatives }\{D_{v}\mathscr{F}:=j_v(p)| v\in\mathscr{B}, \|v\|=1\}\label{H_2}\\&\mbox{ is equicontinuous near }0,\nonumber\\
& \{D_{v}\mathscr{F}=k_p(v)| p \in \mathbb{R}^n\times \mathscr{B}\}\mbox{ is equicontinuous on }\{v\in \mathscr{B}|\|v\|=1\}.\label{H_3}
\end{align}
Then we can define the linear operator as follows:
\begin{align}
\mathscr{L}_{p}(x,v)=\sum_{i=1}^n\frac{\partial}{\partial x_i}\mathscr{F}(p)x_i+D_{\frac{v}{\|v\|}}\mathscr{F}(p)v\label{H_4}
\end{align}
To prove this is the linear approximation, we need to check some other conditions. However, this is the only possible linear operator tangential to $\mathscr{F}$ at $0$.\\

 Now, suppose we already show that these linear operators are the differential of $\mathscr{F}$. To show $\mathscr{F}$ is $C^1$, it is sufficient to show that $\mathscr{L}_{p}$ varies continuously. So the condition (\ref{H_1}) and (\ref{H_2}) are exactly what we need to show.\\

Here I divide my proof into two parts. In first part, I will assume that $f$ is differentiable at every point and then showing that $f$ is $C^1$. In the second part, I will prove that $f$ is differentiable.\\

{\bf Step 1}. Since $\mathfrak{k}_s^b$ is analytic (w.r.t $s$), the family of directional derivatives of $f$ is actually equicontinuous at any point except $p=0$. Therefore, we only need to show conditions (\ref{H_1}) and (\ref{H_2}) hold near 0.\\

Since 
\begin{align}
s\mathfrak{k}^b_s=\sum_{i=0}^{\infty} s\mathfrak{k}^b_{i,s}=\sum_{i=0}^{\infty} s\mathfrak{k}^b_i+O(s^2),\label{H_5}
\end{align}
we can further simplify this equation by using the conclusion in Remark 7.10:
\begin{align}
s\mathfrak{k}^b_s=s\mathfrak{k}^b_0+o(s).\label{H_6}
\end{align}
Now, recall the way we construct $k^{\pm}_0$ from Step 1 and Step 2 in Section 7.3. In the case that we have no perturbation for $g$, $k^{\pm}_0=0$. That is to say, $s\mathfrak{k}^b_s=o(s)$. Therefore, the directional derivatives of $f$ along $\mathbb{H}_0$ will be 0. Meanwhile, it is obvious that they are continuous by using (\ref{H_6}).\\

To prove (\ref{H_2}), we use (\ref{H_6}) again. Here we can check that if we perturb the metric along the opposite direction, then the corresponding $\mathfrak{k}^b_0$ will only change the sign. So the directional derivatives along $p_1(\mathcal{N})$ also exist and are continuous at 0. Furthermore, since the estimates shown in Section 8 are  independent of the choice of $g_s$, so it doesn't depend on $v$. Therefore, $\{j_v(p)\}$ is equicontinuous at $0$.\\

{\bf Step 2}. In this step, we need to show that $f$ is differentiable. By Definition 8.1, we need to show that for any $p=(y,w)\in \mathbb{R}^n\times \mathscr{B}$,
\begin{align}
\|f(y+x,w+v)-\mathscr{L}_{(y,w)}(x,v)-f(y,w)\|\leq o(\sqrt{\|x\|^2+\|v\|^2_{C^2}})\label{H_7}
\end{align}  
where $x,y \in\mathbb{K}_0$ and $w,v+w\in p_1(\mathcal{N})$. All we need to show is the "small o" in (\ref{H_7}) will converge to zero uniformly. Namely, we are going to prove (\ref{H_3}) here. Now, since we already prove that the directional derivatives of $f$ are all continuous, so we can obtain (\ref{H_7}) by showing that $\{k_p(v)\}$ is equicontinuous.\\

By using the conclusion in 7.5, we suppose that $\|\partial_s g_s\|_{C^2}=  C\mathfrak{r}^{\frac{5}{2}}$, then the directional derivative of $f$ along $v=\frac{\partial_s g_s}{\|\partial_s g_s\|}$ at $g^{s_0}$ will be $\frac{1}{C\mathfrak{r}^{\frac{5}{2}}}\frac{\partial}{\partial s}(B(s\mathfrak{k}_s^b))|_{s=s_0}$. Now we can prove (\ref{H_3}) by using the fact that $\mathfrak{k}^b_s$ is analytic and the estimates (\ref{GGG_12}) and (\ref{GGG_30}).\\
 
 Therefore, we complete the proof of this part.

 \subsection{Homeomorphism $\Upsilon$} Let me summarize what we have proved in Section 7 and Section 8: For any $((g,\Sigma,e),\psi)$, there exist a neighborhood of $y=(g,\Sigma,e)$, $\mathcal{N}\subset\mathcal{Y}$, finite dimensional ball $\mathbb{B}=B_{\varepsilon}(0)\subset\mathbb{K}_0$ with $\varepsilon$ providing by Proposition 7.3 and finite dimensional vector space $\mathbb{K}_1$ all defined as above such that $f$ can be defined as
\begin{align*}
f(g_s,s\xi,\hat{\psi})=\big(\mathcal{T}_{a^+,a^-}\circ B(s\mathfrak{k}_s),P_{\ker(D|_{L^2_1})}(D_{pert,\eta_s}(\psi_s))\big)\in \mathbb{H}_1\times\ker(D|_{L^2_1})\cong\mathbb{K}_1
\end{align*}
for all $g_s\in p_1(\mathcal{N})$, $(s\xi,\hat{\psi})\in \mathbb{B}$. Here $P_{\ker(D|_{L^2_1})}$ is the orthogonal projection from $L^2$ to $\ker(D|_{L^2_1})$. Since $B(\mathfrak{k}_s)\perp\ker(\mathcal{T}_{a^+,a^-})$, $\mathcal{T}_{a^+,a^-}\circ B(s\mathfrak{k}_s)=0$ implies $B(\mathfrak{k}_s)=0$. So we can define the following map from $f^{-1}(0)$ to $\mathcal{N}$:
\begin{align}
\Upsilon:(g_s,s\xi,\hat{\psi})\mapsto ((g_s,\Sigma=h(\{(t,\eta_s(t))|t\in S^1\}),e),\psi+\hat{\psi}+s\mathfrak{k}_s)\label{H_8}
\end{align}
(Recall (\ref{AA_1}) for the definition of $h$). To complete the proof of Theorem 1.5, we have to show $\Upsilon$ is a homeomorphism.\\

 We can check that this map is injective (see Remark 7.6 and Proposition 7.11). It is also straightforward to see the inverse (from its image) of this map is continuous. Therefore, in order to prove (\ref{H_8}) defines a homeomorphism, we have to show the map $\Upsilon$ is surjective to $\mathcal{N}\cap\mathfrak{M}$ when $\mathcal{N}$, $\varepsilon$ are sufficiently small.
\begin{pro}
$\Upsilon(f^{-1}(0)\cap p_1(\mathcal{N})\times \mathbb{B})=\mathcal{N}\cap \mathfrak{M}$ for a small neightborhood $\mathcal{N}$ of $y$ and a small $\varepsilon>0$.
\end{pro}
\begin{proof}\ \\
{\bf Step 1}. Let $\mathcal{N}$ be a small neighborhood of $y$. Then we can assume that all $p\in \mathcal{N}$ is in the fiber $\mathcal{E}_x$ for some $x\in \mathscr{V}_{\Sigma,r,C}\times\mathcal{V}_{g,r,C'}$ ($\mathcal{E}$ is defined in Definition 1.3) with some small $r,C,C'\in\mathbb{R}$.\\
\ \\
{\bf Claim}. We claim the following two facts: Let $g_s,\xi,\hat{\psi}$ be given and $\mathfrak{k}_s$ be the corresponding element provided by Proposition 7.3. We have\\[1mm]
{\bf 1}. Suppose $(\mathcal{T}_{a^+,a^-}\circ B(\mathfrak{k}_s),D_{pert,\eta_s}(\psi_s))\neq 0$, then 
\begin{align}
\mathcal{N}\cap\mathcal{E}_{(g_s,s\xi,\hat{\psi})}\cap \mathfrak{M}=\emptyset.\label{H_12}
\end{align}
\ \\[-4mm]
{\bf 2}. Suppose $(\mathcal{T}_{a^+,a^-}\circ B(\mathfrak{k}_s),D_{pert,\eta_s}(\psi_s))=0$, then 
\begin{align}
\mathcal{N}\cap\mathcal{E}_{(g_s,s\xi,\hat{\psi})}\cap \mathfrak{M}\label{H_13}
\end{align}
$\mbox{ }\mbox{ }\mbox{ }$contains only one point.\\[1mm]
Suppose these two properties are true, Proposition 8.2 will be obtained directly.\\

{\bf Step 2}. To prove these two claims, we need the following fact: For any $\varepsilon_1>0$, there exists a small neighborhood $\mathcal{N}_1$ of $y$ with the following significance:\\
 For any $y'\in\mathfrak{M}\cap\mathcal{N}$, denote by $\mathbb{K}_0'$, $\mathbb{K}_1'$ the corresponding $\mathbb{K}_0$, $\mathbb{K}_1$ defined in (\ref{F_12}) and (\ref{F_13}) with respect to $y'$, we have
\begin{align}
dist(v,\mathbb{K}_0)\leq \varepsilon_1\|v\|\mbox{ for all }v\in \mathbb{K}'_0;\label{H_14}\\
dist(w,\mathbb{K}_1)\leq \varepsilon_1\|v\|\mbox{ for all }w\in \mathbb{K}'_1.\label{H_15}
\end{align}
These two inequalities can be obtained from the Fredholmness of $\mathcal{T}_{a^+,a^-}$ and the argument in Section 9.2. So we omit the proof. In the following paragraphs, we assume $\mathcal{N}=\mathcal{N}_1$ with a small $\varepsilon_1$ which will be determined later.\\

{\bf Step 3}. In this step, we prove (\ref{H_12}) in the claim. Suppose $\mathcal{N}\cap\mathcal{E}_{(g_s,s\xi,\hat{\psi})}\cap \mathfrak{M}\neq \emptyset$. Then there exists $y'=(g',\Sigma',\psi')\in \mathcal{N}\cap\mathcal{E}_{(g_s,s\xi,\hat{\psi})}\cap \mathfrak{M}$. We have
\begin{align*}
\mathbb{K}'_1=\text{coker}(\mathcal{T}_{a_1^+,a_1^-}\circ B)\oplus \ker(D|_{L^2_1}).
\end{align*}
where $(a_1^+,a_1^-)$ is the leading term of $\psi'$. Define
\begin{align}
\mathbb{H}'_1:=\text{coker}(\mathcal{T}_{a_1^+,a_1^-}\circ B).\label{H_16}
\end{align}
By (\ref{H_15}), we have
\begin{align}
dist(w,\mathbb{K}_1)\leq \varepsilon_1\|v\|\mbox{ for all }w\in \mathbb{H}'_1.\label{H_17}
\end{align}

Suppose $D_{pert,\eta_s}(\psi_s)=0$. Since $\psi_s=\psi+\hat{\psi}+s\mathfrak{k}_s$ satisfies (\ref{r4_7_2}), we have
\begin{align}
\mathcal{T}_{a_1^+,a_1^-}\circ B(\psi_s)\in \text{range}(\mathcal{T}_{a_1^+,a_1^-}\circ B)=\text{coker}(\mathcal{T}_{a_1^+,a_1^-}\circ B)^{\perp}=\mathbb{H}_1'^{\perp}\label{H_18}
\end{align}
By taking $\varepsilon_1<\frac{1}{2}$, this leads to a contradiction because 
\begin{align*}
\mathcal{T}_{a_1^+,a_1^-}\circ B(\psi_s)=\mathcal{T}_{a_1^+,a_1^-}\circ B(s\mathfrak{k}_s)\in\mathbb{H}_1
\end{align*}
and (\ref{H_17}), unless $\mathcal{T}_{a^+,a^-}\circ B(s\mathfrak{k}_s)=0$.\\

Suppose $\mathcal{T}_{a^+,a^-}\circ B(s\mathfrak{k}_s)=0$. We have $D_{pert,\eta_s}(\psi_s)\in\ker(D|_{L^2_1})\cap\text{range}(D_{pert,\eta_s}|_{L^2_1})$, which implies $D_{pert,\eta_s}(\psi_s)=0$ when $\varepsilon_1$ small (recall 3. in Proposition 2.4). So this case also leads to a contradiction.\\

{\bf Step 4}. In this step, we prove (\ref{H_13}) in the claim. Clearly, because $\psi_s$ satisfies (\ref{r4_7_2}), the set $\mathcal{N}\cap\mathcal{E}_{(g_s,s\xi,\hat{\psi})}\cap \mathfrak{M}$ is non empty. Now, suppose that we have two elements in this set, say $y_1=(g_1,\Sigma_1,\psi_1)$ and $y_2=(g_2,\Sigma_2,\psi_2)$. We can define
\begin{align}
g(s)&:=sg_1+(1-s)g_2,\label{H_19}\\
\Sigma(s)&:=h\{(s\eta(t),t)|t\in S^1\}\mbox{ such that }\Sigma(0)=\Sigma_1\mbox{ and }\Sigma(1)=\Sigma_2,\label{H_20}\\
\psi(s)&:=s\psi_1+(1-s)\psi_2\label{H_21}
\end{align}
by using $h$ as we defined in (\ref{AA_1}) to parametrize a small tubular neighborhood of $\Sigma_1$. Then, by mean value theorem, there exists $t\in(0,1)$ such that
\begin{align}
\frac{\partial}{\partial s}D^{(s)}\psi(s)\Big|_{s=t}=0\label{H_22}
\end{align}
where $D^{(s)}$ is the Dirac operator with respect to $g(s)$ and $\Sigma(s)$. By the same argument in Section 6.2, (\ref{H_22}) gives us an element $\eta$ in $\mathbb{H}'_0$ such that
\begin{align*}
a^+\eta+c&=O(\varepsilon_1);\\
a^-\bar{\eta}+c^{aps}&=O(\varepsilon_1)
\end{align*}
for some $(c,c^{aps})\in B(\ker(D|_{L^2}))$. However, since both $y_1$, $y_2$ are in $\mathcal{E}_{(g_s,s\xi,\hat{\psi})}$, we will have $\eta \perp \mathbb{H}_0$, which is also contradict to (\ref{H_14}). Therefore, we prove this proposition. 
\end{proof}

With this proposition, we have $f^{-1}(0)$ and $\mathcal{N}\cap\mathfrak{M}$ are homeomorphic.\\

 Here I should make one more remark. Recall that we define $\mathcal{N}=\bigcup_{r>\mathfrak{r}} \mathscr{V}_{g,r,Cr^{5/2}}\times\mathcal{V}_{\Sigma,r,C}$ in Section 7.5. The choice of this open set depends on $\mathfrak{r}$, so we can call it $\mathcal{N}^{(\mathfrak{r})}$ for a while. Now, what we proved in the previous section show us that there exists $C_{\mathfrak{r}}>0$, which goes to infinity as $\mathfrak{r}\rightarrow 0$,
such that $\|df|_{\mathcal{N}^{(\mathfrak{r})}}\|\leq C_{\mathfrak{r}}$ for any $\mathfrak{r}>0$. Because of this, there is no uniform control for $\|df\|$ when $\mathfrak{r}\rightarrow 0$. So we can not choose $\mathcal{N}$ to be $\bigcup_{r>0} \mathscr{V}_{g,r,Cr^{5/2}}\times\mathcal{V}_{\Sigma,r,C}$.\\

\noindent {\bf Acknowledgement:} This paper is the part of author's PhD thesis. The author wants to thank his advisor, Clifford Taubes, for helping and encouraging him in so many aspects. He also wants to thank Professor Shing-Tung Yau and Peter Kronheimer for their help. He also wants to thank all his friends at Harvard and Professor Chang-Shou Lin and Chiun-Chuan Chen for their encouragement. Finally, he want to thank his friend Chen-Yu Chi and several anonymous referees for helping him to make this article better.\\ 
 
\section{Appendix}
\subsection{Remark of the proof when the metric is not Euclidean around $\Sigma$} Here I will sketch the proof of the general case that the metric is not Euclidean near $\Sigma$. The idea is to replace Proposition 4.4 and 4.6 by Proposition 5.4 and 5.6 in the argument contained in Section 7.\\

First of all, let me summarize what we have done in Section 7. We start with a perturbation $g_s$ which gives us an extra term $\mathfrak{f}_0$ such that $D_{pert}\psi=\mathfrak{f}_0$. Then in the next step, we construct a triple $(\mathfrak{h}_0,\mathfrak{c}_0,\eta_0)$ such that $D\mathfrak{h}_0=\mathfrak{f}_0$ (mod a finite dimensional space), $D\mathfrak{c}_0=0$ and  "eliminate" the $\frac{1}{\sqrt{r}}$ part in $\mathfrak{h}_0$ by $(\mathfrak{c}_0,\eta_0)$. Then we repeat this process. Each time we will produce a new $\mathfrak{f}$ which can be decomposed into 3 parts, which belongs to $\mathfrak{A}$, $\mathfrak{B}$ and $\mathfrak{C}$ defined in Definition 7.2 (We omit all subscripts here).\\

Now, we restart the process of producing $(\mathfrak{h}_0,\mathfrak{c}_0,\eta_0)$ for the general case, but this time we replace the Dirac operator $D$ by $D^{(1)}$ defined in Section 5.2. So $D^{(1)}\mathfrak{h}_0=\mathfrak{f}_0$(mod a finite dimensional space) and $D^{(1)}\mathfrak{c}_0=0$. By using the same argument, we will still generate $\mathfrak{f}_1$. The only difference will be an extra term in $\mathfrak{C}$, which is something we can deal with. This part is generated by the operator $\delta^{(1)}$ defined in Proposition 5.3.\\

Now we do this process step by step. We replace $D$ by $D^{(i)}$ in $i$-th step, then we will get the same result. So the whole argument works for the general case. 

\subsection{Upper semi-continuity of $dim(\text{coker}(p^-))$} In this final part, I will answer the question about the upper semi-continuity of $dim(\text{coker}(p^-))$.\\

Since $p^-$ is a Fredholm operator, we can decompose $Exp^-=\text{range}(p^+)\oplus \mathbb{W}$ where $\mathbb{W}$ is finite dimensional. Now, for any $c^{\pm}\in \text{range}(p^-)$, there exists $\mathfrak{c}\in \ker(D|_{L^2})^0$ such that $B(\mathfrak{c})=c^{\pm}$. Suppose we have a perturbed Dirac operator $D_{pert}$. We can follow the argument in the proof of Proposition 4.8 to get a $\mathfrak{c}'$ such that $D_{pert}(\mathfrak{c}')=0$ and $\|B(\mathfrak{c}-\mathfrak{c}')\|\leq \varepsilon \|B(\mathfrak{c})\|$.\\

To prove $\text{coker}(p^-)$ is upper semi-continuous, we need to show that the dimension of cokernel under a small perturbation will be less or equal than the dimension of $\mathbb{W}$. We can prove this fact by showing that $\text{range}(p^-_{pert})+\mathbb{W}=Exp^-$.\\

Suppose this is not the case, then we can find $v\in Exp^-$, $\|v\|=1$ such that $v\perp \mathbb{W}$ and $v\perp \text{range}(p^-_{pert})$. So we have
\begin{align*}
\langle v, B(\mathfrak{c}')\rangle =0=\langle v, B(\mathfrak{c}')\rangle + O(\varepsilon).
\end{align*}
This means that, if we decompose $v=v_0+v_1$ where $v_0\in \text{range}(p^-)$ and $v_1=\mathbb{W}$, then we have $\|v_0\|\leq O(\varepsilon)$ and $v_1=0$. Therefore, we have $\|v\|=O(\varepsilon)$, which is a contradiction. Therefore, we prove the upper semi-continuity of $dim(\text{coker}(p^-))$.

\subsection{The bijection between $\ker(\mathfrak{L}_p|_{\delta=0})$ and $\mathbb{K}_0$ and the injection between $\text{coker}(\mathfrak{L}_p|_{\delta=0})$ and $\mathbb{K}_1$}
First of all, we prove that $\ker(\mathfrak{L}_p|_{\delta=0})$ is isomorphic to $\mathbb{K}_0=\ker(\mathcal{T}_{a^+,a^-}\circ B)$. Recall the notation in Section 6.2, we have the following map:
\begin{align*}
\mathbf{J}:\ker(\mathfrak{L}_p|_{\delta=0})&\rightarrow \ker(\mathcal{T}_{a^+,a^-}\circ B);\\
(\eta,\phi)&\mapsto \Big(\left(
\begin{array}{c}
\frac{a^+\eta}{2\sqrt{z}}\\
\frac{a^-\bar{\eta}}{2\sqrt{\bar{z}}}
\end{array}
\right)
+O_{L^2_1}(1)+\phi\Big)
\end{align*}
where the right-hand side is the element defined in (\ref{F_8}) (The element $\mathfrak{h}$ defined in (\ref{F_7}) is zero since $\delta=0$). Notice that the $O_{L^2_1}(1)$ term on the right is determined by $\eta$ and $\psi_0$. To prove $\mathbf{J}$ is a bijection, we need to find the inverse. Suppose we have $\mathfrak{u}\in \ker(\mathcal{T}_{a^+,a^-}\circ B)$, $B(\mathfrak{u})=(u^+,u^-)$. Then we can solve $\eta=\frac{2u^+}{a^+}=\frac{2u^-}{\bar{a}^-}$. Once we solve $\eta$, $\phi$ will be solved immediately. So we can construct the inverse map. Therefore, $\mathbf{J}$ is a bijection.\\

Next, we prove that there exists an injection from $\text{coker}(\mathfrak{L}_p|_{\delta=0})$ to 
\[
\mathbb{K}_1=\text{coker}(\mathcal{T}_{d^+,a^-}\circ B)\oplus \ker(D|_{L^2_1}).
\]
Notice that
\begin{align*}
\text{coker}(\mathfrak{L}_p|_{\delta=0})\subset \text{range}(D|_{L^2_1})^{\perp}= \ker(D|_{L^2})
\end{align*}
by (\ref{F_2}) and Proposition 2.4. Since we have $\ker(D|_{L^2})^0\subset \ker(D|_{L^2})$ is a dense subspace, there exists a dense subspace of $\text{coker}(\mathfrak{L}_p|_{\delta=0})$:
\begin{align*}
{\rm coker}(\mathfrak{L}_p|_{\delta=0})^0\subset \ker(D|_{L^2})^0=B(\ker(D|_{L^2})^0)\oplus  \ker(D|_{L^2_1}).
\end{align*}
 So for any $\mathfrak{u}\in \text{coker}(\mathfrak{L}_p|_{\delta=0})^0$, there is a unique corresponding pair $(B(\mathfrak{u}),\mathfrak{v})=((u^+,u^-),\mathfrak{v})\in B(\ker(D|_{L^2})^0)\oplus  \ker(D|_{L^2_1})$. Since $\mathfrak{u}\perp \text{range}(\mathfrak{L}_p|_{\delta=0})$, by integration by parts, we have $a^-\bar{u}^+=\bar{a}^+u^-$. Therefore, we can define the following map
\begin{align*}
\mathbf{L}:\text{coker}(\mathfrak{L}_p|_{\delta=0})^0&\rightarrow \text{coker}(\mathcal{T}_{a^+,a^-}\circ B)\oplus \ker(D|_{L^2_1});\\
\mathfrak{u}&\mapsto (c,\mathfrak{v})\mbox{ where }c=\frac{\bar{u}^+}{\bar{a}^+}=\frac{u^-}{a^-}.
\end{align*}
To prove this element $c$ is in $\text{coker}(\mathcal{T}_{a^+,a^-}\circ B)$, we use the inner product defined in the Section 6.3 and integration by parts:
\begin{align*}
\langle \mathcal{T}_{a^+,a^-}\circ B(\mathfrak{w}),c\rangle=Re(\int_{\mathbb{S}^1}\bar{u}^-w^+- u^+\bar{w}^-)=Re(\int_{M\setminus\Sigma}\langle D\mathfrak{u},\mathfrak{w}\rangle+\langle \mathfrak{u},D\mathfrak{w}\rangle)=0
\end{align*}
for any $\mathfrak{w}\in \ker(D|_{L^2})^0$ with $B(\mathfrak{w})=(w^+,w^-)$. Finally, it is easy to see from the definition that $\mathbf{L}$ is injective. Since $\text{coker}(\mathcal{T}_{a^+,a^-}\circ B)\oplus \ker(D|_{L^2_1})$ is finite dimensional, we have $\text{coker}(\mathfrak{L}_p|_{\delta=0})^0$ is finite dimensional. So $\text{coker}(\mathfrak{L}_p|_{\delta=0})^0=\text{coker}(\mathfrak{L}_p|_{\delta=0})$ and $L$ is defined on $\text{coker}(\mathfrak{L}_p|_{\delta=0})$ as an injective map. So we finish this proof.

\subsection{Proof of Remark 2.2}
For each $n\in\mathbb{N}$ given, let us denote the cut-off function $\chi_{\frac{1}{2n},\frac{1}{n}}$ by $\chi_n$ (see (\ref{ctf}) for the definition of $\chi_{\frac{1}{2n},\frac{1}{n}}$). So we have
\begin{align}
|\nabla\chi_n|\leq Cn
\end{align}
for some universal constant $C>0$.

By the definition of $L^2_1(M\setminus\Sigma;\mathbf{\mathcal{S}}\otimes\mathcal{I})$ and  $L^2_{1,cpt}(M\setminus\Sigma;\mathbf{\mathcal{S}}\otimes\mathcal{I})$, we have
\begin{align}
L^2_{1,cpt}(M\setminus\Sigma;\mathbf{\mathcal{S}}\otimes\mathcal{I})\subseteq L^2_1(M\setminus\Sigma;\mathbf{\mathcal{S}}\otimes\mathcal{I}).
\end{align}
To prove that they are equal, we have to show that for any $\mathfrak{u}\in L^2_1(M\setminus\Sigma;\mathbf{\mathcal{S}}\otimes\mathcal{I})$, there exists a sequence $\{\mathfrak{u}_n\}\subset L^2_{1,cpt}(M\setminus\Sigma;\mathbf{\mathcal{S}}\otimes\mathcal{I})$ such that $\mathfrak{u}_n\rightarrow\mathfrak{u}$ in $L^2_1$-norm.\\

We define $\mathfrak{u}_n=(1-\chi_n)\mathfrak{u}$. Clearly, we have
$\mathfrak{u}_n\in L^2_{1,cpt}(M\setminus\Sigma;\mathbf{\mathcal{S}}\otimes\mathcal{I})$ and
\begin{align}
\int_{M\setminus\Sigma}|\mathfrak{u}_n-\mathfrak{u}|^2\rightarrow 0
\end{align}
as $n\rightarrow \infty$. Meanwhile, we have
\begin{align}
\int_{M\setminus\Sigma}|\nabla(\mathfrak{u}_n-\mathfrak{u})|^2&=\int_{M\setminus\Sigma}|(\nabla\chi_n)\mathfrak{u}+\chi_n\nabla\mathfrak{u}|^2\nonumber\\
&\leq 2\int_{M\setminus\Sigma}|(\nabla\chi_n)\mathfrak{u}|^2+|\chi_n\nabla\mathfrak{u}|^2.
\end{align}
By (9.1) and Lemma 2.6, we have
\begin{align}
\int_{M\setminus\Sigma}|(\nabla\chi_n)\mathfrak{u}|^2\leq C^2n^2\Big(64\pi^2\frac{1}{n^2}\Big)\int_{N_{\frac{1}{n}}}|\nabla\mathfrak{u}|^2= 64C^2\pi^2\int_{N_{\frac{1}{n}}}|\nabla\mathfrak{u}|^2.
\end{align}
The left hand side of (9.5) converges to $0$ as $n\rightarrow\infty$. Meanwhile, we have
\begin{align}
\int_{M\setminus\Sigma}|\chi_n\nabla\mathfrak{u}|^2\leq \int_{N_{\frac{1}{n}}}|\nabla\mathfrak{u}|^2.
\end{align}
So the left hand side of (9.6) also converges to $0$. By (9.3), (9.4), (9.5) and (9.6), we have $\mathfrak{u}_n\rightarrow\mathfrak{u}$ in $L^2_1$-norm. We complete our proof.

\bibliographystyle{amsplain}

\end{document}